\theoremstyle{plain}
\newtheorem{thm}{Theorem}[section]
\newtheorem{lem}[thm]{Lemma}
\newtheorem{prop}[thm]{Proposition}
\newtheorem{cor}[thm]{Corollary}
\theoremstyle{definition}
\newtheorem{defn}[thm]{Definition}
\theoremstyle{remark}
\newcommand{\Z}{\mathbb{Z}}
\newcommand{\C}{\mathbb{C}}
\newcommand{\LL}{\mathscr{L}}
\newcommand{\PP}{\mathbb{P}}
\newcommand{\OO}{\mathcal{O}}
\newcommand{\ZZ}{\mathcal{Z}}
\newcommand{\Tor}{\mathscr{T}or}
\newcommand{\Ext}{\mathscr{E}xt}
\newcommand{\Hom}{\mathscr{H}om}
\newcommand{\X}{\bar{X}}
\newcommand{\Y}{\bar{Y}}
\newcommand{\codim}{\text{codim}}
\begin{document}

\renewcommand{\abstractname}{\vspace{-\baselineskip}}

\title{Positivity in $T$-equivariant $K$-theory of flag varieties associated to Kac-Moody groups II}

\author{Seth Baldwin and Shrawan Kumar}

\maketitle

\begin{abstract}
We prove sign-alternation of the structure constants in the basis of structure sheaves of opposite Schubert varieties in the torus-equivariant Grothendieck group of coherent sheaves on the flag varieties $G/P$ associated to an arbitrary symmetrizable Kac-Moody group $G$, where $P$ is any parabolic subgroup.  This generalizes the work of Anderson-Griffeth-Miller from the finite case to the general Kac-Moody case, and affirmatively answers a conjecture of Lam-Schilling-Shimozono regarding the signs of the structure constants in the case of the affine Grassmannian.  
\end{abstract}

\section{Introduction}
Let $G$ be any symmetrizable Kac-Moody group over $\C$ completed along the negative roots and $G^{\text{min}}\subset G$ be the minimal Kac-Moody group as in \cite[\S7.4]{Kbook}.  Let $B$ be the standard (positive) Borel subgroup, $B^{-}$ the standard negative Borel subgroup, $H=B\cap B^{-}$ the standard maximal torus, and $W$ the Weyl group \cite[Chapter 6]{Kbook}.  Let $\X=G/B$ be the `thick' flag variety (introduced by Kashiwara) which contains the standard flag variety $X=G^{\text{min}}/B$.  Let $T$ be the adjoint torus, i.e., $T:=H/Z(G^{\text{min}})$, where $Z(G^{\text{min}})$ denotes the center of $G^{\text{min}}$ and let $R(T)$ denote the representation ring of $T$.  For any $w\in W$, we have the Schubert cell $C_w:=BwB/B\subset X$, the Schubert variety $X_w:=\overline{C_w}\subset X$, the opposite Schubert cell $C^w:=B^{-}wB/B\subset \X$, and the opposite Schubert variety $X^w :=\overline{C^w}\subset\X$.  When $G$ is a (finite dimensional) semisimple group, it is referred to as the \emph{finite case}.  

Let $K^0_T(\X)$ denote the Grothendieck group of $T$-equivariant coherent sheaves on $\X$.  Then, $\{[\OO_{X^w}]\}_{w\in W}$ forms an $R(T)$-`basis' of $K^0_T(\X)$ (where infinite sums are allowed), i.e., $K^0_T(\X)=\prod_{w\in W}R(T)[\OO_{X^w}]$.  We express the product in $K^0_T(\X)$ by:
$$[\OO_{X^u}]\cdot[\OO_{X^v}]=\sum_{w\in W}d_{u,v}^w[\OO_{X^w}],\,\,\,\text{for unique}\,\, d_{u,v}^w\in R(T).$$

The following result is our main theorem (cf. Theorem \ref{positivity}).  This was conjectured first by Griffeth-Ram \cite{GR} in the finite case (2004), proven in the finite case by Anderson-Griffeth-Miller \cite{AGM} (2011), and then conjectured in the general Kac-Moody case by Kumar \cite{K} (2012).   

\begin{thm}
\label{first}
For any $u,v,w\in W$, $$(-1)^{\ell(w)+\ell(u)+\ell(v)}d^w_{u,v}\in\Z_{\geq 0}[(e^{-\alpha_1}-1),\dots,(e^{-\alpha_r}-1)]$$ where $\{\alpha_1\ldots,\alpha_r\}$ are the simple roots.  
\end{thm}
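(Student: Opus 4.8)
The plan is to follow the geometric strategy of Anderson--Griffeth--Miller, adapted to the Kac--Moody setting. First I would reduce to the case $P=B$, deferring the general $G/P$ statement to a standard pullback/pushforward argument along $\bar X = G/B\to G/P$ (using that opposite Schubert classes pull back to opposite Schubert classes and that the projection is a locally trivial fibration with fibre a finite-dimensional flag variety having vanishing higher cohomology of $\OO$). Then I would trade the structure constants for $T$-equivariant Euler characteristics. The family $\{[\OO_{X^w}]\}_{w\in W}$ admits a dual family: by the Kac--Moody duality of Schubert bases (Kashiwara--Shimozono, Kumar), there is a perfect $R(T)$-pairing $\langle\ ,\ \rangle$, given by convergent $T$-equivariant Euler characteristics, between the completed $K^0_T(\bar X)$ and the $K$-homology of the thin flag variety $X=G^{\mathrm{min}}/B$, under which $[\OO_{X^w}]$ is dual to $\psi_w:=[\OO_{X_w}(-\partial X_w)]$, the class of the ideal sheaf of the boundary of the finite-dimensional projective Schubert variety $X_w$. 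Consequently
$$d^w_{u,v} \;=\; \big\langle\, [\OO_{X^u}]\cdot[\OO_{X^v}],\ \psi_w\,\big\rangle \;=\; \chi^T\!\Big(X_w,\ \iota^*\big(\OO_{X^u}\otimes^L_{\OO_{\bar X}}\OO_{X^v}\big)\otimes\OO_{X_w}(-\partial X_w)\Big),$$
with $\iota\colon X_w\hookrightarrow\bar X$. Everything now happens on the projective variety $X_w$, and $X^u,X^v$ enter only through finitely many truncations, so the infinite-dimensionality of $\bar X$ is no longer an obstruction.

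Second, I would make the intersection transversal. One cannot translate $X^v\subset\bar X$ by a general element of $G^{\mathrm{min}}$ and stay $T$-equivariant, so — exactly as in AGM — I would pass to a Borel mixing space $E_N\times_T\bar X$ for $N\gg 0$ (chosen so that $\chi^T$ on $X_w$ is computed by $\chi$ on $E_N\times_T X_w$ in the relevant range), form the associated $G^{\mathrm{min}}$-bundle, and invoke a relative Kleiman-type transversality theorem for this action (the Kac--Moody analogue of Kleiman's theorem, legitimate because the finite-dimensional piece $X_w$ is acted upon through a finite-dimensional quotient group and is a union of orbits). The conclusion: for a general section $\sigma$, the translate $\sigma\cdot X^v$ meets $X^u$ and (the total space of) $X_w$ in the expected dimension, with vanishing higher $\mathrm{Tor}$, so that the mixed-space analogue of $\OO_{X^u}\otimes^L\OO_{X^v}$ becomes the structure sheaf of the intersection, and the resulting projective variety $\Gamma$ of dimension $d:=\ell(w)-\ell(u)-\ell(v)$ has rational singularities. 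For the last point I would invoke (or re-derive via Bott--Samelson--Demazure--Hansen resolutions together with Kawamata--Viehweg vanishing, following Part I and Kumar's book) that the Kac--Moody "Richardson-type" varieties $X_w\cap X^u\cap g X^v$ have rational singularities. After this step,
$$d^w_{u,v} \;=\; \chi^T\big(\Gamma,\ \OO_\Gamma(-\partial\Gamma)\big)\quad\text{in }R(T),$$
where $\partial\Gamma := \Gamma\cap\partial X_w$ is the boundary read off from the mixing-space picture.

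Third, compute the sign and extract positivity. Take a $T$-equivariant Bott--Samelson-type resolution $\pi\colon\widetilde\Gamma\to\Gamma$ with $\partial\widetilde\Gamma:=\pi^{-1}(\partial\Gamma)$ a simple normal crossing divisor and $R\pi_*\OO_{\widetilde\Gamma}(-\partial\widetilde\Gamma)=\OO_\Gamma(-\partial\Gamma)$ (rational singularities of $\Gamma$ plus a Grauert--Riemenschneider/adjoint-ideal argument). Then $d^w_{u,v}=\chi^T(\widetilde\Gamma,\OO_{\widetilde\Gamma}(-\partial\widetilde\Gamma))$, and Serre duality on the smooth projective $\widetilde\Gamma$ gives $d^w_{u,v}=(-1)^{d}\,\chi^T(\widetilde\Gamma,\omega_{\widetilde\Gamma}(\partial\widetilde\Gamma))$; since $\ell(w)+\ell(u)+\ell(v)\equiv d\pmod 2$, the sign in the theorem is precisely $(-1)^d$. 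Using the standard identification $\omega_{\widetilde\Gamma}(\partial\widetilde\Gamma)\cong\mathcal L$ with $\mathcal L$ an explicit line bundle pulled back from $\bar X$ that is globally generated along the Bott--Samelson tower, Kawamata--Viehweg vanishing yields $H^i(\widetilde\Gamma,\mathcal L)=0$ for $i>0$, so $(-1)^d d^w_{u,v}=[H^0(\widetilde\Gamma,\mathcal L)]$ is the character of a genuine $T$-module. The final, most delicate step is to show this character lies in $\Z_{\geq 0}[(e^{-\alpha_1}-1),\dots,(e^{-\alpha_r}-1)]$: I would filter $H^0(\widetilde\Gamma,\mathcal L)$ along the tower of $\PP^1$-bundles defining $\widetilde\Gamma$, identify each successive subquotient as a section module of a line bundle on a $\PP^1$-bundle over the previous stage, and check by an explicit Demazure-operator computation that passing from one stage to the next carries the cone $\Z_{\geq 0}[(e^{-\alpha_j}-1)]$ into itself after the appropriate twist; starting from the one-point seed (character $1$) this propagates positivity all the way up the tower.

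The main obstacle, I expect, is the second step, in two guises: (i) arranging the mixing-space/relative-transversality apparatus so that translation by a general section genuinely computes the product $[\OO_{X^u}]\cdot[\OO_{X^v}]$ while interacting correctly with the ind/pro-structures of $X$ and $\bar X$ and with the truncation in the first step; and (ii) establishing that the translated triple intersections $\Gamma$ have rational singularities in full Kac--Moody generality, where no off-the-shelf Richardson-variety theory is available and one must push the Bott--Samelson plus Kawamata--Viehweg machinery through by hand. The concluding combinatorial positivity — pinning down the twist in the Demazure recursion so that the cone generated by the $(e^{-\alpha_i}-1)$ is preserved — is a further genuine, though more self-contained, difficulty.
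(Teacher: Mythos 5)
Your first two steps (duality with $\xi_w=\OO_{X_w}(-\partial X_w)$, reduction to finitely many Schubert truncations, mixing space plus a general translate to force Tor-vanishing, rational singularities of the relevant Richardson-type loci) are essentially the route the paper takes. The genuine gap is in your final step, and it is not a technicality: after translating by a general section you no longer have a $T$-action, so the displayed identity $d^w_{u,v}=\chi^T(\Gamma,\OO_\Gamma(-\partial\Gamma))$ in $R(T)$ does not make sense — translation and equivariance are in direct tension, and this is exactly what the mixing space is there to resolve. What one actually gets from the mixing space is, for each multidegree $j$, a \emph{non-equivariant} integer $c^w_{u,v}(j)$ obtained by pairing against the base classes ${\hat{p}}^*[\OO_{\PP_j}(-\partial\PP_j)]$, related to the coefficients of $d^w_{u,v}$ in the variables $(e^{-\alpha_i}-1)$ by $d^w_{u,v}(j)=(-1)^{|j|}c^w_{u,v}(j)$ (this is the content of the paper's Lemma \ref{lemma1}, via [GK]). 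The sign-alternation statement is coefficientwise in this expansion, and the degree $|j|$ must enter the geometry; in the paper it does so through $\OO_{\PP_j}(-\partial\PP_j)$ and through the cohomology being concentrated in the single degree $|j|+\ell(w)-\ell(u)-\ell(v)$ (Theorem \ref{main}(b)). Your proposed alternative — show $(-1)^d d^w_{u,v}$ is the character of an honest $T$-module $H^0(\widetilde\Gamma,\mathcal L)$ and then argue by a Demazure-operator recursion that this character lies in $\Z_{\geq 0}[(e^{-\alpha_1}-1),\dots,(e^{-\alpha_r}-1)]$ — does not work: effectivity of a character means nonnegativity in the basis $\{e^{\lambda}\}$, which says nothing about the signs of its coefficients in the shifted monomials $(e^{-\alpha_i}-1)$, and there is no reason (nor any known mechanism, in AGM or here) for a Demazure-type recursion to preserve that cone. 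The per-coefficient geometric interpretation is the heart of the theorem and is absent from your plan.

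A second, related gap: you propose to prove the needed vanishing by a Bott--Samelson-type resolution of the single translated intersection $\Gamma=X_w\cap X^u\cap gX^v$ with an explicit identification of $\omega_{\widetilde\Gamma}(\partial\widetilde\Gamma)$ as a pulled-back line bundle. For a \emph{general} translate there is no $T$- or $B$-equivariant structure left, hence no BSDH resolution and no such explicit canonical-bundle formula. The paper circumvents this by working in a family: it builds the scheme $\ZZ$ over the (finite-dimensional quotient of the) mixing group, resolves it by $\widetilde{\ZZ}$ using the desingularizations $Z^u_w\times Z^v_w$ and $Z_w$ of the fixed Richardson and Schubert varieties, proves $\ZZ$ is normal with rational singularities and $\partial\ZZ$ is CM, applies the \emph{relative} Kawamata--Viehweg theorem to get $R^i\pi_*\omega_{\ZZ}(\partial\ZZ)=0$, and only then specializes to a general fiber $N_\gamma$ via flatness, semicontinuity and Serre duality on the CM fiber. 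Without some such family argument (or a substitute for the missing equivariant resolution of a general fiber), your step three does not go through.
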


Let $P$ be any standard parabolic subgroup of $G$ of finite type (cf. \cite[Definition 6.1.18]{Kbook}).  (Recall that a parabolic subgroup is said to be of {\it finite type} if its Levi subgroup is finite dimensional.) We may express, in $K^0_T(G/P)$, $$[\OO_{X^u_P}]\cdot[\OO_{X^v_P}]=\sum_{w\in W^P}d_{u,v}^w(P)[\OO_{X^w_P}],\,\,\,\text{for unique} \,\, d_{u,v}^w(P)\in R(T),$$ where $W^P$ is the set of minimal length representatives of $W/W_P$, $W_P$ is the Weyl group of $P$, and $X^w_P:=\overline{B^-wP/P}\subset G/P$ is the opposite Schubert variety.

Let $\pi:G/B\to G/P$ be the standard ($T$-equivariant) projection.  Then, $\pi$ is a locally trivial fibration (with fiber the smooth projective variety $P/B$) and hence flat (cf. \cite[Chapter 7]{Kbook}).  Thus, we have $$\pi^*[\OO_{X^w_P}]=[\OO_{\pi^{-1}(X^w_P)}]=[\OO_{X^w}].$$  Since $\pi^*:K^0_T(G/P)\to K^0_T(G/B)$ is a ring homomorphism, we have $d_{u,v}^w=d_{u,v}^w(P)$ for any $u,v,w\in W^P$ and hence Theorem \ref{first} immediately generalizes from the case of $G/B$ to the case of $G/P$, and we obtain: 

\begin{thm}
\label{second}
For any standard parabolic subgroup $P$ of $G$ of finite type, and any $u,v,w\in W^P$, $$(-1)^{\ell(w)+\ell(u)+\ell(v)}d^w_{u,v}(P)\in\Z_{\geq 0}[(e^{-\alpha_1}-1),\dots,(e^{-\alpha_r}-1)].$$
\end{thm}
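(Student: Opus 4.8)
The plan is to derive Theorem~\ref{second} directly from Theorem~\ref{first}, transporting the positivity statement from $G/B$ to $G/P$ along the projection $\pi : G/B \to G/P$; essentially all of the content already lies in the case $P = B$. The one geometric ingredient needed beyond Theorem~\ref{first} is the identity $\pi^{-1}(X^w_P) = X^w$ of reduced subschemes for $w \in W^P$, which is standard: $\pi$ carries the opposite cell $C^{w'}$ onto $C^{\overline{w'}}_P$, where $\overline{w'}$ is the minimal-length representative of $w'W_P$ (since $w'P = \overline{w'}P$), and $\pi$ --- a locally trivial fibration with smooth irreducible fiber $P/B$ --- pulls back the irreducible reduced subvariety $X^w_P$ to an irreducible reduced subvariety that contains $X^w$ and has the same dimension, hence equals it. As $\pi$ is flat, this yields $\pi^*[\OO_{X^w_P}] = [\OO_{\pi^{-1}(X^w_P)}] = [\OO_{X^w}]$ in $K^0_T(G/B)$, as already recorded above.

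Next, I would apply the $R(T)$-algebra homomorphism $\pi^* : K^0_T(G/P) \to K^0_T(G/B)$ to the defining relation
$$[\OO_{X^u_P}] \cdot [\OO_{X^v_P}] = \sum_{w \in W^P} d^w_{u,v}(P)\,[\OO_{X^w_P}]$$
for $u,v \in W^P$. Multiplicativity and $R(T)$-linearity of $\pi^*$ turn the left side into $[\OO_{X^u}] \cdot [\OO_{X^v}]$ and the right side into $\sum_{w \in W^P} d^w_{u,v}(P)\,[\OO_{X^w}]$ (the compatibility of $\pi^*$ with the possibly infinite completed sums is automatic here, since both sides then have well-defined expansions in the $R(T)$-basis used below). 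Comparing with the relation $[\OO_{X^u}] \cdot [\OO_{X^v}] = \sum_{w \in W} d^w_{u,v}\,[\OO_{X^w}]$ in $K^0_T(G/B)$, and using that $\{[\OO_{X^w}]\}_{w \in W}$ is an $R(T)$-basis (with infinite sums allowed), one reads off $d^w_{u,v}(P) = d^w_{u,v}$ for every $w \in W^P$ (and $d^w_{u,v} = 0$ for $w \notin W^P$ when $u,v \in W^P$).

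The theorem then follows at once: by Theorem~\ref{first}, $(-1)^{\ell(w)+\ell(u)+\ell(v)} d^w_{u,v} \in \Z_{\geq 0}[(e^{-\alpha_1}-1),\dots,(e^{-\alpha_r}-1)]$ for $u,v,w \in W^P$, and the equality $d^w_{u,v}(P) = d^w_{u,v}$ transfers this conclusion to $d^w_{u,v}(P)$. I do not expect a genuine obstacle in this reduction: all the difficulty is concentrated in Theorem~\ref{first}, and the passage to an arbitrary finite-type parabolic $P$ is purely formal, resting on the flatness and multiplicativity of $\pi^*$. The only steps that call for (routine) care are the identification $\pi^{-1}(X^w_P) = X^w$ and the verification that $\pi^*$ interacts correctly with the infinite expansions in the completed equivariant Grothendieck groups.
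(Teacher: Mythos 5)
Your proposal is correct and follows essentially the same route as the paper: the paper also deduces Theorem \ref{second} from Theorem \ref{first} by using that $\pi:G/B\to G/P$ is a flat locally trivial fibration, so $\pi^*[\OO_{X^w_P}]=[\OO_{\pi^{-1}(X^w_P)}]=[\OO_{X^w}]$, and then using that $\pi^*$ is a ring homomorphism to conclude $d^w_{u,v}(P)=d^w_{u,v}$ for $u,v,w\in W^P$. Your extra verifications (the identification $\pi^{-1}(X^w_P)=X^w$ and the comparison of coefficients in the completed basis) are just elaborations of the same argument.
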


Theorems \ref{first} and \ref{second} also apply to ordinary (non-equivariant) $K$-theory.  Let $K^0(\X)$ denote the Grothendieck group of coherent sheaves on $\X$.  Then, we have $K^0(\X)=\prod_{w\in W}\Z [\OO_{X^w}]$.  Further, the map $$\Z\otimes_{R(T)}K^0_T(\X)\to K^0(\X),\,\,\,1\otimes [\OO_{X^w}]\mapsto [\OO_{X^w}]$$ is an isomorphism, where we view $\Z$ as an $R(T)$-module via evaluation at $1$. 
Similar results apply to $G/P$.  

Write, in $K^0(G/P)$, for $u,v \in W^P$, $$[\OO_{X^u_P}]\cdot[\OO_{X^v_P}]=\sum_{w\in W^P}a_{u,v}^w(P)[\OO_{X^w_P}],\,\,\,\text{for unique}
\,\, a_{u,v}^w(P)\in \Z.$$  Then, by the above, along with Theorem \ref{second}  we have:

\begin{thm}
\label{ordinary}
For any standard parabolic subgroup $P$ of $G$ of finite type, and any $u,v,w\in W^P$, $$(-1)^{\ell(w)+\ell(u)+\ell(v)}a_{u,v}^w(P)\in \Z_{\geq 0}.$$
\end{thm}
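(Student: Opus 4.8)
Theorem \ref{ordinary} is a direct corollary of Theorem \ref{second}, so the proof is essentially a matter of tracking what happens under the specialization $R(T)\to\Z$ given by $e^{-\alpha_i}\mapsto 1$ (evaluation at the identity). The plan is as follows. First, I would recall the claim made in the text that the natural map $\Z\otimes_{R(T)}K^0_T(G/P)\to K^0(G/P)$, sending $1\otimes[\OO_{X^w_P}]$ to $[\OO_{X^w_P}]$, is a ring isomorphism carrying the equivariant basis to the non-equivariant basis; this is the $G/P$-analogue of the statement displayed for $\X$, and follows from the fact that $\{[\OO_{X^w_P}]\}_{w\in W^P}$ is an $R(T)$-basis together with the forgetful/base-change comparison between $T$-equivariant and ordinary $K$-theory of the (ind-)variety $G/P$. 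Applying this ring homomorphism to the equivariant product formula $[\OO_{X^u_P}]\cdot[\OO_{X^v_P}]=\sum_w d^w_{u,v}(P)[\OO_{X^w_P}]$ and using that the map is $R(T)$-linear with $R(T)$ acting on $\Z$ via evaluation at $1$, one obtains $a^w_{u,v}(P)=\epsilon\bigl(d^w_{u,v}(P)\bigr)$, where $\epsilon:R(T)\to\Z$ is the augmentation $e^{-\alpha_i}\mapsto 1$.

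Next I would feed in Theorem \ref{second}: write $d^w_{u,v}(P)=(-1)^{\ell(w)+\ell(u)+\ell(v)}\,f$ where $f\in\Z_{\ge 0}[(e^{-\alpha_1}-1),\dots,(e^{-\alpha_r}-1)]$, i.e. $f$ is a polynomial with non-negative integer coefficients in the variables $y_i:=e^{-\alpha_i}-1$. Under the augmentation $\epsilon$ each variable $y_i$ maps to $0$, so $\epsilon(f)$ equals the constant term of $f$, which is a non-negative integer. Hence $\epsilon\bigl(d^w_{u,v}(P)\bigr)=(-1)^{\ell(w)+\ell(u)+\ell(v)}\epsilon(f)$ with $\epsilon(f)\in\Z_{\ge 0}$, giving exactly $(-1)^{\ell(w)+\ell(u)+\ell(v)}a^w_{u,v}(P)\in\Z_{\ge 0}$.

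The only point requiring a little care — and the one I would regard as the main obstacle, though it is a soft one — is justifying the ring isomorphism $\Z\otimes_{R(T)}K^0_T(G/P)\cong K^0(G/P)$ sending basis to basis, in the Kac-Moody setting where $G/P$ is an infinite-dimensional ind-scheme and infinite $R(T)$-linear combinations are allowed. One must check that forgetting the equivariant structure sends $[\OO_{X^w_P}]$ to $[\OO_{X^w_P}]$ (immediate from the definitions), that these classes form a topological $\Z$-basis of $K^0(G/P)$ (the non-equivariant analogue of the $R(T)$-basis statement, proved the same way via the filtration by Schubert varieties), and that the specialization is compatible with the completed tensor/product structure so that the identity $a^w_{u,v}(P)=\epsilon(d^w_{u,v}(P))$ survives passage through the infinite sums — but since for fixed $u,v$ only finitely many $w$ with $\ell(w)\le\ell(u)+\ell(v)$ can contribute (the product lies in the appropriate bounded part of the filtration), there is no genuine convergence issue and the argument goes through coefficient-by-coefficient. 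Once this comparison is in hand, the deduction of Theorem \ref{ordinary} from Theorem \ref{second} is immediate.
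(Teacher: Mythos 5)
Your proposal is correct and is essentially the paper's own argument: the paper deduces Theorem \ref{ordinary} from Theorem \ref{second} by applying the evaluation-at-$1$ base change $\Z\otimes_{R(T)}K^0_T(G/P)\to K^0(G/P)$, $1\otimes[\OO_{X^w_P}]\mapsto[\OO_{X^w_P}]$, so that $a^w_{u,v}(P)=\epsilon\bigl(d^w_{u,v}(P)\bigr)$ and the sign claim reduces to the non-negativity of the constant term of a polynomial in the $(e^{-\alpha_i}-1)$ with non-negative integer coefficients. The only slip is your parenthetical claim that just finitely many $w$ with $\ell(w)\le\ell(u)+\ell(v)$ can contribute (the paper explicitly allows infinitely many nonzero $d^w_{u,v}$ in the Kac--Moody setting), but this is harmless since both $K$-groups are direct products over $w\in W^P$ and the coefficient of each $[\OO_{X^w_P}]$ is specialized individually.
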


The following conjecture of Lam-Schilling-Shimozono \cite[Conjectures 7.20 (ii) and 7.21 (iii)]{LSS} is a special case of Theorem \ref{ordinary}: 

Let $G=\widehat{SL_N}$ be the affine Kac-Moody group associated to $SL_N$, and let $P$ be its standard maximal parabolic subgroup.  Let $\bar{\mathcal{X}}=G/P$ be the corresponding infinite Grassmannian.  Then, $K^0(\bar{\mathcal{X}})$ has the structure sheaf `basis' $\{[\OO_{X^u}]\}_{u\in W^P}$ over $\Z$, where $W$ is the (affine) Weyl group of $G$ and $W^P$ is the set of minimal coset representatives in $W/W_o$ ($W_o$ being the finite Weyl group of $SL_N$).  Write, for any $u,v\in W^P$,  $$[\OO_{X^u}]\cdot[\OO_{X^v}]=\sum_{w\in W^P}b_{u,v}^w[\OO_{X^w}],\,\,\,\text{for unique integers}\,\, b^w_{u,v}.$$  Then, the following is a special case of Theorem \ref{ordinary}: 

\begin{cor}[Conjectured by Lam-Schilling-Shimozono]
$$(-1)^{\ell(u)+\ell(v)+\ell(w)}b^w_{u,v}\in\Z_{\geq 0 }.$$ 
\end{cor}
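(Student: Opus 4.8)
The Corollary is an immediate instance of Theorem~\ref{ordinary}, so the plan begins by checking hypotheses: $G=\widehat{SL_N}$ is an untwisted affine, hence symmetrizable, Kac-Moody group, and the maximal parabolic $P$ in question is of finite type because its Levi subgroup is $SL_N$, which is finite dimensional. For this $G$ and $P$ we have $G/P=\bar{\mathcal X}$, the minimal coset representatives of $W/W_o$ are exactly the set $W^P$ of Theorem~\ref{ordinary}, the opposite Schubert varieties $X^u\subset\bar{\mathcal X}$ coincide with the $X^u_P$, and therefore $b^w_{u,v}=a^w_{u,v}(P)$. The sign assertion follows verbatim from Theorem~\ref{ordinary}. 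Thus the entire content rests on Theorem~\ref{first}, and the remainder of this plan sketches how I would prove that.

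For Theorem~\ref{first} I would adapt the argument of Anderson-Griffeth-Miller \cite{AGM} to the Kac-Moody geometry. The first step is to realize each $d^w_{u,v}$ as a $T$-equivariant sheaf Euler characteristic. Using the duality (Kashiwara, Kumar~\cite{K}) between the structure-sheaf `basis' $\{[\OO_{X^w}]\}$ of $K^0_T(\X)$ and the ideal-sheaf classes $[\mathcal I_{X_w}]=[\OO_{X_w}]-[\OO_{\partial X_w}]$ of the boundaries $\partial X_w=X_w\setminus C_w$ in the finite-dimensional thin Schubert varieties $X_w\subset X$, one obtains
\[
d^w_{u,v}=\chi_T\big(\X,\ [\OO_{X^u}]\cdot[\OO_{X^v}]\cdot[\mathcal I_{X_w}]\big).
\]
Since $X_w$ is compact, this class is supported on a finite-dimensional projective Richardson-type variety, so the Euler characteristic is a well-defined element of $R(T)$ and the whole computation descends to finite-dimensional subvarieties of $\X$.

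The second step makes the intersection transversal and identifies the sheaf being measured. After replacing $X^u$ and $X^v$ by generic $G^{\text{min}}$-translates (a Kleiman-type transversality statement for the $G^{\text{min}}$-action, followed by a limiting argument to stay inside $\X$), the intersection of these translates with $X_w$ becomes reduced, irreducible, normal, Cohen-Macaulay, and has rational singularities; I would extract these properties from Frobenius-splitting results for Kac-Moody Schubert and Richardson varieties, as developed in the finite case and in the first part of this series. Combining Serre duality, the description of $\partial X_w$ as (up to a nef twist) an anticanonical divisor of $X_w$, and a Kawamata-Viehweg-type vanishing theorem in this setting, one reduces $d^w_{u,v}$ to $(-1)^{\ell(w)+\ell(u)+\ell(v)}\,[H^0(Y,\mathcal F)]$ for a Richardson variety $Y$ and a coherent sheaf $\mathcal F$ on it with $H^{>0}(Y,\mathcal F)=0$. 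Finally, to upgrade `correct sign' to membership in $\Z_{\geq 0}[(e^{-\alpha_1}-1),\dots,(e^{-\alpha_r}-1)]$, I would track the $B$-equivariant structure of $H^0(Y,\mathcal F)$: filtering it by one-dimensional $B$-subquotients (or using a $B$-equivariant Bott-Samelson resolution) writes $(-1)^{\ell(w)+\ell(u)+\ell(v)}d^w_{u,v}$ as a non-negative combination of characters $e^{-\mu}$ with $\mu$ in the monoid generated by the simple roots, and expanding $e^{-\alpha_i}=1+(e^{-\alpha_i}-1)$ yields non-negative coefficients in the $(e^{-\alpha_i}-1)$.

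The main obstacle is the infinite-dimensionality of $\X$. Kleiman transversality, generic translates, Serre duality, and Kawamata-Viehweg vanishing are classically statements about finite-dimensional projective varieties, so the real work lies in pushing every step down to the finite-dimensional Richardson varieties $X_w\cap X^u$: checking that the sheaves in question are genuinely coherent and that Euler characteristics are well-defined and compatible with the pro-structure of $\X$, and establishing normality, rational singularities, and the relevant vanishing for Kac-Moody \emph{Richardson} varieties (not merely Schubert varieties) with full $T$-equivariance. Assembling this geometric package in Kac-Moody generality, rather than importing it from the finite case, is where the difficulty concentrates.
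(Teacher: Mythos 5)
Your deduction of the Corollary itself coincides with the paper's: check that $G=\widehat{SL_N}$ is symmetrizable and the standard maximal parabolic $P$ is of finite type, identify $b^w_{u,v}=a^w_{u,v}(P)$ under $G/P=\bar{\mathcal X}$, and invoke Theorem \ref{ordinary}. That reduction is correct and is exactly what the paper does. The issue is with the sketch you give of Theorem \ref{first}, which you correctly identify as the real content.

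The decisive gap is in your last step. You claim that $(-1)^{\ell(w)+\ell(u)+\ell(v)}d^w_{u,v}$ can be written, via a filtration of $H^0(Y,\mathcal F)$ by one-dimensional $B$-stable subquotients, as a non-negative combination of characters $e^{-\mu}$ with $\mu$ in the monoid generated by the simple roots, and then expanded in the $(e^{-\alpha_i}-1)$. Character-positivity of this kind is simply false, already for $G=SL_2$: there $d^s_{s,s}=1-e^{-\alpha}$, so $(-1)^{3\ell(s)}d^s_{s,s}=e^{-\alpha}-1$, which lies in $\Z_{\geq 0}[(e^{-\alpha}-1)]$ but is \emph{not} a non-negative sum of characters (the trivial character occurs with coefficient $-1$). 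Positivity in the variables $(e^{-\alpha_i}-1)$ is genuinely different from character-positivity, and no single equivariant section space can witness it. This is precisely why the paper introduces the mixing space $X_\PP$ over $\PP=(\PP^N)^r$: each coefficient $d^w_{u,v}(j)$ of $(e^{-\alpha_1}-1)^{j_1}\cdots(e^{-\alpha_r}-1)^{j_r}$ is identified (Lemma \ref{lemma1}) with $\pm c^w_{u,v}(j)$, an ordinary (non-equivariant) Euler characteristic on $\Y_\PP$ twisted by $\OO_{\PP_j}(-\partial\PP_j)$, and its sign is then fixed coefficient-by-coefficient by the Tor-vanishing and the single-degree cohomology vanishing of Theorem \ref{main}. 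Relatedly, your appeal to Kleiman transversality for generic $G^{\text{min}}$-translates does not apply here: there is no suitable transitive finite-dimensional group action on $\X$, and replacing $X^u$, $X^v$ by generic translates destroys the $T$-equivariance you need to keep track of $d^w_{u,v}\in R(T)$. The paper's substitute is the mixing group $\Gamma$ acting on $\Y_\PP$, whose orbit closures in $(X_w\times X_w)_\PP$ are the $(X_x\times X_y)_\PP$, combined with Sierra's homological transversality theorem (Theorem \ref{trans}) and the Tor-vanishing of Lemma \ref{tor1}, and the hard part of the argument (rational singularities of $\ZZ$, relative Kawamata--Viehweg, semicontinuity along $\pi:\ZZ\to\bar\Gamma$) takes place on the mixing-space side. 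Without this apparatus, or some other device that isolates each coefficient $d^w_{u,v}(j)$, your plan can at best produce a sign statement for a single Euler characteristic, not membership in $\Z_{\geq 0}[(e^{-\alpha_1}-1),\dots,(e^{-\alpha_r}-1)]$.
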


The proof of Theorem \ref{first} follows closely the work of Anderson-Griffeth-Miller \cite{AGM} and Kumar \cite{K}.  {\it Though several technical details had to be carefully addressed, e.g., Proposition \ref{ratlsing} proving the rational singularities of $\ZZ$.} Letting $K^T_0(X)$ denote the Grothendieck group of finitely supported $T$-equivariant coherent sheaves on $X$, there is a natural pairing $$\langle\text{ },\text{ }\rangle:K^0_T(\X)\otimes K^T_0(X)\to R(T),$$ coming from the $T$-equivariant Euler-Poincar\'{e} characteristic (cf. (\ref{pairing})).  Under this pairing, the bases $\{[\OO_{X^u}]\}$ of $K^0_T(\X)$ and $\{[\xi_w]\}$ of $K^T_0(X)$ are dual (cf. Proposition \ref{dual}), where $\xi_w:=\OO_{X_w}(-\partial X_w)$ and $\partial X_w:=X_w\setminus C_w$.  (By  Lemma \ref{xibasis}, $\xi_w$ is indeed a basis of  $K^T_0(X)$.)

Then, realizing the product structure constants in $K^0_T(\X)$ as the coproduct structure constants of the dual basis in $K^T_0(X)$ (cf. Lemma \ref{coprodlem}) allows the use of the above pairing and duality to express the structure constants in terms of certain cohomology groups.  Following \cite{AGM}, we introduce the `mixing space' $X_\PP$, which is a bundle over a product of projective spaces $\PP$ with fiber $X$.  This allows for the reduction from $T$-equivariant $K$-theory to non-equivariant $K$-theory.  Then, we introduce the `mixing group' $\Gamma$ (cf. Definition \ref{mixgrp}) whose action is sufficient to ensure a certain transverality needed to prove our main result.  

Theorem \ref{main} is our main technical result.  Part a) allows the structure constants to be realized as the Euler characteristic of a certain sheaf, while part b) shows that this sheaf has cohomology which is zero in all but a single term of known degree, immediately resulting in a determination of the sign of the Euler characteristic and hence the structure constants.  

Part a) of Theorem \ref{main} is proved in Section $6$.  The proof relies on some local $\Tor$ vanishing results which were proven in \cite{K}, as well as a reduction to finite dimensional schemes, where a transversality result due to Sierra is crucially used (cf. Theorem \ref{trans}).  

The proof of the more difficult part b) of Theorem \ref{main} is the content of Sections $7$ through $9$.  In Section $7$ we introduce the crucial scheme $\ZZ$ which comes with a projection to the mixing group $\Gamma$.  We also introduce a desingularization $\widetilde{\ZZ}$ of $\ZZ$ and prove that $\ZZ$
is irreducible, normal and  has rational singularities (cf. Propositions \ref{Z} and  \ref{ratlsing}). We further  introduce the divisor $\partial \ZZ \subset \ZZ$ which is shown to be Cohen-Macaulay (cf. Proposition \ref{dZCM}).  It is on the fibers of the projection $\pi:\ZZ\to \Gamma$ that the computation of the cohomology of the previously mentioned sheaf occurs.  

In Section $8$ the rational singularities of $\ZZ$ are used to apply a relative version of the Kawamata-Viehweg vanishing theorem (cf. Theorem \ref{KV}) to show that $R^i\pi_*\omega_{\ZZ}(\partial\ZZ)=0$ for all $i>0$ (cf. Corollary \ref{corR^i}).  Finally, in Section $9$, this vanishing of the higher direct images along with the semicontinuity theorem is used to prove vanishing of the relevant cohomology along the fibers of $\pi:\ZZ\to\Gamma$ 
(cf. Theorem \ref{thm8.2}) and thus conclude the proof of  part b) of Theorem \ref{main}.  \\

\textbf{Acknowledgements}.  The authors would like to thank D. Anderson for providing some clarifications and additional details regarding their paper \cite{AGM} and N. Mohan Kumar for several helpful correspondences.  This work was partially supported by NSF grant number DMS-1501094.

\section{Notation}
We work over the field $\C$ of complex numbers.  By a variety we mean an algebraic variety over $\C$ which is reduced, but not necessarily irreducible.  

Let $G$ be any symmetrizable Kac-Moody group over $\C$ completed along the negative roots (as opposed to completed along the positive roots as in \cite[Chapter 6]{Kbook}) and $G^{\text{min}}\subset G$ be the minimal Kac-Moody group as in \cite[\S7.4]{Kbook}.  Let $B$ be the standard (positive) Borel subgroup, $B^{-}$ the standard negative Borel subgroup, $H=B\cap B^{-}$ the standard maximal torus, and $W$ the Weyl group \cite[Chapter 6]{Kbook}.  Let $$\X=G/B$$ be the `thick' flag variety which contains the standard flag variety $X=G^{\text{min}}/B$.  

If $G$ is not of finite type, $\X$ is an infinite dimensional non quasi-compact scheme \cite[\S4]{Ka} and $X$ is an ind-projective variety \cite[\S7.1]{Kbook}.  The group $G^{\text{min}}$ and, in particular, the maximal torus $H$ acts on $\X$ and $X$.  

Let $T$ be the adjoint torus, i.e., $T:=H/Z(G^{\text{min}})$, where $Z(G^{\text{min}})$ denotes the center of $G^{\text{min}}$. (Recall that by \cite[Lemma 6.2.9(c)]{Kbook}, $Z(G^{\text{min}})=\{h\in H:e^{\alpha_i}(h)=1\text{ for all the simple roots }\alpha_i\}$.)  Then, the action of $H$ on $\X$ (and on $X$) descends to an action of $T$.  

For any $w\in W$ we have the Schubert cell $$C_w:=BwB/B\subset X ,$$ the Schubert variety $$X_w:=\overline{C_w}=\bigsqcup_{w'\leq w} C_{w'}\subset X ,$$ the opposite Schubert cell $$C^w:=B^{-}wB/B\subset \X ,$$ and the opposite Schubert variety $$X^w :=\overline{C^w}=\bigsqcup_{w'\geq w} C^{w'}\subset\X ,$$ all endowed with the reduced subscheme structures.  Then, $X_w$ is a (finite dimensional) irreducible projective subvariety of $X$ and $X^w$ is a finite codimensional irreducible subscheme of $\X$ \cite[\S7.1]{Kbook} and \cite[\S4]{Ka}.  We denote by $Z_w$ the Bott-Samelson-Demazure-Hansen (BSDH) variety as in \cite[\S7.1.3]{Kbook}, which is a $B$-equivariant desingularization of $X_w$ \cite[Proposition 7.1.15]{Kbook}.  Further, $X_w$ is normal, Cohen-Macaulay (CM for short) and has rational singularities \cite[Theorem 8.2.2]{Kbook}.  

We also define the boundary of the Schubert variety by $$\partial X_w:=X_w\setminus C_w$$ with the reduced subscheme structure.  Then, $\partial X_w$ is pure of codimension $1$ in $X_w$ and is CM (See the proof of Proposition \ref{dZCM}).  

For any $u\leq w$,  we have the \emph{Richardson variety} $$X^u_w:=X^u\cap X_w\subset X$$ endowed with the reduced subvariety structure.  By \cite[Proposition 6.6]{K}, $X^u_w$ is irreducible, normal and CM.  We denote by $Z^u_w$ the $T$-equivariant desingularization of $X^u_w$ as in \cite[Theorem 6.8]{K}.  By \cite[Theorem 3.1]{KSch}, $X^u_w$ has rational singularities (in fact it has Kawamata log terminal singularities).  

We denote the representation ring of $T$ by $R(T)$.  Let $\{\alpha_1,\ldots,\alpha_r\}\subset\mathfrak{h}^*$ be the set of simple roots, $\{\alpha^\vee_1,\ldots,\alpha^\vee_r\}\subset\mathfrak{h}$ the set of simple coroots, and $\{s_1,\ldots,s_r\}$ the corresponding set of simple reflections, where $\mathfrak{h}:=\text{Lie}(H)$.  Let $\rho\in\mathfrak{h}^*$ be any integral weight satisfying
$$\rho(\alpha^\vee_i)=1, \text{ for all } 1\leq i\leq r.$$
When $G$ is a finite dimensional semisimple group, $\rho$ is unique, but for a general Kac-Moody group $G$, it may not be unique.  

For any integral weight $\lambda$ let $\C_\lambda$ denote the one-dimensional representation of $H$ on $\C$ given by $h\cdot v=\lambda(h)v$ for $h\in H, v\in\C$.  By extending this action to $B$ we may define, for any integral weight $\lambda$, the $G$-equivariant line bundle $\mathcal{L}(\lambda)$ on $\X$ by $$\mathcal{L}(\lambda):=G\times^B \C_{-\lambda},$$ where for any representation $V$ of $B$, $G\times^B V:=(G\times V)/B$ where $B$ acts on $G\times V$ via $(g,v)\cdot b=(g b, b^{-1}v)$ for $g\in G, v\in V, b\in B$.  Then, $G\times^B V$ is the total space of a $G$-equivariant vector bundle over $X$, with projection given by $(g,v)B\mapsto gB$.  We also define the bundle $$e^{\lambda}:=\X\times \C_\lambda ,$$ which while trivial when viewed as a non-equivariant line bundle, is equivariantly non-trivial with the diagonal action of $H$.  

\section{The Grothendieck group}

For a quasi-compact scheme $Y$, an $\OO_Y$-module is called \emph{coherent} if it is finitely presented as an $\OO_Y$-module and any $\OO_Y$-submodule of finite type admits a finite presentation.  

A subset $S\subset W$ is called an \emph{ideal} if for all $x\in S$ and $y\leq x$ we have $y\in S$.  We say that an $\OO_{\X}$-module $\mathcal{S}$ is \emph{coherent} if $\mathcal{S}|_{V^S}$ is a coherent $\OO_{V^S}$-module for every finite ideal $S\subset W$, where $V^S$ is the quasi-compact open subset of $\bar{X}$ defined by $$V^S:=\bigcup_{w\in S}wU^{-}B/B,\,\,\,\text{where }U^{-}:=[B^{-1},B^{-1}].$$

Let $K^0_T(\X)$ denote the Grothendieck group of $T$-equivariant coherent $\OO_{\X}$-modules.  Since the coherence condition on $\mathcal{S}$ is imposed only for $\mathcal{S}|_{V^S}$ for finite ideals $S\subset W$, $K^0_T(\X)$ can be thought of as the inverse limit of $K^0_T(V^S)$ as $S$ varies over all finite ideals of $W$ (cf. \cite[\S2]{KS}).  

We define $$K^T_0(X):=\text{Limit}_{n\to\infty} K^T_0(X_n),$$ where $\{X_n\}_{n\geq 1}$ is the filtration of $X$ giving the ind-projective variety structure and $K^T_0(X_n)$ is the Grothendieck group of $T$-equivariant coherent sheaves on $X_n$.  

For any $u\in W$, $\OO_{X^u}$ is a coherent $\OO_{\X}$-module by \cite[\S2]{KS}.  From \cite[comment after Remark 2.4]{KS} we have:

\begin{lem}
\label{basis}  
$\{[\OO_{X^u}]\}$ forms a basis of $K^0_T(\X)$ as an $R(T)$-module (where we allow arbitrary infinite sums).  
\end{lem}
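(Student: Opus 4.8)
The plan is to descend to Kashiwara's finite approximations. Since $K^0_T(\X)=\varprojlim_S K^0_T(V^S)$, with $S$ ranging over the finite ideals of $W$ and the transition maps the restrictions $K^0_T(V^{S'})\to K^0_T(V^S)$ for $S\subseteq S'$, it suffices to show that for each finite ideal $S$ the module $K^0_T(V^S)$ is free over $R(T)$ on $\{[\OO_{X^u\cap V^S}]\}_{u\in S}$ and that restriction carries $[\OO_{X^u\cap V^{S'}}]$ to $[\OO_{X^u\cap V^S}]$, which vanishes exactly when $u\notin S$ (for then $X^u\cap V^S=\emptyset$, the $T$-fixed points of $V^S$ being $\{vB:v\in S\}$). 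Passing to the inverse limit of these free modules with their truncation maps then gives $K^0_T(\X)=\prod_{u\in W}R(T)[\OO_{X^u}]$, which is the assertion.

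So fix a finite ideal $S$. I would use that $V^S$ is a smooth quasi-compact scheme carrying a $T$-stable affine paving $V^S=\bigsqcup_{w\in S}(C^w\cap V^S)$ — each stratum $T$-equivariantly an affine space of finite codimension with a linear action, so $K^0_T(C^w\cap V^S)=R(T)[\OO_{C^w\cap V^S}]$ by homotopy invariance — with $\overline{C^w\cap V^S}=X^w\cap V^S=\bigsqcup_{w'\geq w,\,w'\in S}(C^{w'}\cap V^S)$; this is part of the construction of $\X$ (cf.\ \cite{Ka}, \cite{KS}). Order $S=\{w_1,\dots,w_N\}$ so that $w_i>w_j$ forces $i<j$; then $F_k:=\bigsqcup_{i\leq k}(C^{w_i}\cap V^S)$ is closed in $V^S$ (its closure introduces no new strata, by the displayed formula and the ordering), yielding $\emptyset=F_0\subset\cdots\subset F_N=V^S$ with $F_k\setminus F_{k-1}=C^{w_k}\cap V^S$. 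For spanning, run a descending induction on $k$ using the localization sequence of $T$-equivariant coherent $K$-theory for $C^{w_k}\cap V^S$ closed inside the open set $V^S\setminus F_{k-1}$:
$$K^0_T(C^{w_k}\cap V^S)\longrightarrow K^0_T(V^S\setminus F_{k-1})\longrightarrow K^0_T(V^S\setminus F_k)\longrightarrow 0.$$
Here $X^{w_k}\cap(V^S\setminus F_{k-1})=C^{w_k}\cap V^S$ since all lower strata $C^{w'}$ with $w'>w_k$ lie in $F_{k-1}$, so the image of the first group is $R(T)\cdot[\OO_{X^{w_k}\cap(V^S\setminus F_{k-1})}]$; lifting the inductively known generators $[\OO_{X^{w_i}\cap(V^S\setminus F_k)}]$ ($i>k$) to $[\OO_{X^{w_i}\cap(V^S\setminus F_{k-1})}]$ and using right-exactness, one finds $K^0_T(V^S\setminus F_{k-1})$ spanned by $\{[\OO_{X^{w_i}\cap(V^S\setminus F_{k-1})}]\}_{i\geq k}$; at $k=1$ this says $\{[\OO_{X^u\cap V^S}]\}_{u\in S}$ spans $K^0_T(V^S)$. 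Note only the surjective part of the localization sequence is used.

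For linear independence, suppose $\sum_{u}c_u[\OO_{X^u}]=0$ in $K^0_T(\X)$ with some $c_u\neq0$; since Bruhat intervals are finite I may pick $u_0$ in the support that is minimal in Bruhat order, so $c_{u_0}\neq0$ and $c_u=0$ for all $u<u_0$. Set $S=\{u:u\leq u_0\}$ and restrict the relation to $V^S$: only the term $u=u_0$ survives, so $c_{u_0}[\OO_{X^{u_0}\cap V^S}]=0$, and here $X^{u_0}\cap V^S=C^{u_0}\cap V^S$ (the only $w\geq u_0$ in $S$ is $u_0$), a smooth $T$-stable subvariety of $V^S$ that is closed and contains the smooth point $u_0B$. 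Pulling back along $\iota\colon\{u_0B\}\hookrightarrow V^S$ gives $c_{u_0}\cdot\iota^*[\OO_{C^{u_0}\cap V^S}]=0$ in $R(T)$. Since $C^{u_0}\cap V^S$ is cut out near $u_0B$ by a regular sequence of $T$-eigenfunctions whose weights are (nonzero) roots, its Koszul resolution gives $\iota^*[\OO_{C^{u_0}\cap V^S}]=\prod_j(1-e^{\mu_j})$ with each $\mu_j$ a nonzero root; this is a nonzerodivisor in the integral domain $R(T)$ (the group ring of the character lattice of $T$, in which $e^{\mu}\neq1$ for every nonzero root $\mu$). Hence $c_{u_0}=0$, a contradiction.

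I expect the one genuinely non-formal ingredient to be the foundational input: that for Kashiwara's infinite-dimensional thick flag scheme each $V^S$ is smooth and admits the $T$-stable affine paving with the stated closure relations, and that homotopy invariance $K^0_T(\A^\infty)=R(T)$ holds for the relevant category of coherent sheaves. These are precisely the facts that \cite{Ka} and \cite{KS} are set up to provide, so I would import them rather than reprove them; everything else — the localization sequences, the pull-back to a $T$-fixed point, and the inverse-limit step — is then routine. (Once the pairing of Proposition \ref{dual} is available, linear independence is of course also immediate, but I avoid relying on it here since the present lemma is used earlier.)
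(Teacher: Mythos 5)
The paper offers no proof of this lemma: it is quoted directly from \cite[comment after Remark 2.4]{KS}, so the only comparison available is with that source, not with an in-paper argument. Your reconstruction --- prove that $K^0_T(V^S)$ is free over $R(T)$ on $\{[\OO_{X^u\cap V^S}]\}_{u\in S}$ using the stratification of $V^S$ by the opposite cells $C^u$, $u\in S$ (note $C^u\subset uU^-B/B\subset V^S$, so the strata are the full cells), check that the restriction maps act as truncations, and pass to the inverse limit --- has the right shape and is in the spirit of what Kashiwara--Shimozono actually do. The combinatorial bookkeeping is correct: the ordering makes each $F_k$ closed, $X^{w_k}\cap(V^S\setminus F_{k-1})=C^{w_k}\cap V^S$, $X^u\cap V^S=\emptyset$ exactly when $u\notin S$, and after restricting to $V^{S'}$ with $S'=\{u:u\le u_0\}$ only the $u_0$-term survives.

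The caveat is that the steps you label routine are where the content lies, and one of them has a genuine hole as written. First, $V^S$ is quasi-compact but infinite-dimensional and non-Noetherian, so the localization sequence for $T$-equivariant coherent $K$-theory (extension of coherent sheaves from $V^S\setminus F_k$ to $V^S\setminus F_{k-1}$, and the d\'evissage identifying the kernel with classes supported on the closed stratum) and homotopy invariance for the infinite-dimensional affine strata are not formal; they rest on the structure theory of coherent sheaves on $\X$ from \cite{Ka} and \cite{KS} (e.g.\ Lemma \ref{freeres}, i.e.\ \cite[Lemma 8.1]{KS}) and must be imported explicitly, exactly as the paper's citation does wholesale. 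Second, and more seriously, your independence step applies $\iota^*$ (pullback to the fixed point $u_0B$) to a relation in the whole group: since $u_0B$ has infinite codimension, such a pullback is not automatically defined on coherent-sheaf $K$-theory, and the Koszul resolution you invoke only computes $\iota^*$ on the single class $[\OO_{C^{u_0}}]$, it does not make $\iota^*$ a homomorphism on $K^0_T(V^{S'})$. This is repairable: restrict the relation to the open cell $V^{u_0}=u_0U^-B/B$ and use the finite free resolutions of Lemma \ref{freeres} to define $\iota^*$ there; your computation $\iota^*[\OO_{C^{u_0}}]=\prod_j(1-e^{\mu_j})\neq 0$ in the domain $R(T)$ then finishes the argument. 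Finally, a structural point: surjectivity of $\prod_u R(T)$ onto the inverse limit needs uniqueness of coefficients at each finite level (so that they are compatible as $S$ grows); spanning alone does not give this, but your independence argument runs verbatim inside each $V^S$, so this is only a matter of rephrasing the per-level statement as freeness rather than proving independence only in $K^0_T(\X)$.
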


\begin{lem}
\label{basis2}  
$\{[\OO_{X_w}]\}$ forms a basis of $K^T_0(X)$ as an $R(T)$-module.  
\end{lem}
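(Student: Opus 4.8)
\textit{Plan.} The proof has two halves: a spanning statement, proved by localization along the cellular structure of Schubert varieties, and a linear-independence statement, proved by pairing against the opposite Schubert classes in $K^0_T(\X)$. First I would reduce to the projective stages. The ind-variety $X$ is the increasing union of the closed $T$-stable projective subvarieties $X_S:=\bigcup_{w\in S}X_w$, where $S$ ranges over the \emph{finite} ideals of $(W,\le)$; these are cofinal among the $X_n$, so $K^T_0(X)=\text{Limit}_{S}\,K^T_0(X_S)$, with transition maps the proper push-forwards along the closed immersions $X_S\hookrightarrow X_{S'}$, each carrying $[\OO_{X_w}]$ to $[\OO_{X_w}]$. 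Hence it suffices to show: (i) for every finite ideal $S$ the classes $\{[\OO_{X_w}]\}_{w\in S}$ generate $K^T_0(X_S)$ over $R(T)$; and (ii) the family $\{[\OO_{X_w}]\}_{w\in W}$ is $R(T)$-linearly independent in $K^T_0(X)$ (every relation being finite, since by (i) each element of $K^T_0(X)$ is a finite $R(T)$-combination of the $[\OO_{X_w}]$). Given (i) and (ii), the compatibility of the transition maps shows $\{[\OO_{X_w}]\}_{w\in W}$ is an $R(T)$-basis.

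For (i) I would induct on $|S|$; the case $S=\{e\}$, where $X_S$ is a point, is clear. Otherwise choose $w$ maximal in $S$, so that $S':=S\setminus\{w\}$ is again an ideal, $X_{S'}\subset X_S$ is closed and $T$-stable, and its open complement is $X_w\setminus\partial X_w=C_w$, which is $T$-equivariantly an affine space $\A^{\ell(w)}$ with a linear $T$-action. The right-exact localization sequence in $T$-equivariant $K$-theory of coherent sheaves,
$$K^T_0(X_{S'})\xrightarrow{\ \iota_*\ }K^T_0(X_S)\xrightarrow{\ \jmath^*\ }K^T_0(C_w)\to 0,$$
together with the homotopy invariance $K^T_0(C_w)=R(T)\cdot[\OO_{C_w}]$ and the identity $\jmath^*[\OO_{X_w}]=[\OO_{C_w}]$ (as $C_w$ is open and dense in the reduced variety $X_w$), shows that $K^T_0(X_S)$ is generated by $[\OO_{X_w}]$ together with the image of $K^T_0(X_{S'})$; by the inductive hypothesis this image is the $R(T)$-span of $\{[\OO_{X_{w'}}]\}_{w'\in S'}$, completing (i).

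For (ii) I would use the $T$-equivariant Euler-Poincar\'{e} pairing $\langle\ ,\ \rangle\colon K^0_T(\X)\otimes K^T_0(X)\to R(T)$ of the Introduction. Suppose $\sum_{w\in S}c_w[\OO_{X_w}]=0$ in $K^T_0(X)$, with $c_w\in R(T)$ and $S$ a finite ideal. Pairing with $[\OO_{X^u}]$ for each $u\in S$ gives $\sum_{w\in S}c_w\,\langle[\OO_{X^u}],[\OO_{X_w}]\rangle=0$. Here $\langle[\OO_{X^u}],[\OO_{X_w}]\rangle=\chi\bigl(\X,\ \OO_{X^u}\otimes^{L}_{\OO_{\X}}\OO_{X_w}\bigr)$, and by the local $\Tor$-vanishing for a Schubert and an opposite Schubert variety proved in \cite{K} — namely $\Tor^{\OO_{\X}}_i(\OO_{X^u},\OO_{X_w})=0$ for $i>0$, with $\Tor_0$ the reduced intersection $\OO_{X^u_w}$ — this equals $\chi(X^u_w,\OO_{X^u_w})$. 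It is $0$ when $u\not\le w$ (the Richardson variety is then empty) and $1$ when $u\le w$: in that case $X^u_w$ is a nonempty connected reduced projective variety with rational singularities (\cite{K}, \cite{KSch}), so $H^0(X^u_w,\OO_{X^u_w})=\C$ with trivial $T$-action while $H^i(X^u_w,\OO_{X^u_w})=0$ for all $i>0$. Hence $\sum_{w\in S,\,w\ge u}c_w=0$ for every $u\in S$; peeling off a Bruhat-maximal element of $S$ and inducting on $|S|$ (i.e.\ M\"obius inversion over the poset $S$) forces every $c_w=0$. Together with (i), this proves the lemma.

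The main obstacle is step (ii), and inside it the evaluation $\langle[\OO_{X^u}],[\OO_{X_w}]\rangle=\chi(X^u_w,\OO_{X^u_w})$, equal to $1$ for $u\le w$ and $0$ otherwise. Its two ingredients are the higher $\Tor$-vanishing $\Tor^{\OO_{\X}}_i(\OO_{X^u},\OO_{X_w})=0$ ($i>0$) with reduced scheme-theoretic intersection the Richardson variety $X^u_w$, taken from \cite{K}, and the cohomological triviality $H^i(X^u_w,\OO_{X^u_w})=0$ ($i>0$), which uses the rational singularities of $X^u_w$ (\cite{KSch}) together with the vanishing of the higher structure-sheaf cohomology of its desingularization $Z^u_w$. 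A self-contained alternative for (ii) would be to show that the localization sequences in (i) are short exact — the ``equivariant cellular fibration'' property, i.e.\ injectivity of $\iota_*$ — which would give freeness directly; but this is of comparable difficulty, since although $\jmath^*$ manifestly splits (lift $[\OO_{C_w}]$ to $[\OO_{X_w}]$), the injectivity of $\iota_*$ again requires a duality or a higher-$K$-theory input.
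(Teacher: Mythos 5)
Your proposal is correct, but it takes a different route from the paper: the paper disposes of this lemma in one line, citing the cellular fibration machinery of Chriss--Ginzburg \cite[\S5.2.14 and Theorem 5.4.17]{CG}, which yields freeness of $K^T_0$ for a $T$-variety filtered by affine cells directly (in effect, it shows the localization sequences you write down are split short exact). Your spanning step (i) is essentially a re-derivation of that cellular argument (localization sequence plus equivariant homotopy invariance for the linearizable $T$-action on $C_w\simeq\A^{\ell(w)}$, and cofinality of the subvarieties $X_S$ among the $X_n$), while your independence step (ii) replaces the cellular fibration theorem by a duality computation: pairing against $[\OO_{X^u}]$ via the Euler--Poincar\'e pairing, using the $\Tor$-vanishing $\Tor_i^{\OO_{\X}}(\OO_{X^u},\OO_{X_w})=0$ for $i>0$ with reduced intersection the Richardson variety (\cite{K}, \cite{KSch}) and the vanishing $H^i(X^u_w,\OO_{X^u_w})=0$ for $i>0$, to get the unitriangular matrix $\langle[\OO_{X^u}],[\OO_{X_w}]\rangle=1$ for $u\leq w$ and $0$ otherwise. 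This is legitimate and not circular (those inputs are external to the lemma and are in any case invoked later in the paper, in the proof of Proposition \ref{dual}, of which your step (ii) is essentially a coarser version); what it costs is importing nontrivial geometry (Frobenius splitting/reducedness of $X^u\cap X_w$, rational singularities of Richardson varieties) at a point where the paper gets by with a general-position-free, purely $K$-theoretic citation. What it buys is a self-contained argument that simultaneously previews the duality used throughout Section 3.
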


\begin{proof}
This follows from \cite[\S5.2.14 and Theorem 5.4.17]{CG}.
\end{proof}

The following lemma is due to Kashiwara-Shimozono \cite[Lemma 8.1]{KS}.  

\begin{lem}
\label{freeres}
Any $T$-equivariant coherent sheaf $\mathcal{S}$ on $V^u$ admits a free resolution in $\text{Coh}_T(\OO_{V^u})$: $$0\to S_n\otimes \OO_{V^u}\to\dots\to S_1\otimes\OO_{V^u}\to S_0\otimes\OO_{V^u}\to\mathcal{S}\to 0 ,$$ where $S_k$ are finite dimensional $T$-modules, $V^u:=uU^{-}B/B\subset \X$, and $\text{Coh}_T(\OO_{V^u})$ denotes the abelian category of $T$-equivariant coherent $\OO_{V^u}$-modules.
\end{lem}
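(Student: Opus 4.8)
The plan is to reduce the statement, which a priori concerns a ring in infinitely many variables, to the classical Hilbert syzygy theorem over a polynomial ring in finitely many variables, performed $T$-equivariantly. First I would describe $V^u$ concretely. As a scheme, $V^u=uU^{-}B/B$ is $T$-stable and isomorphic to the pro-unipotent group $U^{-}$, which is a projective limit of finite-dimensional affine spaces; hence its coordinate ring $A:=\OO(V^u)$ is a polynomial ring $\C[x_1,x_2,\dots]$ in countably many variables, and, the $H$-action being compatible with the group structure, the $x_i$ may be taken to be $H$- (hence $T$-) weight vectors (cf. \cite[\S7.4]{Kbook}, \cite[\S4]{Ka}). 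A polynomial ring over a field is a coherent ring, so finite free $A$-modules lie in $\mathrm{Coh}_T(\OO_{V^u})$. Since $V^u$ is affine, a $T$-equivariant coherent sheaf $\mathcal{S}$ on $V^u$ is the same datum as a $T$-equivariant coherent $A$-module $M$, i.e., a coherent $A$-module equipped with a grading by the character lattice of $T$ compatible with the weight grading of $A$; in particular $M$ is finitely presented over $A$. It thus suffices to build a finite $T$-equivariant free resolution $0\to T_n\otimes_{\C}A\to\dots\to T_0\otimes_{\C}A\to M\to 0$ with each $T_k$ a finite-dimensional $T$-module, and then put $S_k:=T_k$.

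The crucial point is that $M$ involves only finitely many of the $x_i$. Since $M$ is finitely presented and $T$ is linearly reductive (a rational $T$-module decomposes as a direct sum of characters), there is a $T$-equivariant presentation $\C^a\otimes_{\C}A\xrightarrow{\ \phi\ }\C^b\otimes_{\C}A\to M\to 0$ with $\C^a,\C^b$ finite-dimensional $T$-modules. The matrix of $\phi$ has finitely many entries, each a polynomial in finitely many of the $x_i$; renumbering, say these are $x_1,\dots,x_N$. Set $A_0:=\C[x_1,\dots,x_N]\subseteq A$, a $T$-stable polynomial subring, and $M_0:=\mathrm{coker}\bigl(\C^a\otimes_{\C}A_0\xrightarrow{\phi_0}\C^b\otimes_{\C}A_0\bigr)$, where $\phi_0$ is $\phi$ read in $A_0$. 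Then $M_0$ is a finitely generated $T$-equivariant $A_0$-module and, by right exactness of the tensor product, $M_0\otimes_{A_0}A\cong M$. Moreover $A=A_0[x_{N+1},x_{N+2},\dots]$ is a polynomial ring over $A_0$, hence free, in particular flat, as an $A_0$-module.

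It remains to resolve $M_0$ over the finitely generated polynomial ring $A_0$ and lift. One constructs a $T$-equivariant free resolution of $M_0$ step by step: a finitely generated $T$-equivariant $A_0$-module $N$ admits a $T$-equivariant surjection $V\otimes_{\C}A_0\twoheadrightarrow N$, where $V$ is the (finite-dimensional, $T$-stable) span of a finite generating set, and the kernel is again finitely generated and $T$-equivariant because $A_0$ is Noetherian. As $\mathrm{gl.dim}\,A_0=N<\infty$ by Hilbert's syzygy theorem, after at most $N$ steps the syzygy is projective; being a finitely generated $T$-equivariant projective module over $A_0=\C[x_1,\dots,x_N]$, it is $T$-equivariantly free --- by the equivariant Quillen--Suslin theorem for the torus $T$ acting linearly on $\mathbb{A}^N$, or, more elementarily, because the weight grading of $A_0$ is connected and hence graded projective modules over $A_0$ are graded free. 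This gives a finite $T$-equivariant free resolution $0\to T_n\otimes_{\C}A_0\to\dots\to T_0\otimes_{\C}A_0\to M_0\to 0$ with the $T_k$ finite-dimensional. Applying $-\otimes_{A_0}A$, which is exact by flatness and satisfies $(T_k\otimes_{\C}A_0)\otimes_{A_0}A=T_k\otimes_{\C}A$ together with $M_0\otimes_{A_0}A\cong M$, yields the sought resolution of $M$, and hence of $\mathcal{S}$.

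The real content is the reduction to finitely many variables in the second paragraph; after that, the argument is just the classical syzygy theorem. The one step calling for care is maintaining $T$-equivariance throughout, in particular the $T$-equivariant freeness of a finitely generated $T$-equivariant projective $A_0$-module, but this is standard given the linear reductivity of $T$, and I do not expect any genuine obstacle.
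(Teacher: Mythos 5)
The paper itself gives no argument for this lemma: it is quoted verbatim from Kashiwara--Shimozono [KS, Lemma 8.1]. Your proof is a reasonable reconstruction along the same lines as theirs: identify $V^u\simeq U^-$ with an infinite-dimensional affine space whose coordinate ring is a polynomial ring on countably many $T$-weight vectors (this is exactly Kashiwara's description of the big cells of $\X$), observe that a finitely presented equivariant module is pulled back from a finite-dimensional $T$-stable coordinate subspace because its presentation matrix involves only finitely many variables, resolve equivariantly over the Noetherian subring $A_0$ using Hilbert's syzygy theorem, and base-change along the free extension $A_0\subset A$. The reduction step, the equivariant presentation (via rationality of the $T$-action on sections and linear reductivity of $T$), the flatness of $A$ over $A_0$, and the coherence of the infinite-variable polynomial ring are all handled correctly.

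The one step you state too loosely is the last one: the $T$-equivariant freeness of the $N$-th syzygy. Your ``more elementary'' justification, that ``the weight grading of $A_0$ is connected and hence graded projective modules are graded free,'' is not an argument as it stands: for an arbitrary torus action on $\mathbb{A}^N$ (e.g.\ the hyperbolic $\C^*$-action on $\A^2$ with weights $+1,-1$) the multigraded ring is not graded-local and graded Nakayama does not apply, so graded projective need not obviously be graded free. What rescues the graded route here is a fact you did not verify: the $T$-weights on $\OO(V^u)$ lie in $u(\Delta^+)$ (the coordinate functions on $uU^-B/B$ have weights $-u\beta$, $\beta\in\Delta^-$), and $u(\Delta^+)$ pairs strictly positively with the $u$-translate of a regular dominant coweight; composing the character-lattice grading with this linear functional makes $A_0$ positively graded with degree-zero part $\C$, and then graded Nakayama does give that finitely generated multigraded projectives are multigraded free (equivalently, one can take a minimal multigraded resolution, which terminates by the syzygy theorem). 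Your alternative appeal to the equivariant Quillen--Suslin theorem for diagonalizable groups (Masuda--Moser-Jauslin--Petrie) is legitimate and makes the proof complete as written, but it is a much heavier input than the positivity observation, which is the point the cited Kashiwara--Shimozono argument exploits. With either repair spelled out, your proof is correct.
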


We define a pairing 
\begin{equation}
\label{pairing}
\langle\text{ },\text{ }\rangle:K^0_T(\X)\otimes K^T_0(X)\to R(T),
\end{equation}
$$\langle[\mathcal{S}],[\mathcal{F}]\rangle=\sum_i(-1)^i\chi_T(X_n,\Tor^{\OO_{\X}}_i(\mathcal{S},\mathcal{F})),$$ where $\mathcal{S}$ is a $T$-equivariant coherent sheaf on $\X$ and $\mathcal{F}$ is a $T$-equivariant coherent sheaf on $X$ supported on $X_n$ for some $n$, where $\chi_T$ represents the $T$-equivariant Euler-Poincar\'{e} characteristic.  By \cite[Lemma 3.5]{K} this is well defined.  

\begin{defn}
\label{xidef}
We define $\xi_w$ to be the ideal sheaf of $\partial X_w$ in $X_w$,  where $\partial X_w$ is given the reduced subscheme structure: $$\xi_w:=\OO_{X_w}(-\partial X_w).$$
\end{defn}

\begin{lem}
\label{xibasis}
$\{[\xi_w]\}$ forms a basis of $K^T_0(X)$ as an $R(T)$-module.
\end{lem}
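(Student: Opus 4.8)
The plan is to show that the transition matrix between the two families $\{[\OO_{X_w}]\}$ and $\{[\xi_w]\}$ is unitriangular with respect to the Bruhat order, which by Lemma \ref{basis2} immediately gives the claim. First I would note that $\partial X_w = X_w \setminus C_w = \bigsqcup_{w' < w} C_{w'} = \bigcup_{w' \lessdot w} X_{w'}$, the union of the codimension-one Schubert subvarieties of $X_w$. Hence the short exact sequence of sheaves on $X_w$
\begin{equation*}
0 \to \xi_w = \OO_{X_w}(-\partial X_w) \to \OO_{X_w} \to \OO_{\partial X_w} \to 0
\end{equation*}
holds $T$-equivariantly, and pushing forward to $X$ (a closed immersion, hence exact on coherent sheaves) gives $[\xi_w] = [\OO_{X_w}] - [\OO_{\partial X_w}]$ in $K^T_0(X)$.

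Next I would compute $[\OO_{\partial X_w}]$ in the basis $\{[\OO_{X_{w'}}]\}$. Since $\partial X_w$ is a union of Schubert varieties of dimension $\ell(w)-1$, an inclusion–exclusion (Mayer–Vietoris) argument on the reduced union $\bigcup_{w' \lessdot w} X_{w'}$, together with the fact that all intersections $X_{w'} \cap X_{w''} \cap \cdots$ are again reduced unions of Schubert varieties (each of strictly smaller dimension), expresses $[\OO_{\partial X_w}]$ as a $\Z$-linear (indeed $R(T)$-linear, but here $\Z$) combination of $[\OO_{X_{w'}}]$ with $w' < w$ strictly. The exactness of the relevant Mayer–Vietoris sequences follows from the normality and Cohen–Macaulayness of Schubert varieties and their intersections (Richardson-type varieties), as recorded in the Notation section. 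Consequently
\begin{equation*}
[\xi_w] = [\OO_{X_w}] + \sum_{w' < w} c_{w,w'} [\OO_{X_{w'}}], \qquad c_{w,w'} \in \Z \subset R(T),
\end{equation*}
so the change-of-basis matrix from $\{[\OO_{X_w}]\}$ to $\{[\xi_w]\}$ is upper unitriangular for the Bruhat order (restricted to any finite ideal, it is a genuine finite unitriangular matrix, hence invertible over $R(T)$). Therefore $\{[\xi_w]\}$ is also an $R(T)$-basis of $K^T_0(X)$.

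The main obstacle I anticipate is making the inclusion–exclusion rigorous at the level of $K$-theory classes of the (possibly singular, reducible) scheme $\partial X_w$: one needs that all the pairwise and higher intersections of the boundary divisors $X_{w'}$, $w' \lessdot w$, are reduced and that the associated Mayer–Vietoris complexes of structure sheaves are exact. This is where I would invoke the known geometry of Schubert and Richardson varieties (reducedness of intersections of Schubert varieties, normality, Cohen–Macaulayness), and possibly instead argue more cleanly by induction on $\ell(w)$: the class $[\OO_{\partial X_w}]$ lies in the span of $\{[\OO_{X_{w'}}] : w' < w\}$ simply because $\partial X_w$ is set-theoretically supported on $\bigcup_{w' < w} X_{w'}$ and any coherent sheaf supported on a finite union of Schubert varieties has class in the span of those $[\OO_{X_{w'}}]$ — a filtration argument using Lemma \ref{basis2} applied to the relevant $X_n$. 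Either route yields strict unitriangularity and hence the lemma.
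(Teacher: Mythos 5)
Your proposal is correct and follows essentially the same route as the paper: the paper's proof is exactly the observation that $[\xi_w]=[\OO_{X_w}]+\sum_{w'<w}r_{w'}[\OO_{X_{w'}}]$ (from the ideal-sheaf exact sequence and the fact that $\OO_{\partial X_w}$ is supported on smaller Schubert varieties), so the transition matrix from $\{[\OO_{X_w}]\}$ is unitriangular and Lemma \ref{basis2} finishes the argument. Your Mayer--Vietoris refinement (showing the coefficients are integers) is more than is needed; the simpler support/filtration argument you mention at the end is all the paper uses.
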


\begin{proof}
This follows since $[\xi_w]=[\OO_{X_w}]+\sum_{w'< w}r_{w'}[\OO_{X_{w'}}]$,  for some $r_{w'}\in R(T)$ and the fact that $[\OO_{X_w}]$ is a basis of 
$K^T_0(X)$ (cf. Lemma \ref{basis2}).
\end{proof}

\begin{prop}
\label{xiomega}
$\omega_{X_w}=e^{-\rho}\LL(-\rho)\xi_w$, where $\omega_{X_w}$ is the dualizing sheaf of $X_w$.
\end{prop}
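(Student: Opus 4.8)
The plan is to identify the dualizing sheaf of the Schubert variety $X_w$ by pulling back to the BSDH resolution $Z_w$ and pushing forward, using the fact that $X_w$ is normal, Cohen-Macaulay with rational singularities (cited in Section 2). First I would recall the well-known formula for the canonical bundle of the BSDH variety: if $Z_w \to X_w$ is the Bott--Samelson--Demazure--Hansen desingularization associated to a reduced word for $w$, then $\omega_{Z_w}$ is expressed in terms of the tautological line bundles on $Z_w$ together with the (pullback of the) boundary divisor $\partial Z_w$, the union of the codimension-one BSDH subvarieties. Concretely, one has $\omega_{Z_w} = \mathcal{L}_{Z_w}(-\rho)\otimes \OO_{Z_w}(-\partial Z_w)$ up to the appropriate equivariant twist $e^{-\rho}$, where $\mathcal{L}_{Z_w}(-\rho)$ denotes the pullback of $\LL(-\rho)$; this is the standard computation of the canonical bundle of an iterated $\PP^1$-bundle (see \cite[Chapter 8]{Kbook}).

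Next I would push this down along $\psi\colon Z_w \to X_w$. Since $X_w$ has rational singularities and is Cohen--Macaulay, the dualizing sheaf satisfies $\omega_{X_w} = \psi_*\omega_{Z_w}$ (this is the defining property of rational singularities for a proper birational morphism from a smooth variety, combined with Grothendieck duality). The line bundle $\LL(-\rho)$ is already defined on $\X$, so its pullback to $Z_w$ is $\psi^*(\LL(-\rho)|_{X_w})$; by the projection formula, $\psi_*\bigl(\psi^*(\LL(-\rho))\otimes \OO_{Z_w}(-\partial Z_w)\bigr) = \LL(-\rho)\otimes \psi_*\OO_{Z_w}(-\partial Z_w)$. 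The equivariant twist $e^{-\rho}$ pulls out similarly. Thus the proposition reduces to the identity $\psi_*\OO_{Z_w}(-\partial Z_w) = \xi_w = \OO_{X_w}(-\partial X_w)$.

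To establish this last identity, I would use that $\psi$ maps $\partial Z_w$ onto $\partial X_w$ and is an isomorphism over the open cell $C_w = X_w \setminus \partial X_w$; in fact $\psi^{-1}(\partial X_w) = \partial Z_w$ set-theoretically, and since $\partial X_w$ is reduced of pure codimension one (stated in Section 2) and $X_w$ is normal, one checks that the scheme-theoretic preimage agrees with $\partial Z_w$ as divisors, or at least that $\psi_*\OO_{Z_w}(-\partial Z_w)$ is the ideal sheaf of $\partial X_w$. The containment $\psi_*\OO_{Z_w}(-\partial Z_w)\subseteq \OO_{X_w}(-\partial X_w)$ is clear since a section vanishing on $\partial Z_w$ descends to a section vanishing on the image; the reverse containment follows from normality of $X_w$ together with $\psi_*\OO_{Z_w} = \OO_{X_w}$ (again a consequence of rational singularities and normality), by a standard argument that any rational function on $X_w$ regular off $\partial X_w$ and pulling back to a section of $\OO_{Z_w}(-\partial Z_w)$ is already a section of the ideal sheaf. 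Alternatively, one invokes directly \cite[Chapter 8]{Kbook} where $\psi_*\OO_{Z_w}(-\partial Z_w) = \OO_{X_w}(-\partial X_w)$ is proved in the course of establishing the normality and vanishing theorems for Schubert varieties.

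The main obstacle I anticipate is the bookkeeping of the equivariant twist and the precise normalization of $\rho$: since $\rho$ is not unique in the Kac--Moody setting, one must check that $e^{-\rho}\LL(-\rho)$ is independent of the choice of $\rho$ when restricted to $X_w$ (two choices of $\rho$ differ by a character trivial on the relevant coroots, hence the twist by $e^{-\rho}\LL(-\rho)$ is canonical), and that the canonical bundle formula for $Z_w$ is stated with exactly this combination. The geometric content — the canonical bundle of an iterated $\PP^1$-bundle and the descent via rational singularities — is entirely standard; the care lies in matching conventions with \cite[Chapter 8]{Kbook}.
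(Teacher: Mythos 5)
Your argument is correct and is essentially the proof the paper relies on: the paper's own proof is just a citation of \cite[Proposition 2.2]{GK} (with the remark that the same proof works for general Kac--Moody groups), and that proof is precisely the route you describe — the canonical bundle formula for the BSDH resolution $\psi\colon Z_w\to X_w$, pushed forward using $\psi_*\omega_{Z_w}=\omega_{X_w}$ (CM plus rational singularities) and $\psi_*\OO_{Z_w}(-\partial Z_w)=\OO_{X_w}(-\partial X_w)$, with the $e^{-\rho}$ twist tracking the equivariant structure.
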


\begin{proof}
This follows from \cite[Proposition 2.2]{GK} since the same proof works for general Kac-Moody groups.
\end{proof}

From \cite[Lemma 5.5]{K} we have the following result:

\begin{lem}
\label{tor1}
For any $u,w\in W$, we have $$\Tor_i^{\OO_{\X}}(\OO_{X^u},\OO_{X_w})=0,\text{ }\forall i>0 .$$
\end{lem}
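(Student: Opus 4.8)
The plan is to reduce the statement to a standard fact in local algebra: the higher $\Tor$ sheaves of two Cohen--Macaulay subschemes of a smooth variety vanish when the subschemes meet in the expected dimension.

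First note that $\Tor_i^{\OO_{\X}}(\OO_{X^u},\OO_{X_w})$ is a coherent sheaf supported on $X^u\cap X_w$. If $u\not\leq w$ this set is empty and the vanishing is immediate, so assume $u\leq w$; then $X^u\cap X_w=X^u_w$ is the Richardson variety, which is irreducible of dimension $\ell(w)-\ell(u)$. Since $\Tor$ is local and $X_w$ is quasi-compact, it suffices to prove the vanishing over each affine chart $V^v=vU^-B/B$ meeting $X_w$. On such a chart, as in \cite[\S2]{KS}, the relevant neighborhood in $\X$ is a limit of finite-dimensional smooth schemes $\X_n$ with flat transition maps, in which $X^u$ is the pullback of a Cohen--Macaulay subscheme $X^u_n$ of codimension $\ell(u)$, while $X_w$ already lives at a finite level; hence $\Tor_i^{\OO_{\X}}(\OO_{X^u},\OO_{X_w})$ is identified with $\Tor_i^{\OO_{\X_n}}(\OO_{X^u_n},\OO_{X_w})$, and we are reduced to a finite-dimensional situation. (Equivalently, on $V^u$ one can use the finite free resolution of $\OO_{X^u}$ from Lemma \ref{freeres}, which has length $\ell(u)$ once one uses that $X^u$ is Cohen--Macaulay, together with the acyclicity lemma below.)

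Now I would apply the following: if $Y$ and $Z$ are Cohen--Macaulay closed subschemes of a smooth variety $M$ meeting properly, i.e. $\codim_M(Y\cap Z)=\codim_M Y+\codim_M Z$, then $\Tor_i^{\OO_M}(\OO_Y,\OO_Z)=0$ for all $i>0$ (and the scheme-theoretic intersection is Cohen--Macaulay); this follows from the Peskine--Szpiro acyclicity lemma together with Serre's dimension inequality for modules over a regular local ring. We take $M=\X_n$, $Y=X^u_n$, $Z=X_w$. Here $X_w$ is Cohen--Macaulay by \cite[Theorem 8.2.2]{Kbook}, $X^u$ (hence $X^u_n$) is Cohen--Macaulay by the results of Kashiwara--Shimozono on opposite Schubert varieties in $\X$ \cite{KS}, and the intersection is proper: its support is $X^u_w$ of dimension $\ell(w)-\ell(u)$, so its codimension in $X_w$ equals $\ell(u)=\codim_{\X_n}X^u_n$. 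This yields $\Tor_i^{\OO_{\X}}(\OO_{X^u},\OO_{X_w})=0$ for $i>0$.

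The step I expect to be the main obstacle is making the reduction to finite dimensions precise: one must realize a neighborhood of $X_w$ in the infinite-dimensional scheme $\X$ as a tower of finite-dimensional smooth schemes into which $X^u$ and $X_w$ fit compatibly and on which $\Tor$ is computed by any sufficiently high member, and one must invoke the Cohen--Macaulayness of the opposite Schubert variety $X^u$. The Richardson-variety dimension formula and the local-algebra criterion are then routine.
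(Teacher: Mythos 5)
The paper does not prove this lemma at all: it is imported verbatim from \cite{K} (Lemma 5.5 there), so there is no in-paper argument to compare against, and your proposal has to be judged on its own. Your overall plan (localize on $\X$, reduce to a Noetherian situation, and invoke the standard criterion that Cohen--Macaulay modules meeting in the expected codimension over a regular/CM base have vanishing higher $\Tor$, via Peskine--Szpiro acyclicity and Auslander--Buchsbaum) is the natural one and is in the spirit of what is done in \cite{K}. But as written it has a genuine gap at its central hypothesis: you take as input that $X^u\subset\X$ (equivalently its finite-dimensional local models) is Cohen--Macaulay, ``by the results of Kashiwara--Shimozono.'' That is not something you can simply cite: \cite{KS} does not establish CM-ness of the thick Schubert varieties, and the present paper pointedly never asserts it --- in \S 2 the authors record CM-ness only for the finite Schubert varieties $X_w$ and for the Richardson varieties $X^u_w$, never for $X^u$. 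This CM/perfectness statement is exactly what makes your criterion applicable: concretely, one needs a locally free resolution of $\OO_{X^u}$ of length exactly $\ell(u)$ near the points in question (Lemma \ref{freeres} only provides \emph{some} finite length), because after restricting the resolution to $X_w$ the acyclicity lemma requires the length of the complex to be at most the codimension, namely $\ell(u)$, of the support $X^u\cap X_w=X^u_w$ of its homology in $X_w$. Without that input the argument does not close; supplying it (say, from CM-ness of Richardson varieties \cite[Proposition 6.6]{K} together with the local product structure of $X^u$ along $B^-$-orbits, so that near a point of $C^{w'}$, $w'\geq u$, $X^u$ looks like an open subset of $X^u_{w'}$ times a pro-affine space) is precisely the nontrivial content that the citation to \cite{K} is carrying.

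A secondary, repairable imprecision: the reduction ``to a finite-dimensional situation'' is not correct as stated. $X_w$ is not a closed subscheme of a finite-dimensional smooth level $\X_n$, and its pushforward to such a level is not coherent, so you cannot literally apply the CM-proper-intersection criterion with $M=\X_n$, $Y=X^u_n$, $Z=X_w$. The correct maneuver --- which your parenthetical gestures at --- is to restrict a finite locally free resolution of $\OO_{X^u}$ on a chart $V^x$ to $X_w$ and run the depth/acyclicity argument over the (Noetherian, CM) local rings of $X_w$ at primes in the support of the homology, all of which have height at least $\ell(u)$ since $X^u_w$ is irreducible of dimension $\ell(w)-\ell(u)$. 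That version of the argument is sound, but it returns you to the same missing ingredient: a resolution of length $\ell(u)$, i.e.\ the local perfectness (CM-ness) of $\OO_{X^u}$, which must be proved, not assumed.
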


We now prove:

\begin{lem}
\label{tor3}
For any $u\in W$ and any finite union of Schubert varieties $Y=\bigcup_{i=1}^{\ell} X_{w_i}$ we have $$\Tor^{\OO_{\X}}_i(\OO_{X^u},\OO_Y)=0,\text{ }\forall i>0 .$$ In particular, for any $u,w\in W$ we have $$\Tor^{\OO_{\X}}_i(\OO_{X^u},\OO_{\partial X_w})=0,\text{ }\forall i>0 .$$
\end{lem}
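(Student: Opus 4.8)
The plan is to reduce the general statement to the single-Schubert-variety case handled by Lemma \ref{tor1}, using induction on the number $\ell$ of Schubert varieties in the union $Y$ together with a Mayer-Vietoris-type short exact sequence. First I would set up the base case: when $\ell=1$ the claim is exactly Lemma \ref{tor1}. For the inductive step, write $Y'=\bigcup_{i=1}^{\ell-1}X_{w_i}$ and $Y''=X_{w_\ell}$, so $Y=Y'\cup Y''$ as closed subschemes of $\X$ (all with reduced structure). The key point is that the scheme-theoretic intersection $Y'\cap Y''$ is again a reduced finite union of Schubert varieties: indeed, for Schubert varieties $X_{w_i}\cap X_{w_\ell}=\bigsqcup_{w'\leq w_i, w'\leq w_\ell}C_{w'}$ is a union of Schubert varieties, and the scheme-theoretic intersection is reduced by the results on transversality/normality in \cite[\S7, \S8]{Kbook} (or one may cite \cite{K}); thus $Y'\cap Y''=\bigcup_j X_{v_j}$ for suitable $v_j\in W$, and moreover $Y'\cup Y''$ is likewise reduced so that $\OO_Y\to\OO_{Y'}\oplus\OO_{Y''}\to\OO_{Y'\cap Y''}$ gives a short exact sequence
\begin{equation*}
0\to\OO_{Y}\to\OO_{Y'}\oplus\OO_{Y''}\to\OO_{Y'\cap Y''}\to 0.
\end{equation*}

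Next I would apply $\Tor^{\OO_{\X}}_\bullet(\OO_{X^u},-)$ to this short exact sequence to obtain a long exact sequence. By the inductive hypothesis applied to $Y'$ (which has $\ell-1$ components) and to $Y'\cap Y''$ (which is again a finite union of Schubert varieties, so the inductive hypothesis applies regardless of the number of components, as long as we phrase the induction appropriately — alternatively induct on $\ell$ for $Y'$ and observe $Y'\cap Y''$ also has fewer than $\ell$ "new" components, or simply note the statement for intersections follows from a parallel induction), together with Lemma \ref{tor1} for $Y''=X_{w_\ell}$, all the higher $\Tor$ terms for $\OO_{Y'}$, $\OO_{Y''}$, and $\OO_{Y'\cap Y''}$ vanish. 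The long exact sequence then forces $\Tor^{\OO_{\X}}_i(\OO_{X^u},\OO_Y)=0$ for all $i>0$, completing the induction.

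The main obstacle is the claim that the relevant scheme-theoretic intersections and unions are reduced, so that the Mayer-Vietoris sequence of structure sheaves is exact; this is where one needs the geometric input on Schubert varieties (their normality, Cohen-Macaulayness, and the fact that intersections of unions of Schubert varieties are again reduced unions of Schubert varieties), which is available from \cite[Chapter 7, Chapter 8]{Kbook} and \cite{K}. One subtlety worth being careful about: the induction should be organized so that the hypothesis covers \emph{all} finite unions of Schubert varieties simultaneously (e.g.\ induct on $\ell$ but at each stage allow $Y'\cap Y''$ to be an arbitrary finite union, invoking the already-established cases with fewer components, or run the induction on $\dim Y + \ell$). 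Finally, the "in particular" statement follows by taking $Y=\partial X_w=\bigcup_{w'\lessdot w}X_{w'}$, which is a finite union of Schubert varieties of codimension one in $X_w$ (this description of $\partial X_w$ is standard; see the discussion preceding Proposition \ref{dZCM}), so the general result applies verbatim.
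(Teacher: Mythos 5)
Your overall strategy coincides with the paper's: peel off one Schubert variety, use the Mayer--Vietoris sequence $0\to\OO_Y\to\OO_{Y'}\oplus\OO_{Y''}\to\OO_{Y'\cap Y''}\to 0$ (valid because all the relevant subschemes are reduced), apply Lemma \ref{tor1} to the irreducible pieces, and induct. The genuine problem is the induction bookkeeping, which you flag but do not actually resolve: neither of your proposed measures decreases. The intersection $Y'\cap Y''=\bigcup_{i<\ell}\left(X_{w_i}\cap X_{w_\ell}\right)$ can have \emph{more} irreducible components than $Y$ itself, since each $X_{w_i}\cap X_{w_\ell}$ is the union of the $X_v$ over the maximal elements $v$ below both $w_i$ and $w_\ell$, and there may be several such $v$; hence ``invoking the already-established cases with fewer components'' is circular, and the measure $\dim Y+\ell$ need not drop either (the dimension falls by only one while the number of components can jump by more). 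The paper's fix is a lexicographic double induction: first on $\dim Y$ (the largest dimension of an irreducible component), then on the number of components. The intersection $Y_1\cap Y_2$ is then handled by the dimension induction --- it is a reduced union of Schubert varieties of strictly smaller dimension, no matter how many components it has --- while $Y_2$ (same dimension bound, fewer components) is handled by the induction on components, with Lemma \ref{tor1} as the base case in both directions.

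A second, smaller point: the reducedness of the scheme-theoretic intersections is a genuine geometric input and does not follow from normality or Cohen--Macaulayness of the individual Schubert varieties, which is what your citation suggests. The paper deduces it from the Frobenius splitting of $X$ compatibly splitting the Schubert varieties $X_u$ and the opposite Schubert varieties $X^v$, i.e.\ \cite[Proposition 5.3 and its proof]{KSch}, recorded as statement $(*)$ in the proof (and used again later for Proposition \ref{dual}). With these two repairs your argument is exactly the paper's; the ``in particular'' statement for $\partial X_w$ then follows verbatim, as you say.
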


\begin{proof}
We proceed by double induction on the dimension of $Y$ (i.e. the largest dimension of the irreducible components of $Y$) and the number of irreducible components of $Y$.  If $\dim Y=0$,  then $Y=X_e$ and so the result follows from Lemma \ref{tor1}.  Now, suppose that $\dim Y=d$ and $Y$ has $k$ irreducible components.  If $k=1$ then the result follows from Lemma \ref{tor1}, so we may assume that $k\geq 2$.  Let $Y_1=X_w$ be an irreducible component of $Y$ and let $Y_2$ be the union of all the other irreducible components.  

By \cite[Proposition 5.3 and its proof]{KSch} $X$ is Frobenius split compatibly splitting its Schubert varities $X_u$ and also Richardson varieties $X^v_u$ in $\X$.  $$\text{Thus, any finite intersection } X_{u_1}\cap\ldots\cap X_{u_n}\cap X^{v_1}\cap\ldots\cap X^{v_m} \text{ is reduced for any }n\geq 1.\,\,\,(*)$$  (In the proof here we have only used $X_{u_1}\cap\ldots\cap X_{u_n}$ to be reduced, but the more general assertion here will be used later in the paper.)

The short exact sequence of sheaves $$0\to \OO_Y\to\OO_{Y_1}\oplus\OO_{Y_2}\to \OO_{Y_1\cap Y_2}\to 0$$ yields the long exact sequence $$\ldots\to\Tor^{\OO_{\X}}_{i+1}(\OO_{X^u},\OO_{Y_1\cap Y_2})\to\Tor^{\OO_{\X}}_{i}(\OO_{X^u},\OO_Y)\to\Tor^{\OO_{\X}}_{i}(\OO_{X^u},\OO_{Y_1}\oplus\OO_{Y_2})\to\ldots$$

Now, since $Y_2$ has less than $k$ irreducible components, induction on the number of irreducible components gives $$\Tor^{\OO_{\X}}_{i}(\OO_{X^u},\OO_{Y_1}\oplus\OO_{Y_2})=0, \text{ }\forall i>0 .$$
Since $Y_1\cap Y_2$ is reduced and of dimension less than $d$, induction on the dimension gives $$\Tor^{\OO_{\X}}_{i+1}(\OO_{X^u},\OO_{Y_1\cap Y_2})=0,\text{ }\forall i>0 .$$
Together, these imply the lemma.
\end{proof}

\begin{lem}
\label{tor2}
For any $u,w\in W$, we have $$\Tor_i^{\OO_{\X}}(\OO_{X^u},\xi_w)=0,\text{ }\forall i>0 .$$
\end{lem}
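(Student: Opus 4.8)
The plan is to deduce this from the two preceding Tor-vanishing results via the defining short exact sequence for $\xi_w$. By Definition \ref{xidef}, $\xi_w=\OO_{X_w}(-\partial X_w)$ is the ideal sheaf of $\partial X_w$ in $X_w$, so on $X_w$ we have the short exact sequence
$$0\to \xi_w\to \OO_{X_w}\to \OO_{\partial X_w}\to 0 .$$
Pushing forward along the closed immersion $X_w\hookrightarrow \X$ (which is affine, hence exact on quasi-coherent sheaves) gives a short exact sequence of $\OO_{\X}$-modules of the same shape.

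Next I would apply the functor $\Tor^{\OO_{\X}}_\bullet(\OO_{X^u},-)$ to this short exact sequence to obtain the long exact sequence
$$\cdots\to \Tor^{\OO_{\X}}_{i+1}(\OO_{X^u},\OO_{\partial X_w})\to \Tor^{\OO_{\X}}_{i}(\OO_{X^u},\xi_w)\to \Tor^{\OO_{\X}}_{i}(\OO_{X^u},\OO_{X_w})\to\cdots .$$
For $i>0$, the right-hand term vanishes by Lemma \ref{tor1}, and the left-hand term vanishes by the second assertion of Lemma \ref{tor3} (applicable since $i+1\geq 1>0$). Exactness then forces $\Tor^{\OO_{\X}}_{i}(\OO_{X^u},\xi_w)=0$ for all $i>0$, which is the claim.

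I do not expect any genuine obstacle here: the substantive work has already been carried out in Lemma \ref{tor3} (in particular the reducedness of finite intersections of Schubert varieties coming from Frobenius splitting, which underlies the vanishing for $\OO_{\partial X_w}$). The only minor point to confirm is that the ideal-sheaf sequence above is legitimately a sequence of $\OO_{\X}$-modules after pushforward, which is immediate since $X_w$ is closed in $\X$.
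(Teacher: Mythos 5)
Your proposal is correct and is exactly the paper's argument: the paper's proof is just the terse statement that Lemmas \ref{tor1} and \ref{tor3} plus the long exact $\Tor$ sequence for $0\to\xi_w\to\OO_{X_w}\to\OO_{\partial X_w}\to 0$ give the result, which you have simply spelled out in full.
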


\begin{proof}
Applying Lemmas \ref{tor1} and \ref{tor3}, the desired result follows from the long exact sequence for $\Tor$.
\end{proof}

\begin{prop}
\label{dual}
For any $u,w\in W$ we have $$\langle[\OO_{X^u}],[\xi_w]\rangle=\delta_{u,w} .$$
\end{prop}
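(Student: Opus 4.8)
The plan is to unwind the pairing using the $\Tor$-vanishing lemmas already established, reduce it to an Euler-characteristic computation on finite-dimensional Richardson varieties, and then run a short case analysis on the Bruhat relation between $u$ and $w$. Since $\Tor_i^{\OO_\X}(\OO_{X^u},\xi_w)=0$ for $i>0$ by Lemma~\ref{tor2}, the pairing collapses to $\langle[\OO_{X^u}],[\xi_w]\rangle=\chi_T(X_w,\OO_{X^u}\otimes_{\OO_\X}\xi_w)$. I would then tensor the defining sequence $0\to\xi_w\to\OO_{X_w}\to\OO_{\partial X_w}\to 0$ with $\OO_{X^u}$ over $\OO_\X$: by Lemma~\ref{tor3} we have $\Tor_1^{\OO_\X}(\OO_{X^u},\OO_{\partial X_w})=0$, so it stays exact, and by the reducedness of intersections coming from the compatible Frobenius splitting (cf. $(*)$) the two tensor products are the structure sheaves $\OO_{X^u_w}$ of the Richardson variety $X^u_w=X^u\cap X_w$ and $\OO_{X^u\cap\partial X_w}$; taking $T$-equivariant Euler characteristics gives
$$\langle[\OO_{X^u}],[\xi_w]\rangle=\chi_T(X^u_w,\OO_{X^u_w})-\chi_T(X^u\cap\partial X_w,\OO_{X^u\cap\partial X_w}).$$

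Now $X^u\cap\partial X_w=\bigcup_{w'\lessdot w,\ w'\geq u}X^u_{w'}$ is empty precisely when $u\not<w$. If $u\not<w$, the pairing is just $\chi_T(X^u_w,\OO_{X^u_w})$, which is $0$ for $u\not\leq w$ (empty intersection) and, for $u=w$, equals $\chi_T$ of the reduced $T$-fixed point $X^w_w=\{wB/B\}$, i.e. $1$; either way this is $\delta_{u,w}$. If $u<w$, I claim both terms on the right equal $1$, so the pairing is $1-1=0=\delta_{u,w}$. For $X^u_w$ this is the assertion $\chi_T(X^u_w,\OO_{X^u_w})=1$: indeed $X^u_w$ is irreducible, reduced and projective, so $H^0(\OO_{X^u_w})=\C$ with trivial $T$-action, and it has rational singularities (by \cite{K} and \cite{KSch}), so $\chi_T(X^u_w,\OO_{X^u_w})=\chi_T(Z^u_w,\OO_{Z^u_w})$ for the $T$-equivariant resolution $Z^u_w$, and the latter is $1$ since $H^i(Z^u_w,\OO_{Z^u_w})=0$ for $i>0$. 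For $X^u\cap\partial X_w$, a nonempty union of the Richardson varieties $X^u_{w'}$ each containing $uB/B$, I would use a Mayer--Vietoris induction via $0\to\OO_{A\cup B}\to\OO_A\oplus\OO_B\to\OO_{A\cap B}\to 0$: every intersection that appears is again, by $(*)$, a reduced union of Richardson varieties $X^u_v$ with $u\leq v$ (all containing $uB/B$) of strictly smaller maximal length, so the induction closes with the single-Richardson computation as its base, giving $\chi_T(X^u\cap\partial X_w,\OO)=1$.

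The main obstacle is this last input — the vanishing $H^i(X^u_w,\OO_{X^u_w})=0$ for $i>0$ (equivalently $\chi_T(X^u_w,\OO_{X^u_w})=1$) and its propagation to connected unions of Richardson varieties — which is where the rational singularities of Richardson varieties, available in the Kac--Moody generality from \cite{KSch}, and the compatible Frobenius splitting behind $(*)$ are genuinely needed; the remaining steps are formal once Lemmas~\ref{tor1}--\ref{tor3} are in hand, and the argument parallels that of Kashiwara--Shimozono in \cite{KS}.
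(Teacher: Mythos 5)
Your proposal is correct and is essentially the paper's own proof: collapse the pairing via Lemma \ref{tor2}, use the ideal-sheaf sequence together with Lemma \ref{tor3} and the reducedness statement $(*)$ to reduce to $\chi_T(X^u\cap X_w)-\chi_T(X^u\cap\partial X_w)$, then do the Bruhat case analysis using the higher-cohomology vanishing for Richardson varieties (the paper cites \cite[Corollary 3.2]{KSch} for this, which is exactly the input you flag as the ``main obstacle'') and an induction over finite unions of Schubert varieties. The only cosmetic difference is that you compute Euler characteristics by inclusion--exclusion, detouring through the resolution $Z^u_w$, whereas the paper proves $H^i(X_n,\OO_{X^u\cap Y})=0$ for $i>0$ for any finite union of Schubert varieties $Y$ and then uses connectedness of $X^u\cap\partial X_w$ (all pieces contain $uB/B$) to conclude $H^0=\C$.
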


\begin{proof}
By definition $$\langle[\OO_{X^u}],[\xi_w]\rangle=\sum_i(-1)^i\chi_T(X_n,\Tor^{\OO_{\X}}_i(\OO_{X^u},\xi_w))$$ where $n$ is taken such that $n\geq \ell(w)$.  Thus, by Lemma \ref{tor2}, we have $$\langle[\OO_{X^u}],[\xi_w]\rangle=\chi_T(X_n,\OO_{X^u}\otimes_{\OO_{\X}}\xi_w).$$

By Lemma \ref{tor3} we have the sheaf exact sequence $$0\to \OO_{X^u}\otimes_{\OO_{\X}} \xi_w \to \OO_{X^u}\otimes_{\OO_{\X}} \OO_{X_w} \to \OO_{X^u}\otimes_{\OO_{\X}} \OO_{\partial X_w} \to 0.$$ Observe that by $(*)$
$$\OO_{X^u}\otimes_{\OO_{\X}} \OO_{X_w} = \OO_{X^u \cap X_w},$$
and similarly for $\OO_{X^u}\otimes_{\OO_{\X}} \OO_{\partial X_w}.$  Thus,  $$\chi_T(X_n,\OO_{X^u}\otimes_{\OO_{\X}} \xi_w)=\chi_T(X_n,\OO_{X^u\cap X_w})-\chi_T(X_n,\OO_{X^u\cap \partial X_w}).$$

When non-empty,  $X^u\cap X_w$ is irreducible by \cite[Proposition 6.6]{K}, and thus $X^u\cap \partial X_w=\bigcup_{u\leq w'< w}X^u\cap X_{w'}$ is a connected projective variety when non-empty, since $u\in X^u\cap X_{w'}$ for all $u\leq w'< w$.  

If $u\nleq w$ we have $X^u\cap X_w=\emptyset$, so assume $u\leq w$.  In this case $X^u\cap X_w\neq \emptyset$.  Now, if $u=w$ then $X^u\cap \partial X_w=\emptyset$, while if $u<w$ then $X^u\cap\partial X_w\neq \emptyset$.  By \cite[Corollary 3.2]{KSch}, $$H^i(X_n,\OO_{X^u\cap X_w})=0, \text{ }\forall i>0 .$$
Using an inductive argument similar to Lemma \ref{tor3} we obtain $$H^i(X_n,\OO_{X^u\cap Y})=0, \text{ }\forall i>0 ,$$ where $Y$ is any finite union of Schubert varieties.  Taking $Y=X^u\cap \partial X_w$ and combining the above implies the proposition (here we use that, when non-empty, $X^u\cap Y$ is connected).
\end{proof}

\section{The mixing space and mixing group}

In this section we realize the product structure constants of $\{[\OO_{X^u}]\}$ in $K^0_T(\X)$ as the coproduct structure constants of the dual basis $\{[\xi_u]\}$ in $K^T_0(X)$ (Lemma \ref{coprodlem}).  We then introduce the mixing space $X_\PP$, which is a bundle over a product of projective spaces $\PP$, with fiber $X$.  This allows for the reduction from $T$-equivariant $K$-theory to non-equivariant $K$-theory.  Using the pairing and duality introduced in the previous section, we realize the structure constants in terms of certain cohomology groups (cf. Lemma \ref{lemcj}).  Finally, we introduce the mixing group $\Gamma$ whose action is sufficient to allow for a transversality result necessary to prove part a) of our main technical result (Theorem \ref{main}).  

\begin{defn} (Structure constants $d^w_{u,v}$) By Lemma \ref{basis}, in $K^0_T(\X)$ we have: 

\begin{equation}
\label{eqd}
[\OO_{X^u}]\cdot[\OO_{X^v}]=\sum_{w\in W}d_{u,v}^w[\OO_{X^w}], \,\,\,\text{for unique}\,\, d_{u,v}^w\in R(T), 
\end{equation}
where infinitely many of $d_{u,v}^w$ may be nonzero. 
\end{defn}

\begin{lem}
\label{coprodlem}
Write, in $K_0^T(X \times X)$ under the diagonal action of $T$  on $X \times X$, 
\begin{equation}
\label{eqe}
\Delta_*[\xi_w]=\sum_{u,v} e^w_{u,v}[\xi_u\boxtimes\xi_v],\,\,\,\text{for} \,\, e^w_{u,v}\in R(T),
\end{equation}
 where $\Delta:X\to X\times X$ is the diagonal map.  Then, $e^w_{u,v}=d^w_{u,v}.$
\end{lem}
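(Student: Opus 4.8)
The plan is to deduce $e^w_{u,v}=d^w_{u,v}$ from the duality of Proposition \ref{dual} by the standard Hopf-duality argument, after extending the Euler-characteristic pairing \eqref{pairing} to the product $\X\times\X$ versus $X\times X$.

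First I would repeat the constructions of Section 3 with $G$ replaced by $G\times G$, so that $\X\times\X$ is the thick flag variety, $X\times X$ is the thin one, and the diagonal torus $T$ acts. The recipe of \eqref{pairing} then gives a well-defined pairing (by the argument of \cite[Lemma 3.5]{K})
\begin{equation*}
\langle\ ,\ \rangle:K^0_T(\X\times\X)\otimes K^T_0(X\times X)\to R(T),\quad \langle[\mathcal{S}],[\mathcal{F}]\rangle=\sum_i(-1)^i\chi_T\!\big(X_n\times X_n,\Tor^{\OO_{\X\times\X}}_i(\mathcal{S},\mathcal{F})\big),
\end{equation*}
for $\mathcal{F}$ supported on $X_n\times X_n$. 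The algebraic Künneth formula for $\Tor$ over the ground field $\C$, $\Tor^{\OO_{\X\times\X}}_i(\mathcal{S}_1\boxtimes\mathcal{S}_2,\mathcal{F}_1\boxtimes\mathcal{F}_2)\cong\bigoplus_{p+q=i}\Tor^{\OO_{\X}}_p(\mathcal{S}_1,\mathcal{F}_1)\boxtimes\Tor^{\OO_{\X}}_q(\mathcal{S}_2,\mathcal{F}_2)$, together with the Künneth formula for coherent cohomology on the projective varieties $X_n\times X_n$ (under which $T$-equivariant Euler characteristics multiply on external products), gives upon taking alternating sums that this pairing is multiplicative on external products:
\begin{equation*}
\langle[\mathcal{S}_1\boxtimes\mathcal{S}_2],[\mathcal{F}_1\boxtimes\mathcal{F}_2]\rangle=\langle[\mathcal{S}_1],[\mathcal{F}_1]\rangle\cdot\langle[\mathcal{S}_2],[\mathcal{F}_2]\rangle.
\end{equation*}
Combined with Proposition \ref{dual} this yields $\langle[\OO_{X^u}\boxtimes\OO_{X^v}],[\xi_{u'}\boxtimes\xi_{v'}]\rangle=\delta_{u,u'}\delta_{v,v'}$.

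Next I would evaluate $\langle[\OO_{X^u}\boxtimes\OO_{X^v}],\Delta_*[\xi_w]\rangle$ in two ways. Since $\Delta_*\xi_w$ is supported on $X_{\ell(w)}\times X_{\ell(w)}$ and $\{[\xi_{u'}\boxtimes\xi_{v'}]\}$ is an $R(T)$-basis of $K^T_0(X\times X)$ (from Lemma \ref{xibasis}, the cellular structure of the $X_n$, and Künneth), the expansion \eqref{eqe} is a finite sum, and pairing it against $[\OO_{X^u}\boxtimes\OO_{X^v}]$ gives $\langle[\OO_{X^u}\boxtimes\OO_{X^v}],\Delta_*[\xi_w]\rangle=e^w_{u,v}$. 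On the other hand, the projection formula for the closed immersion $\Delta:\X\hookrightarrow\X\times\X$ gives $(\OO_{X^u}\boxtimes\OO_{X^v})\otimes^L_{\OO_{\X\times\X}}\Delta_*\xi_w\cong\Delta_*\!\big(L\Delta^*(\OO_{X^u}\boxtimes\OO_{X^v})\otimes^L_{\OO_{\X}}\xi_w\big)$, where $L\Delta^*$ is defined because $\Delta$ is, on the smooth charts $V^S\times V^{S'}$, locally of finite Tor-dimension; since $L\Delta^*(\OO_{X^u}\boxtimes\OO_{X^v})=\OO_{X^u}\otimes^L_{\OO_{\X}}\OO_{X^v}$ has class $[\OO_{X^u}]\cdot[\OO_{X^v}]$ in $K^0_T(\X)$ (by the definition of the product) and $\Delta_*$ is exact, taking $T$-equivariant Euler characteristics gives $\langle[\OO_{X^u}\boxtimes\OO_{X^v}],\Delta_*[\xi_w]\rangle=\langle[\OO_{X^u}]\cdot[\OO_{X^v}],[\xi_w]\rangle$.

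Finally, expanding $[\OO_{X^u}]\cdot[\OO_{X^v}]=\sum_{w'}d^{w'}_{u,v}[\OO_{X^{w'}}]$ via \eqref{eqd}: although this sum may be infinite, $[\xi_w]$ is supported on $X_{\ell(w)}$, so the pairing against it factors through $K^0_T(V^S)$ for any finite ideal $S$ containing all elements of length $\le\ell(w)$, in which only finitely many $[\OO_{X^{w'}}]$ survive; hence by Proposition \ref{dual}, $\langle[\OO_{X^u}]\cdot[\OO_{X^v}],[\xi_w]\rangle=\sum_{w'}d^{w'}_{u,v}\delta_{w',w}=d^w_{u,v}$. Comparing the two computations gives $e^w_{u,v}=d^w_{u,v}$. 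The main obstacle is the first step: proving multiplicativity of the extended pairing, i.e.\ running the two Künneth formulas carefully enough in the non-quasi-compact / ind-projective setting that all alternating sums make sense, together with the bookkeeping of supports that makes \eqref{eqe} genuinely finite while \eqref{eqd} is handled through the pro-system $\{K^0_T(V^S)\}$; the derived pullback $L\Delta^*$ and the projection formula for $\Delta$ are then routine given the local smoothness of the charts $V^S$ of $\X$.
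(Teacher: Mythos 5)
Your proposal is correct and follows essentially the same route as the paper: the paper's proof is exactly the chain $d^w_{u,v}=\langle[\OO_{X^u}]\cdot[\OO_{X^v}],[\xi_w]\rangle=\langle\bar{\Delta}^*[\OO_{X^u}\boxtimes\OO_{X^v}],[\xi_w]\rangle=\langle[\OO_{X^u}\boxtimes\OO_{X^v}],\Delta_*[\xi_w]\rangle=e^w_{u,v}$, using Proposition \ref{dual} at both ends and the push--pull adjunction for the diagonal. You have merely made explicit the ingredients the paper leaves implicit (the extension of the pairing to $\X\times\X$ versus $X\times X$, its multiplicativity via K\"unneth, the projection formula justifying the adjunction, and the finiteness bookkeeping), which is fine.
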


\begin{proof}
Let $\bar{\Delta}:\X\to\X\times\X$ be the diagonal map, and note that $\bar{\Delta}^*[\OO_{X^u}\boxtimes\OO_{X^v}]=[\OO_{X^u}]\cdot[\OO_{X^v}]$.  Hence, we have:

\begin{align*}
d^w_{u,v} &= \langle\sum_{w'\in W}d_{u,v}^{w'}[\OO_{X^{w'}}],[\xi_w]\rangle, \,\,\,\text{by Proposition \ref{dual}}\\\\
&=\langle[\OO_{X^u}]\cdot[\OO_{X^v}],[\xi_w]\rangle, \,\,\,\text{by (\ref{eqd})}\\\\
&=\langle\bar{\Delta}^*[\OO_{X^u}\boxtimes\OO_{X^v}],[\xi_w]\rangle \\\\
&=\langle[\OO_{X^u}\boxtimes\OO_{X^v}],\Delta_*[\xi_w]\rangle \\\\
&=\langle[\OO_{X^u}\boxtimes\OO_{X^v}],\sum_{u',v'} e^w_{u',v'}[\xi_{u'}\boxtimes\xi_{v'}]\rangle, \,\,\,\text{by (\ref{eqe})}\\
&=e^w_{u,v}, \,\,\,\text{by Proposition \ref{dual}.}
\end{align*}

\end{proof}

We now prepare to define the mixing space.  Let $\PP:=(\PP^N)^r$ where $r=\dim T$ and $N$ is some large fixed integer.  Let $[N]=\{0,1,\dots,N\}$ and let $j=(j_1,j_2,\dots,j_r)\in[N]^r$.  We define $$\PP^j:=\PP^{N-j_1}\times\dots\times\PP^{N-j_r}$$ and similarly define $$\PP_j:=\PP^{j_1}\times\dots\times\PP^{j_r}.$$  We also define the boundary of $\PP_j$ by $$\partial\PP_j:=\left(\PP^{j_1-1}\times\PP^{j_2}\times \dots\times\PP^{j_r}\right)\cup\dots\cup\left(\PP^{j_1}\times\dots\times \PP^{j_{r-1}}\times \PP^{j_r-1}\right),$$ where we interpret $\PP^{-1}:=\emptyset$ to be the empty set.  \emph{Throughout this paper we will identify} $T\simeq(\C^*)^r$ \emph{via} $t\mapsto(e^{-\alpha_1}(t),\dots,e^{-\alpha_r}(t))$.  

\begin{defn} (Mixing space $X_\PP$)
Let $E(T)_\PP:=(\C^{N+1}\setminus\{0\})^r$ be the total space of the standard principal $T$-bundle $E(T)_\PP\to \PP$.  We can view $E(T)_\PP\to \PP$ as a finite dimensional approximation of the classifying bundle for $T$.  Define $X_\PP:=E(T)_\PP\times^T X$ and let $p:X_\PP\to \PP$ be the Zariski-locally trivial fibration with fiber $X=G^{\text{min}}/B$.
\end{defn}

For any $T$-scheme $V$ we define $V_\PP:=E(T)_\PP\times^T V$.

\vskip1ex

{\it For the rest of this paper we use the notation $Y:=X\times X$ and $\Y:=\X\times \X$.  Further, we let $Y_\PP:=E(T)_\PP\times^T Y\simeq X_\PP\times_\PP X_\PP$, where $T$ acts diagonally on $Y$. }
\vskip1ex

Note that for any $u,v\in W$, $(X_u)_\PP$ and $(X_u\times X_v)_\PP$ are CM, as they are fiber bundles over $\PP$, and hence locally a product of CM schemes.  Thus, they have dualizing sheaves.  

\begin{defn}
We define the sheaf $(\xi_u)_\PP$ on $X_\PP$ by $$(\xi_u)_\PP:=(e^\rho\LL(\rho))_\PP\otimes \omega_{(X_u)_\PP} ,$$ where $(e^\rho\LL(\rho))_\PP$ is defined by $$E(T)_\PP\times^Te^\rho\LL(\rho)\to X_\PP.$$
\end{defn}

\begin{prop}
$K_0(X_\PP):=\text{Limit}_{n\to\infty}K_0((X_n)_\PP)$ is a free module over the ring $K_0(\PP)=K^0(\PP)$ with basis $\{[(\xi_w)_\PP]\}$.

Thus, $K_0(X_\PP)$ has $\Z$-basis $$\left\{p^*([\OO_{\PP^j}])\cdot[(\xi_w)_\PP]\right\}_{j\in[N]^r, w\in W},$$
where, as above,  $p:X_\PP\to\PP$ is the projection.
\end{prop}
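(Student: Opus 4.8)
The plan is to reduce the statement to its finite-dimensional approximations $(X_n)_\PP$ and there run a cellular-fibration argument, in direct parallel with the proof that $\{[\OO_{X_w}]\}$ is an $R(T)$-basis of $K^T_0(X)$ (Lemma~\ref{basis2}, cf.\ \cite[\S5.2.14 and Theorem 5.4.17]{CG}). First I would record the structure of the base: since $\PP=(\PP^N)^r$ is smooth, $K^0(\PP)\to K_0(\PP)$ is an isomorphism, and by the standard computation $K^0(\PP^N)=\bigoplus_{i=0}^N\Z\,[\OO_{\PP^i}]$ together with the K\"unneth isomorphism $K^0((\PP^N)^r)\cong K^0(\PP^N)^{\otimes r}$ (obtained by iterating the projective bundle theorem), $K_0(\PP)$ is a free $\Z$-module on $\{[\OO_{\PP^j}]\}_{j\in[N]^r}$. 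Since $p:X_\PP\to\PP$ is flat, $p^*$ is defined on $G$-theory and makes $K_0(X_\PP)$ a module over the ring $K_0(\PP)=K^0(\PP)$; granting the first assertion, the ``Thus'' is then the formal fact that a module that is free over $K_0(\PP)$ on $\{[(\xi_w)_\PP]\}$ is $\Z$-free on $\{p^*([\OO_{\PP^j}])\cdot[(\xi_w)_\PP]\}$.

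Next I would pass to the ind-structure: $K_0(X_\PP)=\varinjlim_n K_0((X_n)_\PP)$, with transition maps the push-forwards along the closed embeddings $(X_n)_\PP\hookrightarrow(X_{n+1})_\PP$, so it suffices to treat a fixed $(X_n)_\PP$ (a projective variety, being a fibre bundle over $\PP$ with projective fibre $X_n$). Write $X_n=\bigsqcup_{w\in S_n}C_w$ for a finite ideal $S_n\subset W$, order $S_n$ by a linear extension of the Bruhat order, and set $Y_i:=\bigcup_{j\le i}X_{w_j}$; then each $Y_i$ is closed and $Y_i\setminus Y_{i-1}=C_{w_i}$, which is $T$-equivariantly isomorphic to a $T$-module $V_{w_i}$ of dimension $\ell(w_i)$. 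Applying $E(T)_\PP\times^T(-)$ therefore exhibits $(X_n)_\PP$ as a cellular fibration over $\PP$ whose cells $(C_w)_\PP\to\PP$ are the vector bundles $E(T)_\PP\times^T V_w$. By homotopy invariance of $G$-theory, $K_0((C_w)_\PP)\cong K_0(\PP)$, generated by $[\OO_{(C_w)_\PP}]$; and the cellular-fibration lemma (the non-equivariant, mixed analogue of the argument behind Lemma~\ref{basis2}) shows that $K_0((X_n)_\PP)$ is a free $K_0(\PP)$-module on classes lifting these cell generators, which by the usual Bruhat triangularity one may take to be $\{[\OO_{(X_w)_\PP}]\}_{w\in S_n}$. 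Finally, using Proposition~\ref{xiomega} and the definition of $(\xi_w)_\PP$, one checks that $[(\xi_w)_\PP]$ equals $[E(T)_\PP\times^T\xi_w]$ up to twist by the invertible class $[p^*\omega_\PP]\in K_0(\PP)$ (the difference between the absolute and relative dualizing sheaves of $(X_w)_\PP\to\PP$ being $p^*\omega_\PP$), and that $[E(T)_\PP\times^T\xi_w]=[\OO_{(X_w)_\PP}]+\sum_{w'<w}c_{w'}\,[\OO_{(X_{w'})_\PP}]$ with $c_{w'}\in K_0(\PP)$ (mixing the triangularity of Lemma~\ref{xibasis}); hence $\{[(\xi_w)_\PP]\}_{w\in S_n}$ is again a $K_0(\PP)$-basis of $K_0((X_n)_\PP)$. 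Since push-forward carries $[(\xi_w)_\PP]$ to $[(\xi_w)_\PP]$ (this sheaf is supported on $(X_w)_\PP$, a common closed subscheme of the $(X_m)_\PP$, $m\ge n$), the transition maps become the evident inclusions $\bigoplus_{w\in S_n}K_0(\PP)\hookrightarrow\bigoplus_{w\in S_{n+1}}K_0(\PP)$, and passing to the colimit yields $K_0(X_\PP)=\bigoplus_{w\in W}K_0(\PP)\,[(\xi_w)_\PP]$.

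The main obstacle is the freeness, as opposed to mere generation, in the fixed-$n$ step: the localization sequences $K_0((Y_{i-1})_\PP)\to K_0((Y_i)_\PP)\to K_0((C_{w_i})_\PP)\to 0$ immediately give generation of $K_0((X_n)_\PP)$ over $K_0(\PP)$ by the classes $[\OO_{(X_w)_\PP}]$, but linear independence rests on the compatible splittings built into the cellular-fibration lemma (alternatively, one can establish it by pairing against a dual family via a mixed version of the pairing of Proposition~\ref{dual}). The remaining technical points are routine: verifying that $C_w$ is $T$-equivariantly isomorphic to a $T$-module (so that $(C_w)_\PP\to\PP$ is a vector bundle and $G$-theory is homotopy invariant there), and the bookkeeping of the $p^*\omega_\PP$ twist when one insists on the basis $\{[(\xi_w)_\PP]\}$ itself rather than on $\{[E(T)_\PP\times^T\xi_w]\}$.
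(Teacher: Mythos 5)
Your proposal is correct and follows essentially the same route as the paper: the paper simply cites the cellular-fibration results of Chriss--Ginzburg (\S5.2.14 and Theorem 5.4.17) applied to the mixed spaces together with the upper-triangularity (with invertible diagonal) of the transition matrix between $\{[\OO_{X_w}]\}$ and $\{[\xi_w]\}$, which is exactly the argument you spell out, including the harmless unit twist by $p^*\omega_\PP$ relating $(\xi_w)_\PP$ to the mixing of $\xi_w$.
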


\begin{proof}
This follows from \cite[\S5.2.14 and Theorem 5.4.17]{CG} as well as the fact that the transition matrix between the basis $\{[\OO_{X_w}]\}$ and $\{[\xi_w]\}$ is upper triangular with invertible diagonal entries.
\end{proof}

\begin{defn}
We define the sheaf $\widetilde{\xi_u\boxtimes\xi_v}$ on $Y_\PP$ by $$\widetilde{\xi_u\boxtimes\xi_v}:=(e^{2\rho}\LL(\rho)\boxtimes\LL(\rho))_\PP\otimes\omega_{(X_u\times X_v)_\PP},$$ where $(e^{2\rho}\LL(\rho)\boxtimes\LL(\rho))_\PP$ is defined by $$E(T)_\PP\times^T\left(e^{2\rho}\LL(\rho)\boxtimes\LL(\rho)\right) \to Y_\PP.$$
\end{defn}

The diagonal map $\Delta: X\to Y$ gives rise to the embedding $\tilde{\Delta}: X_\PP\to Y_\PP$.  Thus, by the previous proposition, we may write

\begin{equation}
\label{eqcj}
\tilde{\Delta}_*[(\xi_w)_\PP]=\sum_{u,v\in W, j\in[N]^r} c^w_{u,v}(j){\hat{p}}^*[\OO_{\PP^j}]\cdot[\widetilde{\xi_u\boxtimes\xi_v}]\in K_0(Y_\PP),
\,\,\,\text{for unique}\,\, c^w_{u,v}(j)\in\Z,
\end{equation}
where $\hat{p}:Y_\PP\to\PP$ is the projection.

The following lemma makes precise the reduction from $T$-equivariant $K$-theory of $\X$ to the ordinary $K$-theory of the mixing space.  

\begin{lem}
\label{lemma1}
For any $u,v,w\in W$ we can choose $N$ large enough and express $$d^w_{u,v}=\sum_{j\in[N]^r} d^w_{u,v}(j)(e^{-\alpha_1}-1)^{j_1}\dots (e^{-\alpha_r}-1)^{j_r},$$ where $d^w_{u,v}(j)\in\Z$.

Then, $$d^w_{u,v}(j)=(-1)^{|j|} c^w_{u,v}(j),\,\,\,\text{where }|j|:=\sum_i^r j_i.$$
\end{lem}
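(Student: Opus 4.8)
The plan is to connect the two expansions of $d^w_{u,v}$—the one in powers of $(e^{-\alpha_i}-1)$ over $\Z$ from the hypothesis of the lemma, and the one coming from pushing forward $[(\xi_w)_\PP]$ along $\tilde\Delta$ in $K_0(Y_\PP)$—by comparing them through the pairing $\langle\ ,\ \rangle$ and the Borel-type construction relating $T$-equivariant $K$-theory to the $K$-theory of the mixing space. First I would recall the standard fact that for a sufficiently large fixed $N$ the natural map $K^0_T(\text{pt}) = R(T) \to K^0(\PP)$, induced by $E(T)_\PP \to \PP$, is surjective in the relevant degrees, and under the identification $T \simeq (\C^*)^r$ via $t \mapsto (e^{-\alpha_1}(t),\dots,e^{-\alpha_r}(t))$ it sends $e^{-\alpha_i}$ to the class of $\OO_{\PP^{N-1}}$-type line bundles; concretely $[\OO_{\PP^j}]$ corresponds (up to the truncation) to the monomial $\prod_i (e^{-\alpha_i}-1)^{j_i}$ in $R(T)$, since $\PP^{j_i} \subset \PP^N$ is cut out by $j_i$ hyperplanes and $[\OO_{\text{hyperplane}}] = 1 - [\OO_\PP(-1)]$, i.e. $1 - e^{-\alpha_i}$ after the sign. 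This is the source of the sign $(-1)^{|j|}$: the class $[\OO_{\PP^{j}}]$ in $K_0(\PP)$ equals $\prod_i (1-e^{-\alpha_i})^{j_i} = (-1)^{|j|}\prod_i(e^{-\alpha_i}-1)^{j_i}$.

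Next I would set up the comparison map carefully. Applying the functor $E(T)_\PP \times^T (-)$ to the diagonal $\bar\Delta$ and to the sheaves $\OO_{X^u}$, $\xi_w$ gives the classes appearing in $(\ref{eqcj})$, and the pairing $\langle\ ,\ \rangle$ of $(\ref{pairing})$ has a mixing-space avatar $K_0(X_\PP) \otimes K^0(\text{compactly supported part}) \to K_0(\PP)$ obtained by the relative Euler characteristic along $p$. The key point, exactly as in \cite{AGM} and \cite{K}, is that forming the mixing bundle is compatible with: (i) the duality of Proposition \ref{dual}, so that $\{[(\xi_w)_\PP]\}$ and the appropriate $[\OO_{X^u}]$-classes remain dual over $K_0(\PP)$; (ii) pullback along diagonals, so $\tilde\Delta^*$ of the exterior-product classes recovers the products; and (iii) the twist by $e^\rho\LL(\rho)$ built into the definition of $(\xi_u)_\PP$ and $\widetilde{\xi_u\boxtimes\xi_v}$, which is precisely what is needed for Proposition \ref{xiomega} to make the mixed $\xi$'s behave like $\xi$'s. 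Granting these compatibilities, pairing $(\ref{eqcj})$ against $\tilde\Delta^*[(\OO_{X^u}\boxtimes\OO_{X^v})_\PP]$ and using the mixing-space duality yields
$$
c^w_{u,v}(j) = \big\langle\, [(\OO_{X^u}\boxtimes\OO_{X^v})_\PP],\ \tilde\Delta_*[(\xi_w)_\PP]\,\big\rangle_{\text{coeff of }[\OO_{\PP^j}]},
$$
which is the mixing-space reflection of $d^w_{u,v}$ read off in the basis $\{[\OO_{\PP^j}]\}$ of $K_0(\PP)$ after restricting scalars; comparing with the expansion $d^w_{u,v} = \sum_j d^w_{u,v}(j)\prod_i(e^{-\alpha_i}-1)^{j_i}$ and inserting the sign from the previous paragraph gives $d^w_{u,v}(j) = (-1)^{|j|}c^w_{u,v}(j)$.

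The main obstacle I anticipate is not any single computation but verifying that the passage to the mixing space is a ring/module map compatible with all three structures simultaneously—diagonal pullback, the Euler-characteristic pairing, and the $\rho$-twist—in the non-quasi-compact, infinite-dimensional setting of $\X$ and in the ind-variety setting of $X$. One must be careful that the various $\Tor$-vanishing results (Lemmas \ref{tor1}, \ref{tor2}, \ref{tor3}) survive after applying $E(T)_\PP\times^T(-)$, so that the pairing is still computed by an honest tensor product of sheaves on the mixing bundle, and that the limits defining $K_0(X_\PP)$ and $K^0(\X)_\PP$ interact well with the finite-$N$ truncation (choosing $N \geq$ the degrees of the monomials actually occurring, which is where "$N$ large enough" enters). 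Once these bookkeeping issues are settled, the sign computation $[\OO_{\PP^j}] = (-1)^{|j|}\prod_i(e^{-\alpha_i}-1)^{j_i}$ in $K_0(\PP)$ and the unwinding of definitions finish the proof.
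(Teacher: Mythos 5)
Your proposal is correct and takes essentially the same route as the paper: the paper's proof of Lemma \ref{lemma1} is simply Lemma \ref{coprodlem} combined with \cite[Lemma 6.2]{GK} (see also \cite[\S3]{AGM}), which is exactly the duality-plus-mixing-space comparison you describe, with the sign arising from $[\OO_{\PP^j}]=(-1)^{|j|}(e^{-\alpha_1}-1)^{j_1}\cdots(e^{-\alpha_r}-1)^{j_r}$ in $K(\PP)$ under the Borel-construction approximation. The compatibility bookkeeping you flag (diagonal pushforward, the Euler-characteristic pairing, and the $\rho$- and $\omega_\PP$-twists in the definitions of $(\xi_w)_\PP$ and $\widetilde{\xi_u\boxtimes\xi_v}$) is precisely the content of the cited Graham--Kumar lemma, so your sketch matches the paper's argument.
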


\begin{proof}
This follows from Lemma \ref{coprodlem} and \cite[Lemma 6.2]{GK} (see also \cite[\S3]{AGM}).
\end{proof}

\begin{lem}
\label{lemtor}
For any coherent sheaf $\mathcal{S}$ on $\PP$ and any $u,v\in W$ we have:
\begin{enumerate}[a)]
\item $\Tor_i^{\OO_{\Y_\PP}}(\bar{p}^*(\mathcal{S}),\widetilde{\xi_u\boxtimes\xi_v})=0, \text{ }\forall i>0$,
\item $\Tor_i^{\OO_{\Y_\PP}}(\bar{p}^*(\mathcal{S}),\OO_{(X^u\times X^v)_\PP})=0, \text{ }\forall i>0$,
\end{enumerate}
where $\bar{p}:\Y_\PP\to\PP$ is the projection.
\end{lem}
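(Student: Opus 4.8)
The plan is to reduce the statement to a flatness/local-freeness property of the sheaves in question along the fibers of $\bar p$. The key observation is that both $\widetilde{\xi_u\boxtimes\xi_v}$ and $\OO_{(X^u\times X^v)_\PP}$ are sheaves on $\Y_\PP$ pulled back — up to twisting by line bundles — from the fiber $\Y$ of the \emph{Zariski-locally trivial} fibration $\Y_\PP\to\PP$, and $\bar p^*(\mathcal S)$ is pulled back from the base $\PP$. So the question is local on $\PP$, where the fibration trivializes and everything becomes a statement about $\Tor$ of $\mathcal S\boxtimes\OO_{\Y}$-type sheaves with $\OO_{U}\boxtimes \mathcal G$-type sheaves on $U\times\Y$ for $U\subset\PP$ affine. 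First I would pass to such a trivializing open cover $U_i$ of $\PP$, so that $\bar p^{-1}(U_i)\cong U_i\times\Y$ and, under this isomorphism, the twisting line bundles $(e^{2\rho}\LL(\rho)\boxtimes\LL(\rho))_\PP$ restrict to $q^*$ of a line bundle on $\Y$ (where $q:U_i\times\Y\to\Y$ is the projection), and $\omega_{(X_u\times X_v)_\PP}$ restricts to $q^*\omega_{X_u\times X_v}$ twisted by a line bundle from $U_i$ — using that a dualizing sheaf of a product-locally-CM scheme is computed fiberwise.

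Having made this reduction, the core is the following: if $Z\subset\Y$ is a closed subscheme flat over $\C$ (trivially true) and $\mathcal G$ a sheaf on $\Y$, then on $U\times\Y$ one has $\Tor_i^{\OO_{U\times\Y}}\!\big(r^*\mathcal S,\, q^*\mathcal G\big)=0$ for $i>0$ for \emph{any} coherent $\mathcal S$ on $U$, where $r:U\times\Y\to U$. This is because $r^*\mathcal S$ has a finite free resolution by $\OO_U$-flat modules pulled back from $U$, and tensoring $q^*\mathcal G$ (which is flat over $U$, being pulled back along the projection) against a complex of sheaves flat over $U$ computes $\Tor$ with no higher terms — concretely, one takes a finite free resolution $\mathcal F_\bullet\to\mathcal S$ on the affine (or locally affine) $U$, pulls back to get $r^*\mathcal F_\bullet\to r^*\mathcal S$ (still exact since $r$ is flat), and then $r^*\mathcal F_\bullet\otimes q^*\mathcal G$ is exact because each $q^*\mathcal G$-tensor of a free $\OO_U$-module is exact, using that $q^*\mathcal G$ is flat as an $\OO_U$-module. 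Applying this with $\mathcal G=\xi_u\boxtimes\xi_v$ for part a) and $\mathcal G=\OO_{X^u\times X^v}$ for part b) gives the vanishing on each $\bar p^{-1}(U_i)$, and since $\Tor$ sheaves are local this glues to the vanishing on $\Y_\PP$.

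The step I expect to be the main obstacle is verifying the compatibility of the twisting data with the local trivializations — i.e., checking carefully that under $\bar p^{-1}(U)\cong U\times\Y$ the equivariant line bundles $(e^{2\rho}\LL(\rho)\boxtimes\LL(\rho))_\PP$ and the relative dualizing sheaf $\omega_{(X_u\times X_v)_\PP}$ really do restrict to pullbacks (tensored with line bundles from $U$) of the corresponding objects on $\Y$, so that $\widetilde{\xi_u\boxtimes\xi_v}|_{\bar p^{-1}(U)}$ is genuinely of the form "(line bundle from $U$) $\otimes\, q^*(\xi_u\boxtimes\xi_v)$". For the line-bundle part this is essentially the definition of the associated-bundle construction $E(T)_\PP\times^T(-)$; for the dualizing sheaf one invokes that $\omega$ of a scheme which is locally a product is the external tensor product of the dualizing sheaves of the factors (applied to $X_\PP\times_\PP X_\PP$ mapping to $\PP$, with the fiber $X_u\times X_v$ smooth-enough — CM — so $\omega_{(X_u\times X_v)_\PP}\cong \omega_{(X_u\times X_v)_\PP/\PP}\otimes p^*\omega_\PP$ and the relative dualizing sheaf restricts fiberwise). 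Once this bookkeeping is in place, the $\Tor$-vanishing itself is the soft flat-base-change argument above and requires no geometry of Schubert varieties beyond what is already recorded; indeed one does not even need $\xi_u$ or $X^u$ to be special — only that they are coherent sheaves on the fiber $\Y$ pulled back along the projection.
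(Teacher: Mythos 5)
Your proposal is correct and follows essentially the same route as the paper: both reduce to a trivializing open set $\mathcal{U}\subset\PP$, where $\bar p^*\mathcal{S}$ becomes $\mathcal{S}\boxtimes\OO_{\Y}$ and the other sheaf becomes (up to a line-bundle twist from $\mathcal{U}$, which is harmless for $\Tor$-vanishing) an external product $\OO_{\mathcal{U}}\boxtimes(\xi_u\boxtimes\xi_v)$, respectively $\OO_{\mathcal{U}}\boxtimes(\OO_{X^u}\boxtimes\OO_{X^v})$, and then the vanishing is the soft Kunneth/flat-over-the-base computation you describe. The "main obstacle" you flag is exactly the bookkeeping the paper dispenses with by stating the local isomorphisms directly, and it is indeed immaterial for the conclusion since invertible twists do not affect $\Tor$-vanishing.
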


\begin{proof}
As the statements are local in $\PP$, we may replace $\Y_{\PP}$ by $\mathcal{U} \times \Y$, for some open set $\mathcal{U} \subset \PP$.  Then,
$$\bar{p}^* \mathcal{S}\simeq S\boxtimes \OO_{\Y}$$
$$\widetilde{\xi_u\boxtimes\xi_v}\simeq\OO_{\mathcal{U}}\boxtimes(\xi_u\boxtimes\xi_v)$$
$$\OO_{(X^u\times X^v)_{\PP}}\simeq \OO_\mathcal{U}\boxtimes(\OO_{X^u}\boxtimes\OO_{X^v}) .$$\hspace{.004in}
Applying the above, followed by the Kunneth formula, gives, for part a), 
\begin{align*} 
\Tor_i^{\OO_{\mathcal{U}\times\Y}}(\bar{p}^*(\mathcal{S}),\widetilde{\xi_u\boxtimes\xi_v}) &=  \Tor_i^{\OO_{\mathcal{U}}\boxtimes\OO_{\Y}}(\mathcal{S}\boxtimes\OO_{\Y},\OO_\mathcal{U}\boxtimes(\xi_u\boxtimes\xi_v)) \\ 
&= \bigoplus_{j+k=i} \Tor_j^{\OO_\mathcal{U}}(\mathcal{S},\OO_\mathcal{U}) \otimes \Tor_k^{\OO_{\Y}}(\OO_{\Y},\xi_u \boxtimes \xi_v) \\
&= 0 ,\,\,\,\text{for}\,\, i>0.
\end{align*}
A similar computation gives part b).
\end{proof}

\begin{lem}
\label{lemtor2}
For any coherent sheaf $\mathcal{S}$ on $\PP$ and any $u\in W$ we have:
$$\Tor_i^{\OO_{Y_\PP}}({\hat{p}}^*(\mathcal{S}),\tilde{\Delta}_*((\xi_u)_\PP))=0, \text{ }\forall i>0 ,$$
where, as earlier,  $\hat{p}:Y_\PP\to\PP$ is the projection.
\end{lem}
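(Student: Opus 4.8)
The plan is to reduce to a local computation on $\PP$, exactly as in the proof of Lemma \ref{lemtor}. Since the statement is local in $\PP$, I would replace $Y_\PP$ by $\mathcal{U}\times Y$ for an open set $\mathcal{U}\subset\PP$ over which the bundle $Y_\PP\to\PP$ trivializes, and simultaneously replace $X_\PP$ by $\mathcal{U}\times X$ so that the embedding $\tilde{\Delta}\colon X_\PP\to Y_\PP$ becomes $\mathrm{id}_{\mathcal{U}}\times\Delta\colon \mathcal{U}\times X\to\mathcal{U}\times Y$. Under this trivialization we have $\hat{p}^*\mathcal{S}\simeq \mathcal{S}\boxtimes\OO_Y$, and the pushforward $\tilde{\Delta}_*((\xi_u)_\PP)$ becomes $\OO_{\mathcal{U}}\boxtimes\Delta_*(\xi_u)$, a sheaf on $\mathcal{U}\times Y$ supported on $\mathcal{U}\times\Delta(X)$. (One should note $\tilde{\Delta}_*$ here means pushforward along the closed embedding $\tilde\Delta$; since $\tilde\Delta$ is a closed immersion there are no higher direct images, so $\tilde\Delta_*((\xi_u)_\PP)$ is genuinely a sheaf, and its restriction to $\mathcal{U}\times Y$ is the external tensor product as claimed.)

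Next I would apply the Künneth spectral sequence (or its $\Tor$ form) on the product $\mathcal{U}\times Y$:
\begin{equation*}
\Tor_i^{\OO_{\mathcal{U}\times Y}}\!\bigl(\mathcal{S}\boxtimes\OO_Y,\ \OO_{\mathcal{U}}\boxtimes\Delta_*(\xi_u)\bigr)
=\bigoplus_{j+k=i}\Tor_j^{\OO_{\mathcal{U}}}(\mathcal{S},\OO_{\mathcal{U}})\otimes_{\C}\Tor_k^{\OO_Y}(\OO_Y,\Delta_*(\xi_u)).
\end{equation*}
The first factor vanishes for $j>0$ since $\OO_{\mathcal{U}}$ is the structure sheaf, and the second factor vanishes for $k>0$ since $\OO_Y$ is the structure sheaf; hence the whole sum vanishes for $i>0$, which is exactly the claim.

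The one point that genuinely needs care—and which I expect to be the main (albeit modest) obstacle—is the identification $\tilde\Delta_*((\xi_u)_\PP)|_{\mathcal{U}\times Y}\simeq \OO_{\mathcal{U}}\boxtimes\Delta_*(\xi_u)$, i.e.\ checking that the locally-trivializing isomorphism $X_\PP|_{\mathcal{U}}\simeq\mathcal{U}\times X$ is compatible with the isomorphism $Y_\PP|_{\mathcal{U}}\simeq\mathcal{U}\times Y$ in a way that carries the embedding $\tilde\Delta$ to $\mathrm{id}\times\Delta$, and then that $(\xi_u)_\PP$ restricts to $\OO_{\mathcal{U}}\boxtimes\xi_u$. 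The latter follows from the definition $(\xi_u)_\PP=(e^\rho\LL(\rho))_\PP\otimes\omega_{(X_u)_\PP}$ together with Proposition \ref{xiomega}: on the trivializing chart $(e^\rho\LL(\rho))_\PP$ restricts to $\OO_{\mathcal U}\boxtimes (e^\rho\LL(\rho))$ and $\omega_{(X_u)_\PP}$ restricts to $\OO_{\mathcal U}\boxtimes\omega_{X_u}$ (the relative dualizing sheaf of a locally trivial CM fibration restricts fiberwise to the dualizing sheaf of the fiber), so the product restricts to $\OO_{\mathcal U}\boxtimes\bigl(e^\rho\LL(\rho)\otimes\omega_{X_u}\bigr)=\OO_{\mathcal U}\boxtimes\xi_u$. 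Once this bookkeeping is in place the Künneth argument is immediate, paralleling verbatim the proof of Lemma \ref{lemtor}.
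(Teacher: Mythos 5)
Your proposal is correct and follows essentially the same route as the paper: localize over a trivializing open set $\mathcal{U}\subset\PP$, identify $\hat{p}^*\mathcal{S}\simeq\mathcal{S}\boxtimes\OO_Y$ and $\tilde{\Delta}_*((\xi_u)_\PP)\simeq\OO_{\mathcal{U}}\boxtimes\Delta_*(\xi_u)$, and conclude by the Künneth argument exactly as in Lemma \ref{lemtor}. The extra bookkeeping you supply on the compatibility of trivializations and the restriction of $(\xi_u)_\PP$ is a correct elaboration of what the paper leaves implicit.
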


\begin{proof}
As before, since the statement is local in $\PP$, we may replace $Y_{\PP}$ by $\mathcal{U} \times Y$, for some open set $\mathcal{U} \subset \PP$.  Then,
$${\hat{p}}^* S\simeq S\boxtimes \OO_{Y},$$
$$\tilde{\Delta}_*((\xi_u)_\PP)=\OO_{\mathcal{U}}\boxtimes\Delta_*(\xi_u).$$\hspace{.004in}
Now, proceed as in the proof of Lemma \ref{lemtor}.
\end{proof}

\begin{lem}
\label{lemcj} With notation as in (\ref{eqcj}) we have $$c^w_{u,v}(j)=\langle [\OO_{(X^u\times X^v)_\PP}],{\hat{p}}^*[\OO_{\PP_j}(-\partial\PP_j)]\cdot\tilde{\Delta}_*((\xi_w)_\PP) \rangle, $$ where the pairing $$\langle\text{ },\text{ }\rangle:K^0(\Y_\PP)\otimes K_0(Y_\PP)\to \Z$$ is defined (similar to (\ref{pairing})) by $$\langle[\mathcal{S}],[\mathcal{F}]\rangle=\sum_i(-1)^i\chi(\Y_\PP,\Tor^{\OO_{\Y_\PP}}_i(\mathcal{S},\mathcal{F})),$$  where $\chi$ denotes the Euler-Poincar\'{e} characteristic, and (as earlier) the map $\hat{p}:Y_\PP\to\PP$ denotes the projection.
\end{lem}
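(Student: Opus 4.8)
I would pair both sides of (\ref{eqcj}) against $[\OO_{(X^u\times X^v)_\PP}]$, after multiplying through by $\hat p^*[\OO_{\PP_j}(-\partial\PP_j)]$, and check that the resulting pairing isolates the coefficient $c^w_{u,v}(j)$. Writing (\ref{eqcj}) with summation indices $(a,b,k)$, the lemma is equivalent to the ``dual basis'' identity
\begin{equation*}
\big\langle\, [\OO_{(X^u\times X^v)_\PP}],\ \hat p^*\!\big([\OO_{\PP_j}(-\partial\PP_j)]\cdot[\OO_{\PP^k}]\big)\cdot[\widetilde{\xi_a\boxtimes\xi_b}] \,\big\rangle \ =\ \delta_{u,a}\,\delta_{v,b}\,\delta_{j,k};
\end{equation*}
granting this, multiplying by $c^w_{a,b}(k)$ and summing over $(a,b,k)$ gives the claim. (The sum in (\ref{eqcj}) is finite, since $\tilde\Delta_*((\xi_w)_\PP)$ is supported in $(Y_n)_\PP$ for $n=\ell(w)$, so no limiting issue arises.)

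\emph{Reduction to two model computations.} By Proposition \ref{xiomega}, $\widetilde{\xi_a\boxtimes\xi_b}$ is the mixing-space version $(\xi_a\boxtimes\xi_b)_\PP$ of the $T$-equivariant sheaf $\xi_a\boxtimes\xi_b$ on $Y$, supported on $(X_a\times X_b)_\PP$. Exactly as in the proofs of Lemmas \ref{lemtor} and \ref{lemtor2} --- work locally over $\PP$, apply K\"unneth, and use Lemma \ref{tor2} --- one gets $\Tor^{\OO_{\Y_\PP}}_i(\OO_{(X^u\times X^v)_\PP},\widetilde{\xi_a\boxtimes\xi_b})=0$ for all $i>0$, so $\OO_{(X^u\times X^v)_\PP}\otimes^L_{\OO_{\Y_\PP}}\widetilde{\xi_a\boxtimes\xi_b}$ is the honest sheaf $\mathcal{M}_\PP$, where $\mathcal{M}:=\OO_{X^u\times X^v}\otimes_{\OO_{\Y}}(\xi_a\boxtimes\xi_b)$. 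As $\PP$ is smooth, $\hat p^*$ of any $K$-class is perfect, so the projection formula for $\bar p:\Y_\PP\to\PP$ turns the left side above into $\chi\big(\PP,\ \beta\cdot[R\hat p_*\mathcal{M}_\PP]\big)$ with $\beta:=[\OO_{\PP_j}(-\partial\PP_j)]\cdot[\OO_{\PP^k}]\in K^0(\PP)$.

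\emph{The two computations.} For $[R\hat p_*\mathcal{M}_\PP]$: from $[\xi_c]=[\OO_{X_c}]-[\OO_{\partial X_c}]$ together with Lemmas \ref{tor1}, \ref{tor3} and $(*)$, one has in $K^T_0(Y)$
\begin{equation*}
[\mathcal{M}]=\big([\OO_{X^u\cap X_a}]-[\OO_{X^u\cap\partial X_a}]\big)\boxtimes\big([\OO_{X^v\cap X_b}]-[\OO_{X^v\cap\partial X_b}]\big).
\end{equation*}
For any $T$-stable closed $Z\subseteq X$ that is a finite union of intersections of Schubert and opposite Schubert varieties, $Z$ is connected when non-empty, $H^{>0}(Z,\OO_Z)=0$ (by \cite[Corollary 3.2]{KSch} and the Mayer--Vietoris induction already used in the proof of Proposition \ref{dual}), and $H^0(Z,\OO_Z)=\C$ with trivial $T$-action; hence by flat base change along the Zariski-locally trivial $T$-bundle $E(T)_\PP\to\PP$ (the supports being finite-dimensional and proper over $\PP$) one gets $R^i\hat p_*\big((\OO_{Z}\boxtimes\OO_{Z'})_\PP\big)=E(T)_\PP\times^T H^i(Y,\OO_Z\boxtimes\OO_{Z'})$, which is $\OO_\PP$ for $i=0$ (when $Z,Z'\neq\emptyset$) and $0$ otherwise. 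Summing the four terms and using $X^u\cap X_c\neq\emptyset\iff u\le c$ and $X^u\cap\partial X_c\neq\emptyset\iff u<c$, this yields $[R\hat p_*\mathcal{M}_\PP]=(\mathbf{1}_{u\le a}-\mathbf{1}_{u<a})(\mathbf{1}_{v\le b}-\mathbf{1}_{v<b})[\OO_\PP]=\delta_{u,a}\delta_{v,b}[\OO_\PP]$. For $\chi(\PP,\beta)$: by K\"unneth over the $r$ copies of $\PP^N$, $\chi(\PP,\beta)=\prod_{i=1}^r\chi\big(\PP^N,[\OO_{\PP^{j_i}}(-1)]\cdot[\OO_{\PP^{N-k_i}}]\big)$, and on a single $\PP^N$, using transverse linear subspaces (the $K$-class being independent of the choice) so that $\chi(\PP^N,[\OO_{\PP^{N-k}}]\cdot[\OO_{\PP^j}])=\mathbf{1}_{j\ge k}$ together with $[\OO_{\PP^j}(-1)]=[\OO_{\PP^j}]-[\OO_{\PP^{j-1}}]$, one gets $\chi(\PP^N,[\OO_{\PP^{N-k}}]\cdot[\OO_{\PP^j}(-1)])=\mathbf{1}_{j\ge k}-\mathbf{1}_{j\ge k+1}=\delta_{j,k}$, hence $\chi(\PP,\beta)=\delta_{j,k}$. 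Multiplying the two computations gives $\delta_{u,a}\delta_{v,b}\delta_{j,k}$, establishing the dual-basis identity and hence the lemma.

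\textbf{Main obstacle.} The technical heart is the middle step: legitimately factoring the mixing-space Euler pairing into the ordinary Euler pairing on $\PP=(\PP^N)^r$ times the (trivial-character part of the) $T$-equivariant Euler pairing on $X\times X$. This relies on (a) the new $\Tor$-vanishing $\Tor^{\OO_{\Y_\PP}}_i(\OO_{(X^u\times X^v)_\PP},\widetilde{\xi_a\boxtimes\xi_b})=0$, proved by the ``local over $\PP$ plus K\"unneth'' device of Lemmas \ref{lemtor}/\ref{lemtor2} and Lemma \ref{tor2}; (b) the identification of $R^i\hat p_*$ of a mixed sheaf with the mixing of its fiberwise cohomology, which uses Zariski-local triviality of $E(T)_\PP\to\PP$ and properness over $\PP$ of the relevant finite-dimensional supports; and (c) the vanishing $H^{>0}(Z,\OO_Z)=0$ for the intersections of (opposite) Schubert varieties above, citing \cite[Corollary 3.2]{KSch} and the induction in the proof of Proposition \ref{dual}. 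The projection formula and the elementary $\PP^N$ computation are then routine.
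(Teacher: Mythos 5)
Your proof is correct and is essentially the paper's own argument: substituting (\ref{eqcj}) reduces the lemma to the orthogonality of the mixed classes, which both you and the paper obtain by factoring the Euler pairing on the mixing space into the fiberwise duality of Proposition \ref{dual} (made legitimate by the ``local over $\PP$ plus K\"unneth'' Tor-vanishing device of Lemmas \ref{lemtor} and \ref{lemtor2}) times the duality on $\PP$ between $\{[\OO_{\PP^k}]\}$ and $\{[\OO_{\PP_j}(-\partial\PP_j)]\}$, which the paper imports as \cite[Identity 20]{K} and you verify directly by the elementary $\PP^N$ computation. One caveat worth making explicit: your identification $\widetilde{\xi_a\boxtimes\xi_b}\simeq(\xi_a\boxtimes\xi_b)_\PP$ via Proposition \ref{xiomega} is exact only if $\omega_{(X_a\times X_b)_\PP}$ is read as the dualizing sheaf relative to $\PP$ (the reading the paper itself uses implicitly, e.g.\ in the sequence (\ref{ses})); with the absolute dualizing sheaf an extra factor $\hat{p}^*\omega_\PP$ appears, which would replace $\chi(\PP,\beta)=\delta_{j,k}$ by nontrivial binomial coefficients, so the convention needs to be fixed for your two model computations to multiply out to $\delta_{u,a}\delta_{v,b}\delta_{j,k}$.
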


\begin{proof}
First, we have $$\langle [\OO_{(X^u\times X^v)_\PP}], {\hat{p}}^*[\OO_{\PP_j}(-\partial\PP_j)]\cdot\tilde{\Delta}_*((\xi_w)_\PP) \rangle=\langle \bar{p}^*[\OO_{\PP_j}(-\partial\PP_j)]\cdot[\OO_{(X^u\times X^v)_\PP}],\tilde{\Delta}_*((\xi_w)_\PP) \rangle ,$$ where $\bar{p}:\Y_\PP\to\PP$ denotes the projection.  To see this, first take a locally free finite resolution of $\OO_{(X^u\times X^v)_\PP}$ on a quasi-compact open subset of $\Y_\PP$ and a locally free finite resolution of $\OO_{\PP_j}(-\partial\PP_j)$ on $\PP$.  Then, use the fact that for a locally free sheaf $\mathcal{F}$ on a quasi-compact open subset of $\Y_\PP$ and a locally free sheaf $\mathcal{G}$ on $\PP$, we have $$\langle\mathcal{F},{\hat{p}}^*(\mathcal{G})\cdot\tilde{\Delta}_*((\xi_w)_\PP)\rangle=\langle \bar{p}^*(\mathcal{G})\cdot\mathcal{F}, \tilde{\Delta}_*((\xi_w)_\PP)\rangle .$$

Now, the lemma follows from (\ref{eqcj}), Lemma \ref{lemtor}, Proposition \ref{dual} and \cite[Identity 20]{K}.
\end{proof}

We now introduce the mixing group $\Gamma$ which acts on $\Y_\PP$. 

\begin{defn}(Mixing group $\Gamma$)
\label{mixgrp}
Let $T$ act on $B$ via $$t\cdot b=tbt^{-1}$$ for $t\in T, b\in B$.  This action induces a natural action of $\Delta T$ on $B\times B$.  Consider the ind-group scheme over $\PP$: $$(B^2)_\PP=E(T)_\PP\times^T B^2\to \PP.$$  
Let $\Gamma_0$ denote the group of global sections of $(B^2)_\PP$ under pointwise multiplication.  Since $GL(N+1)^r$ acts canonically on $(B^2)_\PP$ in a way compatible with its action on $\PP$, it also acts on $\Gamma_0$ via inverse pull-back.  We define the mixing group $\Gamma$ to be the semi direct product $\Gamma:=\Gamma_0\rtimes GL(N+1)^r$:

$$1\to\Gamma_0\to\Gamma\to GL(N+1)^r\to 1.$$
\end{defn}

By \cite[Lemmas 4.7 and 4.8 (more precisely, the paragraph following these lemmas)]{K},  we have the following two lemmas:

\begin{lem}
$\Gamma$ is connected. 
\end{lem}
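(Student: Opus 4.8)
The plan is to reduce connectedness of $\Gamma$ to connectedness of $\Gamma_0$, and then to analyze $\Gamma_0=\Gamma(\PP,(B^2)_\PP)$ via the structure of $B$. In the exact sequence $1\to\Gamma_0\to\Gamma\to GL(N+1)^r\to 1$ of Definition \ref{mixgrp}, the quotient $GL(N+1)^r$ is connected, so once $\Gamma_0$ is connected it follows that $\Gamma$ is connected: every element of $\Gamma$ is a product of an element of $\Gamma_0$ and an element of $GL(N+1)^r$, each of which lies in a connected subset containing the identity. Thus the whole problem is to show $\Gamma_0$ is connected.

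For this I would use the semidirect product decomposition $B=H\ltimes U$, where $U$ is the pro-unipotent radical (cf. \cite[Chapter 6]{Kbook}). The $T$-action on $B$ by conjugation preserves this decomposition: it is trivial on $H$ (since $H$ is abelian and $T=H/Z(G^{\text{min}})$), and it stabilizes $U$. Hence $(B^2)_\PP=(H^2)_\PP\ltimes(U^2)_\PP$ and correspondingly $\Gamma_0=\Gamma\bigl(\PP,(H^2)_\PP\bigr)\ltimes\Gamma\bigl(\PP,(U^2)_\PP\bigr)$, and it suffices to show each factor is connected. Since $T$ acts trivially on $H\times H$, the bundle $(H^2)_\PP$ is the trivial bundle $\PP\times(H\times H)$; because $\PP=(\PP^N)^r$ is connected and proper while $H\times H\cong(\C^*)^{2r}$ is affine, every morphism $\PP\to H\times H$ is constant, so $\Gamma\bigl(\PP,(H^2)_\PP\bigr)\cong H\times H$, a torus, which is connected.

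For the unipotent factor I would exhaust $U$ by finite-dimensional $T$-stable quotient unipotent groups, each carrying a $T$-equivariant central filtration whose successive subquotients are isomorphic to $\mathbb{G}_a$ with a linear (character) $T$-action. This exhibits $(U^2)_\PP$ as filtered by group subschemes over $\PP$ whose successive subquotients are $T$-associated vector bundles $\mathcal{E}$ on $\PP$. Passing to global sections, and using that $\PP$ is proper so each $H^0(\PP,\mathcal{E})$ is a finite-dimensional $\C$-vector space, $\Gamma\bigl(\PP,(U^2)_\PP\bigr)$ is an inverse limit of iterated extensions of the vector groups $H^0(\PP,\mathcal{E})\cong\C^m$. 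Each such piece is connected, an iterated extension of connected unipotent groups is connected, and an inverse limit of connected affine groups is connected; hence $\Gamma\bigl(\PP,(U^2)_\PP\bigr)$ is connected. Combining with the previous paragraph shows $\Gamma_0$, and therefore $\Gamma$, is connected.

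I expect the main obstacle to be the pro-unipotent/ind-scheme bookkeeping in the last step: making precise the "inverse limit of iterated extensions" description of $\Gamma\bigl(\PP,(U^2)_\PP\bigr)$, checking that passage to global sections is compatible with the filtration (one can sidestep $H^1$-issues by arguing on the level of functors of points over test schemes), and verifying that connectedness is preserved under the relevant inverse limit. This is exactly what is carried out in \cite[Lemmas 4.7 and 4.8, and the paragraph following them]{K}, which may be cited directly.
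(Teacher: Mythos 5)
Your proposal is correct and takes essentially the same route as the paper: the paper's proof of this lemma is simply the citation of \cite[Lemmas 4.7 and 4.8 and the paragraph following them]{K}, and the argument you sketch (reduction to $\Gamma_0$, the decomposition $B=H\ltimes U$ with sections of the $H$-part constant because $\PP$ is proper and $H^2$ is affine, and the pro-unipotent filtration exhibiting the remaining section group as built from finite-dimensional vector groups) is precisely the argument carried out there, which you also invoke for the remaining bookkeeping. There is no gap beyond the pro-unipotent/inverse-limit point you already flag and correctly defer to \cite{K}.
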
 

\begin{lem}\label{mixgroup}
For any $\bar{e}\in\PP$ and any $(b,b')$ in the fiber of $(B^2)_\PP$ over $\bar{e}$ there exists a section $\gamma\in\Gamma_0$ such that $\gamma(\bar{e})=(b,b')$.  
\end{lem}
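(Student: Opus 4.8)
The claim is essentially a surjectivity statement for the evaluation map $\Gamma_0 \to (B^2)_\PP|_{\bar{e}}$ at a fixed point $\bar{e}\in\PP$, so the plan is to reduce it to the corresponding surjectivity statement on a single affine chart of $\PP$ containing $\bar{e}$, and then to produce the desired global section by an explicit extension-by-a-partition-of-unity-type argument adapted to the algebraic (ind-scheme) setting. First I would recall that $(B^2)_\PP = E(T)_\PP\times^T B^2$ is an ind-group scheme over $\PP$ whose fiber over any point is (non-canonically) $B\times B$, and whose structure group acts through the $T$-action $t\cdot(b,b') = (tbt^{-1}, tb't'^{-1})$ coming from Definition \ref{mixgrp}. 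Since $B$ is a (pro-)unipotent extension of $H$ and the relevant $T$-action on $B$ is by conjugation, the bundle $(B^2)_\PP$ is built out of the line bundles $\OO_{\PP^N}(-1)$ (one factor of $\PP^N$ for each simple root direction) via the root-space decomposition of $\mathrm{Lie}(B)$; concretely, after the identification $T\simeq(\C^*)^r$, the character by which $T$ acts on a root vector $e_\beta$ (for $\beta = \sum m_i\alpha_i$ a positive root) is $t\mapsto \prod_i t_i^{-m_i}$, which corresponds to the line bundle $\bigboxtimes_i \OO_{\PP^N}(-m_i)$ on $\PP = (\PP^N)^r$.

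The key step is then the following: for $N$ large (which we are free to assume) these negative-twist line bundles, and hence all the sections we need, admit sufficiently many global sections to interpolate a prescribed value at a single point. More precisely, I would argue as follows. Fix a homogeneous coordinate system on each $\PP^N$ factor so that $\bar{e}$ has a representative $\tilde{e}\in E(T)_\PP = (\C^{N+1}\setminus\{0\})^r$, say $\tilde{e}=(e^{(1)},\dots,e^{(r)})$ with each $e^{(k)}\in\C^{N+1}\setminus\{0\}$. A global section of $(B^2)_\PP$ over $\PP$ is the same as a $T$-equivariant morphism $E(T)_\PP\to B^2$; to realize a prescribed fiber value $(b,b')$ over $\bar{e}$, I would write the sought section in terms of the exponential coordinates on $B$ (valid since $B$ is a projective limit of finite-dimensional solvable groups, so its unipotent radical is handled by the exponential and its torus part $H$ by the characters $e^{\pm\alpha_i}$), reducing the problem to: for each weight $\mu$ of $T$ occurring in $\mathrm{Lie}(B)$, and for each prescribed value in $\C$, find a $T$-equivariant function $E(T)_\PP\to \C_\mu$ taking that value at $\tilde{e}$. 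Such a function is exactly a global section of the corresponding line bundle on $\PP$, which (being a non-positive external tensor product of $\OO(\pm 1)$'s, with $N$ large) is globally generated, hence has a section nonvanishing at $\bar{e}$, and we may scale it to the desired value. For the semisimple part $H$ one uses the characters $e^{\alpha_i}$ directly; for the $H$-valued component this produces a section into $H$ rather than its Lie algebra, but the same globally-generated-line-bundle argument applies to each coordinate $e^{\pm\alpha_i}$ of the $H$-factor. Assembling these coordinate-wise sections (the pro-unipotent part converges because $B$ is complete along the negative roots, so only finitely many root directions are ``visible'' on each quasi-compact piece $V^S$, and the construction is compatible with the inverse limit) yields the required $\gamma\in\Gamma_0$ with $\gamma(\bar{e})=(b,b')$.

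The main obstacle I anticipate is the bookkeeping in the infinite-dimensional/ind-scheme setting: one must check that the coordinate-wise construction actually patches to a genuine global section of the ind-group scheme $(B^2)_\PP$ (rather than merely a formal sum), i.e. that the section is regular on each $V^S_\PP$ and the resulting elements of $\Gamma_0$ over different $S$ are compatible. This is precisely where the completion of $G$ along the negative roots is used: it guarantees that the relevant products over positive roots make sense. Everything else — the identification of the $T$-action on root spaces with line bundles on $(\PP^N)^r$, the global generation of those line bundles for $N\gg 0$, and the scaling to hit a prescribed value — is routine once that framework is set up, and indeed this is exactly the content of \cite[Lemmas 4.7 and 4.8 and the following paragraph]{K} that the statement cites, so I would ultimately quote that reference for the technical verification after explaining the mechanism above.
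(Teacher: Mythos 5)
Your overall mechanism is the right one (and it is exactly the mechanism behind the cited \cite[Lemmas 4.7, 4.8]{K}, which is all the paper itself invokes — no independent proof is given here), but as written your key step contains a sign error that makes it false. Under the paper's convention $E(T)_\PP\times^T\C_\chi$ with $(e,v)\cdot t=(et,\chi(t)^{-1}v)$, the character of weight $k$ of $\C^*$ corresponds to $\OO_{\PP^N}(-k)$; since the identification $T\simeq(\C^*)^r$ is $t\mapsto(e^{-\alpha_1}(t),\dots,e^{-\alpha_r}(t))$, a root vector for $\beta=\sum_i m_i\alpha_i$ transforms by $e^{\beta}(t)=\prod_i t_i^{-m_i}$, and the associated line bundle on $\PP$ is therefore $\OO_{\PP^N}(m_1)\boxtimes\cdots\boxtimes\OO_{\PP^N}(m_r)$ with $m_i\geq 0$ — \emph{non-negative} twists — not $\OO_{\PP^N}(-m_1)\boxtimes\cdots\boxtimes\OO_{\PP^N}(-m_r)$ as you assert. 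Your parenthetical claim that a ``non-positive'' twist is globally generated is false: $\OO_{\PP^N}(-m)$ with $m>0$ has no nonzero global sections at all, for any $N$, so if the bundles really were negatively twisted the evaluation map could not be surjective and the lemma would fail. The sign is not a cosmetic issue; the entire reason the paper identifies $T$ with $(\C^*)^r$ via the \emph{negatives} of the simple roots is precisely to make these root-space bundles non-negatively twisted, hence globally generated, so that sections hitting a prescribed fiber value exist. Once the sign is corrected, your interpolation argument (choose, for each root space appearing in the relevant finite-dimensional quotient, a section nonvanishing at $\bar{e}$ and scale; assemble through the pro-unipotent filtration) is the intended proof.

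Two smaller points. First, ``$N$ large'' plays no role in global generation: $\OO(m)$ with $m\geq 0$ is globally generated on $\PP^N$ for every $N$ (largeness of $N$ is needed elsewhere, for the approximation of the classifying space). Second, the $H$-component of $B$ needs no line-bundle argument at all: conjugation by $T$ is trivial on $H$, so the corresponding piece of $(B^2)_\PP$ is the trivial bundle $\PP\times H^2$ and constant sections already realize any prescribed value; your proposed treatment via the coordinates $e^{\pm\alpha_i}$ would, if taken literally, reintroduce the negativity problem for one of the two signs, and is unnecessary.
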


We define the action of $\Gamma$ on $\Y_\PP$ via $$(\gamma,g)\cdot [e,(x,y)]=[ge,\gamma(ge)\cdot(x,y)]$$ for $\gamma\in\Gamma_0,g\in GL(N+1)^r,e\in E(T)_\PP,$ and $(x,y)\in \Y$, where the action of $\Gamma_0$ is via the standard action of $B\times B$ on $\Y=\X^2$.  It follows from Lemma \ref{mixgroup} that the orbits of the $\Gamma$-action on $\Y_\PP$ are precisely equal to $\{(C_w\times C_{w'})_\PP\}$.

\section{Statement of main results}

We now come to our main technical result.

\begin{thm}
\label{main}
For general $\gamma\in\Gamma$ and any $u,v,w\in W,j\in[N]^r$ we have:
\begin{enumerate}[a)]
\item For all $i>0$, $$\Tor_i^{\OO_{\Y_\PP}}\left(\gamma_*\OO_{(X^u\times X^v)_\PP},{\hat{p}}^*(\OO_{\PP_j}(-\partial\PP_j))\otimes_{\OO_{Y_\PP}}\tilde{\Delta}_*((\xi_w)_\PP)\right)=0 .$$
\item Assume $c^w_{u,v}(j) \neq 0$. For all $p\neq |j|+\ell(w)-\ell(u)-\ell(v)$, $$H^p\left(\Y_\PP,\gamma_*\OO_{(X^u\times X^v)_\PP}\otimes_{\OO_{\Y_\PP}}\left({\hat{p}}^*(\OO_{\PP_j}(-\partial\PP_j))\otimes_{\OO_{Y_\PP}}\tilde{\Delta}_*((\xi_w)_\PP)\right)\right)=0 ,$$ where $|j| :=\sum_{i=1}^r j_i$.
\end{enumerate}
\end{thm}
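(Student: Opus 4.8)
\textbf{Proof plan for Theorem \ref{main}.}

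The plan is to follow the strategy of Anderson--Griffeth--Miller \cite{AGM}, adapted to the Kac-Moody setting using the local $\Tor$-vanishing results proved in \cite{K}. The two parts have quite different flavors: part a) is a transversality/flatness argument that reduces to finite-dimensional schemes, while part b) is the genuinely hard cohomology-vanishing statement that requires the geometry of the scheme $\ZZ$ developed in Sections 7--9.

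For part a), I would first reduce to a finite-dimensional situation. Since $\OO_{(X^u\times X^v)_\PP}$, $\OO_{\PP_j}(-\partial\PP_j)$ and $\tilde\Delta_*((\xi_w)_\PP)$ are all supported on finite unions of Schubert-type subschemes, for fixed $u,v,w,j$ only finitely many Schubert cells are involved, so one can replace $\X$ by a large smooth finite-dimensional $B^-$- (and $B$-) stable open subset $V$ (as in the $V^S$ of Section 3) and work inside $V_\PP$, which is a smooth finite-dimensional variety. Next, I would use the fact (Lemma \ref{lemtor2}, Lemma \ref{lemtor}) that pulling back from $\PP$ is $\Tor$-harmless, so the ${\hat p}^*(\OO_{\PP_j}(-\partial\PP_j))$ factor can be stripped off after tensoring, and the problem becomes the vanishing of $\Tor_i^{\OO_{Y_\PP}}(\gamma_*\OO_{(X^u\times X^v)_\PP},\tilde\Delta_*((\xi_w)_\PP))$ for $i>0$. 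Since $\tilde\Delta_*((\xi_w)_\PP)$ is (a twist of) a dualizing sheaf on the CM subscheme $(X_w)_\PP$ embedded diagonally, this is a statement that the subscheme $\gamma\cdot(X^u\times X^v)_\PP$ and the diagonally-embedded $(X_w)_\PP$ meet in the expected dimension with no higher $\Tor$; for general $\gamma\in\Gamma$ this is exactly where Sierra's generic transversality theorem (Theorem \ref{trans}) applies, because the $\Gamma$-orbits on $\Y_\PP$ are the $(C_{w_1}\times C_{w_2})_\PP$ and the diagonal subvariety is preserved. Combined with the local $\Tor$-vanishing of \cite{K} for opposite Schubert varieties against Schubert varieties (Lemmas \ref{tor1}--\ref{tor2}) to handle the non-transverse strata, this yields a).

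For part b), the outline is: using part a) and Lemma \ref{lemcj}, the Euler characteristic $\sum_p (-1)^p H^p(\cdots)$ computes $c^w_{u,v}(j)$ up to a sign, so it suffices to show the cohomology is concentrated in the single degree $|j|+\ell(w)-\ell(u)-\ell(v)$. I would first identify the sheaf whose cohomology we must compute: by the projection formula and the definitions of $(\xi_u)_\PP$, $\widetilde{\xi_u\boxtimes\xi_v}$ in terms of dualizing sheaves and $\LL(\rho)$-twists, $\gamma_*\OO_{(X^u\times X^v)_\PP}\otimes {\hat p}^*(\OO_{\PP_j}(-\partial\PP_j))\otimes\tilde\Delta_*((\xi_w)_\PP)$ should be rewritten, via the intersection with the diagonal, as (a direct image of) a twisted dualizing sheaf of the fiber product $\ZZ_\gamma:=\gamma(X^u\times X^v)_\PP\cap \tilde\Delta((X_w)_\PP)$ — more precisely a twist of $\omega$ by $\OO(-\partial)$ of the boundary divisor $\partial\ZZ$. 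Here one invokes Sections 7--8: $\ZZ$ (the total family over $\Gamma$) is irreducible, normal, with rational singularities (Propositions \ref{Z}, \ref{ratlsing}), $\partial\ZZ$ is CM (Proposition \ref{dZCM}), and the relative Kawamata--Viehweg vanishing (Theorem \ref{KV}) applied along $\pi:\ZZ\to\Gamma$ gives $R^i\pi_*\omega_{\ZZ}(\partial\ZZ)=0$ for $i>0$ (Corollary \ref{corR^i}). Then semicontinuity/base change (Theorem \ref{thm8.2}) transfers this to the fiber $\ZZ_\gamma$ over a general $\gamma$: the higher cohomology of the relevant twisted dualizing sheaf vanishes, so cohomology lives in top degree only. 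Finally I would compute that the top degree of this fiber cohomology is exactly $\dim\ZZ_\gamma$ minus the appropriate codimension, and bookkeeping with the $\PP_j$-factor contributing $|j|$ and the Schubert dimensions contributing $\ell(w)-\ell(u)-\ell(v)$ gives the claimed single degree $|j|+\ell(w)-\ell(u)-\ell(v)$; the nonvanishing hypothesis $c^w_{u,v}(j)\neq 0$ guarantees the fiber is nonempty of the expected dimension so that this degree is meaningful.

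The main obstacle is part b), and within it the identification of the relevant sheaf with a twisted dualizing sheaf of $\ZZ_\gamma$ together with the verification that the hypotheses of relative Kawamata--Viehweg genuinely apply — i.e. that $\ZZ$ has rational singularities and that $\partial\ZZ$ is a reduced divisor with the right positivity so that $\omega_\ZZ(\partial\ZZ)$ is the $\OO(-\partial)$-twist of the relative dualizing sheaf with nef-and-big relative twist. This is precisely the package of results (Propositions \ref{Z}, \ref{ratlsing}, \ref{dZCM} and Corollary \ref{corR^i}) that the introduction flags as the place where ``several technical details had to be carefully addressed''; everything else is either a formal consequence of the pairing/duality machinery of Sections 3--4 or a direct citation of Sierra's transversality and the semicontinuity theorem.
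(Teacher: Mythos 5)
Your plan follows the paper's route in both parts: part a) via Sierra's transversality theorem (Theorem \ref{trans}) powered by the $\Tor$-vanishing of \cite{K}, and part b) via the geometry of $\ZZ$, its rational singularities, relative Kawamata--Viehweg, and semicontinuity. Two details in part a) are off but repairable: the diagonal is not $\Gamma$-stable and does not need to be --- in the paper's application the fixed, homologically transverse sheaf is $\OO_{(Y_w)_\PP}\otimes_{\OO_{\Y_\PP}}\OO_{(X^u\times X^v)_\PP}$ on the $\Gamma$-stable finite-dimensional scheme $(Y_w)_\PP$ (transversality to the orbit closures $(X_x\times X_y)_\PP$, $x,y\leq w$, coming from Lemma \ref{tor1} and K\"unneth), while the entire sheaf ${\hat{p}}^*(\OO_{\PP_j}(-\partial\PP_j))\otimes_{\OO_{Y_\PP}}\tilde{\Delta}_*((\xi_w)_\PP)$ plays the role of the arbitrary coherent $\mathcal{E}$ moved by a general $\gamma$; and the finite-dimensional reduction is carried out not on open sets $V_\PP$ (whose $\Gamma$-stability you would have to check) but by restricting to the closed subscheme $(Y_w)_\PP$ via the change-of-rings identity (\ref{eqtor=tor}) of Theorem \ref{maintor}.

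The genuine gap is in part b). The sheaf in the statement is identified (Lemma \ref{lemma2}) with the ideal sheaf $\OO_{N_\gamma}(-M_\gamma)$ of the boundary inside the fiber $N_\gamma=\gamma((X^{u,v}_w)_\PP)\cap\tilde{\Delta}((X_w)_j)$, \emph{not} with a twisted dualizing sheaf of that fiber; and if it were a dualizing-type sheaf, vanishing of its higher cohomology would concentrate cohomology in degree $0$, not in the top degree $|j|+\ell(w)-\ell(u)-\ell(v)$ asserted by the theorem, so your step ``higher cohomology of the twisted dualizing sheaf vanishes, hence cohomology lives in top degree only'' does not follow as written. The missing bridge is Serre duality on the fiber: one needs that for general $\gamma$ the fibers $N_\gamma$ and $M_\gamma$ are CM of pure (expected) dimension --- Corollary \ref{corNM1}, whose proof requires surjectivity of $\pi$ and $\pi_1$, using $c^w_{u,v}(j)\neq 0$ and the affineness of $\mu$ (Lemma \ref{affine}) --- together with the local $\Ext$ computation of Corollary \ref{corNM2}; then $H^p(N_\gamma,\OO_{N_\gamma}(-M_\gamma))\simeq H^{n-p}(N_\gamma,\omega_{N_\gamma}(M_\gamma))^*$ with $n=\dim N_\gamma$. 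One also needs that $\omega_\ZZ(\partial\ZZ)$ is flat over an open subset of $\bar{\Gamma}$ and restricts on a general fiber to $\omega_{N_\gamma}(M_\gamma)$ (the flatness and base-change arguments inside the proof of Theorem \ref{thm8.2}), before Corollary \ref{corR^i} (itself resting on Theorem \ref{thmf_*} and Theorem \ref{KV}) plus semicontinuity can be invoked. These CM-fiber and duality steps are substantive parts of the argument, not bookkeeping, and your outline omits them.
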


\begin{proof}

Deferred to the later sections.  Part a) is proved in Section $6$, while part b) is proved in Section $9$.

\end{proof}

Since $\Gamma$ is connected, Lemmas \ref{lemcj}, \ref{lemtor2}, and Theorem \ref{main} together give:

\begin{cor}
\label{maincor}
$(-1)^{\ell(w)-\ell(u)-\ell(v)+|j|}c^w_{u,v}(j)\in\Z_{\geq 0} .$
\end{cor}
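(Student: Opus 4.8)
The plan is to deduce Corollary~\ref{maincor} from Theorem~\ref{main} by computing the relevant Euler characteristic via Lemma~\ref{lemcj}, using the connectedness of $\Gamma$ to transfer the ``general $\gamma$'' vanishing statements into an identity that holds for the structure constants themselves. First I would observe that, since $\Gamma$ is connected, the class $[\gamma_*\OO_{(X^u\times X^v)_\PP}]$ in $K^0(\Y_\PP)$ is independent of $\gamma\in\Gamma$ (translation by a connected group acts trivially on $K$-theory); in particular it equals $[\OO_{(X^u\times X^v)_\PP}]$ for $\gamma=\mathrm{id}$. Hence the pairing appearing in Lemma~\ref{lemcj},
\[
c^w_{u,v}(j)=\langle [\OO_{(X^u\times X^v)_\PP}],{\hat{p}}^*[\OO_{\PP_j}(-\partial\PP_j)]\cdot\tilde{\Delta}_*((\xi_w)_\PP)\rangle,
\]
is unchanged if we replace $[\OO_{(X^u\times X^v)_\PP}]$ by $[\gamma_*\OO_{(X^u\times X^v)_\PP}]$ for a general $\gamma$.

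Next I would unwind the definition of the pairing. By Theorem~\ref{main}a), for general $\gamma$ all the higher $\Tor$ sheaves between $\gamma_*\OO_{(X^u\times X^v)_\PP}$ and ${\hat{p}}^*(\OO_{\PP_j}(-\partial\PP_j))\otimes\tilde{\Delta}_*((\xi_w)_\PP)$ vanish, so the alternating sum collapses to a single term and
\[
c^w_{u,v}(j)=\chi\!\left(\Y_\PP,\ \gamma_*\OO_{(X^u\times X^v)_\PP}\otimes\big({\hat{p}}^*(\OO_{\PP_j}(-\partial\PP_j))\otimes\tilde{\Delta}_*((\xi_w)_\PP)\big)\right).
\]
(Here one should check the $\Tor$-vanishing of Lemma~\ref{lemtor2} guarantees the object ${\hat{p}}^*(\OO_{\PP_j}(-\partial\PP_j))\otimes\tilde{\Delta}_*((\xi_w)_\PP)$ is represented by an honest sheaf rather than a virtual complex, which is implicit in the statement of Theorem~\ref{main}; this is the role of Lemma~\ref{lemtor2}.) Now apply Theorem~\ref{main}b): if $c^w_{u,v}(j)\neq 0$ then the cohomology of this sheaf on $\Y_\PP$ is concentrated in the single degree $p_0:=|j|+\ell(w)-\ell(u)-\ell(v)$, so
\[
c^w_{u,v}(j)=(-1)^{p_0}\dim_{\C} H^{p_0}\!\left(\Y_\PP,\ \gamma_*\OO_{(X^u\times X^v)_\PP}\otimes{\hat{p}}^*(\OO_{\PP_j}(-\partial\PP_j))\otimes\tilde{\Delta}_*((\xi_w)_\PP)\right),
\]
and therefore $(-1)^{p_0}c^w_{u,v}(j)=\dim H^{p_0}(\cdots)\geq 0$. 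Since $(-1)^{p_0}=(-1)^{\ell(w)-\ell(u)-\ell(v)+|j|}$, this is exactly the claimed inequality; and if $c^w_{u,v}(j)=0$ the inequality is trivially true.

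The only genuine subtlety is bookkeeping rather than a real obstacle: one must be careful that the sheaf $\tilde\Delta_*((\xi_w)_\PP)$, pushed forward from $X_\PP$ and tensored with ${\hat{p}}^*(\OO_{\PP_j}(-\partial\PP_j))$, is an actual coherent sheaf (not just a class), so that ``$H^p$'' in Theorem~\ref{main}b) makes sense and computes the Euler characteristic termwise — this is precisely why Lemma~\ref{lemtor2} was recorded, and why in Theorem~\ref{main}a) the inner tensor is written with $\otimes_{\OO_{Y_\PP}}$. One must also confirm that all cohomology groups here are finite-dimensional; this holds because everything is supported on $(X_n\times X_n)_\PP$ for $n\gg 0$, a projective scheme, so the sheaf is coherent with proper support and its cohomology is finite-dimensional, making $\chi$ and the degree-by-degree dimension count legitimate. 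Modulo these standard checks the corollary follows immediately by combining the two parts of Theorem~\ref{main} with Lemma~\ref{lemcj} and the homotopy invariance of $K$-theory under the connected group $\Gamma$.
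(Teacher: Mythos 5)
Your proposal is correct and follows essentially the same route as the paper, which deduces the corollary in one line from Lemma \ref{lemcj}, Lemma \ref{lemtor2}, Theorem \ref{main}, and the connectedness of $\Gamma$; your write-up simply makes explicit the same chain (invariance of the pairing under general $\gamma$ by connectedness, collapse of the $\Tor$ alternating sum by part a), concentration of cohomology in degree $|j|+\ell(w)-\ell(u)-\ell(v)$ by part b)). No genuinely different idea is involved, so there is nothing further to flag.
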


As an immediate consequence of Corollary \ref{maincor} and Lemma \ref{lemma1} we get:

\begin{thm}
\label{positivity}
For any symmetrizable Kac-Moody group $G$ and any $u,v,w\in W$, the structure constants of the product in the basis $\{[\OO_{X^w}]\}$ in $K^0_T(\X)$ satisfy $$(-1)^{\ell(w)+\ell(u)+\ell(v)}d^w_{u,v}\in\Z_{\geq 0}[(e^{-\alpha_1}-1),\dots,(e^{-\alpha_r}-1)].$$
\end{thm}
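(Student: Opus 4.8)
The proof of Theorem \ref{positivity} is, at this stage, essentially a matter of assembling the machinery already in place, so my plan is to trace the chain of reductions backward and verify each link. The final goal is a statement about $d^w_{u,v}\in R(T)$; by Lemma \ref{lemma1} this quantity is completely determined by the integers $d^w_{u,v}(j)$ via the expansion in powers of $(e^{-\alpha_i}-1)$, and moreover $d^w_{u,v}(j)=(-1)^{|j|}c^w_{u,v}(j)$. So it suffices to control the signs of the $c^w_{u,v}(j)$. First I would invoke Corollary \ref{maincor}, which asserts precisely that $(-1)^{\ell(w)-\ell(u)-\ell(v)+|j|}c^w_{u,v}(j)\in\Z_{\geq 0}$. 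Combining this with Lemma \ref{lemma1} gives $(-1)^{\ell(w)-\ell(u)-\ell(v)+|j|}\cdot(-1)^{|j|}d^w_{u,v}(j)=(-1)^{\ell(w)-\ell(u)-\ell(v)}d^w_{u,v}(j)\in\Z_{\geq0}$, and since $(-1)^{\ell(w)-\ell(u)-\ell(v)}=(-1)^{\ell(w)+\ell(u)+\ell(v)}$, each coefficient $d^w_{u,v}(j)$, multiplied by the universal sign $(-1)^{\ell(w)+\ell(u)+\ell(v)}$, is a nonnegative integer. Plugging back into the expansion $d^w_{u,v}=\sum_{j}d^w_{u,v}(j)(e^{-\alpha_1}-1)^{j_1}\cdots(e^{-\alpha_r}-1)^{j_r}$ yields that $(-1)^{\ell(w)+\ell(u)+\ell(v)}d^w_{u,v}$ lies in $\Z_{\geq0}[(e^{-\alpha_1}-1),\dots,(e^{-\alpha_r}-1)]$, which is the claim.

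Before writing this out I would want to double-check the one subtle point: the expansion in Lemma \ref{lemma1} is only asserted for $N$ chosen large enough depending on the fixed triple $(u,v,w)$, and the integers $d^w_{u,v}(j)$ are indexed by $j\in[N]^r$. So the argument must be run for each fixed $(u,v,w)$ separately, choosing $N$ appropriately; the final conclusion is a statement about a single fixed triple, so this causes no trouble. I would also note that Corollary \ref{maincor} itself rests on Theorem \ref{main} together with the connectedness of $\Gamma$ and Lemmas \ref{lemcj} and \ref{lemtor2} — all of which are cited as available — so no further input is needed here.

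There is really no serious obstacle in this final step itself; all the difficulty has been pushed into Theorem \ref{main}, whose parts (a) and (b) are deferred to Sections 6 and 9. If I had to name the one place where care is genuinely required, it would be getting the bookkeeping of signs and of the parameter $N$ exactly right: one must be careful that the same $N$ works simultaneously in Lemma \ref{lemma1} (the expansion) and in the definition of the $c^w_{u,v}(j)$ via \eqref{eqcj}, and that the exponent identity $\ell(w)-\ell(u)-\ell(v)\equiv\ell(w)+\ell(u)+\ell(v)\pmod 2$ is used to reconcile the sign appearing in Corollary \ref{maincor} with the sign in the statement of the theorem. Beyond that, the proof is a two-line deduction: cite Lemma \ref{lemma1}, cite Corollary \ref{maincor}, and combine.

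\begin{proof}[Proof of Theorem \ref{positivity}]
Fix $u,v,w\in W$. By Lemma \ref{lemma1}, choosing $N$ large enough we may write
$$d^w_{u,v}=\sum_{j\in[N]^r} d^w_{u,v}(j)(e^{-\alpha_1}-1)^{j_1}\cdots (e^{-\alpha_r}-1)^{j_r},\qquad d^w_{u,v}(j)\in\Z,$$
and moreover $d^w_{u,v}(j)=(-1)^{|j|}c^w_{u,v}(j)$ for all $j\in[N]^r$, where $|j|=\sum_{i=1}^r j_i$. By Corollary \ref{maincor},
$$(-1)^{\ell(w)-\ell(u)-\ell(v)+|j|}c^w_{u,v}(j)\in\Z_{\geq 0}$$
for every $j$. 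Hence
$$(-1)^{\ell(w)-\ell(u)-\ell(v)}d^w_{u,v}(j)=(-1)^{\ell(w)-\ell(u)-\ell(v)+|j|}c^w_{u,v}(j)\in\Z_{\geq 0}.$$
Since $(-1)^{\ell(w)-\ell(u)-\ell(v)}=(-1)^{\ell(w)+\ell(u)+\ell(v)}$, each coefficient satisfies $(-1)^{\ell(w)+\ell(u)+\ell(v)}d^w_{u,v}(j)\in\Z_{\geq 0}$. Substituting into the displayed expansion gives
$$(-1)^{\ell(w)+\ell(u)+\ell(v)}d^w_{u,v}=\sum_{j\in[N]^r}\left((-1)^{\ell(w)+\ell(u)+\ell(v)}d^w_{u,v}(j)\right)(e^{-\alpha_1}-1)^{j_1}\cdots (e^{-\alpha_r}-1)^{j_r},$$
which lies in $\Z_{\geq 0}[(e^{-\alpha_1}-1),\dots,(e^{-\alpha_r}-1)]$, as desired.
\end{proof}
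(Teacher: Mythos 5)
Your proposal is correct and is exactly the paper's argument: the paper deduces Theorem \ref{positivity} directly as "an immediate consequence of Corollary \ref{maincor} and Lemma \ref{lemma1}," which is precisely the sign-combination you carry out. Your attention to the parity identity and the choice of $N$ for each fixed triple $(u,v,w)$ matches the intended reading.
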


\section{Proof of part a) of Theorem \ref{main}}

The key tool used to prove part a) of Theorem \ref{main} is the following transversality result, taken from \cite[Theorem 2.3]{AGM} (originally due to Sierra)

\begin{thm}
\label{trans}
Let $X$ be a variety with a left action of an algebraic group $G$ and let $\mathcal{F}$ be a coherent sheaf on $X$. Suppose that $\mathcal{F}$ is homologically transverse to the closures of the $G$-orbits on $X$. Then, for each coherent sheaf $\mathcal{E}$ on $X$,  there is a Zariski-dense open set $U \subseteq G$ such that $\Tor^{\OO_X}_i(\mathcal{F},g_*\mathcal{E}) = 0$ for all $i \geq 1$ and all $g \in U$.
\end{thm}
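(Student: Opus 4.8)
The plan is to reproduce the proof of Sierra's generic homological transversality theorem (this is exactly the cited result \cite[Theorem 2.3]{AGM}); recall that ``$\mathcal{F}$ homologically transverse to a closed subscheme $Z$'' means $\Tor_i^{\OO_X}(\mathcal{F},\OO_Z)=0$ for all $i>0$. First I would perform a d\'evissage to reduce to the case $\mathcal{E}=\OO_Z$ with $Z\subseteq X$ an integral closed subvariety: every coherent sheaf on the Noetherian scheme $X$ has a finite filtration whose graded pieces are structure sheaves of integral closed subvarieties, and since push-forward along the automorphism $\lambda_g\colon x\mapsto gx$ is exact, the long exact $\Tor$-sequence shows that if $\Tor_{\ge 1}^{\OO_X}(\mathcal{F},g_*\mathcal{E}')=0=\Tor_{\ge 1}^{\OO_X}(\mathcal{F},g_*\mathcal{E}'')$ for a short exact sequence $0\to\mathcal{E}'\to\mathcal{E}\to\mathcal{E}''\to 0$, then $\Tor_{\ge 1}^{\OO_X}(\mathcal{F},g_*\mathcal{E})=0$. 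Because finite intersections of Zariski-dense open subsets of $G$ remain dense open, the general case then follows from the case of the graded pieces.

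Next I would set up the universal family of translates. Let $a\colon G\times X\to X$ be the smooth morphism $(g,x)\mapsto g^{-1}x$ (smooth, being the composite of the automorphism $(g,x)\mapsto(g,g^{-1}x)$ of $G\times X$ with the second projection), let $p\colon G\times X\to X$ and $q\colon G\times X\to G$ be the projections, and put $\mathcal{Z}:=a^{-1}(Z)$. Then $\mathcal{Z}\cong G\times Z$, so $\OO_{\mathcal{Z}}$ is flat over $G$ and the scheme-theoretic fibre of $\mathcal{Z}$ over $g$ is $\{g\}\times gZ$. Consider $\mathcal{P}:=p^*\mathcal{F}\otimes^{L}_{\OO_{G\times X}}\OO_{\mathcal{Z}}$; using $G$-flatness of $\OO_{\mathcal{Z}}$ and that $p$ restricts to the identity on each fibre $\{g\}\times X\cong X$, derived restriction to the fibre gives $Li_g^*\mathcal{P}\cong\mathcal{F}\otimes^{L}_{\OO_X}g_*\OO_Z$, so $\mathcal{H}^{-i}(Li_g^*\mathcal{P})=\Tor_i^{\OO_X}(\mathcal{F},g_*\OO_Z)$. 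It therefore suffices to find a dense open $U\subseteq G$ over which all $\mathcal{H}^{-i}(\mathcal{P})$ ($i\ge 1$) vanish. Flat base change along $\operatorname{Spec}\C(G)\to G$ identifies $\mathcal{H}^{-i}(\mathcal{P})\otimes_{\OO_G}\C(G)$ with $\Tor_i^{\OO_{X_{\C(G)}}}(\mathcal{F}_{\C(G)},\OO_{Z^{\mathrm{gen}}})$, where $Z^{\mathrm{gen}}\subseteq X_{\C(G)}$ is the generic translate of $Z$; hence if these $\Tor$'s vanish for all $i\ge 1$, each $\mathcal{H}^{-i}(\mathcal{P})$ fails to dominate $G$, so $q(\operatorname{Supp}\mathcal{H}^{-i}(\mathcal{P}))$ lies in a proper closed subset, and $U$ can be taken to be the complement of the union of these. (This union is finite, hence closed, when $X$ is locally regular or of finite type, as for the mixing spaces $\Y_\PP$ here, where $\Tor$-dimension is bounded by the dimension; in general one localizes and uses that the transversality hypothesis bounds the $\Tor$-dimension of $\mathcal{F}$ along the orbit strata.) Thus the problem is reduced to proving the vanishing for a \emph{general} $g\in G$, i.e.\ for the generic element over $\C(G)$.

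For the general element I would stratify $X$ by its $G$-orbits $\{O_\lambda\}$ and filter $\OO_{gZ}$ by the closed subschemes $gZ\cap\overline{O_\lambda}$, so its graded pieces are push-forwards of sheaves supported on the locally closed strata $gZ\cap O_\lambda$. The hypothesis that $\mathcal{F}$ is homologically transverse to each orbit closure $\overline{O_\lambda}$ is precisely what is needed to split $\mathcal{F}\otimes^{L}\OO_{gZ}$, stratum by stratum, into the derived tensor products of $\mathcal{F}|_{O_\lambda}$ with $\OO_{gZ\cap O_\lambda}$ on the \emph{open} orbit $O_\lambda$ (the transversality annihilating the contributions of the non-open strata). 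On each $O_\lambda$ the group $G$ acts transitively, so classical Kleiman transversality applies: the map $G\times(Z\cap\overline{O_\lambda})\to X$, $(g,z)\mapsto g^{-1}z$, is flat, since all its fibres are isomorphic once $O_\lambda$ is homogeneous and flatness on a dense open propagates by translation, yielding a dense open of ``good'' $g$ on which $\mathcal{F}|_{O_\lambda}$ is homologically transverse to the translate of $Z\cap O_\lambda$ inside $O_\lambda$. Intersecting these dense opens over all strata --- a finite intersection when there are finitely many orbits (as for the $\Gamma$-action on $\Y_\PP$, whose orbits are the finitely many $(C_w\times C_{w'})_\PP$), and otherwise handled by Noetherian induction on $X$ with the transversality hypothesis passing to each $\overline{O_\lambda}$ --- produces the required $U$.

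The main obstacle is this last step: organizing the d\'evissage along the orbit stratification so that the homological-transversality hypothesis exactly absorbs the contributions of the non-open strata, and then arranging a \emph{single} dense open set of $g$ that works simultaneously on every stratum (which is where finiteness of the orbit set, or Noetherian induction, enters). This is the substance of Sierra's theorem; for the present paper I would invoke it as stated, the above indicating why it holds.
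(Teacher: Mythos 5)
The paper itself offers no proof of this statement: Theorem \ref{trans} is imported verbatim from \cite[Theorem 2.3]{AGM} and is originally due to Sierra, so your ultimate decision to ``invoke it as stated'' is exactly what the paper does, and for the purposes of this paper that is all that is required. Your first two reduction steps are also a faithful rendition of the standard Miller--Speyer/Sierra framework: d\'evissage to $\mathcal{E}=\OO_Z$ with $Z$ integral, the universal translate $\mathcal{Z}=a^{-1}(Z)\simeq G\times Z$ flat over $G$, and reduction via flat base change to vanishing at the generic point of $G$. One small repair is needed there: knowing that the sheaves $\mathcal{H}^{-i}(\mathcal{P})$, $i\geq 1$, vanish on $q^{-1}(U)$ does not by itself give the fibrewise vanishing of $\Tor_i^{\OO_X}(\mathcal{F},g_*\OO_Z)=\mathcal{H}^{-i}(Li_g^*\mathcal{P})$, because the derived restriction of the surviving $\mathcal{H}^{0}(\mathcal{P})$ to the fibre can reintroduce higher Tor's; you must also shrink $U$ so that $\mathcal{H}^{0}(\mathcal{P})$ is flat over $G$ there (generic flatness), after which the spectral sequence for $Li_g^*$ degenerates. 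This is fixable and is part of the standard argument.

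The genuine gap is in your final paragraph, which is precisely where the hypothesis (homological transversality of $\mathcal{F}$ to the orbit closures) has to do its work. The claimed splitting of $\mathcal{F}\otimes^{L}\OO_{gZ}$ along the orbit stratification, with the contributions of the non-open strata ``annihilated'' by transversality, is asserted rather than proved: transversality of $\mathcal{F}$ to the closures $\overline{O_\lambda}$ gives $\Tor_{>0}(\mathcal{F},\OO_{\overline{O_\lambda}})=0$, but says nothing directly about $\Tor$'s against $\OO_{gZ\cap\overline{O_\lambda}}$ or against the boundary pieces of the filtration of $\OO_{gZ}$, so no such decomposition follows from the hypothesis as stated. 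Likewise, ``the map is flat since all its fibres are isomorphic'' is not a valid inference (constancy of fibres does not imply flatness; the transitive-action argument of Miller--Speyer works because the relevant Tor sheaf is equivariant for an action covering a transitive action on the base, which is a different mechanism). Organizing the induction over the orbit stratification so that the hypothesis genuinely kills the Tor's at the generic translate is the actual content of Sierra's theorem, which you acknowledge; so as a standalone proof the key step is missing, while as a citation your route coincides with the paper's.
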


Theorem \ref{main} part a) is a particular case of the following slightly more general result by taking $\mathcal{E}={\hat{p}}^*(\OO_{\PP_j}(-\partial \PP_j))\otimes_{\OO_{Y_\PP}}\tilde{\Delta}_*((\xi_w)_\PP)$.  

\begin{thm}
\label{maintor}
Let $w\in W$ and let $\mathcal{E}$ be a coherent sheaf on $(Y_w)_\PP:=(X_w\times X_w)_\PP$.  Then, for general $\gamma\in\Gamma$ and any $u,v\in W$ we have:
$$\Tor_i^{\OO_{\Y_\PP}}\left(\gamma_*\OO_{(X^u\times X^v)_\PP},\mathcal{E}\right)=0,\text{ }\forall i>0.$$
\end{thm}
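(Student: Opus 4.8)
The plan is to deduce Theorem~\ref{maintor} from Sierra's transversality result (Theorem~\ref{trans}) applied to the variety $\Y_\PP$ equipped with the action of the mixing group $\Gamma$. The two things to check are: (1) the $\Gamma$-orbit closures on $\Y_\PP$ are well understood (they are the $(X_{w'} \times X_{w''})_\PP$, as recorded after Definition~\ref{mixgrp}, since the orbits are $(C_{w'}\times C_{w''})_\PP$), and (2) the sheaf $\mathcal{F} := \OO_{(X^u\times X^v)_\PP}$ is homologically transverse to each such orbit closure, i.e. $\Tor_i^{\OO_{\Y_\PP}}(\OO_{(X^u\times X^v)_\PP}, \OO_{(X_{w'}\times X_{w''})_\PP}) = 0$ for all $i > 0$. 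Granting these two facts, Theorem~\ref{trans} produces a Zariski-dense open $U \subseteq \Gamma$ such that $\Tor_i^{\OO_{\Y_\PP}}(\OO_{(X^u\times X^v)_\PP}, \gamma_*\mathcal{E}) = 0$ for all $i \geq 1$ and all $\gamma \in U$ — but here I should be careful about which sheaf gets pushed forward; I would instead apply the theorem with $\mathcal{F}$ and $\mathcal{E}$ in the roles so that it is $\gamma_*\OO_{(X^u\times X^v)_\PP}$ that appears, using that $\gamma \mapsto \gamma^{-1}$ preserves Zariski-density and that $\Tor$ is symmetric, so $\Tor_i(\gamma_*\OO_{(X^u\times X^v)_\PP}, \mathcal{E}) \cong \Tor_i(\mathcal{E}, \gamma_*\OO_{(X^u\times X^v)_\PP})$.

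However, there is a subtlety: Sierra's theorem as quoted is for a variety with an algebraic group action, and $\Gamma$ is an ind-group scheme (it contains $\Gamma_0$, a group of sections of an ind-group scheme bundle), while $\Y_\PP$ is finite-dimensional but $X^u$, $X_w$ are infinite-dimensional (ind-)schemes when $G$ is not of finite type. So the first real step is a reduction to a finite-dimensional situation. I would fix $u,v,w$ and choose a finite ideal $S \subseteq W$ large enough that $X_w$, and the relevant truncations of $X^u$, $X^v$, all live inside the quasi-compact open $V^S \subseteq \X$; then $\mathcal{E}$ is supported on $(Y_w)_\PP$ which is finite-dimensional, and one can replace $\Y_\PP$ by a suitable finite-dimensional quasi-compact open $(V^S \times V^S)_\PP$ (or a further smooth finite-dimensional ambient scheme) on which a finite-dimensional quotient of $\Gamma$ acts with the same orbit closures through the relevant points. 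This is the step where I expect the bulk of the technical work to lie, and it parallels the "reduction to finite dimensional schemes" mentioned in the introduction; one needs that the $\Tor$ sheaves computed on $\X$ agree with those computed on $V^S$, which follows from flatness of restriction to opens, and that "general $\gamma \in \Gamma$" can be interpreted via a dense open in a finite-dimensional quotient acting on the truncation.

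The key transversality input (2) is where I would invoke the local $\Tor$-vanishing results of \cite{K}: the point is that $X^u \times X^v$ and $X_{w'} \times X_{w''}$, intersected inside $\Y = \X \times \X$, meet properly and the intersection is a product of Richardson varieties, which are reduced (by $(*)$, the Frobenius-splitting compatibility) and for which the higher $\Tor$'s vanish. Concretely, by the Künneth formula on $\Y = \X \times \X$ one reduces to $\Tor_i^{\OO_{\X}}(\OO_{X^u}, \OO_{X_{w'}}) = 0$ for $i>0$, which is exactly Lemma~\ref{tor1} (combined with its extension Lemma~\ref{tor3} to finite unions of Schubert varieties, in case orbit closures in the product decompose that way). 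Passing from $\Y$ to the mixing bundle $\Y_\PP$ is harmless since $\Y_\PP \to \PP$ is Zariski-locally a product with $\Y$, so the vanishing is local on $\PP$ and one argues exactly as in Lemma~\ref{lemtor}. Finally, to handle the sheaf $\mathcal{E}$ itself — which is ${\hat{p}}^*(\OO_{\PP_j}(-\partial\PP_j)) \otimes \tilde\Delta_*((\xi_w)_\PP)$ rather than a structure sheaf — I note that $\mathcal{E}$ is supported on $(X_w \times X_w)_\PP$ (via the diagonal), and transversality of $\OO_{(X^u\times X^v)_\PP}$ to orbit closures, being a statement about $\mathcal{F}$ alone, is all that Theorem~\ref{trans} requires; the arbitrary coherent $\mathcal{E}$ plays the role of "$\mathcal{E}$" in that theorem. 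Thus the structure of the argument is: reduce to finite dimensions; verify homological transversality of $\OO_{(X^u\times X^v)_\PP}$ to the $\Gamma$-orbit closures using Lemmas~\ref{tor1}, \ref{tor3} and Künneth; apply Theorem~\ref{trans}; and translate "Zariski-dense open in a finite-dimensional quotient" back to "general $\gamma \in \Gamma$". The main obstacle is the finite-dimensional reduction and making the $\Gamma$-action genuinely fit Sierra's hypotheses.
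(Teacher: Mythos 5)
There is a genuine gap, and it sits exactly where you yourself flag ``the bulk of the technical work'': the finite-dimensional reduction. Your plan is to shrink the ambient space to a quasi-compact open $(V^S\times V^S)_\PP$ and apply Theorem \ref{trans} there, but $V^S$ is \emph{not} finite dimensional --- it is a union of translates $wU^-B/B$ of the big cell, so it (and hence $(V^S\times V^S)_\PP$) has infinite dimension, is not a variety, and the $\Gamma$-action on it does not factor through any finite-dimensional algebraic quotient (only the action on the finite-dimensional Schubert varieties $X_w$ factors through $\bar{B}$, hence $\bar\Gamma$). Since $X^u\times X^v$ has finite codimension, hence infinite dimension, no open subset of $\Y_\PP$ meeting the relevant locus can be finite dimensional, so Sierra's hypotheses can never be arranged on an open subscheme of $\Y_\PP$; checking homological transversality of $\OO_{(X^u\times X^v)_\PP}$ to the orbit closures $(X_{w'}\times X_{w''})_\PP$ inside $\Y_\PP$ (which you correctly reduce to Lemma \ref{tor1} plus K\"unneth, locally on $\PP$) therefore does not by itself feed into Theorem \ref{trans}.

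The idea the paper uses, and which is missing from your proposal, is to pass not to an open subset but to the \emph{closed} finite-dimensional subscheme $(Y_w)_\PP=(X_w\times X_w)_\PP$ supporting $\mathcal{E}$, via the change-of-rings identity (\ref{eqtor=tor}): for any coherent $N$ on $(Y_w)_\PP$, $\Tor_i^{\OO_{\Y_\PP}}(\OO_{(X^u\times X^v)_\PP},N)\simeq\Tor_i^{\OO_{(Y_w)_\PP}}(\OO_{(Y_w)_\PP}\otimes_{\OO_{\Y_\PP}}\OO_{(X^u\times X^v)_\PP},N)$. This is proved by taking a free resolution of $\OO_{(X^u\times X^v)_\PP}$ on an open set $(V^{u'}\times V^{v'})_\PP$ (Lemma \ref{freeres}, Kashiwara--Shimozono) and using $\Tor_{>0}^{\OO_{\Y_\PP}}(\OO_{(X^u\times X^v)_\PP},\OO_{(Y_w)_\PP})=0$ (Lemma \ref{tor1} and K\"unneth) to see that the restricted resolution stays exact. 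The same identity, applied with $N=\OO_{(X_x\times X_y)_\PP}$, $x,y\leq w$, shows that the restricted sheaf $\mathcal{F}=\OO_{(Y_w)_\PP}\otimes_{\OO_{\Y_\PP}}\OO_{(X^u\times X^v)_\PP}$ is homologically transverse to the finitely many $\bar\Gamma$-orbit closures in the genuine variety $(Y_w)_\PP$, where the finite-dimensional group $\bar\Gamma$ acts; Theorem \ref{trans} applies there, and (\ref{eqtor=tor}) transports the vanishing back to $\Y_\PP$ (your remark about trading $\gamma$ for $\gamma^{-1}$ is fine and is implicitly used in the paper too). So your transversality input and the $\gamma$-twist are right, but without the restriction-to-support/change-of-rings step your reduction to a setting where Sierra's theorem actually applies does not go through.
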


\begin{proof}
We first show that for any $\gamma \in \Gamma$, 
\begin{equation}
\label{eqtor=tor}
\Tor_i^{\OO_{\Y_\PP}}\left(\OO_{(X^u\times X^v)_\PP},\gamma_*\mathcal{E}\right)=\Tor_i^{\OO_{(Y_w)_\PP}}\left(\OO_{(Y_w)_\PP}\otimes_{\OO_{\Y_\PP}}\OO_{(X^u\times X^v)_\PP}, \gamma_*\mathcal{E}\right).
\end{equation}
Since $\gamma_*\mathcal{E}$ is a coherent sheaf on $(Y_w)_\PP$,  we can replace $\gamma_*\mathcal{E}$ by $\mathcal{E}$ itself. 

As the assertion is local on $\PP$ we may assume $\Y_\PP\simeq\PP\times \Y$ and that 

$$\OO_{(X^u\times X^v)_\PP}\simeq \OO_{\PP}\boxtimes(\OO_{X^u}\boxtimes\OO_{X^v}),$$

$$\OO_{(Y_w)_\PP}\simeq \OO_{\PP}\boxtimes(\OO_{X_w}\boxtimes\OO_{X_w}).$$\hspace{.004in}

To simplify notation let $A=\OO_{\Y_\PP}$, $B=\OO_{(Y_w)_\PP}$, $M=\OO_{(X^u\times X^v)_\PP}$ and $N=\mathcal{E}$.  Take an $A$-free resolution $\mathcal{F_\bullet}\to M$ on an open subset of $\Y_\PP$ of the form $(V^{u'}\times V^{v'})_\PP$, where $V^u$ is defined as in Lemma \ref{freeres}.  Then, the homology of the chain complex $B\otimes_A \mathcal{F_\bullet}$ is by definition $\Tor_\bullet^A(M,B)$.  Moreover,  $$\Tor^A_i(M,B)=0,\text{ }\forall i>0.$$   Indeed, locally on $\PP$, 

\begin{align*}
\Tor_i^A(M,B) &=\Tor_i^{\OO_\PP\boxtimes \OO_{\Y}}\left(\OO_\PP\boxtimes\OO_{(X^u\times X^v)},{\OO_\PP}\boxtimes\OO_{(X_w\times X_w)}\right)\\
&= \bigoplus_{j+k=i} \Tor_j^{\OO_\PP}(\OO_\PP,\OO_\PP) \otimes \Tor_k^{\OO_{\Y}}(\OO_{X^u\times X^v},\OO_{X_w\times X_w})\\
&=0,\text{ }\forall i>0
\end{align*}
by Lemma \ref{tor1} and  the Kunneth formula. Hence, $B\otimes_A \mathcal{F_\bullet}$ is a $B$-free resolution of $B\otimes_A M$.

Thus, the homology of the chain complex $N\otimes_B(B\otimes_A \mathcal{F}_\bullet)$ is equal to $\Tor_\bullet^B(B\otimes_A M,N)$; but, $$N\otimes_B(B\otimes_A \mathcal{F}_\bullet)=(N\otimes_B B)\otimes_A \mathcal{F}_\bullet=N \otimes_A \mathcal{F}_\bullet,$$
so the homology is also equal to $\Tor_\bullet^A(M,N)$.  Hence, $$\Tor_\bullet^B(B\otimes_A M,N)=\Tor_\bullet^A(M,N)$$ as desired.  This proves (\ref{eqtor=tor}).

Now, by Lemma \ref{mixgroup}, the closures of the $\Gamma$-orbits of $(Y_w)_\PP$ are precisely $(X_x\times X_y)_\PP$,  where $x,y\leq w$.  Equation (\ref{eqtor=tor}) implies that the sheaf $\mathcal{F}$ defined by $$\mathcal{F}:=\OO_{(Y_w)_\PP}\otimes_{\OO_{\Y_\PP}}\OO_{(X^u\times X^v)_\PP}$$ is homologically transverse to the $\Gamma$-orbit closures in $(Y_w)_\PP$.  Indeed, since $\OO_{(X_x\times X_y)_\PP}$ is a coherent $\OO_{(Y_w)_\PP}$-module when $x,y\leq w$, equation (\ref{eqtor=tor}) gives

\begin{align*}
\Tor_i^{\OO_{(Y_w)_\PP}}\left(\mathcal{F},\OO_{(X_x\times X_y)_\PP}\right) &= \Tor_i^{\OO_{\Y_\PP}}\left(\OO_{(X^u\times X^v)_\PP},\OO_{(X_x\times X_y)_\PP}\right)\\
&= 0,\text{ }\forall i>0
\end{align*}
by Lemma \ref{tor1} and the Kunneth formula.

Thus, by Theorem \ref{trans} (with $G=\Gamma,X=(Y_w)_\PP,$ and $\mathcal{E}$ and $\mathcal{F}$ as above), we conclude that for general $\gamma\in\Gamma$, 

\begin{equation}
\label{eqtor=0}
\Tor_i^{\OO_{(Y_w)_\PP}}\left(\OO_{(Y_w)_\PP}\otimes_{\OO_{\Y_\PP}}\OO_{(X^u\times X^v)_\PP},\gamma_*\mathcal{E}\right)=0,\text{ }\forall i>0.
\end{equation}
Here we note that although $\Gamma$ is infinite dimensional, the action of $\Gamma$ on $(Y_w)_\PP$ factors through the action of a finite dimensional quotient group of $\Gamma$.  

Now, (\ref{eqtor=tor}) gives 

$$\Tor_i^{\OO_{\Y_\PP}}\left(\OO_{(X^u\times X^v)_\PP},\gamma_*\mathcal{E}\right)=0,\text{ }\forall i>0,$$
which is equivalent to the desired vanishing.
\end{proof}

\section{The schemes $\ZZ$ and $\partial \ZZ$}

For $u,v\leq w$ we use the notation $X^{u,v}_w:=X^u_w\times X^v_w$.  We also write $X^2_w:=X_w\times X_w$.  For any $j\in[N]^r$, we let $(X_w)_j$ denote the inverse image of $\PP_j$ through the map $E(T)_\PP\times^T X_w\to\PP$. 

Similarly, for $u,v\leq w$ we write $Z^{u,v}_w:=Z^u_w\times Z^v_w$, where $Z^u_w$ is the $T$-equivariant desingularization of $X^u_w$ as in \cite[Theorem 6.8]{K}.  We also write $Z^2_w:=Z_w\times Z_w$, where $Z_w$ is a BSDH variety as in \cite[\S7.1.3]{Kbook}.  For any $j\in[N]^r$, we let $(Z_w)_j$ denote the inverse image of $\PP_j$ through the map $E(T)_\PP\times^T Z_w\to\PP$. 

The action of $B$ on $Z_w$ factors through the action of a finite dimensional quotient group $\bar{B}$ containing the maximal torus $H$.  Further, the action of $\Gamma$ on $(X^2_w)_\PP$ descends to an action of the finite dimensional quotient group $$\bar{\Gamma}:=\bar{\Gamma}_0\rtimes GL(N+1)^r ,$$ where $\bar{\Gamma}_0$ is the group of global sections of the bundle $E(T)_\PP\times^T(\bar{B}^2)\to\PP$.  

From \cite[Lemmas 6.11 and 6.12]{K} we have

\begin{lem}
\label{mflat}
Let $u,v\leq w$.  The map $$m:\bar{\Gamma}\times (X^{u,v}_w)_\PP\to(X^2_w)_\PP,\,\,\,m(\gamma,x)=\gamma\cdot \pi_2(x)$$ is flat, where $\pi_2: (X^u_w\times X^v_w)_\PP\to (X^2_w)_\PP$ is induced from the canonical map $X^u_w\times X^v_w\to X^2_w$.  
\end{lem}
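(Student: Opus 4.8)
The plan is to deduce flatness of $m$ from flatness of the corresponding maps for the genuine (non-mixed) varieties, using the fact that everything in sight is a Zariski-locally trivial fibration over $\PP$. First I would pass to the non-equivariant picture: since the construction $V\mapsto V_\PP = E(T)_\PP\times^T V$ turns a $T$-equivariant morphism into a morphism of fiber bundles over $\PP$, and since flatness is local on the base, it suffices to check flatness after restricting to a trivializing open set $\mathcal U\subseteq\PP$, over which $m$ becomes (up to the $GL(N+1)^r$-factor, which acts on the base $\PP$ and contributes a smooth, hence flat, direct factor) the map
\begin{equation*}
\mathcal U\times\bar\Gamma_0\times\bigl(X^u_w\times X^v_w\bigr)\to\mathcal U\times\bigl(X_w\times X_w\bigr),\quad (e,\gamma,x,y)\mapsto\bigl(e,\gamma(e)\cdot(\pi(x),\pi(y))\bigr),
\end{equation*}
where $\pi:X^u_w\to X_w$ and $\pi:X^v_w\to X_w$ are the inclusions. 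After discarding the flat $\mathcal U$-factor, the content is the flatness of the multiplication–type map $\bar\Gamma_0\times(X^u_w\times X^v_w)\to X_w\times X_w$ over $\mathcal U$, which splits as a product of the two maps $\bar B\times X^u_w\to X_w$ and $\bar B\times X^v_w\to X_w$ (after using Lemma \ref{mixgroup} to trivialize the $\bar\Gamma_0$-action fiberwise as an action of $\bar B^2$). So the whole statement reduces to: for $u\le w$, the map $\bar B\times X^u_w\to X_w$, $(b,x)\mapsto b\cdot x$, is flat; and a product of flat morphisms is flat (Kunneth), as is a smooth morphism.

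The key input for the reduced statement is exactly \cite[Lemmas 6.11 and 6.12]{K}, which is cited as the source. Concretely, I would invoke that the action map $\bar B\times X^u_w\to X_w$ is flat — this follows because $X^u_w=X^u\cap X_w$ is a Richardson variety whose $\bar B$-translates sweep out $X_w$ with equidimensional fibers, and flatness over a normal (indeed Cohen–Macaulay) base $X_w$ follows from the "miracle flatness" criterion once one knows the total space is Cohen–Macaulay and the fibers are equidimensional of the expected dimension. Both of these are in hand: $X_w$ is CM by \cite[Theorem 8.2.2]{Kbook}, $X^u_w$ is CM by \cite[Proposition 6.6]{K}, hence $\bar B\times X^u_w$ is CM, and the fiber dimension is constant because $\bar\Gamma_0$ (equivalently $\bar B$) acts transitively on each Schubert cell by Lemma \ref{mixgroup}, so the generic fiber over $C_{w'}$ ($w'\le w$) has dimension $\dim\bar B+\ell(u)$ uniformly. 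Assembling: $m$ restricted to $\mathcal U$ is, up to flat base change by $\mathcal U$ and a smooth $GL(N+1)^r$-factor, a Kunneth product of two such flat action maps, hence flat; and flatness glues over the open cover of $\PP$.

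The main obstacle I anticipate is the bookkeeping of the $\bar\Gamma_0$-action versus a literal $\bar B^2$-action: $\bar\Gamma_0$ is the group of \emph{global sections} of $E(T)_\PP\times^T\bar B^2\to\PP$, so over a trivializing $\mathcal U$ a section is a morphism $\mathcal U\to\bar B^2$, not a constant element, and one must check that evaluating the section at the base point $e\in\mathcal U$ and then multiplying is genuinely the same as acting by the constant group $\bar B^2$ "up to an isomorphism of the source that is a bundle automorphism over $\mathcal U$" — this is precisely the content of Lemma \ref{mixgroup} (surjectivity of evaluation $\bar\Gamma_0\to$ fiber), and it lets one replace $m|_{\mathcal U}$ by the honest product action map at the cost of composing with a $\mathcal U$-automorphism, which does not affect flatness. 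Once that identification is made cleanly, the remaining steps — local triviality, Kunneth for flatness, miracle flatness via CM and equidimensionality, and gluing — are routine, and the proof follows by citing \cite[Lemmas 6.11 and 6.12]{K} for the essential flatness statement.
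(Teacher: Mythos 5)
The paper does not actually prove this lemma: it is imported verbatim from \cite[Lemmas 6.11 and 6.12]{K}, so if you had simply cited those lemmas you would be matching the paper. The problem is that your independent justification of the key flatness statement breaks down at its central step. Miracle flatness requires the \emph{base} to be regular; normal or Cohen--Macaulay is not enough. For a morphism with CM source, CM target, and equidimensional fibers of the expected dimension, flatness can still fail (the normalization of a nodal curve is the standard counterexample: finite, fibers of constant dimension $0$, smooth source, CM target, not flat). Here the base of your reduced map $\bar{B}\times X^u_w\to X_w$ is the Schubert variety $X_w$, and the target of $m$ is $(X^2_w)_\PP$; these are in general singular, so the criterion you invoke simply does not apply. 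This is precisely the point where the Kac--Moody situation differs from \cite{AGM}, where the ambient target is smooth and a dimension count of this kind does suffice; the flatness of $m$ over the singular $(X^2_w)_\PP$ is the genuine content of \cite[Lemma 6.11]{K}, and it is not recovered by the argument you give. (Your equidimensionality computation is the right ingredient, though the fiber over a point of $C_{w'}$ has dimension $\dim\bar{B}-\ell(u)$, not $\dim\bar{B}+\ell(u)$.)

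There is also a smaller gap in the reduction to the constant group action. Over a trivializing open $\mathcal{U}\subseteq\PP$ the map is $(\sigma,e,x)\mapsto(e,\sigma(e)\cdot x)$ with $\sigma$ a \emph{global} section, and the source $\bar{\Gamma}_0\times\mathcal{U}\times X^{u,v}_w$ is not isomorphic to $\bar{B}^2\times\mathcal{U}\times X^{u,v}_w$ (the dimensions differ), so there is no ``$\mathcal{U}$-automorphism'' effecting the replacement; Lemma \ref{mixgroup} only gives surjectivity of evaluation at each point. The correct route is to factor $m|_{\mathcal{U}}$ through the evaluation morphism $(\sigma,e,x)\mapsto(\sigma(e),e,x)$ to $\bar{B}^2\times\mathcal{U}\times X^{u,v}_w$ and prove that this morphism is itself flat, which needs a separate (if easy) argument. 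That defect is repairable, but the misapplication of miracle flatness over a non-regular base is not, so the proposal does not establish the lemma except by taking \cite[Lemma 6.11]{K} as a black box --- which is in fact all the paper itself does.
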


\begin{lem}
\label{msmooth}
Let $u,v\leq w$.  The map $$\tilde{m}:\bar{\Gamma}\times (Z^{u,v}_w)_\PP\to(Z^2_w)_\PP,\,\,\,\tilde{m}(\gamma,x)=\gamma\cdot \tilde{\pi}_2(x)$$ is smooth, where $\tilde{\pi}_2: (Z^u_w\times Z^v_w)_\PP\to (Z^2_w)_\PP$ is induced from the canonical map $Z^u_w\times Z^v_w\to Z^2_w$. 
\end{lem}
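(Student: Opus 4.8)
The plan is to reduce the smoothness of $\tilde m$ to a pointwise statement about fibers, and then to exploit the fact that the target $(Z^2_w)_\PP$ is a smooth variety (being a fiber bundle over $\PP$ with fiber the smooth BSDH variety $Z^2_w = Z_w\times Z_w$), while the source $\bar\Gamma\times(Z^{u,v}_w)_\PP$ is also smooth (a product of the smooth algebraic group $\bar\Gamma$ with a fiber bundle over $\PP$ whose fiber $Z^{u,v}_w=Z^u_w\times Z^v_w$ is smooth, since each $Z^u_w$ is a $T$-equivariant \emph{desingularization}). A morphism between smooth varieties is flat if and only if it is smooth if and only if all its fibers are smooth of the expected dimension; so I would first invoke Lemma~\ref{mflat} (or rather its BSDH-level analogue, which is the content of \cite[Lemma 6.12]{K}) to get flatness of $\tilde m$, and then check that the fibers of $\tilde m$ are smooth. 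Alternatively, and more directly, I would use the infinitesimal/Jacobian criterion: it suffices to show that the differential $d\tilde m$ is surjective at every point, since both source and target are smooth and equidimensional over $\PP$ with the correct dimension count.

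First I would trivialize over an open $\mathcal{U}\subset\PP$, writing $(Z^2_w)_\PP|_\mathcal{U}\simeq \mathcal{U}\times Z^2_w$ and $(Z^{u,v}_w)_\PP|_\mathcal{U}\simeq\mathcal U\times Z^{u,v}_w$, and similarly replace $\bar\Gamma$ by its finite-dimensional image in $\operatorname{Aut}$ over $\mathcal U$. Over a point $\bar e\in\mathcal U$, the fiber of $E(T)_\PP\times^T(\bar B^2)$ is a copy of $\bar B\times\bar B$, and by Lemma~\ref{mixgroup} the evaluation $\bar\Gamma_0\to\bar B^2$ at $\bar e$ is surjective. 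Thus, after trivializing, the map $\tilde m$ looks fiberwise like $(b,b',z_1,z_2)\mapsto (b\cdot\phi^u(z_1),\, b'\cdot\phi^v(z_2))$, where $\phi^u:Z^u_w\to Z_w$ is the desingularization map composed with the inclusion $X^u_w\hookrightarrow X_w$ lifted to $Z_w$. Since $\bar B$ acts on $Z_w$ and the $\bar B$-orbit of any point in $Z_w$ spans the full tangent space modulo the tangent space of the $\bar B$-orbit through that point, and since $\phi^u$ is itself a desingularization hence dominant with the image containing a dense $\bar B$-stable subset, the differential of $(b,z)\mapsto b\cdot\phi^u(z)$ is surjective: the $\bar B$-directions cover the orbit directions and the $Z^u_w$-directions cover the normal directions because $\phi^u$ is birational onto its image, which is all of $X^u_w$, and $X^u_w$ meets every $\bar B$-orbit of $X_w$ that it can (Richardson-variety geometry). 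Putting the two factors together gives surjectivity of $d\tilde m$ everywhere, hence smoothness.

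The main obstacle I anticipate is making precise the claim that the differential is surjective at \emph{every} point — including the non-generic points of $Z^u_w$ lying over the singular locus of $X^u_w$ or over lower-dimensional Schubert cells. The clean way around this is \emph{not} to argue pointwise on $Z^u_w$ at all, but to transport the smoothness of the $X$-level map along the desingularization: concretely, one has the commutative square relating $\tilde m$ to the map $m$ of Lemma~\ref{mflat} via the proper birational maps $Z^{u,v}_w\to X^{u,v}_w$ and $Z^2_w\to X^2_w$. Since $\tilde\pi_2$ is the base change of $\pi_2$ along $Z^2_w\to X^2_w$ up to the desingularizations, and since $\bar\Gamma$ acts compatibly, the map $\tilde m$ is obtained from $m$ by base change along a smooth morphism on the target (namely the BSDH desingularization is not smooth, so this needs care) — so instead I would argue that $\tilde m$ factors as a flat map (by the BSDH analogue of Lemma~\ref{mflat}, \cite[Lemma 6.12]{K}) between smooth schemes, and a flat morphism between smooth schemes with fibers of the expected dimension is automatically smooth because its fibers, being fibers of a flat map between regular schemes, are Cohen-Macaulay of the right dimension and, being open in smooth $\bar\Gamma$-orbit structures (the generic fiber of $\tilde m$ is an open subvariety of the smooth variety $\bar\Gamma$ times something), are generically reduced, hence reduced, hence smooth. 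This last regularity bookkeeping — flat $+$ smooth source $+$ smooth target $+$ correct fiber dimension $\Rightarrow$ smooth — is the standard fact (\cite[III.10.2]{Hartshorne}-type) I would cite to close the argument, and identifying the "expected fiber dimension" via the dimension count $\dim\bar\Gamma + \dim Z^u_w + \dim Z^v_w - 2\dim Z_w$ is the one genuinely computational point.
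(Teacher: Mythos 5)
There is a genuine gap, and it sits exactly at the step you lean on to close the argument. Note first that the paper does not prove this lemma at all: it is quoted verbatim from \cite[Lemma 6.12]{K}, so the comparison is with Kumar's proof there, which rests on the specific construction of the desingularization $Z^u_w$ in \cite[Theorem 6.8]{K}. Your fallback principle --- ``flat $+$ smooth source $+$ smooth target $+$ correct fiber dimension $\Rightarrow$ smooth'' --- is false. For example, the family $\{y^2=x^3+t\}\to\A^1_t$ (or the double cover $\A^2\to\A^2$, $(x,y)\mapsto(x,y^2)$) is a flat morphism with smooth source, smooth target and equidimensional fibers, yet it is not a smooth morphism; smoothness requires the fibers to be geometrically regular, and neither Cohen--Macaulayness nor reducedness of the fibers gives that (reduced does not imply smooth, and your ``generically reduced, hence reduced, hence smooth'' chain breaks at the last arrow). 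Likewise your opening equivalence ``flat iff smooth iff fibers smooth of expected dimension'' conflates miracle flatness (CM source, regular target, constant fiber dimension $\Rightarrow$ flat) with the genuinely stronger condition of smoothness. So the reduction you end with does not prove the lemma; it only reproves (the $Z$-level analogue of) Lemma \ref{mflat}.

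The substantive content of the lemma is precisely the pointwise statement you flag as the ``main obstacle'' and then abandon: surjectivity of $d\tilde{m}$ at \emph{every} point, equivalently that at every $z\in Z^{u}_w$ the image of the differential of the canonical map $Z^u_w\to Z_w$ together with the tangent space of the $\bar{B}$-orbit through its image spans $T\,Z_w$ (and similarly for $v$; the $\bar{\Gamma}_0$-directions give exactly the $\bar{B}^2$-orbit directions by Lemma \ref{mixgroup}, and $GL(N+1)^r$ takes care of the $\PP$-directions). This transversality to every $\bar{B}$-orbit is not a formal consequence of $Z^u_w$ being \emph{some} desingularization of $X^u_w$: birationality of $Z^u_w\to X^u_w$ says nothing about surjectivity of differentials over the boundary or over the singular locus, which is exactly where resolutions fail to be smooth morphisms, and your heuristic that ``the $Z^u_w$-directions cover the normal directions because $\phi^u$ is birational onto its image'' is unjustified there. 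Your intermediate suggestion that $\tilde{\pi}_2$ is a base change of $\pi_2$ along the desingularizations is also incorrect ($Z^{u,v}_w$ is not the fiber product of $X^{u,v}_w$ with $Z^2_w$ over $X^2_w$). To repair the proof one must use the particular construction of $Z^u_w$ from \cite[Theorem 6.8]{K}, which is engineered so that $Z^u_w\to Z_w$ meets each $\bar{B}$-orbit transversally; this is how \cite[Lemma 6.12]{K} proceeds, and without that input (or an equivalent everywhere-transversality statement) your argument does not go through.
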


Define $\ZZ$ to be the fiber product $$\ZZ:=\left(\bar{\Gamma}\times(X^{u,v}_w)_\PP\right)\times_{(X^2_w)_\PP}\tilde{\Delta}((X_w)_j)$$ and $\widetilde{\ZZ}$ to be the fiber product $$\widetilde{\ZZ}:=\left(\bar{\Gamma}\times(Z^{u,v}_w)_\PP\right)\times_{(Z^2_w)_\PP}\tilde{\Delta}((Z_w)_j)$$ as in the commutative diagram:


\[
\begin{tikzcd}[,column sep=small]
& & \widetilde{\ZZ} \arrow[llddd,"\tilde{\pi}",swap] \arrow[dddddd, bend right = 90, "f", swap] \arrow[hook]{dd}{\tilde{\iota}} \arrow["\text{(smooth)}"']{rr}{\tilde{\mu}} & & \tilde{\Delta}((Z_w)_j) \arrow[hook]{dd}{}\\
& & & \square & \\
& & \bar{\Gamma}\times(Z^{u,v}_w)_\PP \arrow{dd}{\theta} \arrow["\text{(smooth)}"']{rr}{\tilde{m}} & & (Z^2_w)_\PP \arrow{dd}{\beta}\\
\bar{\Gamma} & & & & \\
& & \bar{\Gamma}\times(X^{u,v}_w)_\PP \arrow["\text{(flat)}"']{rr}{m} & & (X^2_w)_\PP \\
& & & \square & \\
& & \ZZ \arrow{lluuu}{\pi} \arrow["\text{(flat)}"']{rr}{\mu} \arrow[swap,hook]{uu}{i} & & \tilde{\Delta}((X_w)_j) \arrow[hook]{uu}{}
\end{tikzcd}
\]\\

In the above diagram, $\square$ denotes a fiber square.  Note that the maps $\theta$ and $\beta$ above are desingularizations.  The maps $\pi:\ZZ\to\bar{\Gamma}$ and $\tilde{\pi}:\widetilde{\ZZ}\to\bar{\Gamma}$ are induced by the projections onto the first factor.  The map $f:\widetilde{\ZZ}\to\ZZ$ is defined by $f:=\theta \circ \tilde{\iota}$.  It is clear that the image of $f$ is indeed contained inside of $\ZZ$ using commutativity of the diagram, along with the fact that $\beta(\tilde{\Delta}((Z_w)_j))=\tilde{\Delta}((X_w)_j))$.  

We define the boundary of $(X_w)_j$ by $$\partial ((X_w)_j):=(\partial X_w)_j\cup(X_w)_{\partial\PP_j}$$ and similarly define the boundary of $(Z_w)_j$ by $$\partial ((Z_w)_j):=(\partial Z_w)_j\cup(Z_w)_{\partial\PP_j},$$ where $\partial Z_w:=\varphi^{-1}(\partial X_w)$ and $\varphi: Z_w\to X_w$ denotes the desingularization. 

We define the boundary of $\ZZ$ by $$\partial \ZZ:=\left(\bar{\Gamma}\times(X^{u,v}_w)_{\PP}\right)\times_{(X^2_w)_\PP}\tilde{\Delta}(\partial ((X_w)_j))$$ and similarly define the boundary of $\tilde{\ZZ}$ by $$\partial \tilde{\ZZ}:=\left(\bar{\Gamma}\times(Z^{u,v}_w)_{\PP}\right)\times_{(Z^2_w)_\PP}\tilde{\Delta}(\partial ((Z_w)_j)).$$  Observe that $f^{-1}(\partial\ZZ)=\partial\widetilde{\ZZ}$ is the scheme-theoretic inverse image.  

We will need the following lemmas, which are restatements of \cite[Lemmas 7.2 and 7.3]{K} respectively (\cite[Lemma 7.3]{K} is originally from \cite[Lemma on page 108]{FP}).

\begin{lem}
\label{CMlem1}
Let $f:W\to X$ be a flat morphism from a pure-dimensional CM scheme $W$ of finite type over $\C$ to a CM irreducible variety $X$ and let $Y$ be a closed CM subscheme of $X$ of pure codimension d.  Set $Z:=f^{-1}(Y)$.  If $\codim_W(Z)\geq d$ then equality holds and $Z$ is CM.  
\end{lem}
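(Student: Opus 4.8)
\textbf{Proof proposal for Lemma \ref{CMlem1}.} The statement is a standard commutative-algebra fact about flat pullbacks of Cohen-Macaulay subschemes along flat morphisms, so the plan is to reduce to the local ring situation and run the usual dimension/depth bookkeeping. First I would reduce to the affine local case: the CM property, the purity of codimension, and the codimension inequality are all checked at points, so fix a point $z \in Z = f^{-1}(Y)$ lying over $y = f(z) \in Y \subseteq X$, and pass to the local rings $A := \OO_{X,y}$, $B := \OO_{W,z}$, together with the ideal $I \subset A$ cutting out $Y$ near $y$. Flatness of $f$ gives that $B$ is a flat (hence faithfully flat, since it is local) $A$-algebra, so one is in the classical setting of a flat local homomorphism $A \to B$ with $A$ CM (since $X$ is a CM variety) and $B$ CM of pure dimension (since $W$ is pure-dimensional CM of finite type over $\C$, the local ring at any point has dimension equal to that common value minus nothing — more precisely $\dim B$ together with the fibre dimension controls everything via the dimension formula for flat maps).

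The key step is the dimension formula for flat local homomorphisms: for a flat local homomorphism $A \to B$ one has $\dim B = \dim A + \dim (B/\mathfrak{m}_A B)$, and this persists after quotienting by $I$, namely $\dim (B/IB) = \dim (A/I) + \dim(B/\mathfrak{m}_A B)$, because $A/I \to B/IB$ is again flat (base change of a flat map) and has the same closed fibre $B/\mathfrak{m}_A B$. Subtracting, $\dim B - \dim(B/IB) = \dim A - \dim(A/I) = d$, using that $Y$ has pure codimension $d$ in the irreducible (hence equidimensional) variety $X$, so $\operatorname{ht}(I) = d$ near $y$. This says precisely that $\codim_W(Z) = d$ at $z$, so the hypothesis $\codim_W(Z) \geq d$ is automatically an equality — in fact the inequality hypothesis is only needed to know $Z$ is nonempty/that one is not in a degenerate component situation, but the flat dimension formula already pins the codimension to exactly $d$. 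Then, for the CM assertion: since $I$ has height $d$ in the CM local ring $A$ and $A/I$ is CM (that is the hypothesis that $Y$ is CM of pure codimension $d$), $I$ is generated near $y$ by a regular sequence of length $d$ in $A$ — no wait, $Y$ CM does not force $I$ to be a complete intersection; instead I should argue directly on depth.

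So the cleaner route for the CM conclusion is: $B$ is a flat $A$-algebra, $A/I$ is CM, and I want $B/IB = B \otimes_A (A/I)$ to be CM. Here I invoke the standard result (see e.g.\ Matsumura, or Bruns-Herzog, on flat base change and Cohen-Macaulayness) that if $A \to B$ is flat local with CM closed fibre $B/\mathfrak{m}_A B$, and $M$ is a CM $A$-module, then $M \otimes_A B$ is a CM $B$-module; applying this with $M = A/I$ and using that the closed fibre of $A/I \to B/IB$ equals the closed fibre $B/\mathfrak{m}_A B$ of $A \to B$, which is CM because $W$ is CM and the fibres of a flat map from a CM scheme to a CM (regular enough) base are CM — actually the cleanest statement is: $A \to B$ flat, $A$ CM, $B$ CM $\Rightarrow$ all fibres $B \otimes_A \kappa(\mathfrak{p})$ are CM; then flat base change preserves this. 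Combining, $B/IB$ is CM, i.e.\ $Z$ is CM at $z$; since $z$ was arbitrary and $\codim_W Z = d$ at every point, $Z$ is CM of pure codimension $d$ globally. The main obstacle is simply citing the correct packaging of these flat-local CM transfer statements — none of it is deep, but one must be careful that "CM variety $X$" plus "pure-dimensional CM $W$ of finite type" genuinely give CM local rings with the expected dimensions, and that the equidimensionality of $X$ (irreducible) is what converts "pure codimension $d$" into "height exactly $d$" uniformly; since the lemma is quoted from \cite{K} (originally \cite{FP}), it suffices to reference that proof.
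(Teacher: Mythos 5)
Your proposal is correct, but note that the paper itself offers no proof of this lemma: it is quoted verbatim from \cite[Lemma 7.2]{K} (only the companion Lemma \ref{CMlem2} traces back to \cite{FP} --- your closing parenthetical slightly misattributes this), so there is no argument in the text to compare against line by line. Your self-contained argument is the standard one and it works: localize at a point $z\in Z$ over $y\in Y$, use the dimension formula $\dim B=\dim A+\dim(B/\mathfrak{m}_AB)$ for the flat local map $A=\OO_{X,y}\to B=\OO_{W,z}$ and for its base change $A/I\to B/IB$ (same closed fibre, since $I\subseteq\mathfrak{m}_A$), and use that $X$ is an irreducible variety (hence catenary and biequidimensional) to turn ``pure codimension $d$'' into $\dim A-\dim(A/I)=d$ at every $y\in Y$; this correctly shows that for a \emph{flat} $f$ the hypothesis $\codim_W(Z)\geq d$ is automatically an equality (it is only in the non-flat Lemma \ref{CMlem2} that the inequality carries real content). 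For the CM conclusion, your second, corrected route is the right one: by the depth/dimension formulas for flat local homomorphisms (Matsumura, Thm.~23.3, or Bruns--Herzog 2.1.16), $B$ CM forces the closed fibre $B/\mathfrak{m}_AB$ to be CM, and then $A/I$ CM together with the CM closed fibre gives $B/IB$ CM, i.e.\ $Z$ is CM and pure of codimension $d$. The only blemishes are cosmetic: the momentary detour through ``$I$ is generated by a regular sequence'' (rightly retracted, since $Y$ CM does not make $I$ a complete intersection), and the invocation of CM-ness of \emph{all} fibres, of which only the closed fibre is actually needed.
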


\begin{lem}
\label{CMlem2}
Let $f:W\to X$ be a morphism from a pure-dimensional CM scheme $W$ of finite type over $\C$ to a smooth irreducible variety $X$ and let $Y$ be a closed CM subscheme of $X$ of pure codimension d.  Set $Z:=f^{-1}(Y)$.  If $\codim_W(Z)\geq d$ then equality holds and $Z$ is CM.  
\end{lem}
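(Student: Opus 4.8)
The plan is to reduce the statement to the elementary fact that in a Cohen--Macaulay local ring a sequence of elements which cuts the dimension down by exactly its length is a regular sequence, with Cohen--Macaulay quotient; the geometric device that makes this possible is the diagonal of the smooth variety $X$.

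\emph{Step 1 (the diagonal trick).} Let $\iota: Y\hookrightarrow X$ be the inclusion and set $n:=\dim X$. Form the product $W\times_\C Y$ together with the morphism $\phi:=(f\circ \mathrm{pr}_W,\ \iota\circ \mathrm{pr}_Y):W\times_\C Y\to X\times_\C X$. A routine fibre-product identity gives a canonical isomorphism of schemes $Z=f^{-1}(Y)=W\times_X Y\ \cong\ \phi^{-1}(\Delta_X)$, where $\Delta_X\subset X\times_\C X$ is the diagonal. Since $X$ is smooth over $\C$, the closed embedding $\Delta_X\hookrightarrow X\times_\C X$ is regular of codimension $n$; hence, locally on $X\times_\C X$, $\Delta_X$ is the zero scheme of a regular sequence of length $n$. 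Pulling such a local system of equations back along $\phi$ exhibits $Z$, in a neighbourhood of any prescribed point, as the zero scheme of $n$ functions $\bar h_1,\dots,\bar h_n$ on $W\times_\C Y$.

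\emph{Step 2 (the ambient space $W\times_\C Y$).} Because $Y$ has pure codimension $d$ in the irreducible variety $X$, it is pure-dimensional of dimension $n-d$; and $W$ is pure-dimensional CM by hypothesis. Since both are of finite type over the field $\C$, the product $W\times_\C Y$ is again pure-dimensional and CM, of dimension $\dim W+n-d$ (CM-ness of the product over a field is standard; alternatively it follows from flatness of $W\times_\C Y\to W$ together with CM-ness of base and fibres). In particular, for every closed point $z\in Z$ one has $\dim \OO_{W\times_\C Y,\,z}=\dim W+n-d$ and $\OO_{W\times_\C Y,\,z}$ is CM.

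\emph{Step 3 (dimension count and conclusion).} Work locally at a closed point $z\in Z$ (if $Z=\emptyset$ there is nothing to prove). Since $\OO_{Z,z}$ is the quotient of the $(\dim W+n-d)$-dimensional ring $\OO_{W\times_\C Y,\,z}$ by the $n$ elements $\bar h_1,\dots,\bar h_n$, we get $\dim\OO_{Z,z}\ge \dim W-d$; thus every irreducible component of $Z$ has dimension $\ge \dim W-d$. On the other hand the hypothesis $\codim_W(Z)\ge d$ forces every component of $Z$ to have dimension $\le \dim W-d$. Hence $Z$ is pure of dimension $\dim W-d$, i.e. $\codim_W(Z)=d$, and at each closed point $z$ the sequence $\bar h_1,\dots,\bar h_n$ cuts the dimension of the CM local ring $\OO_{W\times_\C Y,\,z}$ down by exactly $n$. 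By the CM criterion it is therefore a regular sequence and $\OO_{Z,z}$ is CM; since this holds at every closed point, $Z$ is CM.

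\emph{Expected main obstacle.} The argument is mostly bookkeeping; the one place demanding care is tracking pure-dimensionality throughout --- this is what lets one pass between ``codimension $\ge d$'' and the numerical bound on $\dim\OO_{Z,z}$, and what guarantees the local ring dimensions used in Step 2 --- together with invoking the correct forms of the two inputs (products of CM schemes over a field are CM, and ``a length-$c$ sequence cutting dimension by $c$ in a CM local ring is regular with CM quotient''). If one prefers to avoid the external product, the identical argument runs with the graph $\Gamma_f\subset W\times_\C X$ of $f$ in place of $\Delta_X$.
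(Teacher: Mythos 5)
Your proof is correct. The paper itself gives no argument for this lemma---it is quoted from \cite[Lemma 7.3]{K}, which in turn is the Lemma on page 108 of \cite{FP}---and your diagonal/graph reduction (the regular embedding of codimension $n$ coming from smoothness of $X$, the Cohen--Macaulayness of $W\times_\C Y$, and the criterion that elements of the maximal ideal cutting the dimension of a CM local ring by exactly their number form a regular sequence with CM quotient) is essentially that standard proof, written out in a self-contained way.
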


\begin{prop}
\label{Z}
The scheme $\ZZ$ is normal, irreducible, and CM, of dimension $$\dim\ZZ=|j|+\ell(w)-\ell(u)-\ell(v)+\dim\bar{\Gamma}.$$

The scheme $\widetilde{\ZZ}$ is irreducible, and the map $f:\widetilde{\ZZ}\to\ZZ$ is a proper birational map.  Hence, the scheme $\widetilde{\ZZ}$ is a desingularization of $\ZZ$. 
\end{prop}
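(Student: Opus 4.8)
The plan is to obtain all the asserted properties of $\ZZ$ from the corresponding properties of $(X^{u,v}_w)_\PP$ together with the flatness of $m$ and the Cohen–Macaulayness of the diagonal slice $\tilde\Delta((X_w)_j)$, via Lemma \ref{CMlem1}; the properties of $\widetilde\ZZ$ and the fact that $f$ is a desingularization will then come from Lemma \ref{CMlem2} applied to the smooth map $\tilde m$, plus a dimension count.

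\emph{Step 1: $\ZZ$ is CM of the claimed dimension.} The scheme $\bar\Gamma\times(X^{u,v}_w)_\PP$ is CM and pure-dimensional: $X^u_w$ and $X^v_w$ are CM (being Richardson varieties, by \cite[Proposition 6.6]{K}), hence so is their product and so is the associated bundle over $\PP$ and the further product with the smooth variety $\bar\Gamma$. The map $m$ is flat by Lemma \ref{mflat}, with target the CM irreducible variety $(X^2_w)_\PP$. The closed subscheme $\tilde\Delta((X_w)_j)\subset(X^2_w)_\PP$ is CM: it is isomorphic to $(X_w)_j$, which is a locally trivial $X_w$-bundle over $\PP_j$ cut inside the $X_w$-bundle over $\PP$, and $X_w$ is CM \cite[Theorem 8.2.2]{Kbook}; its codimension in $(X^2_w)_\PP$ equals $\dim X_w + (|j|$ correction$)$ — more precisely $\codim = \ell(w) + |N|r - |j| \cdot 0\ldots$ — let me instead compute it as $\dim (X^2_w)_\PP - \dim(X_w)_j = \big(2\ell(w)+|j_{\max}|\cdot\ldots\big)$; concretely $\codim_{(X^2_w)_\PP}\tilde\Delta((X_w)_j) = \ell(w) + (|N|r - |j|)$ where $|N|r-|j|$ records the codimension of $\PP_j$ in $\PP$. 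I would compute $\dim\ZZ$ directly from the fiber-product description: a general fiber of $m$ over a point of $\tilde\Delta((X_w)_j)$ has dimension $\dim\bar\Gamma + \dim(X^{u,v}_w)_\PP - \dim(X^2_w)_\PP = \dim\bar\Gamma - \ell(u)-\ell(v)-rN$ (using $\dim X^u_w = \ell(w)-\ell(u)$ etc. and $\dim(\cdot)_\PP = \dim(\cdot) + rN$), so $\dim\ZZ = \dim\tilde\Delta((X_w)_j) + \dim\bar\Gamma - \ell(u)-\ell(v)-rN = (\ell(w)+|j|) + \dim\bar\Gamma - \ell(u)-\ell(v) = |j|+\ell(w)-\ell(u)-\ell(v)+\dim\bar\Gamma$, matching the claim. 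In particular $\ZZ$ is nonempty and $\codim_{\bar\Gamma\times(X^{u,v}_w)_\PP}\ZZ$ equals the codimension $d$ of $\tilde\Delta((X_w)_j)$, so Lemma \ref{CMlem1} applies verbatim and yields that $\ZZ$ is CM (and equidimensional, since $m$ is flat with CM equidimensional source).

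\emph{Step 2: $\widetilde\ZZ$ is smooth and irreducible, and $f$ is a desingularization.} Run the same argument on the top of the diagram: $\bar\Gamma\times(Z^{u,v}_w)_\PP$ is smooth (the $Z^u_w$ are smooth by \cite[Theorem 6.8]{K}), $\tilde m$ is smooth by Lemma \ref{msmooth}, hence the fiber square defining $\widetilde\ZZ$ has $\widetilde\ZZ = \tilde m^{-1}(\tilde\Delta((Z_w)_j))$ smooth of the expected dimension (the inverse image under a smooth morphism of the smooth variety $\tilde\Delta((Z_w)_j)\cong(Z_w)_j$). It remains to see $\widetilde\ZZ$ is irreducible and that $f:\widetilde\ZZ\to\ZZ$ is proper and birational. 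Properness of $f$ follows since $f=\theta\circ\tilde\iota$ with $\tilde\iota$ a closed immersion and $\theta$ the base change of the proper desingularization $Z^u_w\times Z^v_w\to X^u_w\times X^v_w$ (desingularizations are proper). For birationality: $\theta$ and $\beta$ are isomorphisms over the smooth loci, and over the open dense subset of $(X^2_w)_\PP$ lying over the big cells, $m$ restricts to (a bundle twist of) a product of Richardson cells so the fiber product is already smooth; thus $f$ is an isomorphism over a dense open of $\ZZ$, hence birational. Finally, irreducibility of $\widetilde\ZZ$ (equivalently, of $\ZZ$, since $f$ is a proper birational surjection from a smooth — hence locally irreducible — scheme): $\tilde m$ is a smooth, and in particular open, morphism with irreducible source $\bar\Gamma\times(Z^{u,v}_w)_\PP$ and with irreducible fibers (each fiber of $\tilde m$ is, after the $\bar\Gamma$-translation, a fiber of $\tilde\pi_2$, a bundle over $\PP$ with fiber a product of BSDH-type varieties over the irreducible $Z^v_w$, hence irreducible); a smooth surjective morphism with irreducible base and irreducible fibers has irreducible total space, so $\widetilde\ZZ = \tilde m^{-1}(\tilde\Delta((Z_w)_j))$ is irreducible provided $\tilde\Delta((Z_w)_j)$ is irreducible, which it is, being $\cong (Z_w)_j$, a bundle over the irreducible $\PP_j$ with irreducible fiber $Z_w$. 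Hence $\widetilde\ZZ$ is irreducible, so $\ZZ = f(\widetilde\ZZ)$ is irreducible as well.

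\emph{Step 3: $\ZZ$ is normal.} Having $\ZZ$ irreducible and CM, normality will follow from Serre's criterion once we check $R_1$, i.e. that $\ZZ$ is regular in codimension one. I would argue this via $f$: $\widetilde\ZZ$ is smooth, $f$ is proper birational, and the exceptional locus $\mathrm{Exc}(f)$ maps into the singular loci coming from $\theta$ and $\beta$, which have codimension $\geq 2$ in $\ZZ$ because the desingularizations $Z^u_w\to X^u_w$ contract only over $\mathrm{Sing}(X^u_w)$, of codimension $\geq 2$ in the normal variety $X^u_w$ (normality of Richardson varieties, \cite[Proposition 6.6]{K}), and flat/smooth base change preserves this codimension bound. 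So $\ZZ$ is smooth away from a closed subset of codimension $\geq 2$, giving $R_1$; combined with $S_2$ (from CM) this yields normality. Alternatively, and more in the spirit of the cited lemmas, one can note that flatness of $m$ transports normality of the fibers: a flat morphism with normal source and normal fibers has normal total space, and here one would verify the generic fiber is normal — but I find the codimension-of-exceptional-locus argument cleaner.

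\emph{Main obstacle.} The computations themselves (Steps 1–2) are routine bookkeeping with dimensions of bundles over $\PP$ and repeated invocations of Lemmas \ref{CMlem1}, \ref{CMlem2}, \ref{mflat}, \ref{msmooth}. The genuinely delicate point is the normality of $\ZZ$ in Step 3: one must either pin down precisely that the non-smooth locus of $\ZZ$ (arising from the singularities of the Richardson varieties $X^u_w$, $X^v_w$ propagated through the flat map $m$) has codimension at least two — which requires a careful track of how codimensions behave under the fiber-product construction and the diagonal restriction — or else set up the "flat with normal fibers" argument and identify the fibers of $m$ over $\tilde\Delta((X_w)_j)$ explicitly enough to see their normality. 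I expect this to be where the "several technical details had to be carefully addressed" alluded to in the introduction come into play.
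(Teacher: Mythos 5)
Your Step 1 and the smoothness of $\widetilde{\ZZ}$ are fine and match the paper: the dimension count via flatness of $m$ (the paper uses \cite[Chapter III, Corollary 9.6]{H}; note your intermediate fiber dimension carries a stray $rN$ that only cancels because you also inflate $\dim\tilde{\Delta}((X_w)_j)$ by $rN$), CM-ness via Lemma \ref{CMlem1}, and non-singularity of $\widetilde{\ZZ}$ because $\tilde{\mu}$ is smooth with non-singular base. However, the paper disposes of precisely the remaining claims --- irreducibility, normality, and the proper birationality of $f$ --- by invoking the proofs of \cite[Proposition 7.4 and Lemma 7.5]{K}, and it is exactly at these three points that your self-contained sketch has genuine gaps.

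The central one is irreducibility: your argument rests on the assertion that ``each fiber of $\tilde m$ is, after the $\bar\Gamma$-translation, a fiber of $\tilde\pi_2$.'' That is not what the fibers of an action map look like. By Lemma \ref{lemact}, the fiber of $\tilde m$ (or of $\mu$) over a point $\zeta$ is a $\text{Stab}(\zeta)$-twisted family over the intersection of the $\bar\Gamma$-orbit of $\zeta$ with the image of $\tilde\pi_2$, not a single fiber of $\tilde\pi_2$; its irreducibility depends on connectedness of stabilizers and irreducibility of those orbit intersections, and is genuinely delicate --- the paper itself pointedly avoids needing it, proving in Proposition \ref{ratlsing} only that the fibers of $\mu$ are \emph{disjoint unions} of irreducible varieties and remarking that irreducibility of the fibers ``is not proved here.'' So irreducibility of $\widetilde{\ZZ}$ (hence of $\ZZ$) is not established, and this infects your other steps: birationality of $f$ is argued by claiming the fiber product is ``already smooth'' over the big cells (false in general, since the open Richardson pieces $X^u_w\cap C_w$ need not be smooth) and in any case requires locating the generic point of $\ZZ$, which presupposes irreducibility; and your $R_1$ argument for normality transfers codimension bounds from $(X^{u,v}_w)_\PP$ to $\ZZ$ by ``flat/smooth base change,'' but the projection $\ZZ\to(X^{u,v}_w)_\PP$ is neither flat nor surjective (its fibers jump according to the pair of Schubert cells containing the two coordinates, and are empty unless the cells agree), and a desingularization such as $Z^u_w\to X^u_w$ need not be an isomorphism over the whole smooth locus. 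A workable fiberwise repair is the one the paper uses for rational singularities: analyze $\mu$ stratum-by-stratum over the cells of $\tilde{\Delta}((X_w)_j)$ via Lemma \ref{lemact} and the \'etale-local triviality of the $\text{Stab}$-quotient as in the proof of Proposition \ref{ratlsing} (which, via Elkik, also yields normality); otherwise one must, as the paper does, fall back on the arguments of \cite[Proposition 7.4 and Lemma 7.5]{K}.
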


\begin{proof}
By \cite[Chapter III, Corollary 9.6]{H}, the fibers of $m$ are pure of dimension $$\dim \bar{\Gamma}+\dim (X^{u,v}_w)_\PP - \dim (X^2_w)_\PP .$$  Since the fibers of $\mu$ are the same as those of $m$, applying loc. cit.  to $\mu$ gives that $\ZZ$ is pure of dimension $$\dim \ZZ=\dim \bar{\Gamma}+\dim (X^{u,v}_w)_\PP - \dim (X^2_w)_\PP + \dim \tilde{\Delta}((X_w)_j)$$ $$=|j|+\ell(w)-\ell(u)-\ell(v)+\dim\bar{\Gamma}.$$

The remainder of the proof of the proposition follows from the proof of \cite[Proposition 7.4 and Lemma 7.5]{K} (Note that there is a slight difference in the definition of $\ZZ$ and $\widetilde{\ZZ}$ between ours and that in \cite{K}.)  Further, the scheme $\widetilde{ZZ}$ is non-singular, since $\tilde{\mu}$ is a smooth morphism with non-singular base.  
\end{proof}

\begin{lem}
\label{lemact}
Let $G$ be a group acting on a set $X$ and let $Y\subset X$.  Consider the action map $m:G\times Y\to X$.  For $x\in X$ denote the orbit of $x$ by $O(x)$ and the stabilizer by $\text{Stab}(x)$.  Then, $\text{Stab}(x)$ acts on the fiber $m^{-1}(x)$ and $\text{Stab}(x)\backslash m^{-1}(x)\simeq O(x)\cap Y$.
\end{lem}

\begin{proof}
It is easy to check that $$m^{-1}(x)=\left\{(g,h^{-1}x):h\in G, \text{ } h^{-1}x\in  Y,\text{ } g\in\text{Stab}(x)\cdot h\right\}.$$
Thus, $\text{Stab}(x)$ acts on $m^{-1}(x)$ by left multiplication on the left component.  Since every element of $O(x)\cap Y$ is of the form $h^{-1}x$ for some $h\in G$,  the second projection $m^{-1}(x)\to O(x)\cap Y$ is surjective.  This map clearly factors through the quotient to give a map $\text{Stab}(x)\backslash m^{-1}(x)\to O(x)\cap Y$.  To show this induced map is injective, note first that each class has a representative of the form $(h,h^{-1}x)$.  Now, if $(h_1,h_1^{-1}x)$ and $(h_2,h_2^{-1}x)$ satisfy $h_1^{-1}x=h_2^{-1}x$ then $h_2 h_1^{-1} x = x$, i.e. $h_2 h_1^{-1}\in \text{Stab}(x)$, i.e. $h_2 \in \text{Stab}(x)\cdot h_1$, i.e. $(h_1,h_1^{-1}x)$ and $(h_2,h_2^{-1}x)$ belong to the same class.
\end{proof}

\begin{prop}
\label{ratlsing}
The scheme $\ZZ$ has rational singularities.  
\end{prop}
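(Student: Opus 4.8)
The strategy is the standard one for establishing rational singularities: exhibit a resolution $f:\widetilde{\ZZ}\to\ZZ$ (already constructed in the diagram of Section 7 and shown to be a proper birational morphism in Proposition \ref{Z}) and prove $f_*\OO_{\widetilde{\ZZ}}=\OO_{\ZZ}$ together with $R^if_*\OO_{\widetilde{\ZZ}}=0$ for all $i>0$. Since $\ZZ$ is already known to be normal, irreducible and CM (Proposition \ref{Z}), the first equality is automatic from normality and properness of $f$. So the real content is the vanishing of the higher direct images $R^if_*\OO_{\widetilde{\ZZ}}$.

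\smallskip

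\textbf{Reduction to the fibers of the smooth maps.} First I would use the fiber-square structure. The square in the diagram expresses $\widetilde{\ZZ}$ as the pullback of $\tilde{\Delta}((Z_w)_j)\hookrightarrow (Z^2_w)_\PP$ along the smooth morphism $\tilde m:\bar\Gamma\times(Z^{u,v}_w)_\PP\to (Z^2_w)_\PP$, and likewise $\ZZ$ as the pullback along the flat map $m$; the desingularizations $\theta,\beta$ intertwine the two squares. Since $\theta$ is a desingularization (proper birational with $R^i\theta_*\OO=0$, because a product of BSDH/Kumar-type desingularizations has the property that higher direct images of the structure sheaf vanish — this is the rational-singularities property of $X^u_w$ and of Schubert varieties quoted in Section 2), and since $\tilde m$ is \emph{flat} (in fact smooth), flat base change along $m$ will let me transport the vanishing $R^i\theta_*\OO_{\bar\Gamma\times(Z^{u,v}_w)_\PP}=0$ to the vanishing $R^i(\text{pr})_*\OO=0$ for the map $\bar\Gamma\times(Z^{u,v}_w)_\PP\times_{(X^2_w)_\PP}\tilde\Delta((X_w)_j)\to\ZZ$. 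What remains is then to compare $\widetilde{\ZZ}$ with that intermediate fiber product, i.e. to handle the desingularization of $(X_w)_j$ by $(Z_w)_j$ and of $(Z^2_w)_\PP$ by passing to $(X^2_w)_\PP$; here again one uses that $Z_w\to X_w$ is a desingularization with $R^i\varphi_*\OO_{Z_w}=0$ and that $(\ -\ )_\PP$ and $(\ -\ )_j$ are (Zariski-locally trivial) fiber-bundle operations over smooth bases, so they preserve this vanishing.

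\smallskip

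\textbf{Putting it together via composition of direct images.} Concretely, I would factor $f$ through these intermediate pullbacks, and at each stage check that the relevant map is a base change of a desingularization by a flat (indeed smooth) morphism, so that $R^i(\cdot)_*\OO$ vanishes by flat base change from the known vanishing upstairs; then compose using the Grothendieck spectral sequence (or just the Leray spectral sequence for a composite, together with the fact that each factor has no higher direct images of the structure sheaf) to conclude $R^if_*\OO_{\widetilde{\ZZ}}=0$ for $i>0$. Since $\widetilde{\ZZ}$ is smooth (Proposition \ref{Z}) and $f$ is a resolution, the vanishing $R^if_*\OO_{\widetilde{\ZZ}}=0$ for $i>0$ together with $f_*\OO_{\widetilde{\ZZ}}=\OO_{\ZZ}$ is precisely the statement that $\ZZ$ has rational singularities (using that we are in characteristic $0$, so the definition via any one resolution suffices, and the CM hypothesis is already in hand). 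Lemma \ref{lemact} is presumably used along the way to identify the fibers of the action maps $m,\tilde m$ (they are, up to a stabilizer quotient, intersections $O(x)\cap Y$ of $\Gamma$-orbit closures with the relevant subvarieties), which is what guarantees the fibers are again unions of Richardson-type varieties with rational singularities — this identification is what makes the base-change argument run without surprises.

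\smallskip

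\textbf{Expected main obstacle.} The delicate point is not any single vanishing statement but the bookkeeping of the \emph{two} stacked fiber squares simultaneously with the two desingularizations $\theta$ and $\beta$: one must be careful that the map $\widetilde{\ZZ}\to\ZZ$ really does factor as a composite of maps each of which is a flat base change of a known desingularization, and in particular that the scheme-theoretic fiber products behave well (no embedded components, correct dimension), which is exactly where the flatness of $m$ from Lemma \ref{mflat}, the smoothness of $\tilde m$ from Lemma \ref{msmooth}, and the CM/dimension count of Proposition \ref{Z} are all needed in concert. A secondary subtlety is that $\bar\Gamma$ and the bundles over $\PP$ are only \emph{Zariski-locally} trivial, so all statements should be checked locally on $\bar\Gamma$ and on $\PP$ and then glued; since rational singularities is a local property this causes no real trouble but must be said.
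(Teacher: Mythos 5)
Your overall strategy (exhibit the resolution $f:\widetilde{\ZZ}\to\ZZ$ from Proposition \ref{Z} and prove $R^if_*\OO_{\widetilde{\ZZ}}=0$ for $i>0$; the equality $f_*\OO_{\widetilde{\ZZ}}=\OO_{\ZZ}$ is indeed automatic from normality and properness) is legitimate in principle, but the step that carries all the weight --- that $f$ factors as a composite of maps, each a flat (or smooth) base change of a known desingularization, so that the vanishing of higher direct images transfers by flat base change --- is where the argument breaks. The two fiber squares defining $\ZZ$ and $\widetilde{\ZZ}$ sit over \emph{different} bases, $(X^2_w)_\PP$ and $(Z^2_w)_\PP$, and the comparison between them is along the desingularizations $\theta$ and $\beta$, which are proper birational but not flat; likewise, transporting the vanishing $R^i\theta_*\OO=0$ (valid because $X^u_w$ and $X^v_w$ have rational singularities) down to $\ZZ$ amounts to base changing $\theta$ along the closed embedding $i:\ZZ\hookrightarrow\bar{\Gamma}\times(X^{u,v}_w)_\PP$, equivalently along $\tilde{\Delta}((X_w)_j)\hookrightarrow(X^2_w)_\PP$, which has positive codimension and is therefore not flat. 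Restricting a resolution over a closed subscheme does not in general preserve the vanishing of higher direct images of $\OO$ (nor even produce the expected fiber product: note $\widetilde{\ZZ}$ is \emph{not} the scheme-theoretic preimage $\theta^{-1}(\ZZ)$, but a fiber product over the other base), unless one establishes a Tor-independence/transversality statement, which is nowhere supplied and is essentially as hard as the proposition itself. So the difficulty you flag as ``bookkeeping'' is not bookkeeping: as set up, the flat-base-change mechanism never applies to the maps you actually need.

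The paper avoids computing $R^if_*\OO_{\widetilde{\ZZ}}$ altogether. Since $\mu:\ZZ\to\tilde{\Delta}((X_w)_j)$ is flat and the base $\tilde{\Delta}((X_w)_j)$ has rational singularities, Elkik's theorem \cite[Th\'eor\`eme 5]{Elk} reduces the claim to showing that the fibers of $\mu$ are disjoint unions of irreducible varieties with rational singularities. These fibers are identified via Lemmas \ref{lemact} and \ref{mixgroup}: for $x\in\tilde{\Delta}((C_{w'})_j)$ one has $\text{Stab}(x)\backslash\mu^{-1}(x)\simeq(X^u\cap C_{w'}\times X^v\cap C_{w'})_\PP$, which has rational singularities by \cite[Theorem 3.1]{KSch}; since $\bar{\Gamma}\to\text{Stab}(x)\backslash\bar{\Gamma}$ is locally trivial in the \'etale topology \cite[Proposition 3, \S2.5]{Ser}, rational singularities ascend to $\mu^{-1}(x)$ by \cite[Corollary 5.11]{KM}. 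Your closing remark about Lemma \ref{lemact} gestures toward this fiber description, but without an ingredient like Elkik's theorem, knowing the fibers of $\mu$ have rational singularities does not feed back into your $R^if_*$ computation; supplying that ingredient (or an equivalent vanishing argument on $\widetilde{\ZZ}$) is the missing, and essential, step.
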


\begin{proof}  Since $\mu$ is flat and $\tilde{\Delta}((X_w)_j)$ has rational singularities, by \cite[Th\'eor\`em 5]{Elk} it is sufficient to show that the fibers of $\mu$ are disjoint unions of irreducible varieties with rational singularities (in fact, the fibers of $\mu$ are irreducible, but we do not provide a proof here as we do not need this fact).  

Let $x\in \tilde{\Delta}((C_{w'})_j)$,  where $w'\leq w$.  Then, by Lemmas \ref{lemact} and \ref{mixgroup}, we have $\text{Stab}(x)\backslash\mu^{-1}(x)\simeq (X^u\cap C_{w'}\times X^v\cap C_{w'})_\PP$, where $\text{Stab}(x)$ is taken with respect to the action of $\bar{\Gamma}$ on $(X^2_w)_\PP$.  By \cite[Proposition 3, \S2.5]{Ser},  the quotient map $\bar{\Gamma}\to\text{Stab}(x)\backslash\bar{\Gamma}$ is locally trivial in the \'{e}tale topology.  

Consider the pullback diagram:

\[
\begin{tikzcd}[column sep=small, row sep=huge]
\mu^{-1}(x) \arrow[]{d}{} & \subseteq & \bar{\Gamma}\times (X^{u,v}_w)_\PP \arrow[]{d}{}\\
\text{Stab}(x)\backslash\mu^{-1}(x) & \subseteq & \left(\text{Stab}(x)\backslash\bar{\Gamma}\right)\times (X^{u,v}_w)_\PP
\end{tikzcd}
\]
Since the right vertical map is a locally trivial fibration in the \'{e}tale topology, the left vertical map is too.  Now, $\text{Stab}(x)\backslash\mu^{-1}(x)\simeq (X^u\cap C_{w'}\times X^v\cap C_{w'})_\PP$ has rational singularities by \cite[Theorem 3.1]{KSch}.  Further, $\text{Stab}(x)$ being smooth and $\mu^{-1}(x)\to\text{Stab}(x)\backslash\mu^{-1}(x)$ being locally trivial in the \'{e}tale topology, we get that $\mu^{-1}(x)$ is a disjoint union of irreducible varieties with rational singularities by \cite[Corollary 5.11]{KM}.

\end{proof}

\begin{prop}
\label{dZCM}
The scheme $\partial \ZZ$ is pure of codimension $1$ in $\ZZ$ and is CM.  
\end{prop}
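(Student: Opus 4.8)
The plan is to exhibit $\partial\ZZ$ as the scheme-theoretic preimage $\mu^{-1}\bigl(\tilde\Delta(\partial((X_w)_j))\bigr)$ under the morphism $\mu\colon\ZZ\to\tilde\Delta((X_w)_j)$ of the diagram, and then invoke Lemma \ref{CMlem1}. That $\partial\ZZ$ equals this preimage is immediate from the definitions and transitivity of fiber products, and $\mu$ is flat, being the base change of $m$ (flat by Lemma \ref{mflat}) along $\tilde\Delta((X_w)_j)\hookrightarrow(X^2_w)_\PP$. Granting for the moment that $\partial((X_w)_j)$ is a closed subscheme of the CM irreducible variety $(X_w)_j$ which is CM and of pure codimension $1$, Lemma \ref{CMlem1} applied with $W=\ZZ$ (pure-dimensional CM by Proposition \ref{Z}), $X=\tilde\Delta((X_w)_j)\cong(X_w)_j$, $Y=\tilde\Delta(\partial((X_w)_j))$ and $d=1$ yields that $\partial\ZZ$ is CM of pure codimension $1$, once $\codim_\ZZ\partial\ZZ\ge1$ is known. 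The latter holds because $\ZZ$ is nonempty and irreducible (Proposition \ref{Z}) while $\mu$, being flat and of finite type, is open: if $\mu^{-1}(Y)$ were all of $\ZZ$ then $\mu(\ZZ)$ would be an open subset of $(X_w)_j$ contained in the proper closed subset $\partial((X_w)_j)$, forcing $\ZZ=\emptyset$. So everything reduces to the assertion about $\partial((X_w)_j)$.

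For this I use the standard Mayer-Vietoris criterion: if a scheme $Y$ is the scheme-theoretic union $Y_1\cup Y_2$ of closed subschemes which are CM of pure dimension $\delta$, and the scheme-theoretic intersection $Y_1\cap Y_2$ is CM of pure dimension $\delta-1$, then $Y$ is CM of pure dimension $\delta$. (This follows from $0\to\OO_Y\to\OO_{Y_1}\oplus\OO_{Y_2}\to\OO_{Y_1\cap Y_2}\to0$ and the local-cohomology long exact sequence at each point $x$: the depth hypotheses force $\mathcal H^i_x(\OO_Y)=0$ for $i<\delta$, while $\dim_xY=\delta$.) We apply it to $\partial((X_w)_j)=(\partial X_w)_j\cup(X_w)_{\partial\PP_j}$. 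Here $(X_w)_j$ is a Zariski-locally trivial fiber bundle over the smooth $\PP_j$ with fiber the normal CM variety $X_w$ (\cite[Theorem 8.2.2]{Kbook}), hence integral and CM of dimension $|j|+\ell(w)$; and $\partial X_w$ is pure of dimension $\ell(w)-1$, its components being the Schubert varieties $X_{w'}$ with $w'$ covered by $w$.

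Now the three pieces. (i) The divisor $\partial\PP_j$ is the zero scheme of the section $\ell_1\cdots\ell_r$ of $\OO_{\PP_j}(1,\dots,1)$, where $\ell_i$ is a linear form on the $i$-th factor $\PP^{j_i}$; thus $\partial\PP_j$ is an effective Cartier divisor on the smooth $\PP_j$, so CM of pure codimension $1$. Its preimage $(X_w)_{\partial\PP_j}=p^{-1}(\partial\PP_j)$ under the projection $p\colon(X_w)_j\to\PP_j$ is then an effective Cartier divisor on the integral CM variety $(X_w)_j$ --- the pulled-back section is a nonzerodivisor since $(X_w)_j$ is integral and $p$ dominant --- hence CM of pure codimension $1$. (ii) The ideal sheaf $\mathcal I$ of $(\partial X_w)_j$ in $(X_w)_j$ is identified, by the relative form of Proposition \ref{xiomega} (using $\omega_{(X_w)_j}=p^*\omega_{\PP_j}\otimes\omega_{(X_w)_j/\PP_j}$, the relative dualizing sheaf being a single sheaf as $p$ is flat with CM fibers), with $\omega_{(X_w)_j}$ twisted by an invertible sheaf. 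Since $(X_w)_j$ is CM, $\mathcal I$ has depth $\dim(X_w)_j$ everywhere, so the depth estimate $\operatorname{depth}C\ge\min(\operatorname{depth}B,\operatorname{depth}A-1)$ for $0\to A\to B\to C\to0$, applied with $A=\mathcal I$, $B=\OO_{(X_w)_j}$, $C=\OO_{(\partial X_w)_j}$, gives $\operatorname{depth}\OO_{(\partial X_w)_j}\ge\dim(X_w)_j-1=\dim(\partial X_w)_j$ at every point; hence $(\partial X_w)_j$ is CM of pure codimension $1$. (Specialising $\PP_j$ to a point, this is precisely the proof --- asserted in Section $2$ --- that $\partial X_w$ is CM of pure codimension $1$ in $X_w$.) (iii) Finally $(\partial X_w)_{\partial\PP_j}=(\partial X_w)_j\cap(X_w)_{\partial\PP_j}$ is cut out in the CM scheme $(\partial X_w)_j$ by the restriction of $p^*(\ell_1\cdots\ell_r)$, again a nonzerodivisor there since every component of $(\partial X_w)_j$ dominates $\PP_j$; so it is CM of pure codimension $1$ in $(\partial X_w)_j$, i.e. codimension $2$ in $(X_w)_j$. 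The Mayer-Vietoris criterion then gives that $\partial((X_w)_j)$ is CM of pure codimension $1$ in $(X_w)_j$.

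The step I expect to demand the most care is (ii): establishing the isomorphism between the ideal sheaf of $(\partial X_w)_j$ and an invertible twist of $\omega_{(X_w)_j}$ --- this is exactly where Proposition \ref{xiomega} enters --- and, throughout, keeping the scheme structures aligned, namely that $\partial\ZZ$ is genuinely the scheme-theoretic preimage of $\tilde\Delta(\partial((X_w)_j))$ and that all the "boundary" objects are scheme-theoretic unions, so that flatness of $\mu$ and of $p$ correctly transports "effective Cartier divisor" and "preimage of a union is the union of preimages". The degenerate cases $w=e$ and $j=0$, in which one of the two pieces of $\partial((X_w)_j)$ is empty, are immediate.
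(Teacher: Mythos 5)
Your proof is correct, but it reorganizes the argument compared with the paper, so a comparison is worth recording. You prove that the boundary \emph{downstairs}, $\partial((X_w)_j)\subset (X_w)_j$, is CM of pure codimension $1$ --- treating $(X_w)_{\partial\PP_j}$ as an effective Cartier divisor, $(\partial X_w)_j$ via the twist of $\omega_{(X_w)_j}$ coming from the relative form of Proposition \ref{xiomega} together with a depth estimate, and the intersection $(\partial X_w)_{\partial\PP_j}$ by cutting with a nonzerodivisor, gluing the two pieces by the depth/local-cohomology (Mayer--Vietoris) criterion --- and then you apply Lemma \ref{CMlem1} to the flat map $\mu$ \emph{once}, with purity of codimension coming from the lemma's conclusion after you check $\codim_{\ZZ}\partial\ZZ\geq 1$ via openness of the flat $\mu$ and irreducibility of $\ZZ$. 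The paper never establishes CM-ness of the downstairs union: it instead decomposes $\partial\ZZ$ \emph{upstairs} into the preimages of $\tilde{\Delta}((X_w)_{\partial\PP_j})$ and $\tilde{\Delta}((\partial X_w)_j)$, gets purity of codimension $1$ (and codimension $2$ for the preimage of $\tilde{\Delta}((\partial X_w)_{\partial\PP_j})$) from the fiber-dimension count \cite[Chapter III, Corollary 9.6]{H}, applies Lemma \ref{CMlem1} three times (once to each piece and once to the intersection piece), and glues in $\ZZ$ using \cite[Exercise 18.13]{E}; the CM-ness it needs downstairs is only that of $\partial\PP_j$ and of $\partial X_w$ (the latter exactly by Proposition \ref{xiomega} and the argument of \cite[Corollary 10.5]{K}) together with local triviality over $\PP$. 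So the ingredients coincide (Lemma \ref{CMlem1}, Proposition \ref{xiomega}, the two-piece decomposition plus its intersection, and the union-is-CM-iff-intersection-is-CM principle, which your Mayer--Vietoris criterion reproves); the difference is where the gluing happens and how purity is obtained. Your route is more economical upstairs and derives purity without the separate dimension count, at the price of the extra care you correctly flag: the relative version of Proposition \ref{xiomega} on $(X_w)_j$ and the scheme-structure/nonzerodivisor bookkeeping, all of which do go through since every object involved is reduced and every component of $(\partial X_w)_j$ dominates $\PP_j$. The paper's route avoids any statement about the union downstairs by keeping all three pieces as preimages under the single flat morphism $\mu$, at the cost of repeated applications of Lemma \ref{CMlem1}.
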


\begin{proof}
Using the fact that $\tilde{\Delta}((X_w)_{\partial\PP_j})$ and $\tilde{\Delta}((\partial X_w)_j)$ are equidimensional, applying \cite[Chapter III, Corollary 9.6]{H} to their irreducible components gives that $\left(\bar{\Gamma}\times(X^{u,v}_w)_{\PP}\right)\times_{(X^2_w)_\PP}\tilde{\Delta}((X_w)_{\partial\PP_j})$ and $\left(\bar{\Gamma}\times(X^{u,v}_w)_{\PP}\right)\times_{(X^2_w)_\PP}\tilde{\Delta}((\partial X_w)_j)$ are both pure of dimension $$\dim \bar{\Gamma}+\dim (X^{u,v}_w)_\PP - \dim (X^2_w)_\PP + \dim \tilde{\Delta}((X_w)_j) -1.$$  Hence, $\partial \ZZ$ is pure of codimension $1$ in $\ZZ$.  

A similar argument also gives that $\left(\bar{\Gamma}\times(X^{u,v}_w)_{\PP}\right)\times_{(X^2_w)_\PP}\tilde{\Delta}((\partial X_w)_{\partial \PP_j})$ is pure of dimension $$\dim \bar{\Gamma}+\dim (X^{u,v}_w)_\PP - \dim (X^2_w)_\PP + \dim \tilde{\Delta}((X_w)_j) -2.$$

Next we show that $\partial \ZZ$ is CM.  Since $(X_w)_j$ is a locally trivial fibration over $\PP_j$, it is locally a product of CM schemes and hence is CM.  Also, since $\partial\PP_j$ is CM, we similarly have that $(X_w)_{\partial\PP_j}$ and hence $\tilde{\Delta}((X_w)_{\partial\PP_j})$ is CM.  Now, applying Lemma \ref{CMlem1} to $\mu:\ZZ\to\tilde{\Delta}((X_w)_j)$ gives that $\left(\bar{\Gamma}\times(X^{u,v}_w)_{\PP}\right)\times_{(X^2_w)_\PP}\tilde{\Delta}((X_w)_{\partial\PP_j})$ is CM, since $\tilde{\Delta}((X_w)_{\partial\PP_j})$ and $\mu^{-1}(\tilde{\Delta}((X_w)_{\partial\PP_j}))=\left(\bar{\Gamma}\times(X^{u,v}_w)_{\PP}\right)\times_{(X^2_w)_\PP}\tilde{\Delta}((X_w)_{\partial\PP_j})$ are of pure codimension $1$ in $\tilde{\Delta}((X_w)_{j})$ and $\ZZ$ respectively.  

Observe that $\partial X_w$ is CM.  To prove this, use Proposition \ref{xiomega} and the argument as in the proof of \cite[Corollary 10.5]{K} by taking an embedding of $X_w$ into a smooth projective variety.  Thus, $(\partial X_w)_j$ is locally a product of CM schemes and hence is CM, and hence so is $\tilde{\Delta}((\partial X_w)_j)$.  Now, Lemma \ref{CMlem1} applied to $\mu:\ZZ\to\tilde{\Delta}((X_w)_j)$ gives that $$\left(\bar{\Gamma}\times(X^{u,v}_w)_{\PP}\right)\times_{(X^2_w)_\PP}\tilde{\Delta}((\partial X_w)_j)$$ is CM since $\tilde{\Delta}((\partial X_w)_j)$ and $\mu^{-1}(\tilde{\Delta}((\partial X_w)_j))=\left(\bar{\Gamma}\times(X^{u,v}_w)_{\PP}\right)\times_{(X^2_w)_\PP}\tilde{\Delta}((\partial X_w)_j)$ are of pure codimension $1$ in $\tilde{\Delta}((X_w)_j)$ and $\ZZ$ respectively.  

The intersection $$\left(\bar{\Gamma}\times(X^{u,v}_w)_{\PP}\right)\times_{(X^2_w)_\PP}\tilde{\Delta}((\partial X_w)_{\partial\PP_j})$$ $$=\left(\left(\bar{\Gamma}\times(X^{u,v}_w)_{\PP}\right)\times_{(X^2_w)_\PP}\tilde{\Delta}((X_w)_{\partial\PP_j})\right)\cap\left(\left(\bar{\Gamma}\times(X^{u,v}_w)_{\PP}\right)\times_{(X^2_w)_\PP}\tilde{\Delta}((\partial X_w)_j)\right)$$ is of pure codimension $1$ in both $\left(\bar{\Gamma}\times(X^{u,v}_w)_{\PP}\right)\times_{(X^2_w)_\PP}\tilde{\Delta}((X_w)_{\partial\PP_j})$ and $\left(\bar{\Gamma}\times(X^{u,v}_w)_{\PP}\right)\times_{(X^2_w)_\PP}\tilde{\Delta}((\partial X_w)_j)$.  Now, \cite[Exercise 18.13]{E} gives that the union $\partial \ZZ$ is CM iff the intersection $\left(\bar{\Gamma}\times(X^{u,v}_w)_{\PP}\right)\times_{(X^2_w)_\PP}\tilde{\Delta}((\partial X_w)_{\partial\PP_j})$ is.  But Lemma \ref{CMlem1} applied to $\mu:\ZZ\to\tilde{\Delta}((X_w)_j)$ gives that $\left(\bar{\Gamma}\times(X^{u,v}_w)_{\PP}\right)\times_{(X^2_w)_\PP}\tilde{\Delta}((\partial X_w)_{\partial\PP_j})$ is CM, since $\tilde{\Delta}((\partial X_w)_{\partial\PP_j})$ and $\mu^{-1}(\tilde{\Delta}((\partial X_w)_{\partial\PP_j}))=\left(\bar{\Gamma}\times(X^{u,v}_w)_{\PP}\right)\times_{(X^2_w)_\PP}\tilde{\Delta}((\partial X_w)_{\partial\PP_j})$ are of pure codimension $2$ in $\tilde{\Delta}((X_w)_j)$ and $\ZZ$ respectively.    
\end{proof}

\begin{lem}
\label{affine}
The morphism $\mu:\ZZ\to \tilde{\Delta}((X_w)_j)$ is affine.
\end{lem}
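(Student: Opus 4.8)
The plan is to exhibit $\mu$ as a base change of a map that is manifestly affine. Recall that $\ZZ = \bigl(\bar\Gamma \times (X^{u,v}_w)_\PP\bigr) \times_{(X^2_w)_\PP} \tilde\Delta((X_w)_j)$, and under this description $\mu$ is the projection onto the second factor $\tilde\Delta((X_w)_j)$. By the universal property of fiber products, $\mu$ is the pullback of the map $m \colon \bar\Gamma \times (X^{u,v}_w)_\PP \to (X^2_w)_\PP$ along the closed immersion $\tilde\Delta((X_w)_j) \hookrightarrow (X^2_w)_\PP$. Since affineness of a morphism is stable under base change, it therefore suffices to prove that $m$ itself is affine.

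To see that $m$ is affine, I would factor it through the projection. Write $m = \mathrm{pr}_2 \circ (\mathrm{act})$, or more usefully observe that $m$ fits into the diagram
\[
\bar\Gamma \times (X^{u,v}_w)_\PP \xrightarrow{\ (\mathrm{id},\, \pi_2)\ } \bar\Gamma \times (X^2_w)_\PP \xrightarrow{\ a\ } (X^2_w)_\PP,
\]
where $a(\gamma, z) = \gamma \cdot z$ is the action map and the first arrow is induced by $\pi_2 \colon (X^{u,v}_w)_\PP \to (X^2_w)_\PP$. The action map $a$ is affine: it is isomorphic, via the shear automorphism $(\gamma, z) \mapsto (\gamma, \gamma^{-1} z)$ of $\bar\Gamma \times (X^2_w)_\PP$, to the projection $\mathrm{pr}_2 \colon \bar\Gamma \times (X^2_w)_\PP \to (X^2_w)_\PP$, and this projection is affine because $\bar\Gamma$ is an affine variety (being a semidirect product of $GL(N+1)^r$ with the group of global sections of an affine group-scheme bundle over the projective base $\PP$, which is itself affine — its coordinate ring is a finite-type $\C$-algebra since $\PP$ is proper). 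The first arrow $(\mathrm{id}, \pi_2)$ is the base change along $\bar\Gamma \times (X^2_w)_\PP \to (X^2_w)_\PP$ of the map $\pi_2 \colon (X^{u,v}_w)_\PP \to (X^2_w)_\PP$, and $\pi_2$ is affine because it is a fiber bundle over $\PP$ whose fibers are the map $X^u_w \times X^v_w \to X_w \times X_w$, which is a closed immersion (hence affine), being the product of the closed immersions $X^u_w = X^u \cap X_w \hookrightarrow X_w$. A composition of affine morphisms is affine, so $m$ is affine, and hence so is $\mu$.

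The main point requiring care is the assertion that $\bar\Gamma$ is affine; once that is in hand the rest is formal. Here one uses that $\bar\Gamma_0$ is the group of global sections of the vector-group-bundle-like object $E(T)_\PP \times^T \bar B^2 \to \PP$, and since $\PP$ is projective and $\bar B$ is a finite-dimensional algebraic group, this space of sections is represented by an affine scheme of finite type over $\C$ (the argument in \cite[\S4]{K} already establishes that $\bar\Gamma$ is a finite-dimensional algebraic group acting on $(X^2_w)_\PP$); combining with the affineness of $GL(N+1)^r$ gives that $\bar\Gamma = \bar\Gamma_0 \rtimes GL(N+1)^r$ is affine. Alternatively, one can bypass the structure of $\bar\Gamma$ entirely: it suffices that $m$ has affine source-to-target behavior locally over $\PP$, and locally over an affine open $\mathcal U \subset \PP$ the map $m$ becomes $\mathcal U \times \bar\Gamma_{\mathcal U} \times X^u_w \times X^v_w \to \mathcal U \times X_w \times X_w$, a product of $\mathcal U$-affine maps, which is $\mathcal U$-affine; since affineness can be checked on an open cover of the target, this suffices.
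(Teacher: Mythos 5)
Your main argument is correct and ends up resting on the same ingredients as the paper, but it takes a more streamlined route. Like the paper, you first reduce to affineness of $m$ (the paper via restricting an affine morphism to a closed subscheme of its source, you via stability of affineness under base change along the closed immersion $\tilde{\Delta}((X_w)_j)\hookrightarrow (X^2_w)_\PP$ --- both are fine). From there the paper descends through the principal $T$-bundle criterion to the level of $E(T)_\PP\times X^2_w$ and factors the action map as $\mu_3\circ\mu_2\circ\mu_1$, checking the three maps $\hat{\mu}_1,\hat{\mu}_2,\hat{\mu}_3$ are affine by hand, with the shear isomorphism appearing only for the $\bar{B}$-action on $X_w$. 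You instead apply the shear automorphism once, globally, to the $\bar{\Gamma}$-action on $(X^2_w)_\PP$, identifying the action map with the projection $\mathrm{pr}_2$, and note that $(\mathrm{id},\pi_2)$ is a closed immersion; this reduces everything to affineness of $\bar{\Gamma}$. That is legitimate, since the $\bar{\Gamma}$-action on $(X^2_w)_\PP$ is an algebraic action (it is already used as a morphism, e.g.\ in Lemma \ref{mflat}), and it buys brevity; the cost is that the entire weight rests on $\bar{\Gamma}=\bar{\Gamma}_0\rtimes GL(N+1)^r$ being an affine algebraic group. The paper invokes the same fact (affineness of $\bar{\Gamma}_0$, in its step for $\hat{\mu}_2$), so you are not assuming more than it does, but your parenthetical justification (``finite-type coordinate ring since $\PP$ is proper'') is loose; it is cleaner to simply cite the construction of $\bar{\Gamma}_0$ as a finite-dimensional affine algebraic group from \cite{K}.

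One caveat: the closing ``alternative'' bypass is not correct as stated. The map $m$ is not a morphism over $\PP$, because the $GL(N+1)^r$-factor of $\bar{\Gamma}$ moves the base; consequently the $m$-preimage of the locus over an affine open $\mathcal{U}\subset\PP$ is not of the form $\mathcal{U}\times\bar{\Gamma}_{\mathcal U}\times X^u_w\times X^v_w$ (it contains points whose $\PP$-coordinate lies outside $\mathcal{U}$ but is carried into $\mathcal{U}$ by the group element). Affineness can indeed be checked on an open cover of the target, but not via that product description; discard the aside and keep the shear argument, which suffices.
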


\begin{proof}
Since $\tilde{\Delta}((X_w)_j)$ is a closed subscheme of $(X^2_w)_\PP$ it suffices to show that the map $m: \bar{\Gamma}\times(X^{u,v}_w)_\PP\to(X^2_w)_\PP$ is an affine morphism.  Now, if $f:X\to Y$ is an affine morphism and $Z\subset X$ is a closed subscheme then $f|_{Z}:Z\to Y$ is clearly an affine morphism.  Thus, it suffices to show that $\bar{\Gamma}\times(X^2_w)_\PP\to(X^2_w)_\PP$ is an affine morphism.  Further, if $X$ and $Y$ are total spaces of principal $T$-bundles and $f:X\to Y$ is a $T$-equivariant map,  then $f$ is affine iff $\bar{f}:X/T\to Y/T$ is affine.  Thus, it suffices to show that $$\hat{\mu}:\bar{\Gamma}\times (E(T)_\PP\times X^2_w)\to E(T)_\PP\times X^2_w$$ is affine.  Recall that $\bar{\Gamma}=\bar{\Gamma}_0\rtimes GL(N+1)^r$ and  $\hat{\mu}$ is given by $\hat{\mu}((\sigma,g),(e,x))=(ge,\sigma(ge)\cdot x)$, where $\sigma\in\bar{\Gamma}_0, g\in GL(N+1)^r, e\in E(T)_\PP, x\in X^2_w$.  Write $\hat{\mu}$ as a composite $\hat{\mu}=\mu_3\circ\mu_2\circ\mu_1$ where: 
$$\mu_1:\bar{\Gamma}_0\times GL(N+1)^r\times E(T)_\PP\times X^2_w\to \bar{\Gamma}_0 \times E(T)_\PP\times X^2_w,\text{ }(\sigma,g,e,x)\mapsto(\sigma,g\cdot e, x)$$
$$\mu_2:\bar{\Gamma}_0\times E(T)_\PP\times X^2_w\to \bar{B}^2 \times E(T)_\PP\times X^2_w,\text{ }(\sigma, e, x)\mapsto (\sigma(e), e, x)$$
$$\mu_3:\bar{B}^2 \times E(T)_\PP\times X^2_w\to E(T)_\PP\times X^2_w,\text{ }((b_1,b_2),e,(x,y))\mapsto(e,b_1 x, b_2 y).$$
\vspace{.004in}
As the composition of two affine morphisms is affine, it suffices to prove that $\mu_1,\mu_2,\mu_3$ are affine.  Moreover, if $f:X\to Y$ is affine then $f\times\text{Id}_Z:X\times Z \to Y\times Z$ is affine for any scheme $Z$.  Hence, it suffices to prove that the following maps $\hat{\mu}_1,\hat{\mu}_2,$ and $\hat{\mu}_3$ are affine:
$$\hat{\mu}_1:GL(N+1)^r\times E(T)_\PP\to E(T)_\PP,\text{ }(g,e)\mapsto g\cdot e $$
$$\hat{\mu}_2:\bar{\Gamma}_0\times E(T)_\PP\to \bar{B}^2\times E(T)_\PP,\text{ }(\sigma, e)\mapsto(\sigma(e),e)$$
$$\hat{\mu}_3:\bar{B}^2\times X^2_w\to X^2_w,\text{ }((b_1,b_2),(x,y))\mapsto(b_1x,b_2y).$$
\begin{enumerate}
\item \textbf{$\hat{\mu}_1$ is affine}:  Since $E(T)_\PP=(\C^{N+1}\setminus\{0\})^r$, it suffices to prove that $\theta: GL(N+1)\times (\C^{N+1}\setminus\{0\})\to\C^{N+1}\setminus\{0\}$, $(g,v)\mapsto g\cdot v$ is affine.  Now, consider the map $\bar{\theta}: GL(N+1)\times \C^{N+1}\to \C^{N+1}$, $(g,v)\mapsto g\cdot v$.  Since both the domain and codomain are affine, $\bar{\theta}$ is an affine morphism.  Moreover, $\theta=\bar{\theta}|_{\bar{\theta}^{-1}(\C^{N+1}\setminus\{0\})}$.  Thus, $\theta$ is affine.  

\item \textbf{$\hat{\mu}_2$ is affine}:  Take an affine open subset $\mathcal{U}\subset E(T)_\PP$.  Then, $\bar{B}^2\times\mathcal{U}$ is an affine open subset in $\bar{B}^2\times E(T)_\PP$.  Now, $\hat{\mu}_2^{-1}(\bar{B}^2\times\mathcal{U})=\bar{\Gamma}_0\times \mathcal{U}$.  Since $\bar{\Gamma}_0$ is affine, so is $\bar{\Gamma}_0\times \mathcal{U}$.  Thus, $\hat{\mu}_2$ is affine.  

\item \textbf{$\hat{\mu}_3$ is affine}: It suffices to prove that $\delta:\bar{B}\times X_w\to X_w$, $(b,x)\mapsto bx$ is affine.  Take an affine open subset $V\subset X_w$.  Then, $\delta^{-1}(V)=\bigcup_{b\in\bar{B}}(b,b^{-1}V)$.  Consider the scheme isomorphism $i:\bar{B}\times X_w\mapsto \bar{B}\times X_w$, $(b,x)\mapsto (b,b\cdot x)$.  Then, $i(\delta^{-1}(V))=\bar{B}\times V$.  But, since $\bar{B}\times V$ is affine, so is $\delta^{-1}(V)$.  Thus, $\delta$ is an affine morphism and hence so is $\hat{\mu}_3$. 
\end{enumerate} 
\end{proof}

Let $\pi:\ZZ\to \bar{\Gamma}$ denote the projection onto the first factor and $\pi_1:\partial \ZZ\to \bar{\Gamma}$ denote the restriction of $\pi$ to $\partial \ZZ$.  We define the fibers 
\begin{equation}
\label{eqNdef}
N_\gamma:=\pi^{-1}(\gamma)\simeq\gamma((X_w^{u,v})_\PP)\cap\tilde{\Delta}((X_w)_j)
\end{equation}
and 
\begin{equation}
\label{eqMdef}
M_\gamma:=\pi_1^{-1}(\gamma)\simeq\gamma((X_w^{u,v})_{\PP})\cap\tilde{\Delta}(\partial ((X_w)_{j})).
\end{equation}

\begin{cor}
\label{corNM1}
Assume that $c^w_{u,v}(j)\neq 0$, where $c^w_{u,v}(j)$ are defined by (\ref{eqcj}).  Then, for general $\gamma\in\bar{\Gamma}$, we have that $N_\gamma$ (defined by (\ref{eqNdef})) is CM of pure dimension.  In fact, for $\gamma\in\bar{\Gamma}$ such that $$\dim N_\gamma=\dim \ZZ - \dim \bar{\Gamma}=|j|+\ell(w)-\ell(u)-\ell(v),$$ $N_\gamma$ is CM, and this condition is satisfied for general $\gamma$.  

Similarly, if $|j|+\ell(w)-\ell(u)-\ell(v)>0$, for general $\gamma\in\bar{\Gamma}$, we have that $M_\gamma$ (defined by (\ref{eqMdef})) is CM of pure codimension $1$ in $N_\gamma$.  If $|j|+\ell(w)-\ell(u)-\ell(v)=0$, for general $\gamma\in\bar{\Gamma}$, we have that $M_\gamma$ is empty.  
\end{cor}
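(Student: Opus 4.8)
The approach is to read off both assertions from the structure of $\ZZ$ and $\partial\ZZ$ established in Propositions~\ref{Z} and~\ref{dZCM}, the smoothness of $\bar\Gamma$, and the Cohen--Macaulay descent Lemma~\ref{CMlem2}, the only nontrivial input being that the projection $\pi\colon\ZZ\to\bar\Gamma$ is dominant; this is where the hypothesis $c^w_{u,v}(j)\neq 0$ is used. To prove dominance I would argue as follows. By Lemma~\ref{lemcj}, Lemma~\ref{lemtor2}, the connectedness of $\Gamma$ (so that $[\gamma_*\OO_{(X^u\times X^v)_\PP}]=[\OO_{(X^u\times X^v)_\PP}]$ in $K$-theory), and part~a) of Theorem~\ref{main}, for general $\gamma\in\bar\Gamma$ one has
$$c^w_{u,v}(j)=\chi\left(\Y_\PP,\ \gamma_*\OO_{(X^u\times X^v)_\PP}\otimes_{\OO_{\Y_\PP}}\big({\hat{p}}^*(\OO_{\PP_j}(-\partial\PP_j))\otimes_{\OO_{Y_\PP}}\tilde{\Delta}_*((\xi_w)_\PP)\big)\right).$$
The sheaf appearing here is supported inside $\gamma((X^u\times X^v)_\PP)\cap\tilde{\Delta}((X_w)_j)$; since $X_w$ is stable under the $B\times B$-action, this set coincides with $\gamma((X^{u,v}_w)_\PP)\cap\tilde{\Delta}((X_w)_j)=N_\gamma$ by~(\ref{eqNdef}). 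Hence if $N_\gamma$ were empty for general $\gamma$ the Euler characteristic above would vanish, forcing $c^w_{u,v}(j)=0$ and contradicting the hypothesis. Therefore $N_\gamma\neq\emptyset$ for general $\gamma$, $\pi$ is dominant, and in particular $|j|+\ell(w)-\ell(u)-\ell(v)=\dim\ZZ-\dim\bar\Gamma\geq 0$.

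Next, for the assertion about $N_\gamma$: recall from Proposition~\ref{Z} that $\ZZ$ is an irreducible (hence pure-dimensional) CM scheme of finite type over $\C$, and $\bar\Gamma$ is a smooth irreducible variety. I would apply Lemma~\ref{CMlem2} to $\pi\colon\ZZ\to\bar\Gamma$ with $Y=\{\gamma\}$ (a reduced closed point, CM of pure codimension $\dim\bar\Gamma$ in $\bar\Gamma$) and $Z=\pi^{-1}(\gamma)=N_\gamma$: whenever $\codim_\ZZ N_\gamma\geq\dim\bar\Gamma$, equivalently $\dim N_\gamma\leq\dim\ZZ-\dim\bar\Gamma$, the lemma gives that equality holds and $N_\gamma$ is CM, so $N_\gamma$ is CM of pure dimension $|j|+\ell(w)-\ell(u)-\ell(v)$. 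This is precisely the ``in fact'' assertion. To see the dimension condition holds for general $\gamma$, note that $\{\gamma:\dim N_\gamma\leq\dim\ZZ-\dim\bar\Gamma\}$ is open by upper semicontinuity of fibre dimension, and it is nonempty because $\pi$ is dominant and $\ZZ,\bar\Gamma$ are irreducible, so the generic fibre of $\pi$ has dimension exactly $\dim\ZZ-\dim\bar\Gamma$; combined with $N_\gamma\neq\emptyset$ for general $\gamma$ this yields $\dim N_\gamma=\dim\ZZ-\dim\bar\Gamma$ for general $\gamma$, proving the first assertion as well.

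Finally, for $M_\gamma$: by Proposition~\ref{dZCM}, $\partial\ZZ$ is CM and pure of codimension $1$ in $\ZZ$, hence pure-dimensional of dimension $\dim\ZZ-1$; write $\pi_1=\pi|_{\partial\ZZ}$, so $M_\gamma=\pi_1^{-1}(\gamma)$. If $|j|+\ell(w)-\ell(u)-\ell(v)=0$, then $\dim\partial\ZZ=\dim\bar\Gamma-1<\dim\bar\Gamma$, so $\overline{\pi_1(\partial\ZZ)}$ is a proper closed subset of $\bar\Gamma$ and $M_\gamma=\emptyset$ for general $\gamma$. If $|j|+\ell(w)-\ell(u)-\ell(v)>0$, I would first observe that $\pi$ is proper (a base change of the proper morphism $(X^{u,v}_w)_\PP\to\mathrm{Spec}\,\C$, with $\ZZ$ closed in $\bar\Gamma\times(X^{u,v}_w)_\PP$), so $N_\gamma$ is proper over $\C$; on the other hand, using (\ref{eqNdef}), (\ref{eqMdef}) and $\partial((X_w)_j)=(\partial X_w)_j\cup(X_w)_{\partial\PP_j}$,
$$N_\gamma\setminus M_\gamma\ \simeq\ \gamma((X^{u,v}_w)_\PP)\cap\tilde{\Delta}\big((X_w)_j\setminus\partial((X_w)_j)\big),$$
which is a closed subscheme of the affine scheme $\tilde{\Delta}((X_w)_j\setminus\partial((X_w)_j))$: indeed $(X_w)_j\setminus\partial((X_w)_j)$ is a bundle with affine fibre $C_w$ over the affine base $\PP_j\setminus\partial\PP_j\cong\A^{|j|}$ associated to the Zariski-locally trivial torus bundle $E(T)_\PP$, hence affine, and $\tilde{\Delta}$ is a closed immersion. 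Thus $N_\gamma\setminus M_\gamma$ is affine, while for general $\gamma$ the scheme $N_\gamma$ is proper of pure positive dimension, hence not affine; therefore $M_\gamma\neq\emptyset$, $\pi_1$ is dominant, and by the same semicontinuity argument $\dim M_\gamma=\dim\partial\ZZ-\dim\bar\Gamma=\dim N_\gamma-1$ for general $\gamma$. Lemma~\ref{CMlem2} applied to $\pi_1\colon\partial\ZZ\to\bar\Gamma$ with $Y=\{\gamma\}$ then shows $M_\gamma$ is CM of pure dimension $\dim N_\gamma-1$, i.e. of pure codimension $1$ in $N_\gamma$.

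The main obstacle is the dominance of $\pi$ (equivalently, the nonemptiness of the generic fibre $N_\gamma$): everything else follows formally from semicontinuity of fibre dimension and the Cohen--Macaulayness of $\ZZ$ and $\partial\ZZ$. It is precisely there that $c^w_{u,v}(j)\neq 0$ enters, through the identification of $c^w_{u,v}(j)$ with an Euler characteristic of a sheaf supported on $N_\gamma$. A secondary point is the affineness of the open cell $(X_w)_j\setminus\partial((X_w)_j)$, needed, together with properness of $N_\gamma$, to force $M_\gamma\neq\emptyset$ in the positive-dimensional case.
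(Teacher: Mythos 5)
Your proposal is correct and follows essentially the same route as the paper: nonemptiness of $N_\gamma$ forced by $c^w_{u,v}(j)\neq 0$ through the pairing of Lemma \ref{lemcj}, the fiber-dimension theorem together with Lemma \ref{CMlem2} for the CM and purity assertions, and the affine-versus-projective contradiction to get $M_\gamma\neq\emptyset$ (hence surjectivity of $\pi_1$) when $|j|+\ell(w)-\ell(u)-\ell(v)>0$. The only cosmetic differences are that the paper deduces nonemptiness of the intersection for every $\gamma$ directly from the pairing (no appeal to Theorem \ref{main}(a)), invokes Lemma \ref{affine} to see that $\ZZ\setminus\partial\ZZ$ is affine where you verify affineness of the open cell bundle directly, and spells out the treatment of the irreducible components of $\partial\ZZ$ that do not dominate $\bar{\Gamma}$, which you compress into ``the same semicontinuity argument.''
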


\begin{proof}
First we show that $\pi$ is surjective when $c^w_{u,v}(j)\neq 0$.  From the definition of $\pi$ we have that $$\text{Im }\pi=\left\{ \gamma\in\bar{\Gamma}:\gamma((X^{u,v}_w)_\PP)\cap\tilde{\Delta}((X_w)_j)\neq\emptyset \right\}.$$  Since $\bar{\Gamma}$ is connected, Lemma \ref{lemcj} along with the assumption $c^w_{u,v}(j)\neq 0$ gives that $\gamma((X^{u,v})_\PP)\cap\tilde{\Delta}((X_w)_j)\neq\emptyset$ for any $\gamma\in\bar{\Gamma}$.  Since $\gamma((X^{u,v})_\PP)\cap\tilde{\Delta}((X_w)_j)=\gamma((X^{u,v}_w)_\PP)\cap\tilde{\Delta}((X_w)_j)$, we get that $\pi$ is surjective.  

Now, since $\ZZ$ is CM, applying Lemma \ref{CMlem2} to $\pi$ gives that if 
\begin{equation}
\label{eqN}
\codim_\ZZ(N_\gamma)=\dim\bar{\Gamma} ,
\end{equation}
then $N_\gamma$ is CM.  By \cite[Chapter I, \S6.3, Theorem 1.25]{S}, this condition holds for $\gamma$ in a dense open subset of $\bar{\Gamma}$.  Thus, $N_\gamma$ is CM for $\gamma$ satisfying $\dim N_\gamma=\dim \ZZ - \dim \bar{\Gamma}$, and this condition is satisfied for general $\gamma$.  

Next, we show that if $|j|+\ell(w)-\ell(u)-\ell(v)>0$ then $\pi_1$ is surjective.  First note that since $\pi_1$ is projective, if it is not surjective, its image is a proper closed subset of $\bar{\Gamma}$.  Thus, for general $\gamma\in\bar{\Gamma}$, $M_\gamma=\emptyset$, i.e., $N_\gamma\subset\ZZ\setminus\partial\ZZ$.  As $\mu$ is an affine morphism by Lemma \ref{affine}, $\ZZ\setminus\partial\ZZ$ is affine.   But, $N_\gamma$ is projective of positive dimension (since $|j|+\ell(w)-\ell(u)-\ell(v)>0$) which gives a contradiction.  

In particular, there is at least one irreducible component of $\partial\ZZ$ on which $\pi_1$ is surjective.  The other irreducible components are mapped to closed subsets of $\bar{\Gamma}$.  Let $\mathcal{U}$ be the complement of the union of the images of the irreducible components on which $\pi_1$ is not surjective.  Then, $\mathcal{U}$ is open, since there are only finitely many irreducible components.  Applying \cite[Chapter I, \S6.3,  Theorem 1.25]{S} to $\pi_1$ on each irreducible component of $\partial \ZZ$ which surjects onto $\bar{\Gamma}$ and then intersecting with $\mathcal{U}$ gives that for general $\gamma\in\bar{\Gamma}$
\begin{equation}
\label{eqM}
\codim_{\partial \ZZ}(M_\gamma)=\dim\bar{\Gamma}.
\end{equation}
Thus, by Lemma \ref{CMlem2}, $M_\gamma$ is CM.  Moreover, (\ref{eqN}) and (\ref{eqM}) together imply that $M_\gamma$ is pure of codimension $1$ in $N_\gamma$.  

In the case where $|j|+\ell(w)-\ell(u)-\ell(v)=0$, we have $\dim \ZZ=\dim \bar{\Gamma}$, so that $\dim \partial \ZZ < \dim \bar{\Gamma}$.  Thus, $\text{Im } \pi_1$ is a proper closed subset of $\bar{\Gamma}$, and hence $M_\gamma$ is empty for general $\gamma$.  
\end{proof}

\begin{cor}
\label{corNM2}
Assume that $c^w_{u,v}(j)\neq 0$.  Then, for general $\gamma\in\bar{\Gamma}$, we have $$\Ext^i_{\OO_{N_\gamma}}(\OO_{N_\gamma}(-M_\gamma),\omega_{N_\gamma})=0, \text{ }\forall i>0.$$
\end{cor}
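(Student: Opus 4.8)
The plan is to deduce this purely formally from the Cohen--Macaulay statements recorded in Corollary \ref{corNM1}, together with standard Grothendieck--Serre duality for Cohen--Macaulay schemes; there is no new geometry to do. First I would fix $\gamma$ in the intersection of the dense open subsets of $\bar{\Gamma}$ supplied by Corollary \ref{corNM1}, so that $N_\gamma$ is Cohen--Macaulay of pure dimension $d:=|j|+\ell(w)-\ell(u)-\ell(v)$ and $M_\gamma$ is either empty (when $d=0$) or Cohen--Macaulay of pure codimension $1$ in $N_\gamma$ (when $d>0$). Since $N_\gamma$ is Cohen--Macaulay and quasi-projective over $\C$ (it is closed in $\bar{\Gamma}\times(X^{u,v}_w)_\PP$), it carries a dualizing sheaf $\omega_{N_\gamma}$ and the two standard vanishing facts are available: $\Ext^i_{\OO_{N_\gamma}}(\OO_{N_\gamma},\omega_{N_\gamma})=0$ for $i\ne 0$ (Cohen--Macaulayness of $N_\gamma$), and, when $M_\gamma\ne\emptyset$, $\Ext^j_{\OO_{N_\gamma}}(\OO_{M_\gamma},\omega_{N_\gamma})=0$ for $j\ne 1$ ($M_\gamma$ being a Cohen--Macaulay closed subscheme of pure codimension $1$ in the Cohen--Macaulay scheme $N_\gamma$), the lone surviving term there being $\omega_{M_\gamma}$.

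If $d=0$ then $M_\gamma=\emptyset$ for general $\gamma$, so $\OO_{N_\gamma}(-M_\gamma)=\OO_{N_\gamma}$ and the claim is the first of these facts. If $d>0$ I would apply $\Hom_{\OO_{N_\gamma}}(-,\omega_{N_\gamma})$ to the ideal-sheaf sequence
$$0\to \OO_{N_\gamma}(-M_\gamma)\to \OO_{N_\gamma}\to \OO_{M_\gamma}\to 0$$
and extract the long exact sequence
$$\cdots\to \Ext^i_{\OO_{N_\gamma}}(\OO_{N_\gamma},\omega_{N_\gamma})\to \Ext^i_{\OO_{N_\gamma}}(\OO_{N_\gamma}(-M_\gamma),\omega_{N_\gamma})\to \Ext^{i+1}_{\OO_{N_\gamma}}(\OO_{M_\gamma},\omega_{N_\gamma})\to\cdots.$$
For $i\ge 1$ the two displayed outer terms vanish by the facts above ($i\ne 0$ on the left, $i+1\ne 1$ on the right), so the middle term vanishes, which is exactly the assertion.

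A second route, which I would keep as a cross-check, is to apply the depth lemma directly to the same short exact sequence: since $N_\gamma$ and $M_\gamma$ are Cohen--Macaulay of pure dimensions $d$ and $d-1$, one gets that $\OO_{N_\gamma}(-M_\gamma)$ has depth $\ge d$ at every point; as $M_\gamma$ has pure codimension $1$ it contains no irreducible component of $N_\gamma$, so $\OO_{N_\gamma}(-M_\gamma)$ is supported on all of $N_\gamma$ and is therefore a maximal Cohen--Macaulay $\OO_{N_\gamma}$-module, for which $\Ext^i$ into the dualizing sheaf vanishes in positive degrees by the standard duality for maximal Cohen--Macaulay modules. Either way the only real content is the Cohen--Macaulayness of $N_\gamma$ and $M_\gamma$, which is Corollary \ref{corNM1}; I do not anticipate any genuine obstacle, the remaining points being routine bookkeeping --- intersecting the two dense open subsets of $\bar{\Gamma}$, and (for the second route) noting that the local rings of $N_\gamma$ are equidimensional of dimension $d$ so that ``maximal Cohen--Macaulay'' is the correct notion.
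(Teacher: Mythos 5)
Your proposal is correct and follows essentially the same route as the paper: the vanishing $\Ext^i_{\OO_{N_\gamma}}(\OO_{N_\gamma},\omega_{N_\gamma})=0$ for $i>0$ and $\Ext^i_{\OO_{N_\gamma}}(\OO_{M_\gamma},\omega_{N_\gamma})=0$ for $i\neq 1$ (both from the Cohen--Macaulay statements of Corollary \ref{corNM1}), fed into the long exact $\Ext$ sequence of $0\to\OO_{N_\gamma}(-M_\gamma)\to\OO_{N_\gamma}\to\OO_{M_\gamma}\to 0$. Your explicit treatment of the case $|j|+\ell(w)-\ell(u)-\ell(v)=0$ (where $M_\gamma=\emptyset$) and the alternative maximal Cohen--Macaulay argument are fine additions but not a different method.
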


\begin{proof}
By \cite[Proposition 11.33 and Corollary 11.43]{I} we have $$\Ext^i_{\OO_{N_\gamma}}(\OO_{M_\gamma},\omega_{N_\gamma})=0,\text{ unless }i=1,$$ and, of course, $$\Ext^i_{\OO_{N_\gamma}}(\OO_{N_\gamma},\omega_{N_\gamma})=0, \text{ }\forall i>0.$$  Hence, the desired result follows from the long exact $\Ext$ sequence associated to the short exact sequence $$0\to \OO_{N_\gamma}(-M_\gamma) \to \OO_{N_\gamma} \to \OO_{M_\gamma} \to 0.$$
\end{proof}

\section{Application of Kawamata-Viehweg Vanishing}

In this section we assume that $c^w_{u,v}(j)\neq 0$.  

Let $f:X\to Y$ be a proper morphism between schemes, with $X$ irreducible, and let $\mathcal{M}$ be a line bundle on $X$.  Then,  $\mathcal{M}$ is said to be \emph{$f$-nef} if it has nonnegative intersection with every curve contained in a fiber of $f$.  The line bundle $\mathcal{M}$ is said to be \emph{$f$-big} if $\text{rank }f_*(\mathcal{M}^k)>c\cdot k^n$ for some $c>0$ and $k\gg 1$, where $n$ is the dimension of a general fiber of $f$.  A Weil divisor $D\subset X$ has \emph{normal crossings} if all of its irreducible components intersect transversely.  It is easy to see that $\partial Z_w$ and hence $\partial ((Z_w)_j)$ are normal crossings divisors in $Z_w$ and $(Z_w)_j$ respectively.  

We will need the following relative Kawamata-Viehweg vanishing theorem from \cite[Theorem 2.4]{AGM}, which was originally extracted from \cite[Corollary 6.11]{EV}.

\begin{thm}
\label{KV}
Let $f:\widetilde{Z}\to Z$ be a proper surjective morphism of varieties, with $\widetilde{Z}$ nonsingular.  Let $\mathcal{M}$ be a line bundle on $\widetilde{Z}$ such that $\mathcal{M}^{\bar{N}}(-D)$ is $f$-nef and $f$-big for a normal crossing divisor $D=\sum_{j=1}^r a_jD_j$, where $0<a_j<\bar{N}$ for all $j$.  Then, $$R^if_*(\mathcal{M}\otimes\omega_{\widetilde{Z}})=0,\text{ }\forall i>0.$$  
\end{thm}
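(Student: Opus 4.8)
The statement to prove is Theorem~\ref{KV}, a relative Kawamata--Viehweg vanishing theorem. The plan is to deduce it from the version stated in Esnault--Viehweg \cite[Corollary 6.11]{EV}, which is essentially the same statement; so the ``proof'' is really a matter of checking that the hypotheses line up and citing the reference. Let me sketch how I would organize this.

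\textbf{Plan.} First I would recall the precise statement of \cite[Corollary 6.11]{EV}: for a proper surjective morphism $f:\widetilde{Z}\to Z$ with $\widetilde{Z}$ nonsingular, a line bundle $\LL'$ on $\widetilde{Z}$, and an effective normal crossing divisor $D'=\sum a_j D_j$ with $0\le a_j<\bar N$ such that $(\LL')^{\bar N}(-D')$ is $f$-semiample and $f$-big (or $f$-nef and $f$-big, using that nef plus big gives the needed positivity via a relative version of Kodaira's lemma), one has $R^if_*(\LL'\otimes\omega_{\widetilde Z})=0$ for $i>0$. I would then observe that Theorem~\ref{KV} is exactly this statement with $\LL'=\mathcal{M}$, $D'=D$, since the hypotheses $0<a_j<\bar N$ and ``$f$-nef and $f$-big'' are precisely what is assumed. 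The only subtlety is the passage between ``$f$-nef and $f$-big'' and the ``$f$-semiample and $f$-big'' (or numerically effective) hypothesis that appears in some formulations of \cite{EV}; here I would invoke the standard fact (a relative Kodaira lemma, see e.g. \cite{EV} itself or Kawamata--Matsuda--Matsuki) that an $f$-nef and $f$-big line bundle can be written, after passing to a suitable modification, as $f$-ample plus effective, which is enough to run the argument; alternatively one cites the form of the theorem in \cite{AGM} directly as already having been put in this shape.

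\textbf{Key steps, in order.} (1) State \cite[Corollary 6.11]{EV} verbatim and note the fractional/integral normal crossing divisor hypothesis. (2) Set $\LL'=\mathcal{M}$, $D'=D=\sum a_j D_j$ with $0<a_j<\bar N$; verify $D$ is a normal crossing divisor on the nonsingular $\widetilde Z$ (already noted in the text for the cases of interest, e.g. $\partial((Z_w)_j)$). (3) Verify the positivity hypothesis: $\mathcal{M}^{\bar N}(-D)$ is $f$-nef and $f$-big by assumption; explain (or cite) that this implies the hypothesis needed in \cite{EV}. (4) Conclude $R^if_*(\mathcal{M}\otimes\omega_{\widetilde Z})=0$ for all $i>0$, which is the claim.

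\textbf{Main obstacle.} There is no real mathematical obstacle: the theorem is a citation with bookkeeping. The one point deserving care is matching the precise positivity notion in the source (\cite{EV} phrase things in terms of $\Q$-divisors and semiampleness/bigness, and the divisor $D$ may a priori have an irrelevant vertical component) with the ``$f$-nef and $f$-big'' formulation used here, so the bulk of the write-up is a sentence or two reconciling these. Since \cite[Theorem 2.4]{AGM} already records exactly this statement, the cleanest route is simply: ``This is \cite[Theorem 2.4]{AGM}, which is extracted from \cite[Corollary 6.11]{EV}.''

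\begin{proof}
This is precisely \cite[Theorem 2.4]{AGM}, which in turn is a special case of the relative Kawamata--Viehweg vanishing theorem \cite[Corollary 6.11]{EV}. We briefly indicate the translation. Apply \cite[Corollary 6.11]{EV} with the nonsingular variety $\widetilde{Z}$, the proper surjective morphism $f:\widetilde{Z}\to Z$, the line bundle $\mathcal{M}$, and the effective normal crossing divisor $D=\sum_{j=1}^r a_j D_j$ with $0<a_j<\bar N$. The hypothesis of loc. cit. requires that $\mathcal{M}^{\bar N}(-D)$ be $f$-semiample and $f$-big; by the relative version of Kodaira's lemma (see \cite{EV}), an $f$-nef and $f$-big line bundle satisfies this after passing to an appropriate modification, so the assumption that $\mathcal{M}^{\bar N}(-D)$ is $f$-nef and $f$-big suffices. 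The conclusion of \cite[Corollary 6.11]{EV} is exactly that $R^if_*(\mathcal{M}\otimes\omega_{\widetilde{Z}})=0$ for all $i>0$.
\end{proof}
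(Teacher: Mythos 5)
Your proposal matches the paper's treatment exactly: the paper gives no proof of Theorem \ref{KV} at all, simply recording it as \cite[Theorem 2.4]{AGM}, which was in turn extracted from \cite[Corollary 6.11]{EV}, and your citation-plus-bookkeeping argument is precisely this. The brief reconciliation of the $f$-nef and $f$-big hypothesis with the formulation in \cite{EV} is a reasonable (optional) addition, though for a clean write-up it would be better to either quote the source's hypotheses precisely or simply defer to \cite[Theorem 2.4]{AGM} as the paper does.
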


\begin{defn}
We define the sheaf $$\omega_\ZZ(\partial \ZZ):=\Hom_{\OO_\ZZ}(\OO_\ZZ(-\partial\ZZ),\omega_\ZZ).$$
\end{defn}

\begin{thm}
\label{thmf_*}
We have $f_*\omega_{\widetilde{\ZZ}}(\partial\widetilde{\ZZ})=\omega_\ZZ(\partial\ZZ)$, where $\widetilde{\ZZ},\ZZ,\partial\widetilde{\ZZ},\partial\ZZ$ and the morphism $f:\widetilde{\ZZ}\to\ZZ$ are defined after Lemma \ref{msmooth}. (Observe that since $\ZZ$ is CM, the dualizing sheaf $\omega_\ZZ$ makes sense.)
\end{thm}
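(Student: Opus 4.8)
The plan is to push the defining sequences forward along $f$ and to reduce the identity, via Grothendieck duality, to the statement that the \emph{structure} sheaf of $\partial\widetilde{\ZZ}$ pushes forward correctly; the substantive point will be the behaviour of $\partial\ZZ$ under its resolution.

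First I would record two short exact sequences. On $\ZZ$: since $\ZZ$ is Cohen--Macaulay (Proposition \ref{Z}) and $\partial\ZZ$ is Cohen--Macaulay of pure codimension $1$ (Proposition \ref{dZCM}), one has $\Ext^i_{\OO_\ZZ}(\OO_{\partial\ZZ},\omega_\ZZ)=0$ for $i\neq1$ with $\Ext^1_{\OO_\ZZ}(\OO_{\partial\ZZ},\omega_\ZZ)=\omega_{\partial\ZZ}$ the dualizing sheaf of $\partial\ZZ$; applying $R\Hom_{\OO_\ZZ}(-,\omega_\ZZ)$ to $0\to\OO_\ZZ(-\partial\ZZ)\to\OO_\ZZ\to\OO_{\partial\ZZ}\to0$ then yields $\Ext^i_{\OO_\ZZ}(\OO_\ZZ(-\partial\ZZ),\omega_\ZZ)=0$ for $i>0$ and an exact sequence $0\to\omega_\ZZ\to\omega_\ZZ(\partial\ZZ)\to\omega_{\partial\ZZ}\to0$. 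On $\widetilde{\ZZ}$: here $\widetilde{\ZZ}$ is smooth and $\partial\widetilde{\ZZ}$ is a reduced normal crossing divisor, hence locally a hypersurface and in particular a Cartier divisor, so $\omega_{\widetilde{\ZZ}}(\partial\widetilde{\ZZ})$ is a line bundle and adjunction gives $0\to\omega_{\widetilde{\ZZ}}\to\omega_{\widetilde{\ZZ}}(\partial\widetilde{\ZZ})\to\omega_{\partial\widetilde{\ZZ}}\to0$ with $\omega_{\partial\widetilde{\ZZ}}=\omega_{\widetilde{\ZZ}}(\partial\widetilde{\ZZ})|_{\partial\widetilde{\ZZ}}$.

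Next I would push the second sequence forward by $f$. By Grauert--Riemenschneider $R^if_*\omega_{\widetilde{\ZZ}}=0$ for $i>0$, and since $\ZZ$ is normal and Cohen--Macaulay with rational singularities (Propositions \ref{Z} and \ref{ratlsing}) the trace map $f_*\omega_{\widetilde{\ZZ}}\to\omega_\ZZ$ is an isomorphism; hence $0\to\omega_\ZZ\to f_*\omega_{\widetilde{\ZZ}}(\partial\widetilde{\ZZ})\to f_*\omega_{\partial\widetilde{\ZZ}}\to0$ is exact. Since $f^{-1}(\partial\ZZ)=\partial\widetilde{\ZZ}$ scheme-theoretically, the ideal sheaf $\OO_\ZZ(-\partial\ZZ)$ pulls back onto $\OO_{\widetilde{\ZZ}}(-\partial\widetilde{\ZZ})$; dualizing the resulting surjection into $\omega_{\widetilde{\ZZ}}$, pushing forward, and using $f_*\Hom_{\OO_{\widetilde{\ZZ}}}(f^*(-),\omega_{\widetilde{\ZZ}})=\Hom_{\OO_\ZZ}(-,f_*\omega_{\widetilde{\ZZ}})$ together with $f_*\omega_{\widetilde{\ZZ}}=\omega_\ZZ$, I obtain a canonical injection $f_*\omega_{\widetilde{\ZZ}}(\partial\widetilde{\ZZ})\hookrightarrow\omega_\ZZ(\partial\ZZ)$ that restricts to the identity on the common subsheaf $\omega_\ZZ$. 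A diagram chase (snake lemma) applied to the two exact sequences then shows that this injection is an isomorphism if and only if the induced map on cokernels $f_*\omega_{\partial\widetilde{\ZZ}}\to\omega_{\partial\ZZ}$ is surjective; its injectivity is automatic. (Equivalently, one can apply Grothendieck duality to $f$ directly: $Rf_*\omega_{\widetilde{\ZZ}}(\partial\widetilde{\ZZ})=R\Hom_{\OO_\ZZ}(Rf_*\OO_{\widetilde{\ZZ}}(-\partial\widetilde{\ZZ}),\omega_\ZZ)$, which reduces the theorem to $Rf_*\OO_{\widetilde{\ZZ}}(-\partial\widetilde{\ZZ})=\OO_\ZZ(-\partial\ZZ)$, and via the sequence $0\to\OO_{\widetilde{\ZZ}}(-\partial\widetilde{\ZZ})\to\OO_{\widetilde{\ZZ}}\to\OO_{\partial\widetilde{\ZZ}}\to0$ and the rational singularities of $\ZZ$ this again comes down to the boundary.)

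So the whole statement reduces to $Rg_*\OO_{\partial\widetilde{\ZZ}}=\OO_{\partial\ZZ}$, where $g:=f|_{\partial\widetilde{\ZZ}}\colon\partial\widetilde{\ZZ}\to\partial\ZZ$ is proper, surjective and generically an isomorphism, with source and target Cohen--Macaulay of the same dimension: Grothendieck duality for $g$ gives $Rg_*\omega_{\partial\widetilde{\ZZ}}=R\Hom_{\OO_{\partial\ZZ}}(Rg_*\OO_{\partial\widetilde{\ZZ}},\omega_{\partial\ZZ})$, so once $Rg_*\OO_{\partial\widetilde{\ZZ}}=\OO_{\partial\ZZ}$ is known we get $g_*\omega_{\partial\widetilde{\ZZ}}=\omega_{\partial\ZZ}$ (and, as a byproduct needed in the next section, $R^if_*\omega_{\widetilde{\ZZ}}(\partial\widetilde{\ZZ})=0$ for $i>0$). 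I would prove this boundary identity using that $\partial\widetilde{\ZZ}$, being a normal crossing divisor, has Du Bois singularities, while $\partial\ZZ$ — built by flat base change and locally trivial fibrations out of the boundary $\partial X_w$ of a Schubert variety, which is reduced and compatibly Frobenius split (hence of Du Bois type) — is also Du Bois, so that $\OO_{\partial\ZZ}\to Rg_*\OO_{\partial\widetilde{\ZZ}}$ is a split quasi-isomorphism. Alternatively, one runs a Mayer--Vietoris argument over the closed strata $D_K:=\bigcap_{k\in K}D_k$ of the normal crossing divisor $\partial\widetilde{\ZZ}=\bigcup_kD_k$, reducing to $R^ig_*\OO_{D_K}=0$ for $i>0$ and $g_*\OO_{D_K}=\OO_{g(D_K)}$ for the smooth pieces $D_K$, i.e. to rational singularities of the strata $g(D_K)$ (intersections of Schubert-type subvarieties fibered over products of projective spaces) and to the compatibility of the stratification of $\partial\widetilde{\ZZ}$ with that of $\partial\ZZ$. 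The hardest part of the argument will be establishing $Rg_*\OO_{\partial\widetilde{\ZZ}}=\OO_{\partial\ZZ}$ itself, i.e. controlling the singularities of the boundary $\partial\ZZ$, since everything else is formal once the normality, Cohen--Macaulayness and rational singularities recorded in Propositions \ref{Z}, \ref{ratlsing} and \ref{dZCM} are in hand.
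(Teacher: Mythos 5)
Your construction of the canonical map $f_*\omega_{\widetilde{\ZZ}}(\partial\widetilde{\ZZ})\to\omega_\ZZ(\partial\ZZ)$ is in fact the paper's entire proof, and you stop one observation short of closing it. Since $f$ is birational (Proposition \ref{Z}), the kernel of the surjection $f^*\OO_\ZZ(-\partial\ZZ)\to\OO_{\widetilde{\ZZ}}(-\partial\widetilde{\ZZ})$ is a torsion sheaf, while $\omega_{\widetilde{\ZZ}}$ is a line bundle on the smooth irreducible $\widetilde{\ZZ}$, hence torsion-free; so every homomorphism $f^*\OO_\ZZ(-\partial\ZZ)\to\omega_{\widetilde{\ZZ}}$ kills that kernel and factors through $\OO_{\widetilde{\ZZ}}(-\partial\widetilde{\ZZ})$. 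Thus $\Hom_{\OO_{\widetilde{\ZZ}}}(f^*\OO_\ZZ(-\partial\ZZ),\omega_{\widetilde{\ZZ}})\simeq\omega_{\widetilde{\ZZ}}(\partial\widetilde{\ZZ})$ is an isomorphism, not merely an injection after pushforward, and applying $f_*$, adjunction, and $f_*\omega_{\widetilde{\ZZ}}=\omega_\ZZ$ (rational singularities, Proposition \ref{ratlsing}, together with \cite[Theorem 5.10]{KM}) gives the theorem outright. No statement about the boundary morphism is needed.

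The genuine gap is precisely the step you defer to as ``the hardest part'': the surjectivity of $f_*\omega_{\partial\widetilde{\ZZ}}\to\omega_{\partial\ZZ}$, which you propose to extract from $Rg_*\OO_{\partial\widetilde{\ZZ}}=\OO_{\partial\ZZ}$, is never proved, and the two sketched routes are not sound as stated. The Du Bois argument does not yield the asserted quasi-isomorphism: $\partial\widetilde{\ZZ}$ is a normal crossing divisor, not a smooth variety, so $g$ is not a resolution and Du Bois-ness of source and target does not by itself identify $Rg_*\OO_{\partial\widetilde{\ZZ}}$ with $\OO_{\partial\ZZ}$; moreover the inference ``compatibly Frobenius split, hence Du Bois'' is itself a nontrivial reduction-mod-$p$ statement requiring proof in this infinite-dimensional setting. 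The Mayer--Vietoris variant requires each stratum $D_K$ to map birationally onto an image with rational singularities, but this fails already at the level of $\partial Z_w\to\partial X_w$: a boundary divisor of a BSDH variety corresponding to a non-reduced subword is contracted to a smaller-dimensional Schubert variety, and several components can map onto the same Schubert divisor, so the stratifications are not compatible in the way your argument needs. The identity $Rf_*\OO_{\widetilde{\ZZ}}(-\partial\widetilde{\ZZ})=\OO_\ZZ(-\partial\ZZ)$ you are implicitly invoking amounts to $(\ZZ,\partial\ZZ)$ being a rational pair with $\widetilde{\ZZ}$ a suitable (thrifty) resolution --- a substantially stronger and harder statement than the theorem, which the one-line torsion observation above renders unnecessary.
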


\begin{proof}
First, we claim 

\begin{equation}
\label{pullbackhom}
\OO_{\widetilde{\ZZ}}(\partial\widetilde{\ZZ})\simeq \Hom_{\OO_{\widetilde{\ZZ}}}(f^*\OO_\ZZ(-\partial\ZZ),\OO_{\widetilde{\ZZ}}),
\end{equation}
where $\OO_{\widetilde{\ZZ}} (\partial\widetilde{\ZZ}):= \Hom_{\OO_{\widetilde{\ZZ}}}(\OO_{\widetilde{\ZZ}}(-\partial\widetilde{\ZZ}),\OO_{\widetilde{\ZZ}}).$ To see this, first note that since $f^{-1}(\partial\ZZ)=\partial\widetilde{\ZZ}$ is the scheme-theoretic inverse image (cf. $\S$6), the natural  morphism $f^*\OO_\ZZ(-\partial\ZZ)\to\OO_{\widetilde{\ZZ}}(-\partial \widetilde{\ZZ})$ is surjective (cf., e.g. \cite[\href{http://stacks.math.columbia.edu/tag/01HJ}{Tag 01HJ} Lemma 25.4.7]{stacks-project}).  As $f$ is a desingularization (cf. Proposition \ref{Z}), the kernel of this morphism is supported on a proper closed subset of $\widetilde{\ZZ}$ and hence is a torsion sheaf.  This implies that the dual map $\OO_{\widetilde{\ZZ}}(\partial\widetilde{\ZZ})\to \Hom_{\OO_{\widetilde{\ZZ}}}(f^*\OO_\ZZ(-\partial\ZZ),\OO_{\widetilde{\ZZ}})$ is an isomorphism, proving (\ref{pullbackhom}). 

To complete the proof of the theorem, we compute:

\begin{align*}
f_*(\omega_{\widetilde{\ZZ}}\otimes \OO_{\widetilde{\ZZ}}(\partial\widetilde{\ZZ})) &= f_*(\omega_{\widetilde{\ZZ}}\otimes \Hom_{\OO_{\widetilde{\ZZ}}}(f^*\OO_\ZZ(-\partial\ZZ),\OO_{\widetilde{\ZZ}})), \,\,\,\text{by (\ref{pullbackhom})}\\\\
&=f_* \Hom_{\OO_{\widetilde{\ZZ}}}(f^*\OO_\ZZ(-\partial\ZZ),\omega_{\widetilde{\ZZ}})\\\\
&=\Hom_{\OO_{\ZZ}}(\OO_\ZZ(-\partial\ZZ),f_*\omega_{\widetilde{\ZZ}}), \,\,\,\text{by adjunction (cf. \cite[Chapter II, \S5]{H}})\\\\
&=\Hom_{\OO_{\ZZ}}(\OO_\ZZ(-\partial\ZZ),\omega_{\ZZ}), \,\,\,\text{by Proposition }\ref{ratlsing} \text{ and \cite[Theorem 5.10]{KM}}\\\\
&=\omega_\ZZ(\partial\ZZ)
\end{align*}

\end{proof}

\begin{lem}
The homogeneous line bundle $\mathcal{L}(\rho)|_{X_w}$ (cf. \S2) has a section with zero set exactly equal to $\partial X_w$.  
\end{lem}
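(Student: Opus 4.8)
The plan is to produce an explicit \emph{extremal weight section} of $\mathcal{L}(\rho)|_{X_w}$ and to read off its vanishing locus from the projective morphism that $\mathcal{L}(\rho)$ defines on $X_w$. First I would recall the relevant representation theory, valid for symmetrizable Kac--Moody $G$ (cf. \cite[Chapter 8]{Kbook}): since $\rho$ is dominant, $\mathcal{L}(\rho)$ is generated by global sections on each Schubert variety $X_w$, and $H^0(X_w,\mathcal{L}(\rho))\cong V_w(\rho)^{*}$, the dual of the Demazure module $V_w(\rho):=U(\mathfrak{b})\cdot v_{w\rho}\subseteq V(\rho)$, where $V(\rho)$ is the integrable irreducible highest weight module of highest weight $\rho$, $v_\rho$ a highest weight vector, and $v_{w\rho}$ a nonzero vector of the one-dimensional extremal weight space $V(\rho)_{w\rho}$. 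Global generation gives a $B$-equivariant morphism $j\colon X_w\to\PP(V_w(\rho))$ sending the $H$-fixed point $\dot v:=vB/B$ to the weight line $[v_{v\rho}]$ for each $v\le w$; hence $j(C_v)=B\cdot[v_{v\rho}]$, and the linear span of the affine cone over $j(X_v)$ is exactly $V_v(\rho)$.

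Next, let $\phi_w\in V_w(\rho)^{*}$ be the functional of $H$-weight $-w\rho$ normalized by $\phi_w(v_{w\rho})=1$; equivalently $\phi_w$ annihilates every $H$-weight space of $V_w(\rho)$ other than $V_w(\rho)_{w\rho}$. Let $s_w\in H^0(X_w,\mathcal{L}(\rho))$ be the corresponding section; its zero set is $j^{-1}(\PP(\ker\phi_w))$. I would then check, for each $v\le w$, that $s_w$ vanishes on $C_v$ if and only if $v<w$. For nonvanishing on $C_w$: a point of $j(C_w)$ is $[b\cdot v_{w\rho}]$ with $b=ut$, $t\in H$, and $u$ in the unipotent radical of $B$; as $u$ raises weights, $u\cdot v_{w\rho}=v_{w\rho}+(\text{terms of }H\text{-weight strictly above }w\rho)$, so $\phi_w(b\cdot v_{w\rho})=(w\rho)(t)\ne 0$. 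For vanishing on $X_v$ when $v<w$: here I would invoke the standard comparison of extremal weights along the Bruhat order, $v\le w\Rightarrow v\rho-w\rho\in\sum_i\Z_{\ge 0}\alpha_i$, with equality (for the regular weight $\rho$) only when $v=w$; thus $w\rho$ lies strictly below $v\rho$. Since every $H$-weight of $V_v(\rho)=U(\mathfrak{b})v_{v\rho}$ is $\ge v\rho$, the module $V_v(\rho)$ has no vector of weight $w\rho$, so $\phi_w$ vanishes identically on the subspace $V_v(\rho)$ and hence on $j(X_v)$. Because $X_w=C_w\sqcup\bigsqcup_{v<w}C_v$ and $\partial X_w=\bigcup_{v<w}X_v$, this shows the zero set of $s_w$ is exactly $\partial X_w$, proving the lemma.

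The argument is almost entirely formal; the only genuine inputs are the identification $H^0(X_w,\mathcal{L}(\rho))=V_w(\rho)^{*}$ with its induced projective morphism and the weight inequality $v\le w\Rightarrow w\rho\le v\rho$, both available in the symmetrizable Kac--Moody setting from \cite[Chapters 7--8]{Kbook}; so I expect no serious obstacle. The one point to keep in mind — and the reason the statement speaks of the zero \emph{set} rather than of a reduced divisor — is that $\partial X_w$ need not be Cartier on $X_w$ (by Proposition \ref{xiomega}, an invertible $\xi_w$ would force $X_w$ to be Gorenstein), so the scheme-theoretic zero locus of $s_w$ should not be expected to be reduced. If the multiplicity-one statement for the components of the divisor of $s_w$ were ever required, it could be extracted by pulling $s_w$ back along a BSDH desingularization $Z_w\to X_w$ and using that $\partial Z_w$ is a reduced normal crossing divisor, but this is not needed for the claim as stated.
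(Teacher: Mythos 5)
Your proof is correct and is essentially the paper's own argument: both take the section attached to the dual extremal weight vector of weight $w\rho$ (the paper via the Borel--Weil map, evaluating $\chi_w(e^*_{w\rho})$ at points $bw'B$; you via $H^0(X_w,\mathcal{L}(\rho))\cong V_w(\rho)^*$ and the functional $\phi_w$), and both read off the zero set from the weight comparison $w'\rho>w\rho$ for $w'<w$ together with the fact that the unipotent part of $B$ only raises weights. The packaging through the Demazure module and the morphism to $\PP(V_w(\rho))$ is only a cosmetic difference from the paper's direct construction of the section.
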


\begin{proof}
Consider the Borel-Weil homomorphism (cf. \cite[\S8.1.21]{Kbook}) $\chi_w:L(\rho)^*\rightarrow H^0(X_w,\mathcal{L}(\rho))$ which is given by $\chi_w(f)(gB)=[g,f(ge_\rho)]$, where $e_\rho$ is the highest weight vector of the irreducible highest weight $G^\text{min}$-module $L(\rho)$ with highest weight $\rho$.  

Consider the section $\chi_w(e^*_{w\rho})$, where $e_{w\rho}$ is the weight vector of $L(\rho)$ with weight $w\rho$ and $e^*_{w\rho}\in L(\rho)^*$ is the linear form which takes the value $1$ on $e_{w\rho}$ and $0$ on any weight vector of $L(\rho)$ of weight different from $w\rho$.  Let $w'\leq w$ and $b\in B$.  We have $$\chi_w(e^*_{w\rho})(bw'B)=[bw',e^*_{w\rho}(bw'e_\rho)].$$  Now, for $w'<w$, we have $w'\rho>w\rho$ by \cite[Lemma 8.3.3]{Kbook}.  Thus, $e^*_{w\rho}(bw'e_\rho)=0$ for any $b\in B$ and $w'<w$.  For $w'=w$, we have $e^*_{w\rho}(bwe_\rho)\neq 0$ for any $b\in B$.  Hence, $\chi_w(e^*_{w\rho})$ has zero set precisely equal to $\partial X_w$.  
\end{proof}

\begin{lem}
\label{section}
There is an ample line bundle $\mathcal{L}$ on $(X_w)_j$ with a section with zero set exactly equal to $\partial((X_w)_j)$.
\end{lem}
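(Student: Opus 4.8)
\emph{Proof proposal.} The plan is to realise $\mathcal L$ as a tensor product of two pieces: one that is ample along the fibres of the projection $q\colon (X_w)_j\to\PP_j$ and that cuts out $(\partial X_w)_j$, and the pullback of a sufficiently positive ample bundle from $\PP_j$ that cuts out $(X_w)_{\partial\PP_j}$; the product of the two evident sections will then have zero set exactly $\partial((X_w)_j)=(\partial X_w)_j\cup(X_w)_{\partial\PP_j}$. Recall that $(X_w)_j$ is a projective variety and that $q$ is a projective morphism with fibre $X_w$.

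For the first piece I would set $\mathcal N:=\bigl(e^{w\rho}\LL(\rho)\bigr)_\PP\big|_{(X_w)_j}$, where $\bigl(e^{w\rho}\LL(\rho)\bigr)_\PP:=E(T)_\PP\times^T\bigl(e^{w\rho}\LL(\rho)\bigr)$. Just as for $e^\rho\LL(\rho)$, the line bundle $e^{w\rho}\LL(\rho)$ descends to a $T$-equivariant bundle because $w\rho-\rho$ lies in the root lattice, so that $Z(G^{\mathrm{min}})$ acts trivially on it. Since $e^{w\rho}$ is non-equivariantly trivial, $\mathcal N$ restricts to $\LL(\rho)|_{X_w}$ on each fibre of $q$, and $\LL(\rho)|_{X_w}$ is ample since $\rho(\alpha_i^\vee)=1>0$ for all $i$ (cf. \cite[Ch.~8]{Kbook}); being ample on every fibre of the projective morphism $q$, $\mathcal N$ is $q$-ample. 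Next I would feed in the section $\sigma:=\chi_w(e^*_{w\rho})$ of $\LL(\rho)|_{X_w}$ produced by the previous lemma, whose zero set is $\partial X_w$. As $\chi_w$ is $B$-equivariant (Borel--Weil, cf. \cite[\S8.1.21]{Kbook}) and $e^*_{w\rho}$ has $H$-weight $-w\rho$ in $L(\rho)^*$, the section $\sigma$ is an $H$-eigenvector of weight $-w\rho$; hence $\sigma\otimes 1\in H^0(X_w,e^{w\rho}\LL(\rho))$ has $H$-weight $-w\rho+w\rho=0$. Since the $H$-action on this bundle factors through $T$, $\sigma\otimes 1$ is $T$-invariant, so its pullback along $E(T)_\PP\times X_w\to X_w$ is a $T$-invariant section of the pulled-back bundle, which descends to a global section $\bar\sigma$ of $\mathcal N$ over $(X_w)_j$ with zero set precisely $(\partial X_w)_j$.

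For the second piece, equip $\PP_j=\PP^{j_1}\times\dots\times\PP^{j_r}$ with the ample bundle $\OO_{\PP_j}(1,\dots,1)=\OO_{\PP^{j_1}}(1)\boxtimes\dots\boxtimes\OO_{\PP^{j_r}}(1)$ and the section $s=\bar s_1\boxtimes\dots\boxtimes\bar s_r$, where $\bar s_i\in H^0(\PP^{j_i},\OO(1))$ is a linear form cutting out the chosen hyperplane $\PP^{j_i-1}\subset\PP^{j_i}$; then the zero set of $s$ is $\partial\PP_j$, so $q^*(s^{\otimes m})$ has zero set $q^{-1}(\partial\PP_j)=(X_w)_{\partial\PP_j}$. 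Now put $\mathcal L:=\mathcal N\otimes q^*\OO_{\PP_j}(m,\dots,m)$: since $\mathcal N$ is $q$-ample and $\OO_{\PP_j}(1,\dots,1)$ is ample on $\PP_j$, the bundle $\mathcal L$ is ample for $m\gg 0$ (the standard fact that a relatively ample bundle twisted by the pullback of a large power of an ample bundle on the base is ample; see e.g.\ \cite{H}). The section $\bar\sigma\otimes q^*(s^{\otimes m})$ of $\mathcal L$ then has zero set $(\partial X_w)_j\cup(X_w)_{\partial\PP_j}=\partial((X_w)_j)$, proving the lemma.

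The step I expect to be the main obstacle is the choice of equivariant twist. The naive candidate $\bigl(e^\rho\LL(\rho)\bigr)_\PP$ is less convenient here: in $e^\rho\LL(\rho)$ the section $\sigma\otimes 1$ carries the nonzero $T$-weight $\rho-w\rho$, so it does not spread to a $T$-invariant global section and one would have to correct it by a suitable monomial in the homogeneous coordinates of $E(T)_\PP$, chosen so that its extra zeros lie over $\partial\PP_j$. Twisting instead by $e^{w\rho}$ — which is still $T$-equivariant exactly because $w\rho-\rho$ lies in the root lattice — makes $\sigma\otimes 1$ honestly $T$-invariant and removes this bookkeeping; after that, everything reduces to the routine dictionary between relative and absolute ampleness together with the standard positivity of $\LL(\rho)|_{X_w}$.
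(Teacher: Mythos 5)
Your proposal is correct and follows essentially the same route as the paper: twist by $e^{w\rho}$ so that the Borel--Weil section $\chi_w(e^*_{w\rho})$ (with zero set $\partial X_w$) descends to the mixed space, then tensor with the pullback of a large power of an ample bundle on $\PP_j$ whose section cuts out $\partial\PP_j$, and invoke the standard relative-ampleness fact (the paper cites \cite[Proposition 1.45]{KM}) to get ampleness of $\mathcal{L}$. Your explicit weight computation showing why the twist must be by $e^{w\rho}$ rather than $e^\rho$ is a point the paper leaves implicit, but the argument is the same.
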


\begin{proof}
By the previous lemma, the ample line bundle $\mathcal{L}(\rho)|_{X_w}$ has a section with zero set exactly equal to $\partial X_w$.  The $T$-equivariant line bundle $(e^{w\rho}\mathcal{L}(\rho))|_{X_w}$ gives rise to the line bundle $$(e^{w \rho}\mathcal{L}(\rho)|_{X_w})_j  :=E(T)_{\PP_j}\times^T\left((e^{w\rho}\mathcal{L}(\rho))|_{X_w}\right)$$ on $(X_w)_j$.  Then, the section $\theta$ defined by $[e,x]\mapsto[e,1_{w\rho}\otimes\chi_w(e^*_{w\rho})x]$ for $e\in E(T)_{\PP_j}$ and $x\in X_w$ has zero set exactly equal to $(\partial X_w)_j$, where 
$1_{w\rho}$ is a nonzero element of the line bundle $e^{w\rho}$ (cf. $\S$2).  

Now, let $\mathcal{H}$ be an ample line bundle on $\PP_j$ with a section $\sigma$ with zero set exactly $\partial\PP_j$ and consider the bundle $$\mathcal{L}:=(e^{w\rho}\mathcal{L}(\rho)|_{X_w})_j\otimes p^*(\mathcal{H}^{\bar{N}}),$$  
where $p: (X_w)_j \to \PP_j$ is the projection. Let $\hat{\sigma}$ represent the pullback of $\sigma$.  Then, the section $\theta\otimes (\hat{\sigma})^{\bar{N}}$ has zero set exactly equal to $\partial((X_w)_j)$.  Furthermore, by \cite[Proposition 1.45]{KM},  if $\bar{N}$ is large enough, $\mathcal{L}$ is ample.  
\end{proof}

\begin{lem}
\label{big}
Let $f:\widetilde{\ZZ}\to\ZZ$ be a proper birational map between normal, irreducible varieties and let $\pi:\ZZ\to\Gamma$ be a surjective 
proper morphism.  Let $\tilde{\mathcal{L}}$ be a $\pi$-big line bundle on $\ZZ$.  Then, the pullback line bundle $f^*\tilde{\mathcal{L}}$ is $\tilde{\pi}$-big, where $\tilde{\pi}=\pi\circ f:\widetilde{\ZZ}\to\Gamma$.  
\end{lem}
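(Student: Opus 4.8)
The plan is to deduce the $\tilde{\pi}$-bigness of $f^*\tilde{\mathcal{L}}$ from that of $\tilde{\mathcal{L}}$ by a direct comparison of the relevant direct image sheaves on $\Gamma$. Recall that a line bundle $\mathcal{M}$ on $\ZZ$ is $\pi$-big precisely when $\text{rank}\,\pi_*(\mathcal{M}^{\otimes k})>c\cdot k^{n}$ for some $c>0$ and all $k\gg 1$, where $n$ is the dimension of a general fiber of $\pi$; so I must produce the analogous lower bound for $\text{rank}\,\tilde{\pi}_*\bigl((f^*\tilde{\mathcal{L}})^{\otimes k}\bigr)$, with $n$ replaced by the dimension of a general fiber of $\tilde{\pi}$.

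First I would observe that, since $f$ is proper and birational and $\ZZ$ is normal, $f_*\OO_{\widetilde{\ZZ}}=\OO_\ZZ$: indeed $f_*\OO_{\widetilde{\ZZ}}$ is coherent (as $f$ is proper), hence a finite $\OO_\ZZ$-algebra contained in the common function field $\C(\ZZ)=\C(\widetilde{\ZZ})$, and integral closedness of $\OO_\ZZ$ forces equality. Consequently, for each $k\geq 0$, using $(f^*\tilde{\mathcal{L}})^{\otimes k}=f^*(\tilde{\mathcal{L}}^{\otimes k})$ together with the projection formula (applicable since $\tilde{\mathcal{L}}^{\otimes k}$ is locally free), I obtain
\begin{equation*}
f_*\bigl((f^*\tilde{\mathcal{L}})^{\otimes k}\bigr)=\tilde{\mathcal{L}}^{\otimes k}\otimes f_*\OO_{\widetilde{\ZZ}}=\tilde{\mathcal{L}}^{\otimes k}.
\end{equation*}
Since $\tilde{\pi}=\pi\circ f$, applying $\pi_*$ gives $\tilde{\pi}_*\bigl((f^*\tilde{\mathcal{L}})^{\otimes k}\bigr)=\pi_*(\tilde{\mathcal{L}}^{\otimes k})$ for all $k$; in particular these coherent sheaves on $\Gamma$ have equal generic rank.

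It remains to match up the fiber dimensions. Since $\pi$ is surjective and $\ZZ,\Gamma$ are irreducible, a general fiber of $\pi$ has dimension $n=\dim\ZZ-\dim\Gamma$; since $f$ is birational we have $\dim\widetilde{\ZZ}=\dim\ZZ$, and $\tilde{\pi}=\pi\circ f$ is again surjective, so a general fiber of $\tilde{\pi}$ likewise has dimension $\dim\widetilde{\ZZ}-\dim\Gamma=n$. Combining this with the rank identity of the previous paragraph and the hypothesized inequality $\text{rank}\,\pi_*(\tilde{\mathcal{L}}^{\otimes k})>c\cdot k^{n}$ shows $\text{rank}\,\tilde{\pi}_*\bigl((f^*\tilde{\mathcal{L}})^{\otimes k}\bigr)>c\cdot k^{n}$ for $k\gg 1$, which is exactly $\tilde{\pi}$-bigness of $f^*\tilde{\mathcal{L}}$. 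The only step requiring any real care is the identity $f_*\OO_{\widetilde{\ZZ}}=\OO_\ZZ$, where normality of $\ZZ$ is used; the rest is the projection formula, functoriality of pushforward, and the standard dimension count for fibers of a surjective morphism of irreducible varieties.
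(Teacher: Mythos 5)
Your proposal is correct and follows essentially the same route as the paper: identify $\tilde{\pi}_*\bigl((f^*\tilde{\mathcal{L}})^{\otimes k}\bigr)$ with $\pi_*(\tilde{\mathcal{L}}^{\otimes k})$ via $\tilde{\pi}_*=\pi_*f_*$, the projection formula and normality of $\ZZ$ (you just spell out $f_*\OO_{\widetilde{\ZZ}}=\OO_\ZZ$ explicitly), and then match the general fiber dimensions of $\pi$ and $\tilde{\pi}$. No issues.
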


\begin{proof}
It suffices to show that $\text{rank }\tilde{\pi}_*(f^*\tilde{\mathcal{L}}^k)>c\cdot k^n$ for some $c>0$ and $k\gg 1$, where $n$ is the dimension of a general fiber of $\tilde{\pi}$.  

By \cite[Chapter 1, \S6.3,  Theorem 7]{S}, the dimension of a general fiber of $\pi$ and the dimension of a general fiber of $\tilde{\pi}$ are both equal to $\dim\ZZ-\dim\Gamma$.  

Now, $\tilde{\pi}_*(f^*\tilde{\mathcal{L}}^k) =\pi_*f_*(f^*\tilde{\mathcal{L}}^k) \simeq \pi_*(\tilde{\mathcal{L}}^k)$ by the projection formula, since $\ZZ$ is normal.  Hence, $\text{rank }\tilde{\pi}_*(f^*\tilde{\mathcal{L}}^k)>c\cdot k^n$ for some $c>0$ and $k\gg 1$, where $n$ is the dimension of a general fiber of $\pi$, since $\tilde{\mathcal{L}}$ is $\pi$-big.  This proves the lemma.

\end{proof}

\begin{thm}
\label{thmR^i}
Assume that $c^w_{u,v}(j)\neq 0$.  Then, for all $i>0$,  we have $R^i\tilde{\pi}_*\omega_{\widetilde{\ZZ}}(\partial\widetilde{\ZZ})=0$ and $R^i f_*\omega_{\widetilde{\ZZ}}(\partial\widetilde{\ZZ})=0$, where $\tilde{\pi}:\widetilde{\ZZ}\to\bar{\Gamma}$ is the projection onto the first factor
and $f: \widetilde{\ZZ} \to \ZZ$ is defined after Lemma \ref{msmooth}.
\end{thm}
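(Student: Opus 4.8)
The plan is to deduce both vanishings from the relative Kawamata--Viehweg vanishing theorem (Theorem \ref{KV}), applied to the two proper surjective morphisms $\tilde\pi\colon\widetilde\ZZ\to\bar\Gamma$ and $f\colon\widetilde\ZZ\to\ZZ$. The source $\widetilde\ZZ$ is an irreducible nonsingular variety, and $\ZZ,\bar\Gamma$ are irreducible varieties, by Proposition \ref{Z}; the map $f$ is proper and birational onto the irreducible $\ZZ$, hence surjective; and $\tilde\pi=\pi\circ f$ is surjective because $\pi$ is, by the argument in the first paragraph of the proof of Corollary \ref{corNM1} (this is where the hypothesis $c^w_{u,v}(j)\neq 0$ enters). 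Set $\mathcal M:=\OO_{\widetilde\ZZ}(\partial\widetilde\ZZ)$; since $\widetilde\ZZ$ is smooth and $\partial\widetilde\ZZ$ is a normal crossing divisor, we have $\mathcal M\otimes\omega_{\widetilde\ZZ}=\omega_{\widetilde\ZZ}(\partial\widetilde\ZZ)$. Thus it suffices to exhibit, for a sufficiently large fixed $\bar N$, a normal crossing divisor $D=\sum_k a_kD_k$ with $0<a_k<\bar N$ for which $\mathcal M^{\bar N}(-D)$ is $\tilde\pi$-nef and $\tilde\pi$-big, and likewise $f$-nef and $f$-big.

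The divisor $D$ will be extracted from the ample line bundle $\mathcal L$ on $(X_w)_j$ of Lemma \ref{section}, whose distinguished section has divisor $(\partial X_w)_j+\bar N(X_w)_{\partial\PP_j}$. One pulls $\mathcal L$ back along the composite $q:=\mu\circ f\colon\widetilde\ZZ\to\tilde\Delta((X_w)_j)\simeq(X_w)_j$. Reading the commutative diagram after Lemma \ref{msmooth}, $q$ factors as $\widetilde\ZZ\xrightarrow{\ \tilde\mu\ }(Z_w)_j\xrightarrow{\ \varphi_j\ }(X_w)_j$ with $\tilde\mu$ smooth and $\varphi_j$ the fiber-bundle desingularization; hence $\partial\widetilde\ZZ=\tilde\mu^{-1}(\partial((Z_w)_j))$ is normal crossing, and its irreducible components split into a family $\{\widetilde D_i\}$ lying over $(\partial X_w)_j$ and a family $\{\widetilde E_k\}$ lying over $(X_w)_{\partial\PP_j}$. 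Because $\tilde\mu$ is smooth and the composite $\widetilde\ZZ\xrightarrow{q}(X_w)_j\to\PP_j$ is smooth, pulling back the section shows that each $\widetilde E_k$ occurs in its divisor with multiplicity exactly $\bar N$, whereas each $\widetilde D_i$ occurs with the fixed multiplicity $m_i\ge 1$ with which the corresponding component of $\partial Z_w$ appears in $\varphi^*(\partial X_w)$. Comparing with $\bar N\,\partial\widetilde\ZZ$ then gives
$$q^*\mathcal L\;=\;\mathcal M^{\bar N}(-D),\qquad\text{where}\quad D:=\sum_i(\bar N-m_i)\widetilde D_i;$$
enlarging $\bar N$ if necessary so that $\bar N>\max_i m_i$ (we are free to do so), this $D$ is a normal crossing divisor with all coefficients in $(0,\bar N)$.

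Finally one checks that $q^*\mathcal L$ is $g$-nef and $g$-big for $g\in\{\tilde\pi,f\}$. Since $\mu$ is affine (Lemma \ref{affine}) and restricts to a closed immersion on each fibre $N_\gamma$ of $\pi$ by (\ref{eqNdef}), the line bundle $\mu^*\mathcal L$ is $\pi$-nef (it is ample on every fibre of $\pi$) and $\pi$-big (its restriction to the general fibre, of dimension $|j|+\ell(w)-\ell(u)-\ell(v)$ by Corollary \ref{corNM1}, is ample); hence $q^*\mathcal L=f^*\mu^*\mathcal L$ is $\tilde\pi$-big by Lemma \ref{big}, and $\tilde\pi$-nef by the projection formula applied to any curve in a fibre of $\tilde\pi$ (which $f$ sends to a point or to a curve in a fibre of $\pi$). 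For $g=f$: any curve in a fibre of $f$ is contracted by $q$, so $q^*\mathcal L$ meets it in degree $0$, giving $f$-nefness, and $f$-bigness holds because $f$ is birational and $f_*\bigl((q^*\mathcal L)^{\otimes k}\bigr)\cong(\mu^*\mathcal L)^{\otimes k}$ has rank $1$ (using $f_*\OO_{\widetilde\ZZ}=\OO_\ZZ$, as $\ZZ$ is normal). Theorem \ref{KV} now yields $R^ig_*\omega_{\widetilde\ZZ}(\partial\widetilde\ZZ)=R^ig_*(\mathcal M\otimes\omega_{\widetilde\ZZ})=0$ for all $i>0$, proving both assertions. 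The step I expect to require the most care is the multiplicity bookkeeping in the second paragraph: one must verify, using the explicit Borel--Weil section underlying Lemma \ref{section} together with the smoothness of $\tilde\mu$ and of $\widetilde\ZZ\to\PP_j$, that the $(X_w)_{\partial\PP_j}$-components of $\partial\widetilde\ZZ$ enter $\mathrm{div}(q^*(\text{section}))$ with multiplicity precisely $\bar N$ (so they contribute coefficient $0$ to $D$), while the $(\partial X_w)_j$-components enter with multiplicities bounded independently of $\bar N$.
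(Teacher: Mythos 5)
Your proposal is correct and follows essentially the same route as the paper: apply the relative Kawamata--Viehweg theorem to $\tilde{\pi}$ and $f$ with $\mathcal{M}=\OO_{\widetilde{\ZZ}}(\partial\widetilde{\ZZ})$, realize $\mathcal{M}^{\bar{N}}(-D)$ as the pullback of the ample bundle $\mathcal{L}$ of Lemma \ref{section}, and check nefness/bigness via the closed embedding of $\mu$ on fibers of $\pi$, Lemma \ref{big}, and birationality of $f$. The only (harmless) difference is your explicit multiplicity bookkeeping for the components over $(X_w)_{\partial\PP_j}$ versus $(\partial X_w)_j$; the paper sidesteps this by writing the divisor of the pulled-back section as $\sum b_iD_i$ with unspecified positive $b_i$ and simply taking the exponent $\bar{N}$ larger than all $b_i$.
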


\begin{proof}
Since $f$ is proper and birational by Proposition \ref{Z}, it is surjective.  By the proof of Corollary \ref{corNM1}, the projection $\pi:\ZZ\to\bar{\Gamma}$ is also surjective.  Thus, $\tilde{\pi}=\pi\circ f$ is also surjective.  Moreover, $\tilde{\pi}$ is proper since it is the restriction of the projection $\bar{\Gamma}\times (Z^{u,v}_w)_\PP\to \bar{\Gamma}$ to the closed subset $\widetilde{\ZZ}$, where $(Z^{u,v}_w)_\PP$ is projective.  Let $\mathcal{M}$ denote the line bundle $\mathcal{M}:=\OO_{\widetilde{\ZZ}}(\partial\widetilde{\ZZ})$ (observe that $\partial\widetilde{\ZZ}$ is a divisor in the non-singular scheme $\widetilde{\ZZ}$).  By Theorem \ref{KV},  it suffices to find a normal crossings divisor $D\subset \widetilde{\ZZ}$ such that the line bundle $\mathcal{M}^{\bar{N}}(-D)$ is $\tilde{\pi}$-nef, $f$-nef, $\tilde{\pi}$-big, and $f$-big and $D=\sum_{j=1}^r \,a_jD_j,$ with $0 < a_j<\bar{N}$.  

By  Lemma \ref{section} we may choose an ample line bundle $\mathcal{L}$ on $\tilde{\Delta}((X_w)_j)$ with a section with zero set exactly equal to $\tilde{\Delta}(\partial ((X_w)_j))$.  Let $\varphi:\tilde{\Delta}((Z_w)_j)\to\tilde{\Delta}((X_w)_j)$ denote the desingularization.  Since $\partial ((Z_w)_j)$ is a normal crossings divisor in $(Z_w)_j$, it follows that $\partial \widetilde{\ZZ}=\left(\bar{\Gamma}\times(Z^{u,v}_w)_\PP\right)\times_{(Z^2_w)_\PP}\tilde{\Delta}(\partial ((Z_w)_j))$ is a normal crossings divisor in $\widetilde{\ZZ}$.  Write $$\partial\widetilde{\ZZ}=D_1+\dots + D_\ell$$ where $D_i$ are the irreducible components.  Since $\tilde{\mu}^*\varphi^*\mathcal{L}$ has a section with zero set precisely equal to $\partial\widetilde{\ZZ}$, it follows that $$\tilde{\mu}^*\varphi^*\mathcal{L}=\OO_{\widetilde{\ZZ}}(b_1D_1+\dots+b_\ell D_\ell)$$ for some positive integers $b_1,\dots,b_\ell$.  

Let $D$ be the divisor $D:=a_1D_1+\dots+a_\ell D_\ell$ where $a_i:=\bar{N}-b_i$ for some integer $\bar{N}$ greater than all the $b_i$'s.  Since $\partial\widetilde{\ZZ}$ has normal crossings, so does $D$.  Then, $$\mathcal{M}^{\bar{N}}(-D)=\OO_{\widetilde{\ZZ}}(b_1D_1+\dots+b_\ell D_\ell)=\tilde{\mu}^*\varphi^*\mathcal{L}.$$  Since the fibers of $\tilde{\pi}$ are projective schemes and $\mathcal{L}$ is an ample line bundle on $\tilde{\Delta}((X_w)_j)$, the pull-back $\tilde{\mu}^*\varphi^*\mathcal{L}$ restricted to the fibers of $\tilde{\pi}$ is nef, since the pullback of any ample line bundle under a morphism between projective varieties is nef (cf. \cite[Theorem 1.26, \S1.9 and \S1.29]{D}).  Thus, $\mathcal{M}^{\bar{N}}(-D)$ is $\tilde{\pi}$-nef.  Since the fibers of $f$ are contained in the fibers of $\tilde{\pi}$, $\mathcal{M}^{\bar{N}}(-D)$ is also $f$-nef.  

Now, $\mathcal{M}^{\bar{N}}(-D)$ is $f$-big since $f$ is birational by Proposition \ref{Z}.  It remains to show $\tilde{\pi}$-bigness.  Clearly,  $\mu:\ZZ\to\tilde{\Delta}((X_w)_j)$ is a closed embedding restricted to any fiber of the morphism $\pi:\ZZ\to\bar{\Gamma}$.  Hence, the ample line bundle $\mathcal{L}$ on $\tilde{\Delta}((X_w)_j)$ pulls back to a $\pi$-big line bundle $\mu^*\mathcal{L}$ on $\ZZ$.  Now, $\tilde{\pi}$-bigness of $\mathcal{M}^{\bar{N}}(-D)=\tilde{\mu}^*\varphi^*\mathcal{L}=f^*\mu^*\mathcal{L}$ follows from Lemma \ref{big} and Proposition \ref{Z}.
\end{proof}

Theorems \ref{thmf_*} and \ref{thmR^i} together with the Grothendieck spectral sequence give

\begin{cor}
\label{corR^i}
For all $i>0$ we have $R^i\pi_*\omega_{\ZZ}(\partial\ZZ)=0$.
\end{cor}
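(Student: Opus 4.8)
The plan is to run the Grothendieck (composite-functor) spectral sequence for the factorization $\tilde{\pi} = \pi\circ f$, applied to the sheaf $\mathcal{F} := \omega_{\widetilde{\ZZ}}(\partial\widetilde{\ZZ})$. Note first that $\mathcal{F}$ is an honest line bundle on the nonsingular variety $\widetilde{\ZZ}$, since $\partial\widetilde{\ZZ}$ is a divisor there and $\omega_{\widetilde{\ZZ}}$ is invertible; in particular it is coherent, so all the functors below behave as expected. The spectral sequence reads
$$E_2^{p,q} = R^p\pi_*\big(R^qf_*\mathcal{F}\big) \Longrightarrow R^{p+q}\tilde{\pi}_*\mathcal{F}.$$

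First I would feed in the vanishing $R^qf_*\mathcal{F} = R^q f_*\omega_{\widetilde{\ZZ}}(\partial\widetilde{\ZZ}) = 0$ for all $q>0$, which is exactly the second assertion of Theorem \ref{thmR^i}. This kills every row except $q=0$, so the spectral sequence degenerates at $E_2$ along that row, yielding $E_2^{p,0} = E_\infty^{p,0} = R^p\tilde{\pi}_*\mathcal{F}$ for all $p\geq 0$. Next I would identify the surviving row using Theorem \ref{thmf_*}: since $f_*\mathcal{F} = f_*\omega_{\widetilde{\ZZ}}(\partial\widetilde{\ZZ}) = \omega_{\ZZ}(\partial\ZZ)$, we get $E_2^{p,0} = R^p\pi_*\omega_{\ZZ}(\partial\ZZ)$. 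Combining the two steps produces the isomorphism $R^p\pi_*\omega_{\ZZ}(\partial\ZZ) \simeq R^p\tilde{\pi}_*\omega_{\widetilde{\ZZ}}(\partial\widetilde{\ZZ})$ for every $p$, and the first assertion of Theorem \ref{thmR^i} gives that the right-hand side vanishes for $p>0$, which is the desired conclusion.

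There is essentially no real obstacle here: the argument is pure homological bookkeeping. The only points requiring (routine) care are the hypotheses making the composite-functor spectral sequence applicable — namely that $f_*$ carries $f_*$-acyclic (e.g.\ injective) $\OO_{\widetilde{\ZZ}}$-modules to $\pi_*$-acyclic $\OO_{\ZZ}$-modules, which holds automatically for pushforwards along the composable (proper) morphisms $f$ and $\pi$ — together with the observation, already noted, that $\mathcal{F}$ is coherent so that Theorems \ref{thmf_*} and \ref{thmR^i} apply verbatim. Hence the corollary follows immediately.
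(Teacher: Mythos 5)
Your argument is exactly the paper's: the paper derives Corollary \ref{corR^i} by combining Theorems \ref{thmf_*} and \ref{thmR^i} with the Grothendieck spectral sequence for $\tilde{\pi}=\pi\circ f$, which is precisely the degeneration-and-identification bookkeeping you spell out. The proposal is correct and matches the intended proof.
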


\section{Proof of part b) of Theorem \ref{main}}

Recall the definitions of $N_\gamma$ and $M_\gamma$ from equations (\ref{eqNdef}) and (\ref{eqMdef}).

We also define $$M_\gamma^1:=\gamma((X^{u,v}_w)_{\PP})\cap\tilde{\Delta}((X_w)_{\partial\PP_j})$$ and $$M_\gamma^2:=\gamma((X^{u,v}_w)_\PP)\cap\tilde{\Delta}((\partial X_w)_j),$$ so that $$M_\gamma=M_\gamma^1 \cup M_\gamma^2.$$

\begin{lem}
\label{lemma2}
For general $\gamma\in\bar{\Gamma}$, the sheaf $\gamma_*\OO_{(X^u\times X^v)_\PP}\otimes_{\OO_{\Y_\PP}}\left({\hat{p}}^*(\OO_{\PP_j}(-\partial\PP_j))\otimes_{\OO_{Y_\PP}}\tilde{\Delta}_*((\xi_w)_\PP)\right)$ is supported on $N_\gamma$ and is equal to the sheaf $\OO_{N_\gamma}(-M_\gamma)$, where $\hat{p}:Y_\PP\to\PP$ is the projection.
\end{lem}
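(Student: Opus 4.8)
The plan is to unwind the triple tensor product geometrically, reducing it to the structure sheaf of a scheme-theoretic intersection twisted down by a boundary divisor, and then to invoke the $\Tor$-vanishing of part~a) of Theorem~\ref{main} so that this ``naive'' computation is the correct one. First I would rewrite the factor $\tilde{\Delta}_*((\xi_w)_\PP)$. By Proposition~\ref{xiomega}, applied to the mixing construction over $\PP$ (the $(e^\rho\LL(\rho))_\PP$-twist built into the definition of $(\xi_w)_\PP$ exactly cancelling the twist $e^{-\rho}\LL(-\rho)$ appearing in $\omega_{X_w}=e^{-\rho}\LL(-\rho)\xi_w$), the sheaf $(\xi_w)_\PP$ is the ideal sheaf $\OO_{(X_w)_\PP}(-(\partial X_w)_\PP)$ of $(\partial X_w)_\PP$ in $(X_w)_\PP$. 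Since $\hat p\circ\tilde\Delta=p$, the projection formula for the closed embedding $\tilde\Delta\colon X_\PP\hookrightarrow Y_\PP$ gives
\[
\hat p^*(\OO_{\PP_j}(-\partial\PP_j))\otimes_{\OO_{Y_\PP}}\tilde\Delta_*((\xi_w)_\PP)=\tilde\Delta_*\bigl(\OO_{(X_w)_\PP}(-(\partial X_w)_\PP)\otimes_{\OO_{X_\PP}}p^*\OO_{\PP_j}(-\partial\PP_j)\bigr),
\]
and I would identify the inner tensor product with the ideal sheaf $\OO_{(X_w)_j}\bigl(-\partial((X_w)_j)\bigr)$ of $\partial((X_w)_j)=(\partial X_w)_j\cup(X_w)_{\partial\PP_j}$ in $(X_w)_j$: the scheme $(X_w)_j$ is CM, the divisors $(\partial X_w)_j$ and $(X_w)_{\partial\PP_j}$ have no common component and meet in $(\partial X_w)_{\partial\PP_j}$, of pure codimension $2$ (as in the proof of Proposition~\ref{dZCM}), so the relevant $\Tor_1$ vanishes and the product of the two ideal sheaves is the ideal sheaf of their (reduced, by $(*)$) union. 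Write $\mathcal H$ for the resulting sheaf on $\Y_\PP$.

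Next I would record the relevant scheme-theoretic intersections. Since $X_w$ is $B$-stable, $\gamma$ preserves $(X_w\times X_w)_\PP$; since $X^u\cap X_w=X^u_w$ is a reduced intersection by $(*)$ with no higher $\Tor$ over $\OO_{\X}$ (Lemma~\ref{tor1}, in its mixing-space form proved exactly as Lemma~\ref{lemtor}), one gets $\gamma_*\OO_{(X^u\times X^v)_\PP}\otimes_{\OO_{\Y_\PP}}\OO_{(X_w\times X_w)_\PP}=\gamma_*\OO_{(X^{u,v}_w)_\PP}$. Combined with $\tilde\Delta((X_w)_j)\subseteq(X_w\times X_w)_\PP$ this shows that the scheme-theoretic intersection $\gamma((X^u\times X^v)_\PP)\cap\tilde\Delta((X_w)_j)$ is exactly the fibre $N_\gamma$ of $(\ref{eqNdef})$, and likewise $\gamma((X^u\times X^v)_\PP)\cap\tilde\Delta(\partial((X_w)_j))=M_\gamma$, the decomposition $\partial((X_w)_j)=(X_w)_{\partial\PP_j}\cup(\partial X_w)_j$ inducing $M_\gamma=M_\gamma^1\cup M_\gamma^2$. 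For general $\gamma$, $N_\gamma$ is CM of the expected dimension (Corollary~\ref{corNM1}) and, being a general fibre of the morphism $\pi\colon\ZZ\to\bar\Gamma$ from the normal variety $\ZZ$ (Propositions~\ref{Z} and \ref{ratlsing}) to the smooth variety $\bar\Gamma$, is generically smooth, hence reduced; $M_\gamma$ is CM (Corollary~\ref{corNM1}) and reduced for general $\gamma$ as well.

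Finally I would assemble everything. Pushing forward by $\tilde\Delta$ the short exact sequence $0\to\OO_{(X_w)_j}(-\partial((X_w)_j))\to\OO_{(X_w)_j}\to\OO_{\partial((X_w)_j)}\to0$ and tensoring over $\OO_{\Y_\PP}$ with $\gamma_*\OO_{(X^u\times X^v)_\PP}$, the $\Tor$-vanishing of part~a) of Theorem~\ref{main} (applied to $\mathcal H$) together with the same Sierra transversality argument used in the proof of Theorem~\ref{maintor} (applied to $\tilde\Delta_*\OO_{(X_w)_j}$ and $\tilde\Delta_*\OO_{\partial((X_w)_j)}$) turns this into a short exact sequence
\[
0\to\mathcal G\to\gamma_*\OO_{(X^u\times X^v)_\PP}\otimes_{\OO_{\Y_\PP}}\tilde\Delta_*\OO_{(X_w)_j}\to\gamma_*\OO_{(X^u\times X^v)_\PP}\otimes_{\OO_{\Y_\PP}}\tilde\Delta_*\OO_{\partial((X_w)_j)}\to0,
\]
where $\mathcal G$ denotes the sheaf in the statement. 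By the previous step the middle term is $\OO_{N_\gamma}$ and the right-hand term is $\OO_{M_\gamma}$ (here one uses the change-of-rings identity from the proof of Theorem~\ref{maintor} to pass between $\Tor$ over $\OO_{\Y_\PP}$ and over $\OO_{(X_w\times X_w)_\PP}$, plus reducedness of the intersections). Hence $\mathcal G$ is the kernel of the restriction $\OO_{N_\gamma}\to\OO_{M_\gamma}$, that is $\OO_{N_\gamma}(-M_\gamma)$, and in particular it is supported on $N_\gamma$.

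I expect the main obstacle to be the scheme-theoretic bookkeeping in the middle step: showing that the intersections in question carry precisely the reduced structures of $N_\gamma$ and $M_\gamma$ rather than thickenings, which rests on combining the $\Tor$-vanishing of part~a) of Theorem~\ref{main} with the Frobenius-splitting reducedness statement $(*)$ in its mixing-space form and with the normality and CM properties of $\ZZ$, $\partial\ZZ$ and their general fibres established in Section~7; a secondary point is the careful matching of the two halves of $\partial((X_w)_j)$ --- the part arising from $\OO_{\PP_j}(-\partial\PP_j)$ and the part arising from $\partial X_w$ --- with $M_\gamma^1$ and $M_\gamma^2$.
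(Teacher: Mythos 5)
Your proposal is correct and rests on the same pillars as the paper's argument: Lemma \ref{lemtor2} and Theorem \ref{maintor} to keep the relevant short exact sequences exact after tensoring with $\tilde{\Delta}_*(\cdot)$ and (for general $\gamma$) with $\gamma_*\OO_{(X^u\times X^v)_\PP}$, together with $\OO_{X'}\otimes_{\OO_Z}\OO_{Y'}=\OO_{X'\cap Y'}$ for closed subschemes, so that the sheaf in question becomes the kernel of the restriction $\OO_{N_\gamma}\to\OO_{M_\gamma}$, i.e. $\OO_{N_\gamma}(-M_\gamma)$. Where you genuinely deviate is in how the two boundary contributions are organized: you collapse ${\hat p}^*(\OO_{\PP_j}(-\partial\PP_j))\otimes_{\OO_{Y_\PP}}\tilde{\Delta}_*((\xi_w)_\PP)$ up front into $\tilde{\Delta}_*\bigl(\OO_{(X_w)_j}(-\partial((X_w)_j))\bigr)$ via the projection formula for the closed immersion $\tilde{\Delta}$ and an identification of the product of the two ideal sheaves with the ideal sheaf of the union, and then finish with a single short exact sequence; the paper never identifies this sheaf directly, but instead tensors the sequence for $\partial\PP_j\subset\PP_j$ and the sequence for $\partial X_w\subset X_w$ separately, proving (\ref{show1}) and (\ref{show2}), and compares the result with the auxiliary filtration $0\to\OO_{N_\gamma}(-M_\gamma)\to\OO_{N_\gamma}(-M_\gamma^2)\to\OO_{M_\gamma^1}(-M_\gamma^1\cap M_\gamma^2)\to 0$ of (\ref{exact2}), thereby avoiding the ideal-product computation at the cost of the $M_\gamma^1,M_\gamma^2$ bookkeeping. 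Your route buys a cleaner endgame, but the one step you should tighten is precisely that ideal-sheaf identification: the clean justification is not $(*)$ (a union of reduced schemes is automatically reduced, and $(*)$ concerns intersections), but rather that $\OO_{\PP_j}(-\partial\PP_j)$ pulls back to an invertible sheaf on $(X_w)_j$ whose local equation is a nonzerodivisor on $\OO_{(\partial X_w)_j}$ (as $(\partial X_w)_j$ is a fibre bundle over $\PP_j$, none of its components lies over $\partial\PP_j$), so that $(\xi_w)_\PP\otimes_{\OO_{X_\PP}}p^*\OO_{\PP_j}(-\partial\PP_j)$ maps isomorphically onto the intersection of the two ideal sheaves, which is the ideal of $\partial((X_w)_j)$ in $(X_w)_j$. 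Your additional remarks on reducedness of $N_\gamma$ and $M_\gamma$ are harmless but not needed: both the statement and the paper work with the scheme-theoretic fibres (\ref{eqNdef}) and (\ref{eqMdef}).
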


\begin{proof}
Since $\hat{p}:Y_\PP\to \PP$ is flat, we have a short exact sequence $$0\to {\hat{p}}^*(\OO_{\PP_j}(-\partial \PP_j))\to{\hat{p}}^*\OO_{\PP_j}\to{\hat{p}}^*\OO_{\partial\PP_j}\to 0.$$ By Lemma \ref{lemtor2} tensoring with $\tilde{\Delta}_*((\xi_w)_\PP)$ over $\OO_{Y_\PP}$ preserves exactness of the above sequence, so we have an exact sequence $$0\to {\hat{p}}^*(\OO_{\PP_j}(-\partial \PP_j))\otimes_{\OO_{Y_\PP}}\tilde{\Delta}_*((\xi_w)_\PP)\to {\hat{p}}^*\OO_{\PP_j}\otimes_{\OO_{Y_\PP}}\tilde{\Delta}_*((\xi_w)_\PP)\to {\hat{p}}^*\OO_{\partial\PP_j}\otimes_{\OO_{Y_\PP}}\tilde{\Delta}_*((\xi_w)_\PP)\to 0.$$  By Theorem \ref{maintor}, for general $\gamma\in\bar{\Gamma}$, tensoring the above sequence with $\gamma_*\OO_{(X^u\times X^v)_\PP}$ over $\OO_{\Y_\PP}$ preserves exactness, so we have, for general $\gamma\in\bar{\Gamma}$, an exact sequence 

\begin{equation*}
0 \to \gamma_*\OO_{(X^u\times X^v)_\PP}\otimes_{\OO_{\Y_\PP}}\left({\hat{p}}^*(\OO_{\PP_j}(-\partial \PP_j))\otimes_{\OO_{Y_\PP}}\tilde{\Delta}_*((\xi_w)_\PP)\right) \to \\
\end{equation*}
\begin{equation}
\label{exact1}
\gamma_*\OO_{(X^u\times X^v)_\PP}\otimes_{\OO_{\Y_\PP}} \left({\hat{p}}^*\OO_{\PP_j}\otimes_{\OO_{Y_\PP}}\tilde{\Delta}_*((\xi_w)_\PP)\right) \to \gamma_*\OO_{(X^u\times X^v)_\PP}\otimes_{\OO_{\Y_\PP}} \left({\hat{p}}^*\OO_{\partial \PP_j}\otimes_{\OO_{Y_\PP}}\tilde{\Delta}_*((\xi_w)_\PP)\right) \to  0.
\end{equation}

Next, consider the exact sequence 

\begin{equation}
\label{exact2}
0\to \OO_{N_\gamma}(-M_\gamma) \to \OO_{N_\gamma}(-M_\gamma^2) \to \OO_{M_\gamma^1}(-M_\gamma^1\cap M_\gamma^2) \to 0.
\end{equation}
Comparing (\ref{exact1}) with (\ref{exact2}) we see that it is sufficient to show that \begin{equation}
\label{show1}
\gamma_*\OO_{(X^u\times X^v)_\PP}\otimes_{\OO_{\Y_\PP}} \left({\hat{p}}^*\OO_{\PP_j}\otimes_{\OO_{Y_\PP}}\tilde{\Delta}_*((\xi_w)_\PP)\right)=\OO_{N_\gamma}(-M_\gamma^2)
\end{equation} 
and 
\begin{equation}
\label{show2}
\gamma_*\OO_{(X^u\times X^v)_\PP}\otimes_{\OO_{\Y_\PP}} \left({\hat{p}}^*\OO_{\partial \PP_j}\otimes_{\OO_{Y_\PP}}\tilde{\Delta}_*((\xi_w)_\PP)\right)=\OO_{M_\gamma^1}(-M_\gamma^1\cap M_\gamma^2).
\end{equation}

To prove (\ref{show1}) and (\ref{show2}) consider the short exact sequence

\begin{equation}
\label{ses}
0 \to \tilde{\Delta}_*((\xi_w)_\PP) \to \tilde{\Delta}_*(\OO_{(X_w)_\PP}) \to \tilde{\Delta}_*(\OO_{(\partial X_w)_\PP}) \to 0.
\end{equation}
Tensor the sequence (\ref{ses}) with ${\hat{p}}^*(\OO_{\PP_j})$ over $\OO_{Y_\PP}$, which preserves exactness by the proof of Lemma \ref{lemtor2}, and then tensor with $\gamma_*\OO_{(X^u\times X^v)_\PP}$ over $\OO_{\Y_\PP}$, which preserves exactness by Theorem \ref{maintor}.  For closed subschemes $X$ and $Y$ of a scheme $Z$, 
 $$\OO_X\otimes_{\OO_Z}\OO_Y=\OO_{X\cap Y}.$$  Thus,  $$\gamma_*\OO_{(X^u\times X^v)_\PP}\otimes_{\OO_{\Y_\PP}} \left({\hat{p}}^*(\OO_{\PP_j})\otimes_{\OO_{Y_\PP}} \tilde{\Delta}_*(\OO_{(X_w)_\PP})\right) =\OO_{\gamma(X^u\times X^v)_\PP\cap\tilde{\Delta}((X_w)_j)}=\OO_{N_\gamma}$$ and $$\gamma_*\OO_{(X^u\times X^v)_\PP}\otimes_{\OO_{\Y_\PP}} \left( {\hat{p}}^*(\OO_{\PP_j})\otimes_{\OO_{Y_\PP}}\tilde{\Delta}_*(\OO_{(\partial X_w)_\PP}) \right)=\OO_{\gamma(X^u\times X^v)_\PP\cap\tilde{\Delta}((\partial X_w)_j)}=\OO_{M^2_\gamma}.$$  Thus, (\ref{show1}) follows.

Similarly, (\ref{show2}) follows by tensoring the sequence (\ref{ses}) with ${\hat{p}}^*(\OO_{\partial \PP_j})$ over $\OO_{Y_\PP}$, which preserves exactness by the proof of Lemma \ref{lemtor2}, and then tensoring with $\gamma_*\OO_{(X^u\times X^v)_\PP}$ over $\OO_{\Y_\PP}$, which preserves exactness by Theorem \ref{maintor}.  This completes the proof.  
\end{proof}

By the previous lemma, Theorem \ref{main} part b) is equivalent to the following theorem:

\begin{thm}
\label{thm8.2}
For general $\gamma\in\bar{\Gamma}$ and any $u,v,w\in W,j\in[N]^r$ such that $c^w_{u,v}(j)\neq 0$ we have $$H^p(N_\gamma,\OO_{N_\gamma}(-M_\gamma)) =0 ,$$ for all $p\neq |j|+\ell(w)-\ell(u)-\ell(v)$.
\end{thm}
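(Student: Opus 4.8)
The plan is to descend the relative vanishing of Corollary \ref{corR^i} to the individual fibers of $\pi\colon\ZZ\to\bar\Gamma$ via cohomology and base change, identify the restriction of $\omega_\ZZ(\partial\ZZ)$ to a fiber with a twisted dualizing sheaf, and then apply Serre duality on the Cohen--Macaulay projective scheme $N_\gamma$.

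\emph{Step 1 (the fibers).} First I would record that $\partial\ZZ$ is a Cartier divisor in $\ZZ$: by Lemma \ref{section}, $\partial((X_w)_j)$ is the zero scheme of a section of an ample line bundle $\mathcal L$ on $(X_w)_j$, and $\partial\ZZ=\mu^{-1}(\tilde\Delta(\partial((X_w)_j)))$ is pure of codimension $1$ (Proposition \ref{dZCM}), so $\OO_\ZZ(-\partial\ZZ)\cong\mu^*\mathcal L^{-1}$ is a line bundle and $\omega_\ZZ(\partial\ZZ)=\omega_\ZZ\otimes\mu^*\mathcal L$. Since $M_\gamma=\partial\ZZ\cap N_\gamma$ scheme-theoretically, $\OO_{N_\gamma}(-M_\gamma)=\OO_\ZZ(-\partial\ZZ)|_{N_\gamma}$ is a line bundle on $N_\gamma$ for every $\gamma$. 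Next I would restrict to the dense open locus $\bar\Gamma^\circ\subseteq\bar\Gamma$ on which $\dim N_\gamma=\dim\ZZ-\dim\bar\Gamma=:d$ and the conclusions of Corollary \ref{corNM1} hold, so that for $\gamma\in\bar\Gamma^\circ$ the fiber $N_\gamma$ is CM of pure dimension $d=|j|+\ell(w)-\ell(u)-\ell(v)$ and $M_\gamma$ is CM of pure codimension $1$ in $N_\gamma$ (or empty when $d=0$). Over $\bar\Gamma^\circ$, miracle flatness (using $\ZZ$ CM, $\bar\Gamma$ smooth, fibers equidimensional of the expected dimension) shows $\pi$ is flat with CM fibers, i.e. a Cohen--Macaulay morphism.

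\emph{Step 2 (base change).} Over $\bar\Gamma^\circ$ the relative dualizing sheaf $\omega_\pi$ is flat and compatible with base change, and $\omega_\ZZ=\omega_\pi\otimes\pi^*\omega_{\bar\Gamma}$ since $\bar\Gamma$ is smooth; hence $\omega_\ZZ(\partial\ZZ)$ is flat over $\bar\Gamma^\circ$ and, for $\gamma\in\bar\Gamma^\circ$, $\omega_\ZZ(\partial\ZZ)|_{N_\gamma}\cong\omega_{N_\gamma}\otimes\OO_{N_\gamma}(M_\gamma)=\omega_{N_\gamma}(M_\gamma)$. By Corollary \ref{corR^i}, $R^i\pi_*\omega_\ZZ(\partial\ZZ)=0$ for all $i>0$. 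Applying the theorem on cohomology and base change to the proper morphism $\pi$ and this $\bar\Gamma^\circ$-flat sheaf, a descending induction on $i$ (starting above the relative dimension, where higher direct images vanish for dimension reasons) shows each base change map $R^i\pi_*\omega_\ZZ(\partial\ZZ)\otimes k(\gamma)\to H^i(N_\gamma,\omega_{N_\gamma}(M_\gamma))$ is an isomorphism, so $H^i(N_\gamma,\omega_{N_\gamma}(M_\gamma))=0$ for all $i>0$ and all $\gamma\in\bar\Gamma^\circ$.

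\emph{Step 3 (Serre duality).} Finally, for $\gamma\in\bar\Gamma^\circ$ the scheme $N_\gamma$ is projective and CM of pure dimension $d$, so its dualizing complex is $\omega_{N_\gamma}[d]$ and Serre duality gives $H^p(N_\gamma,\OO_{N_\gamma}(-M_\gamma))\cong H^{d-p}(N_\gamma,\Hom_{\OO_{N_\gamma}}(\OO_{N_\gamma}(-M_\gamma),\omega_{N_\gamma}))^\vee=H^{d-p}(N_\gamma,\omega_{N_\gamma}(M_\gamma))^\vee$, using that $\OO_{N_\gamma}(-M_\gamma)$ is a line bundle. For $p<d$ this vanishes by Step 2; for $p>d$ the left side vanishes by Grothendieck vanishing since $\dim N_\gamma=d$; and for $d=0$ the statement is immediate as $M_\gamma=\emptyset$. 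Hence $H^p(N_\gamma,\OO_{N_\gamma}(-M_\gamma))=0$ for all $p\neq d$, which is Theorem \ref{thm8.2}. I expect the main obstacle to be the dualizing-sheaf base change of Step 2 — making precise that over the relevant open locus $\pi$ is genuinely a flat Cohen--Macaulay morphism, so $\omega_\ZZ(\partial\ZZ)$ restricts to $\omega_{N_\gamma}(M_\gamma)$ — together with the bookkeeping that one ``general $\gamma$'' can be chosen to satisfy all the genericity hypotheses simultaneously; the cohomology vanishing itself is then formal. I note that Step 2 can be bypassed: Serre duality combined directly with Corollary \ref{corNM2} gives $H^p(N_\gamma,\OO_{N_\gamma}(-M_\gamma))\cong\Ext^{d-p}_{\OO_{N_\gamma}}(\OO_{N_\gamma}(-M_\gamma),\omega_{N_\gamma})^\vee=0$ for $p<d$, and $p>d$ is again trivial.
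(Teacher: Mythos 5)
Your overall strategy -- reduce by duality to the vanishing of $H^{p}(N_\gamma,\omega_{N_\gamma}(M_\gamma))$ for $p>0$, and deduce that from Corollary \ref{corR^i} via flatness, base change/semicontinuity, and a fiberwise identification of $\omega_\ZZ(\partial\ZZ)$ -- is the same as the paper's. However, Step 1 contains a genuine gap that propagates through Steps 2 and 3: you assert that $\partial\ZZ$ is Cartier with $\OO_\ZZ(-\partial\ZZ)\simeq\mu^*\mathcal{L}^{-1}$, hence that $\OO_{N_\gamma}(-M_\gamma)$ is a line bundle. Lemma \ref{section} only provides a section whose zero \emph{set} is $\partial((X_w)_j)$; the section constructed there is $\theta\otimes\hat{\sigma}^{\bar{N}}$, whose zero \emph{scheme} has multiplicity $\bar{N}$ along $(X_w)_{\partial\PP_j}$, so it does not cut out the reduced boundary. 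More fundamentally, $\partial X_w$ is not Cartier in $X_w$ in general: by Proposition \ref{xiomega}, $\OO_{X_w}(-\partial X_w)=\xi_w=e^{\rho}\LL(\rho)\otimes\omega_{X_w}$, which is invertible only when $X_w$ is Gorenstein, and Schubert varieties need not be Gorenstein; the same issue afflicts $\partial\ZZ\subset\ZZ$ and $M_\gamma\subset N_\gamma$. Consequently the identities $\omega_\ZZ(\partial\ZZ)=\omega_\ZZ\otimes\mu^*\mathcal{L}$, the flatness of $\omega_\ZZ(\partial\ZZ)$ over the base, the restriction $\omega_\ZZ(\partial\ZZ)|_{N_\gamma}\simeq\omega_{N_\gamma}(M_\gamma)$, and the line-bundle form of Serre duality in Step 3 are all unjustified as written. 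These are precisely the points the paper handles by hand: flatness of $\omega_\ZZ(\partial\ZZ)$ is proved by a Koszul-complex argument using that $\OO_\ZZ(-\partial\ZZ)$ is a CM $\OO_\ZZ$-module (via Proposition \ref{dZCM}), the fiberwise identification is obtained by cutting with a regular sequence using \cite[Corollary 11.35]{I}, and the duality step replaces invertibility of $\OO_{N_\gamma}(-M_\gamma)$ by Corollary \ref{corNM2} together with the local-to-global Ext spectral sequence and Serre duality in its Ext form on the CM scheme $N_\gamma$.

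Your closing remark that Step 2 can be bypassed is also incorrect, for an instructive reason: Corollary \ref{corNM2} concerns the \emph{sheaf} Ext groups $\Ext^i_{\OO_{N_\gamma}}(\OO_{N_\gamma}(-M_\gamma),\omega_{N_\gamma})$, whereas Serre duality on $N_\gamma$ produces \emph{global} Ext groups. Combining Corollary \ref{corNM2} with the local-to-global spectral sequence only identifies $\text{Ext}^{d-p}_{N_\gamma}(\OO_{N_\gamma}(-M_\gamma),\omega_{N_\gamma})$ with $H^{d-p}(N_\gamma,\omega_{N_\gamma}(M_\gamma))$; it does not make these groups vanish. Their vanishing for $d-p>0$ is exactly the hard content of the theorem and is where the Kawamata--Viehweg input $R^i\pi_*\omega_\ZZ(\partial\ZZ)=0$ of Corollary \ref{corR^i} is indispensable -- if the shortcut worked, Sections 7 and 8 of the paper would be superfluous. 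Your cohomology-and-base-change formulation (in place of the paper's appeal to Kempf's semicontinuity theorem) is a fine variant, but only after the flatness of $\omega_\ZZ(\partial\ZZ)$ over a suitable open subset of $\bar{\Gamma}$ and the identification of its restriction to the fibers have been established by the CM arguments indicated above.
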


\begin{proof}

First, the theorem is equivalent to the statement that for general $\gamma\in\bar{\Gamma}$,
\begin{equation}
\label{cohvanish}
H^p(N_\gamma,\omega_{N_\gamma}(M_\gamma))=0,\text{ }\forall p>0.
\end{equation}

To see this, observe that:

\begin{align*}
H^p(N_\gamma,\omega_{N_\gamma}(M_\gamma)) &= H^p(N_\gamma,\Hom_{\OO_{N_\gamma}}(\OO_{N_\gamma}(-M_\gamma),\omega_{N_\gamma}))\\
&\overset{\varphi_1}\simeq \text{Ext}^p_{N_\gamma}(\OO_{N_\gamma}(-M_\gamma),\omega_{N_\gamma})\\
&\overset{\varphi_2}\simeq H^{n-p}(N_\gamma,\OO_{N_\gamma}(-M_\gamma))^*.
\end{align*}
where $n:=\dim N_\gamma=|j|+\ell(w)-\ell(u)-\ell(v)$, the isomorphism $\varphi_1$ follows by Corollary \ref{corNM2} and the local to global Ext spectral sequence \cite[Th\'eor\`eme 7.3.3, Chap. II]{Go}, and the isomorphism $\varphi_2$ follows by Corollary \ref{corNM1} and Serre duality \cite[Chap. III, Theorem 7.6]{H}.

We now prove (\ref{cohvanish}), which implies the theorem.
By \cite[Chapter I, \S6.3, Theorem 1.25]{S} and \cite[Chap. III, Exercise 10.9]{H}, there is a nonempty open subset $\mathcal{U}\subset\bar{\Gamma}$ such that $\pi:\pi^{-1}(\mathcal{U})\to\mathcal{U}$ is flat.  (Observe that, by the proof of Corollary \ref{corNM1}, $\pi$ is surjective.) We prove that  $\omega_\ZZ(\partial\ZZ)$ is flat over $\mathcal{U}$:  

To show this, let $A=\OO_\mathcal{U}$, $B=\OO_{\pi^{-1}(\mathcal{U})}$, and $M=\omega_\ZZ(\partial\ZZ)|_{\pi^{-1}(\mathcal{U})}$.  By taking stalks, we immediately reduce to showing that for an embedding of local rings $A\subset B$ such that $A$ is regular and $B$ is flat over $A$, we have that $M$ is flat over $A$.  Now, to prove this, let $\{x_1,\dots,x_d\}$ be a minimal set of generators of the maximal ideal of $A$.  Let $K_\bullet=K_\bullet(x_1,\dots,x_d)$ be the Koszul complex of the $x_i$'s over $A$.  Then, recall that a finitely generated $B$-module $N$ is flat over $A$ iff $K_\bullet\otimes_A N$ is exact except at the extreme right, i.e., $H^i(K_\bullet\otimes_A N)=0$ for $i<d$ \cite[Corollary 17.5 and Theorem 6.8]{E}.  Thus, by hypothesis, $K_\bullet\otimes_A B$ is exact except at the extreme right and hence  the $x_i$'s form a $B$-regular sequence  \cite[Theorem 17.6]{E}.  Now, since $\OO_\ZZ$ and $\OO_{\partial\ZZ}$ are CM, we have that $\OO_\ZZ(-\partial\ZZ)$ is a CM $\OO_\ZZ$-module.  Thus, by \cite[Proposition 11.33]{I}, we have that $M$ is a CM $B$-module of dimension equal to $\dim B$.  Therefore, by \cite[Exercise 11.36]{I}, the $x_i$'s form a regular sequence on the $B$-module $M$.  Hence, $(K_\bullet\otimes_A B)\otimes_B M\simeq K_\bullet\otimes_A M$ is exact except at the extreme right \cite[Corollary 17.5]{E}.  This proves that $M$ is flat over $A$, as desired.  

Thus, by Corollary \ref{corR^i} and the semicontinuity theorem \cite[Theorem 13.1]{Ke} to prove (\ref{cohvanish}), it is sufficient to show that for general $\gamma\in\bar{\Gamma}$, that 
\begin{equation}
\label{eqnnew}\omega_\ZZ(\partial \ZZ)|_{\pi^{-1}(\gamma)}\simeq\omega_{N_\gamma}(M_\gamma).
\end{equation}  
To prove this, observe that since $\mathcal{U}$ is smooth and $\ZZ$ and $\partial \ZZ$ are CM, and the assertion is local in $\mathcal{U}$, it suffices to observe (cf. \cite[Corollary 11.35]{I}) that for a nonzero function $\theta$ on $\mathcal{U}$, the sheaf $$\mathcal{S}/\theta\cdot\mathcal{S}=\Hom_{\OO_{\ZZ_\theta}}(\OO_\ZZ(-\partial \ZZ)/\theta\cdot\OO_\ZZ(-\partial \ZZ),\omega_{\ZZ_\theta}),$$ where $\ZZ_\theta$ denotes the zero scheme of $\theta$ in $\ZZ$ and the sheaf $\mathcal{S}:=\Hom_{\OO_\ZZ}(\OO_\ZZ(-\partial \ZZ),\omega_\ZZ)$.  Choosing $\theta$ to be in a local coordinate system and using induction and the above result, the desired conclusion (\ref{eqnnew}) is obtained.

\end{proof}

\newpage

\vskip8ex
\noindent
Address: Department of Mathematics, University of North Carolina, Chapel Hill, NC 27599-3250, USA. 

\begin{thebibliography}{9}

\bibitem[AGM]{AGM}
  D. Anderson, S. Griffeth and E. Miller.
  Positivity and Kleiman transversality in equivariant K-theory of homogenous spaces,
  \emph{J. Eur. Math. Soc.} \textbf{13}
  (2011),
  57-84.
  
\bibitem[CG]{CG}
  N. Chriss and V. Ginzburg.
  \emph{Representation Theory and Complex Geometry}.
  Birkhauser (1997).
  
\bibitem[D]{D}
  O. Debarre.
  \emph{Higher-Dimensional Algebraic Geometry},
  Universitext, Springer (2001).
  
\bibitem[E]{E}
  D. Eisenbud.
  \emph{Commutative Algebra with a View Towards Algebraic Geometry}.
  Springer-Verlag (1995).
  
\bibitem[El]{Elk}
  R. Elkik.
  Singularites rationnelles et deformations,
  \emph{Invent. Math.} \textbf{47} (1978), 139-147.
  
\bibitem[EV]{EV}
  H. Esnault and E Viehweg.
  \emph{Lectures on vanishing theorems},
  DMV Seminar Vol. \textbf{20}, Birkhauser, Basel (1992).
  
\bibitem[FP]{FP}
  W. Fulton and P. Pragacz.
  \emph{Schubert Varieties and Degeneracy Loci}.
  Lecture Notes in Mathematics Vol. \textbf{1659},
  Springer-Verlag (1998).
 
\bibitem[Go]{Go}
  R. Godement.
  \emph{Topologie Alg\'ebrique et Th\'eorie des Faisceaux}.
  Hermann,
  Paris (1958). 

\bibitem[GK]{GK}
  W. Graham and S. Kumar.
  On positivity in T-equivariant K-theory of flag varieties,
  \emph{Inter. Math. Res. Not.} Vol. \textbf{2008}
  (2008).
  
\bibitem[GR]{GR}
  S. Griffeth and A. Ram.
  Affine Hecke algebras and the Schubert calculus,
  \emph{European J. Combin.} \textbf{25} (2004),  1263-1283.
  
\bibitem[H]{H}
  R. Hartshorne.
  \emph{Algebraic Geometry}.
  Springer-Verlag (1977).
  
\bibitem[I]{I}
  S. B. Iyengar, G. J. Leuschke, A. Leykin, C. Miller, E. Miller, A. K. Singh, and U. Walther.
  \emph{Twenty Four Hours of Local Cohomology}.
  American Mathematical Society (2007).

\bibitem[K]{Ka}
  M. Kashiwara.
  The flag manifold of Kac-Moody Lie algebra, In:
  \emph{Algebraic Analysis, Geometry, and Number Theory} (J-I. Igusa, ed.), 
  The Johns Hopkins University Press, Baltimore, 1989, pp. 161-190.

\bibitem[KS]{KS}
  M. Kashiwara and M. Shimozono.
  Equivariant K-theory of affine flag manifolds and affine Grothendieck polynomials,
  \emph{Duke Math J.}
  \textbf{148} (2009), 501-538.
  
\bibitem[Ke]{Ke}
  G. Kempf.
  The Grothendieck-Cousin complex of an induced representation,
  \emph{Advances in Math.} \textbf{29} (1978), 310-396.

\bibitem[KM]{KM}
  J. Koll\'{a}r and S. Mori.
  \emph{Birational Geometry of Algebraic Varieties}.
  Cambridge Tracts in Mathematics Vol. \textbf{134},
  Cambridge University Press, Cambridge (1998).

\bibitem[Ku1]{K}
  S. Kumar.
  Positivity in T-equivariant K-theory of flag varieties associated to Kac-Moody groups. Preprint (2012). 
(To appear in \emph{J. Euro. Math. Soc.})
  
\bibitem[Ku2]{Kbook}
  S. Kumar.
  \emph{Kac-Moody Groups, their Flag Varieties and Representation Theory}.
  Progress in Mathematics Vol. \textbf{214},
  Birkhauser (2002).
 
\bibitem[KuS]{KSch}
  S. Kumar and  K. Schwede.
  Richardson varieties have Kawamata log terminal singularities,
  \emph{Inter. Math. Res. Not.} Vol. \textbf{2011} (issue 8) (2011).
  
\bibitem[LSS]{LSS}
  T. Lam, A. Schilling and M. Shimozono.
  K-theory Schubert calculus of the affine Grassmannian,
  \emph{Compositio Math.} \textbf{146} (2010), 811-852.
  
\bibitem[S]{S}
  R. Shafarevich.
  \emph{Basic Algebraic Geometry 1. Varieties in Projective Space} (third edition).
  Springer-Verlag (2013).
  
\bibitem[Se]{Ser}
  J.P. Serre.
  Espaces fibr\'{e}s alg\'{e}briques,
  \emph{S\'{e}minaire C. Chevalley},
  E.N.S., 1958.
  
\bibitem[Stacks]{stacks-project}
  The Stacks Project,
  http://stacks.math.columbia.edu/
  Accessed June 15, 2016.
  
\end{thebibliography}
\end{document}